\numberwithin{equation}{section}
\newcommand{\norm}[1]{\left\|{#1}\right\|}
\newcommand{\dashto}{\dashrightarrow}
\newcommand{\Spec}{\mathrm{Spec}}
\newcommand{\Oo}{\mathcal{O}}
\newcommand{\Pic}{\operatorname{Pic}}
\newcommand{\Cl}{\operatorname{Cl}}
\newcommand{\Supp}{\operatorname{Supp}}
\newcommand{\mult}{\operatorname{mult}}
\newcommand{\Center}{\operatorname{center}}
\newcommand{\Exc}{\operatorname{Exc}}
\newcommand{\Eff}{\operatorname{Eff}}
\newcommand{\Cc}{\mathbb{C}}
\newcommand{\Pp}{\mathbb{P}}
\newcommand{\Qq}{\mathbb{Q}}
\newcommand{\Rr}{\mathbb{R}}
\newcommand{\F}{\mathbb{F}}
\newcommand{\fR}{\mathfrak{R}}
\newcommand{\fD}{\mathfrak{D}}
\newcommand{\Zz}{\mathbb{Z}}
\newcommand{\cN}{\mathcal{N}}
\newcommand{\cD}{\mathcal{D}}
\newcommand{\cE}{\mathcal{E}}
\newcommand{\I}{\mathcal{I}}
\newcommand{\cZ}{\mathcal{Z}}
\newcommand{\bM}{\mathbf{M}}
\newcommand{\cX}{\mathcal{X}}
\newcommand{\cB}{\mathcal{B}}
\newcommand{\Rcomp}{\operatorname{Rct}}
\newcommand{\RCT}{\mathrm{\Rr CT}}
\newcommand{\vol}{\operatorname{vol}}
\newcommand{\WDiv}{\operatorname{WDiv}}
\newcommand{\lf}{\lfloor}
\newcommand{\rf}{\rfloor}
\newcommand{\Ii}{{\Gamma}}
\newtheorem{thm}{Theorem}[section]
\newtheorem{lem}[thm]{Lemma}
\newtheorem{prop}[thm]{Proposition}
\newtheorem{claim}[thm]{Claim}
\theoremstyle{definition}
\newtheorem{defn}[thm]{Definition}
\newtheorem{rem}[thm]{Remark}
\newtheorem{ex}[thm]{Example}
\theoremstyle{definition}
\begin{document}

\title{Boundedness of complements for generalized pairs}

\author{Guodu Chen}
\address{School of Mathematical Sciences, Shanghai Jiao Tong University, 800 Dongchuan Road, Shanghai, 200240, China}
\email{chenguodu@sjtu.edu.cn}

\author{Jingjun Han}
\address{Shanghai Center for Mathematical Sciences, Fudan University, Shanghai, 200438, China}
\email{hanjingjun@fudan.edu.cn}

\author{Yang He}
\address{Beijing Institute of Mathematical Sciences and Applications, No. 544 Hefangkou Village Huaibei Town, Huairou District, Beijing, 101408, China}
\email{heyang@bimsa.cn}

\author{Lingyao Xie}
\address{Department of Mathematics, University of California, San Diego, 9500 Gilman Drive \# 0112, La Jolla, CA 92093-0112, USA}
\email{l6xie@ucsd.edu}

\begin{abstract}
We prove the boundedness of complements for Fano type generalized pairs (with the boundary coefficient set $[0,1]$) after Shokurov. 
\end{abstract}

\date{\today}

\maketitle
\pagestyle{myheadings}\markboth{\hfill G. Chen, J. Han, Y. He, and L. Xie \hfill}{\hfill Boundedness of complements for generalized pairs\hfill}

\tableofcontents
  
\section{Introduction}
We work over the field of complex numbers $\Cc.$ A \emph{variety} is an integral scheme that is separated and of finite type over $\Cc$.

Shokurov introduced the theory of complements (Definition \ref{defn:comp}) to investigate log flips for threefolds \cite{Sho92}. A key goal of this theory is to establish a certain boundedness property: roughly speaking, for any pair of a given dimension satisfying certain conditions, there exists an $n$-complement of the pair, where $n$ is a positive integer from a finite set that depends only on the dimension and is independent of the underlying variety \cite{Sho00, Sho04}. The theory of complements has had profound applications in the study of higher-dimensional varieties, including the Borisov-Alexeev-Borisov conjecture on the boundedness of Fano varieties \cite{Bir19, Bir21}, the M\textsuperscript{c}kernan-Shokurov conjecture on singularities on Fano type fibrations \cite{Bir23}, the Yau-Tian-Donaldson conjecture on the existence of Kähler-Einstein metrics on log Fano pairs \cite{LXZ21}, and Shokurov's ascending chain condition (ACC) conjecture for minimal log discrepancies \cite{HLL22, HLS19, HL22}. More recently, complement theory has been applied to the study of the boundedness and moduli of Calabi-Yau varieties \cite{ABB+23, Bir18}. For other applications in birational geometry, see, for example, \cite{BL23,CHL23,CHL24b,LS23}. It is widely believed that the boundedness of complements is deeply connected to other long-standing conjectures in birational geometry, particularly those related to the minimal model program, such as the termination of flips and Shokurov's ACC conjecture on $a$-lc thresholds and minimal log discrepancies. 

Much of the prior work has focused on a special case of boundedness for Fano-type varieties, specifically pairs where the boundary coefficients belong to a descending chain condition (DCC) set. After two decades of dedicated research, the boundedness of complements in this setting has been conclusively established \cite{Sho00, PS09, Bir19, HLS19}.
%We refer the readers to \cite{Amb22,Bir23,CH21,CHL23,CHX23} and references therein for more recent progress and applications. 

% even for $\Pp^1$
Shokurov's theory starts with $\Pp^1$, where he proved the boundedness of complements for arbitrary coefficients, meaning when the boundary coefficients belong to the interval $[0,1]$ \cite{Sho92}. The case of arbitrary coefficients is significantly more challenging than the DCC case. Notably, several important ACC-type results in birational geometry, such as the ACC for log canonical thresholds \cite{HMX14}, hold only for DCC coefficients and were pivotal in proving the boundedness of complements in this direction \cite{Bir21, HLS19}. Recently, Shokurov introduced new ideas and approaches, which ultimately led him to prove the boundedness of log canonical complements for Fano-type varieties with coefficients in $[0,1]$ \cite[Theorem 16]{Sho20}.

In this paper, following Shokurov's ideas, we show the boundedness of complements for generalized pairs with arbitrary boundary coefficients. In particular, when $\bM=0$, we provide a short and self-contained proof of Shokurov's result. 

\begin{thm}[=Theorem \ref{thm:klttype}]\label{thm:main}
Let $d$ and $p$ be two positive integers. Then there exists a finite set $\cN$ of positive integers depending only on $d$ and $p$ satisfying the following.

Assume that $(X/Z\ni z,B+\bM)$ is a g-pair of dimension $d$ such that $p\bM$ is b-Cartier, $X$ is of Fano type over $Z$, and $(X/Z\ni z,B+\bM)$ has an $\Rr$-complement which is klt over a neighborhood of $z$. Then $(X/Z\ni z,B+\bM)$ has an $n$-complement for some $n\in\cN$.
\end{thm}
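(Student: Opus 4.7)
The plan is to use the klt $\Rr$-complement hypothesis to replace the arbitrary-coefficient boundary by a $\QQ$-boundary with coefficients in a finite set controlled only by $d$ and $p$, and then invoke the boundedness of complements for g-pairs with DCC coefficients, established in \cite{Bir19, HLS19}. Concretely, after a small $\QQ$-factorial dlt modification and a $(K_X+B+\bM)$-MMP over $Z$, which exists because $X/Z$ is of Fano type, I may assume that $X$ is $\QQ$-factorial klt and that $-(K_X+B+\bM)$ is nef over a neighborhood of $z$. Fix an $\Rr$-complement $(X/Z\ni z,B^++\bM)$ with $B^+\geq B$ and $K_X+B^++\bM\sim_{\Rr}0/Z$ that is klt over $z$.

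The core of the argument is a perturbation step. Writing $B=\sum_{i=1}^{r} b_i B_i$ for the finitely many prime components, consider the set
\[
\cR=\bigl\{\bb'\in[0,1]^r : (X,\textstyle\sum_i b_i'B_i+\bM)\ \text{has an $\Rr$-complement klt over}\ z\bigr\},
\]
which I expect to be a rational polytope whose defining linear inequalities come from the b-Cartier index $p$ of $\bM$ together with the log discrepancies of finitely many divisorial valuations extracted on the chosen dlt modification. The klt-over-$z$ assumption places $\bb=(b_1,\dots,b_r)$ in the \emph{interior} of $\cR$, so there should exist a $\QQ$-point $\bb'\in\cR$ arbitrarily close to $\bb$ with all denominators dividing a fixed integer $N=N(d,p)$. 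By choosing $\bb'$ sufficiently close to $\bb$ one can also arrange that any $n$-complement of $(X/Z\ni z,B'+\bM)$ with $B'=\sum b_i'B_i$, for $n$ divisible by $N$, is automatically an $n$-complement of $(X/Z\ni z,B+\bM)$.

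Once the new boundary $B'$ has coefficients in the finite set $\tfrac{1}{N}\Zz\cap[0,1]$, the boundedness of complements for g-pairs with DCC coefficients provides an $n$-complement of $(X/Z\ni z,B'+\bM)$ for some $n$ in a finite set $\cN$ depending only on $d$ and $p$; by the previous paragraph this is simultaneously an $n$-complement of $(X/Z\ni z,B+\bM)$, giving the desired conclusion.

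The principal obstacle is the perturbation. In the DCC setting this step is carried out using ACC for log canonical thresholds \cite{HMX14, HLS19}; for arbitrary coefficients in $[0,1]$ that tool is unavailable, and one must instead analyze $\cR$ directly, as Shokurov does in \cite{Sho20} in the case $\bM=0$, via an $\Rr$-complementary threshold ($\Rcomp$) argument. The klt-over-$z$ hypothesis guarantees that $\cR$ has non-empty interior around $\bb$, and the b-Cartier index of $\bM$ together with the dimension bound $d$ are what control the denominators of its rational vertices; adapting this analysis from pairs to g-pairs — in particular, incorporating $p\bM$ being b-Cartier into the combinatorics of $\cR$ — is the technical heart of the proof.
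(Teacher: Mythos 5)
Your proposed reduction has a genuine gap at the step you yourself identify as the ``technical heart'': the claim that one can find a rational point $\bb'\in\cR$ \emph{arbitrarily close} to $\bb$ with all denominators dividing a fixed $N=N(d,p)$. This is impossible as stated, since $\frac{1}{N}\Zz^r\cap[0,1]^r$ is a finite set and cannot approximate an arbitrary real vector $\bb$ arbitrarily well; and even the weaker statement (``close enough that $n$-complements transfer'') fails, because the verification of condition $nB^+\ge n\lf B\rf+\lf(n+1)\{B\}\rf$ for the original $B$ from an $n$-complement of the approximation requires the approximating set to accumulate to the coefficients of $B$ from below in a controlled arithmetic way. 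Moreover, no structure theorem makes $\cR$ a rational polytope with vertex denominators depending only on $d$ and $p$; if such a bound existed, the arbitrary-coefficient case would reduce immediately to the finite-coefficient case and the entire difficulty of the problem would evaporate. In short, your proposal assumes, in the perturbation step, essentially the statement being proved.

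The paper's route is entirely different and is designed precisely to avoid this obstruction. It approximates $B$ from \emph{below} by the infinite DCC hyperstandard sets $\Gamma(\cN,\Phi)$ (Definition \ref{defn:nphi}, Lemma \ref{lem:N_Phicompl}), for which the transfer of $n$-complements back to $B$ is an arithmetic fact, and then runs an induction on dimension through a trichotomy: the exceptional case is handled by proving that the monoid $\Eff(X)$ has a uniformly bounded number of generators (Theorem \ref{thm:effisconst}, Proposition \ref{prop:excgpairbdd}), which bounds the number of components of the boundary; the semi-exceptional and general cases exploit the chain $\kappa(B-B_{\Phi_0})\ge\cdots\ge\kappa(B-B_{\Phi_d})$, which must stabilize at some step $i$, producing a fibration on which two consecutive approximations induce the \emph{same} moduli part of bounded Cartier index $p_i$; complements are then lifted from the base via Proposition \ref{prop:cbflift1} (and via divisorial adjunction, Proposition \ref{prop:adjlift}, in the generic klt type case). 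None of this is captured by a coefficient perturbation to a fixed finite set.
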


We remark here that Theorem \ref{thm:main} fails if we remove the assumption of being \emph{klt over a neighborhood of $z$}. For a concrete example, see \cite[Example 11]{Sho20}. 
\medskip

Generalized pairs naturally arise in the study of birational geometry in higher dimensions. For instance, when applying induction on the dimension, generalized pairs appear in the canonical bundle formulas. They were first introduced in \cite{BZ16} by Birkar and Zhang in their work on the effective Iitaka fibration conjecture. Though seemingly technical, the theory of generalized pairs has proven to be a powerful tool in birational geometry. In particular, it has been a crucial component in the proof of the Borisov-Alexeev-Borisov conjecture \cite{Bir19, Bir21}. For recent progress, we refer readers to \cite{HL18, CHLX23, HL23}.%\chen{less cit}

It is natural to consider the boundedness of complements for generalized pairs. In fact, if the boundary is required to belong to a hyperstandard set, the boundedness of complements for pairs and for projective generalized pairs are proved inductively \cite{Bir19}. Moreover, when the boundary coefficients of the generalized pairs belong to an arbitrary DCC set, the boundedness of complements is established in \cite{Chen23}, building on results from \cite{Bir19, HLS19}.%, see also \cite{CX22,CGN21} for related works. 

\smallskip

The following theorem plays a crucial role in proving Theorem \ref{thm:main}. %Specifically, it serves as the first step in proving Theorem \ref{thm:exc}, which is part of the proof of Theorem \ref{thm:main}. It is important to emphasize that the upcoming result concerns \emph{effective Weil divisors} (up to linear equivalence), rather than effective $\Qq$-divisors (up to $\Qq$-linear equivalence) or the N\'eron-Severi group, as discussed in \cite{HX15}. Indeed, in the definition of $n$-complement for pairs, we are actually considering effective divisors that are linearly equivalent to $$-nK_X-n\lf B\rf-\lf(n+1)\{B\}\rf,$$ therefore, nice properties up to $\Qq$-linear equivalence will not be sufficient to show the boundedness of complements. Our approach offers an alternative proof to that presented in \cite[Proposition 11]{Sho20}, driven by our own perspective and interests. %Our proof is purely algebraic.

\begin{thm}[cf.\ {\cite[Proposition 11]{Sho20}}]\label{thm:effisconst}
Let $f: X\to S$ be a projective morphism between normal quasi-projective varieties such that $X$ is of Fano type over $S$. Then possibly up to an \'etale base change, there is a canonical isomorphism $\Eff(X/S)\to\Eff(X_s)$ for any $s\in S$.
\end{thm}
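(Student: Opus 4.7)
The plan is to establish the canonical isomorphism by matching extremal ray structures of both cones, using the rigidity of the Fano type condition under étale base change.

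First, since $X$ is of Fano type over $S$, the relative cone theorem gives that $\Eff(X/S)$ is a rational polyhedral cone in $N^1(X/S)_\Rr$, with extremal rays generated by finitely many prime divisors $D_1, \dots, D_m$ horizontal over $S$. Applying the same reasoning to the generic fiber $X_\eta$, which inherits the Fano type property, gives that $\Eff(X_\eta)$ is also rational polyhedral, and restriction induces a canonical isomorphism $\Eff(X/S) \to \Eff(X_\eta)$, with $D_i|_{X_\eta}$ generating the extremal rays of the generic fiber.

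Next, I would perform an étale base change $S' \to S$ to rigidify the family. After the base change, I would arrange that: (a) each $D_i$ becomes flat over $S'$ with geometrically irreducible fibers, and (b) the torsion component of the relative Picard scheme is trivialized, so that for every $s' \in S'$ the restriction $N^1(X/S')_\Rr \to N^1(X_{s'})_\Rr$ is an isomorphism. Under this identification, each $D_i|_{X_{s'}}$ is a nonzero effective Weil divisor, yielding the inclusion $\Eff(X/S') \subseteq \Eff(X_{s'})$.

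The reverse inclusion is the core of the argument: one must rule out the appearance of new extremal effective classes on special fibers. My approach is to run the relative MMP for $(X/S', \Delta)$ where $\Delta$ is a boundary witnessing the Fano type structure, and to argue that each extremal ray of $\Eff(X_{s'})$ arises from a birational contraction or Mori fibration that extends to a relative contraction over $S'$. Relative Kawamata–Viehweg vanishing on $X/S'$, combined with cohomology and base change, ensures that sections of $\cO_X(D)$ for appropriate effective $D$ are compatible with restriction to fibers, which prevents the appearance of effective classes on $X_{s'}$ not already in $\Eff(X/S')$. A limiting argument handles classes on the boundary of the cone where positivity degenerates.

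The main obstacle is this reverse inclusion — controlling the effective cone on special fibers. Without the Fano type hypothesis only upper semi-continuity of the effective cone holds, and new extremal rays can appear on special fibers. The rigidity provided by Fano type, specifically the rational polyhedrality of $\Eff$ and the vanishing theorems it enables, together with the étale base change that trivializes torsion and reducibility phenomena, is what forces the full equality of the cones across the entire family.
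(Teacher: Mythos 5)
Your proposal misreads what the theorem is actually claiming. In this paper (see the definition at the start of Section 4), $\Eff(X/S)$ is the \emph{monoid of effective Weil divisor classes modulo linear equivalence over $S$}, not the pseudo-effective (or effective) cone inside $N^1(X/S)_\Rr$. Your argument is phrased entirely in cone language --- ``rational polyhedral cone'', ``extremal rays'', ``limiting argument... where positivity degenerates'' --- and so it is really aiming at the $\Rr$-linear or $\Qq$-linear version of the statement. But the paper explicitly points out, in the remark right after Theorem \ref{thm:effisconst}, that the cone version ``typically follows from \cite[Lemma 2.7]{HX15}''; the actual content of Theorem \ref{thm:effisconst} is the integral, Weil-divisor, monoid statement. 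Two nonzero $\Rr$-multiples of a divisor lie on the same ray, and cone arguments cannot distinguish classes that differ by torsion in $\Cl(X/S)$ or by integrality issues, so nothing in your argument controls those.

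There are two technical ingredients in the paper's proof that your sketch does not engage with, and both are essential precisely because of this integral/Weil-divisor requirement. First, one needs an honest isomorphism of abelian groups $\Cl(X/S)\to\Cl(X_s)$ after étale base change, not just on $N^1$; the paper builds this through Theorem \ref{thm:picisconst}, whose proof for a smooth rationally connected family uses that $H^1=H^2(\Oo)=0$ so $\Pic=H^2(\cdot,\Zz)$, trivializes torsion via an étale base change, and then spreads out generators of $\Pic$ of a very general fiber using relative Hilbert schemes. Your mention of ``trivializing the torsion component of the Picard scheme'' gestures at part of this but does not supply the surjectivity over \emph{every} closed fiber. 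Second, for a fixed Weil divisor $L$ (not Cartier, not $\Qq$-Cartier divisible by anything prescribed) one must show $f_*\Oo_X(L)$ is locally free and that $f_*\Oo_X(L)\otimes k(s)\to H^0(X_s,L|_{X_s})$ is an isomorphism for every $s$, with the base change independent of $L$. The paper achieves this by a carefully engineered log-smooth family on a fixed resolution $W$ so that $h^0(W_s,md(K_{W_s}+\Delta''_s))$ is constant by \cite[Theorem 1.8]{HMX13}, together with Grauert's theorem. Your appeal to ``relative Kawamata--Viehweg vanishing plus cohomology and base change'' does not produce this: Kawamata--Viehweg vanishing applies to Cartier divisors with a nef-and-big twist, and $L$ is merely Weil and has no such positivity in general. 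Similarly, the claim that ``each extremal ray of $\Eff(X_{s'})$ arises from a... contraction that extends to a relative contraction over $S'$'' is not justified and is the wrong level of generality: an MMP run on a special fiber need not extend to the family, and even if it did, this gives information only at the cone level. The reverse inclusion you identify as the core of the problem is in fact established in the paper by the sharp cohomological statement above, not by MMP/cone arguments.
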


We remark that if we replace the monoid $\Eff(X/S)$ with $\Eff(X/S)_\Qq$ or the cone $\Eff(X/S)_\Rr$, then the theorem typically follows from \cite[Lemma 2.7]{HX15}, which states that after an \'etale base change, 
$$
f_*\Oo_X\left(L^{\otimes m}\right)\to H^0\left(X_s,L^{\otimes m}|_{X_s}\right)
$$
is surjective for any line bundle $L$ and sufficiently divisible $m$ (where $m$ may depends on $L$). We actually prove something stronger: we show that after an \'etale base change,
$$
f_*\Oo_X(L)\to H^0\left(X_s,L|_{X_s}\right)
$$
is surjective for any Weil divisor $L$ and that $f_*\Oo_X(L)$ is locally free (note that $L|_{X_s}$ is well-defined when $X_s$ is normal). 

Our proof involves a subtle application of deformation invariance of log plurigenera \cite[Theorem 1.8]{HMX13}. One must be careful because they require log smoothness over the base, which usually leads to a shrinking of $S$ that depends on $L$, but we desire a common base for all $L$. Another technical issue arises since $L$ may not be Cartier, even if $L$ is $\Qq$-Cartier, it is not easy to construct a log smooth pair as in \cite[Theorem 1.8]{HMX13} to compute $H^0(X_s,L|_{X_s})$.

\medskip

\noindent\textbf{Sketch of the proof.}
We aim to prove Theorem \ref{thm:main} using induction on dimension, which involves several technical constructions, including the creation of log canonical centers, contractions to lower dimensional varieties, and lifting complements via ``effective'' adjunctions (Proposition \ref{prop:adjlift} and Proposition \ref{prop:cbflift1}). 
For convenience, we assume that $X$ is $\Qq$-factorial and $\dim Z=0$.

Firstly, we assume that $(X,B+G+\bM)$ has only klt singularities for any $0\le G\sim_{\Rr}-(K_X+B+\bM_X)$, i.e., $(X,B+\bM)$ is exceptional (see Definition \ref{defn:exc}). In this case, by \cite[Theorem 1.11]{Bir19}, we know that $X$ belongs to a bounded family. However, since the boundary $B$ may have arbitrary coefficients, we cannot even bound the number of components of $B$. This presents a significant challenge for us to show the boundedness of complements. Our strategy is to reduce our problem to cases where the number of components of $B$ is bounded. Specifically, we show that the monoid of effective Weil divisors modulo linear equivalence, $\Eff(X)$, has a finite number of generators, and more importantly, this number is uniformly bounded (see Theorem \ref{thm:effisconst} and Proposition \ref{prop:excgpairbdd}). As a result, we can assume that the boundary $B$ has a bounded number of components, for which the boundedness is already established. This leads to Theorem \ref{thm:exc}.

Secondly, we assume that $(X,B+G+\bM)$ has lc singularities for any $0\le G\sim_{\Rr}-(K_X+B+\bM_X)$, i.e., $(X,B+\bM)$ is semi-exceptional (see Definition \ref{defn:semiexc}). In this case, we aim to construct a contraction to a lower dimensional variety, and then apply the ``effective'' canonical bundle formula to lift bounded complements from the lower dimensional variety. For notation reason, we assume $\bM=0$ in this case. To apply traditional results (e.g. \cite{Bir19}), we first need to approximate $B$ by some hyperstandard set $\Gamma(\mathcal{N})$ (see Definition \ref{defn:nphi}), determined by a finite set $\mathcal{N}$ of positive integers. We use $B_{\mathcal{N}\_}$ to denote the best approximation of $B$ from below (see Definition \ref{defn:B_N}), where the coefficients of $B_{\mathcal{N}\_}$ belong to $\Gamma(\mathcal{N})$. The goal is to find a sufficiently large $\mathcal N$ such that $(X,B_{\mathcal{N}\_})$ is $\mathcal{N}$-complementary, i.e., there is an $n$-complement of $(Z,B_{\cN\_})$ for some $n\in \cN$. Then, by the delicate definition of $\Gamma(\mathcal N)$, $(X,B)$ would also be $\mathcal{N}$-complementary (cf. Lemma \ref{lem:N_Phicompl}). 

Let us start with the $\mathcal{N}^0$ given by Theorem \ref{thm:main} in lower dimensions (with $p=1$), then any canonical bundle formula for $(X,B_{\mathcal{N}^0\_})\to Z$ will give us a generalized pair with a DCC coefficient boundary $B^0_Z$ and a moduli part $\bM^0$ whose Cartier index is bounded by $p_0=p_0(d,\mathcal{N}^0)$. In fact, $p_0$ contains other information (see Proposition \ref{prop:cbfindex}) but we just ignore it for simplicity. Then if $p_0=1$, we can choose $\cN=\cN^0$ by lifting complements (Proposition \ref{prop:cbflift1}). Otherwise, we need a larger $\mathcal{N}^1=\mathcal{N}^1(p_0,\mathcal{N}^0)$ (given by induction hypothesis) such that $(Z,B^0_Z+\bM^0)$ is $\mathcal{N}^1$-complementary. Consequently, $(X,B_{\mathcal{N}^0\_})$ will be $\mathcal{N}^1$-complementary (but unfortunately, not $\mathcal{N}^0$-complementary) by lifting complements. We can then continue applying this argument and get a chain of finite sets 
$$\mathcal{N}^0 \subseteq\mathcal{N}^1 \subseteq \mathcal{N}^2 \subseteq \cdots \subseteq \mathcal{N}^i\subseteq\cdots.$$
The essential reason that $\cN^{i+1}$ could get larger each time is because $p_i> p_{i-1}$, but there are cases that the index could be the same, especially when we have a fibration $X\to Z$ which is both $K_X+B_{\cN^i\_}$-trivial and $K_X+B_{\cN^{i+1}\_}$-trivial. Therefore we are looking for such fibrations:

It is clear from the definition that 
$$\kappa\left(B-B_{\mathcal{N}^0\_}\right)\ge \kappa\left(B-B_{\mathcal{N}^{1}\_}\right)\ge\cdots\ge \kappa\left(B-B_{\mathcal{N}^{i}\_}\right),$$
and by Theorem \ref{thm:exc}, Lemma \ref{lem:semiexcnotbig} and some reductions we may assume that $\kappa\left(B-B_{\cN^0\_}\right)<d$. Thus, we can find
$$\kappa\left(B-B_{\mathcal{N}^i\_}\right)=\kappa\left(B-B_{\mathcal{N}^{i+1}\_}\right)=:k\le d-1$$
for some $i\le d$. 

After some extra effort and running some MMPs, we are able to obtain our desired fibration $\phi:X\to Z$, which is essential the same as the Iitaka fibration given by both $B-B_{\mathcal{N}^i\_}$ and $B-B_{\mathcal{N}^{i+1}\_}$. Comparing the canonical bundle formulas for $(X,B_{\cN^i\_})\to Z$ and $(X,B_{\cN^{i+1}\_})\to Z$, we see that the boundary parts differ by an effective $\Qq$-divisor from $Z$ (whose pullback is $B_{\mathcal{N}^{i+1}\_}-B_{\mathcal{N}^{i}\_}$), and the moduli parts are the same, denoted by $\bM_\phi$. In particular, the Cartier index of $\bM_\phi$ is bounded by $p_{i}$. By our construction of $\cN^{i+1}$, $(Z,B_{Z,i+1}+\bM_{\phi})$ is $\cN^{i+1}$-complementary, and again $(X,B_{\cN^{i+1}\_})$ would be $\cN^{i+1}$-complementary by lifting this complement to $X$. This leads to Theorem \ref{thm:semiexc}.

%and also note that we do not need assume that $(X,B+\bM)$ has a klt $\Rr$-complement in the semi-exceptional case.  i.e. there is an $n$-complement of $(Z,B_{Z,i+1}+\bM_{\phi})$ for some $n\in \cN^{i+1}$

Finally, assume that $(X,B+G+\bM)$ is not lc for some $0\le G\sim_{\Rr}-(K_X+B+\bM_X)$. If $-(K_X+B+\bM_X)$ is big, then we can reduce to the case that $(X,B+\bM)$ is plt and $S:=\lf B\rf$ is a prime divisor on $X$ (see Lemma \ref{lem:pltcomp2}). By divisorial adjunction and induction, we can construct a bounded complement on $S$, and then lift it back to $X$ via effective divisorial adjunction (i.e., Proposition \ref{prop:adjlift}). This leads to Theorem \ref{thm:generictype}. In the general case, where $-(K_X+B+\bM_X)$ is no longer big, we apply the same arguments as in the proof of the semi-exceptional case. This leads to Theorem \ref{thm:klttype}.

To facilitate a clearer understanding of our proof logic, we outline here how the proofs of Theorem \ref{thm:main} (=Theorem \ref{thm:klttype}), Theorem \ref{thm:exc} (the exceptional case), Theorem \ref{thm:semiexc} (the semi-exceptional case), and Theorem \ref{thm:generictype} (the generic klt type) are conducted through induction:
\begin{itemize}
    \item Prove Theorem \ref{thm:exc}.
    \item Theorem \ref{thm:exc}$_{d}$ and Theorem \ref{thm:semiexc}$_{d-1}$ imply Theorem \ref{thm:semiexc}$_{d}$.
    \item Theorem \ref{thm:semiexc}$_{d}$ and Theorem \ref{thm:generictype}$_{d-1}$ imply Theorem \ref{thm:generictype}$_{d}$.
    \item Theorem \ref{thm:generictype}$_{d}$ and Theorem \ref{thm:klttype}$_{d-1}$ imply Theorem \ref{thm:klttype}$_{d}$.
\end{itemize}

\medskip

\noindent\textbf{Structure of the paper}.
In Section \ref{sec2}, we introduce some notation and tools that will be used in this paper. In Section \ref{sec3}, we recall the divisorial adjunction and the canonical bundle formulas for generalized pairs. In Section \ref{sec4}, we study the monoids of effective divisors and show Theorem \ref{thm:effisconst}. In Sections \ref{sec5}-\ref{sec7}, we prove Theorem \ref{thm:main} with additional conditions, i.e., Theorems \ref{thm:exc}, \ref{thm:semiexc} and \ref{thm:generictype}. In Section \ref{sec8}, we prove Theorem \ref{thm:main} completely. In Section \ref{sec9}, we provide a concise proof of \cite[Theorem 17]{Sho20} (Theorem \ref{thm:lctype}), which demonstrates that the klt assumption in Theorem \ref{thm:klttype} can be substituted with alternative arithmetic properties of the coefficients.

\bigskip

\noindent\textbf{Acknowledgement}.
The authors would like to thank Qianyu Chen, Christopher D. Hacon, Chen Jiang, Zhiyu Tian, and Qingyuan Xue for fruitful discussions. The first author was sponsored by the Natural Science Foundation of Shanghai (Grant No. 24ZR1430000) and the National Natural Science Foundation of China (Grant No. 12401055). The second author is affiliated with LMNS at Fudan University and has received support from the National Natural Science Foundation of China (Grant No. 12322102), and the National Key Research and Development Program of China (Grant No. 2023YFA1010600, No. 2020YFA0713200). The fourth author was partially supported by NSF research grants no. DMS-1801851, DMS-1952522, and DMS-58503413, as well as a grant from the Simons Foundation (Award Number: 256202). The fourth author is supported by another grant from the Simons Foundation.

\section{Preliminaries}\label{sec2}
For the definitions and the basic results on the singularities of pairs (resp. relative pairs) and the minimal model program, we refer the readers to \cite{KM98,BCHM10} (resp. \cite{CH21}). 

\subsection{Arithmetic of sets}\label{subsec:sets}

\begin{defn}[DCC and ACC sets]\label{defn:ACCDCCsets}
We say that a subset $\I\subseteq[0,1]$ satisfies the \emph{descending chain condition} (DCC) if any decreasing sequence $a_1\ge a_2\ge\dots$ in $\I$ stabilizes. We say that $\I$ satisfies the \emph{ascending chain condition} (ACC) if any increasing sequence $a_1\le a_2\le \dots$ in $\I$ stabilizes.

\smallskip

Suppose that $\fR\subseteq[0,1]\cap\Qq$ is a finite set, then we define
     $$\Phi(\fR):=\left\{1-\frac{r}{l}\le1\mid r\in\fR,l\in\Zz_{>0}\right\}$$
to be the set of \emph{hyperstandard multiplicities} associated to $\fR$ (cf. \cite[3.2]{PS09}). We may say that $\Phi(\fR)$ is the hyperstandard set associated to $\fR$. We usually assume that $0,1\in\fR$ without mention, and hence $\Phi(\{0,1\})\subseteq\Phi(\fR)$.
\end{defn}
The following notation is important throughout this paper.

\begin{defn}\label{defn:nphi}
Assume that $\cN\subseteq\Zz_{>0}$ is a finite set, and $\Phi:=\Phi(\fR)$ is the hyperstandard set associated to a finite set $\fR\subseteq[0,1]\cap\Qq$. We define
$$\Ii(\cN,\Phi):=\bigg\{1-\frac{r}{l}+\frac{1}{l}\sum_{n\in\cN}\frac{m_n}{n+1}\le1~\bigg|~ r\in\fR,l\in\Zz_{>0},m_n\in\Zz_{\ge0}\bigg\}.$$
For a positive integer $p$, by $p|\cN$, we mean that $p|n$ for any $n\in\cN$.
\end{defn}

By \cite[Remark 2.2]{CHX23}, we know that
\begin{itemize}
    \item $\Gamma(\cN,\Phi(\fR))$ is a hyperstandard set that is independent of the choice of $\fR$.
    \item If $\cN_1$ and $\cN_2$ are two finite sets of positive integers, then
$\Gamma(\cN_{1}\cup\cN_2,\Phi)=\Gamma(\cN_2,\Gamma(\cN_1,\Phi)).$
\end{itemize}
Thus for convenience, we write $\Gamma(\cN,\Phi)$ instead of $\Gamma(\cN,\Phi(\fR))$. For any $b\in[0,1]$, we define
$$b_{\cN\_\Phi}:=\max\left\{b'\mid b'\le b,\, b'\in\Ii(\cN,\Phi)\right\}.$$ 
If $\cN=\{n\}$ (respectively $\cN=\emptyset$), we may write $b_{n\_\Phi}$ (respectively $b_{\Phi}$) rather than $b_{\cN\_\Phi}$.

\begin{defn}\label{defn:B_N}
Assume that $\I\subseteq[0,1]$ is a subset and $B:=\sum b_iB_i$ is an $\Rr$-divisor on $X$, where $b_i\in\Rr$ and $B_i$ are prime divisors.
\begin{itemize}
    \item By $B\in\I$, we mean $b_i\in\I$ for any $i$.  If $B\in\Qq$ (resp. $B\in\Zz$), then we say $B$ is a $\Qq$-divisor (resp. divisor). If $B\ge0$, then we say $B$ is effective.
    \item We define 
$$\lf B\rf:=\sum\lf b_i\rf B_i,\  \{B\}:=\sum\{b_i\}B_i\text{, and }B^{>0}:=\sum_{b_i>0}b_iB_i.$$
  \item If $\cN\subseteq\Zz_{>0}$ is a finite set and $\Phi$ is a hyperstandard set, then we define
$$B_{\cN\_\Phi}:=\sum b_{\cN\_\Phi}B_i.$$
In particular, if $\cN=\{n\}$, we may write $B_{n\_\Phi}$ instead of $B_{\cN\_\Phi}$. If $\cN=\emptyset$, we may write $B_{\Phi}$ rather than $B_{\cN\_\Phi}$. 
\end{itemize}

%By $B\in\I$, we mean $b_i\in\I$ for any $i$. We define 
%$$\lf B\rf:=\sum\lf b_i\rf B_i,\  \{B\}:=\sum\{b_i\}B_i\text{, and }B^{>0}:=\sum_{b_i>0}b_iB_i.$$
%Moreover, if $\cN\subseteq\Zz_{>0}$ is a finite set and $\Phi$ is a hyperstandard set, then we define
%$$B_{\cN\_\Phi}:=\sum b_{\cN\_\Phi}B_i.$$
%In particular, if $\cN=\{n\}$, we may write $B_{n\_\Phi}$ instead of $B_{\cN\_\Phi}$. If $\cN=\emptyset$, we may write $B_{\Phi}$ rather than $B_{\cN\_\Phi}$. 
\end{defn}

\subsection{Generalized pairs and singularities}
Here, we discuss very briefly the analogous concepts for generalized pairs, while for further details we refer the readers to \cite{BZ16,HL18}.

\begin{defn}
We say $\pi:X\to Z$ is a \emph{contraction} if $X$ and $Z$ are normal quasi-projective varieties, $\pi$ is a projective morphism, and $\pi_*\Oo_X = \Oo_Z$. 

A \emph{birational contraction} $\psi:X\dashto Y$ is a birational map such that $\psi^{-1}$ does not contract any divisor. Note that a birational contraction may not be a contraction.
\end{defn}
\begin{defn}
Let $\F$ be either the rational number field $\Qq$ or the real number field $\Rr$. Let $X$ be a normal variety and $\WDiv(X)$ the free abelian group of Weil divisors on $X$. Then an $\F$-divisor is defined to be an element of $\WDiv(X)_\F:=\WDiv(X)\otimes_\Zz\F$.

\smallskip

A \emph{b-divisor} on $X$ is an element of the projective limit
$$\textbf{WDiv}(X)=\lim_{Y\to X}\WDiv (Y),$$
where the limit is taken ober all the pushforward homomorphisms $f_*:\WDiv (Y)\to\WDiv(X)$ induced by proper birational morphisms $f:Y\to X$. In other words, a b-divisor $D$ on $X$ is a collection of Weil divisors $\textbf{D}_Y$ on higher models $Y$ of $X$ that are compatible under pushforward. The divisors $\textbf{D}_Y$ are called the \emph{traces} of $\textbf{D}$ on the birational models $Y.$ A b-$\F$-divisor is defined to be an element of $\textbf{WDiv}(X)\otimes\F$. 

\smallskip

The \emph{Cartier closure} of an $\F$-Cartier $\F$-divisor $D$ on $X$ is the b-$\F$-divisor $\overline{D}$ with trace $\overline{D}_Y=f^*D$ for any proper birational morphism $f:Y\to X$. A b-$\F$-divisor $\textbf{D}$ on $X$ is $\F$-Cartier if $\textbf{D}=\overline{D_{Y}}$ where $\overline{D_Y}$ is an $\F$-Cartier $\F$-divisor on a birational model over $X$, in this situation, we say $\textbf{D}$ \emph{descends} to $Y$. A b-$\F$-divisor if \emph{nef} if it descends to a nef $\F$-divisor on a birational model of $X$.
\end{defn}

\begin{defn}
A \emph{generalized sub-pair} (g-sub-pair for short) $(X/Z,B+\bM)$ consists of a projective morphism $X\to Z$ between normal quasi-projective varieties, an $\Rr$-divisor $B$ on $X$, and a nef over $Z$ b-$\Rr$-divisor $\bM$ on $X$, such that $K_X+B+\bM_X$ is $\Rr$-Cartier. If $B$ is effective, then we call $(X/Z,B+\bM)$ a \emph{generalized pair} (g-pair for short) and $B$ the \emph{boundary} of $(X/Z,B+\bM)$. If $Z$ is a point, then we may drop $Z$ from the notation and say that the generalized pair is \emph{projective}.

\smallskip

Let $ (X/Z,B+\bM) $ be a g-(sub-)pair and $f:W \to X$ is a log resolution of $(X,\Supp B)$ on which $\bM$ descends. We may write 
$$ K_{W} + B_W + \bM_W \sim_\Rr f^* ( K_X + B + \bM_X ) $$
for some $ \Rr $-divisor $ B_W$ on $ X' $, where we assume that $f_*K_W=K_X$. Let $ E $ be a prime divisor on $ W $. The \emph{log discrepancy} of $ E $ with respect to $ (X/Z,B+\bM) $ is defined as 
$$ a(E, X, B+\bM) :=1-\mult_E B_W . $$
We say $(X/Z,B+\bM)$ is \emph{$\epsilon$-lc} (resp. \emph{klt, lc}) for some non-negative real number $\epsilon$ if $a(E, X, B+\bM) \ge\epsilon$ (resp. $>0,\ \ge0$) for any prime divisor $E$ over $X$. If either $Z$ is clear from the context or $\dim Z=0$, then we may omit ``$Z$''.

For a prime divisor $E$ over $X$ with $a(E,X,B+\bM)\le0$, we call $E$ an \emph{lc place} and its image on $X$ an \emph{lc center}.

We say that $(Y/Z,B_Y+\bM)$ is a \emph{crepant model} of $(X/Z,B+\bM)$ if $(Y/Z,B_Y+\bM)$ and $(X/Z,B+\bM)$ are g-pairs, and there exists a birational morphism $f:Y\to X$ such that 
$$K_Y+B_Y+\bM_Y=f^*(K_X+B+\bM_X).$$
\end{defn}

\begin{defn}
A g-pair $ (X/Z,B+\bM) $ is called \emph{dlt} if it is lc, and there exists a closed subset $V \subseteq X$ such that the pair $(X \setminus V,B|_{X \setminus V})$ is log smooth, and if $ a(E,X,B+\bM) = 0$ for some prime $ E $ over $X$, then $\Center_X E \nsubseteq V $ and $ \Center_X E \setminus V $ is an lc center of $(X \setminus V,B|_{X \setminus V})$.
\end{defn}

By \cite[Proposition 3.10]{HL18}, any lc g-pair has a dlt modification.

%We will use the following well-known result without in this paper. \begin{lem}[Dlt modification, {\cite[Proposition 3.9]{HL18}}] Let $(X/Z,B+\bM)$ be an lc g-pair. Then $(X/Z,B+\bM)$ has a $\Qq$-factorial dlt crepant model $(Y/Z,B_Y+\bM)$ with morphism $f:Y\to X$. Moreover, each $f$-exceptional prime divisor is a component of $\lf B_Y\rf$. We call $(Y/Z,B_Y+\bM)$ a \emph{dlt modification} of $(X/Z,B+\bM)$.\end{lem}

\subsection{Fano type varieties}

\begin{defn}\label{defn:ftv}
Suppose that $X\to Z$ is a contraction. We say that $X$ is of \emph{Fano type} over $Z$ if $(X,B)$ is klt and $-(K_{X}+B)$ is big and nef over $Z$ for some boundary $B$.
\end{defn}
\begin{rem}
Assume that $X$ is of Fano type over $Z$. Then we can run a $D$-MMP over $Z$ for any $\Rr$-Cartier $\Rr$-divisor $D$ on $X$ and it terminates with either a good minimal model or a Mori fiber space (cf. \cite[Corollary 2.7]{PS09}). 
\end{rem}

\begin{lem}\label{lem:genericisFT}
Assume that $(X/Z,B+\bM)$ is a klt g-pair such that $K_X+B+\bM_X\sim_{\Rr,Z}0$ and $B+\bM_X$ is big over $Z$, then $X$ is of Fano type over $Z$. 
\end{lem}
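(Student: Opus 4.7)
The plan is to construct an effective $\Rr$-divisor $\Delta\ge 0$ on $X$ such that $(X,\Delta)$ is klt and $-(K_X+\Delta)$ is ample over $Z$; this suffices since ample over $Z$ is in particular big and nef over $Z$, so $(X,\Delta)$ witnesses that $X$ is of Fano type over $Z$ in the sense of Definition \ref{defn:ftv}.

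First I reduce to the case that $X$ is $\Qq$-factorial by passing to a small $\Qq$-factorialization $g\colon X'\to X$; all hypotheses descend ($(X',B'+\bM)$ remains a klt g-pair with $K_{X'}+B'+\bM_{X'}\sim_{\Rr,Z}0$, and bigness of $B'+\bM_{X'}$ over $Z$ is preserved under small birational maps), and Fano-typeness over $Z$ is invariant under small birational modifications between $\Qq$-factorial models. Moreover, a direct computation on a log resolution with $\bM$ descending shows that the klt g-pair condition for $(X,B+\bM)$ forces the discrepancy $a(E,X,0)>0$ for every exceptional divisor $E/X$; on the $\Qq$-factorial $X$ this means $X$ itself has klt singularities.

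Next, since $K_X+B+\bM_X\sim_{\Rr,Z}0$, I have $-K_X\sim_{\Rr,Z}B+\bM_X$, which is big over $Z$ by hypothesis. Applying the $\Rr$-version of Kodaira's lemma, I write
\[
B+\bM_X\sim_{\Rr,Z}\epsilon A+E,
\]
where $A$ is an ample $\Rr$-Cartier divisor over $Z$, $\epsilon>0$, and $E\ge 0$. Crucially, by choosing $E=\tfrac{1}{m}F$ for a general member $F$ of the relative linear system $|m((B+\bM_X)-\epsilon A)|_{/Z}$ with $m$ sufficiently large and divisible (after rational approximation of $B+\bM_X$), the coefficients of $E$ can be made arbitrarily small.

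Finally, pick $0<\delta<\epsilon$ and a general effective $A'\sim_{\Rr,Z}\delta A$ likewise with arbitrarily small coefficients. Set $\Delta:=E+A'$. Then $\Delta\ge 0$, and substituting $K_X\sim_{\Rr,Z}-\epsilon A-E$ gives
\[
-(K_X+\Delta)\sim_{\Rr,Z}(\epsilon-\delta)A,
\]
which is ample over $Z$. Because $X$ has klt singularities and $\Delta$ has arbitrarily small coefficients and can be placed in general position (in particular SNC with $\Supp(B)$), the pair $(X,\Delta)$ is klt. This completes the proof. The main technical obstacle is securing the refined Kodaira decomposition above with arbitrarily small effective part $E$; I would handle this via $\Qq$-approximation of $\Rr$-Cartier classes and a standard general-member argument in big $\Qq$-Cartier linear systems.
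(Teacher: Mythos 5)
There is a genuine gap at the heart of your argument: the claim that the effective part $E$ in the Kodaira decomposition $B+\bM_X\sim_{\Rr,Z}\epsilon A+E$ can be made to have arbitrarily small coefficients by taking $E=\tfrac1m F$ for a general $F\in|m((B+\bM_X)-\epsilon A)|_{/Z}$. The divisor $D:=(B+\bM_X)-\epsilon A$ is merely big over $Z$, so $|mD|$ has a fixed divisorial part whose normalized coefficients converge to those of $N_\sigma(D/Z)$, which is in general a nonzero effective divisor; they do not tend to $0$. Moreover, even where the coefficients along prime divisors are small, a general member of a linear system with nonempty base locus can have large multiplicity at points of that base locus, so ``small divisorial coefficients'' plus ``general member'' does not yield a klt pair $(X,\Delta)$; genuine general position is only available for the movable part. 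This matters structurally: your argument ultimately uses only that $X$ has klt singularities and that $-K_X$ is big over $Z$, and that is not enough to conclude Fano type (there are klt, even smooth, varieties with $-K_X$ big that are not of Fano type). The klt-ness of the g-pair $(X,B+\bM)$ must enter more substantially than just through $a(E,X,0)>0$.

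The paper's proof supplies exactly the two ingredients you are missing. First, instead of shrinking $E$, it shrinks its coefficient by a convex combination: writing $B+\bM_X=A+E$ with $A$ ample over $Z$ and $E\ge0$, it observes that $(X/Z,(1-\epsilon)B+\epsilon E+(1-\epsilon)\bM)$ is klt for $0<\epsilon\ll1$ because $(X/Z,B+\bM)$ is klt (openness of klt-ness in the coefficient), and that the anti-log-canonical class of this combination is $\sim_{\Rr,Z}\epsilon A$, hence ample over $Z$. Second, to convert this g-pair into an honest klt pair it invokes \cite[Lemma 3.5]{HL18}, which uses the ample room $\tfrac{\epsilon}{2}A$ to replace the nef b-divisor part $(1-\epsilon)\bM_X+\tfrac{\epsilon}{2}A$ by an $\Rr$-linearly equivalent effective divisor keeping the pair klt. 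If you want to repair your write-up, replace the ``make $E$ small'' step by this convex-combination step and cite (or reprove) the absorption lemma for the nef part; the $\Qq$-factorialization reduction in your first paragraph is then unnecessary.
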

\begin{proof}
We may write $B+\bM_X:=A+E$, for $\Rr$-divisors $E\ge0$ and $A$ which is ample over $Z$. Since $(X/Z,B+\bM)$ is klt, there exists a positive real number $\epsilon$, such that $(X/Z,(1-\epsilon)B+\epsilon E+(1-\epsilon)\bM)$ is klt. Note that 
$$-(K_X+(1-\epsilon)B+\epsilon E+(1-\epsilon)\bM_X)\sim_{\Rr,Z} \epsilon A.$$
By \cite[Lemma 3.5]{HL18}, there exists an $\Rr$-divisor $\Delta\sim_{\Rr,Z}(1-\epsilon)B+\epsilon E+(1-\epsilon)\bM_X+\frac{\epsilon}{2}A$ such that $(X,\Delta)$ is klt. Since $-(K_X+\Delta)\sim_{\Rr,Z}\frac{\epsilon}{2}A$ is ample over $Z$, we see that $X$ is of Fano type over $Z$.
\end{proof}

\begin{lem}\label{lem:S'isFT}
Let $(X/Z,B+\bM)$ be a $\Qq$-factorial plt g-pair such that $\lf B\rf=S$ is a prime divisor and $-(K_X+B+\bM_X)$ is ample over $Z$. Then $S\to\pi(S)$ is a contraction, where $\pi$ is the morphism $X\to Z$. In particular, $S$ is of Fano type over $\pi(S)$.
\end{lem}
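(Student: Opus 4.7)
The plan is to run the standard connectedness-via-vanishing argument in the generalized pair setting, followed by plt divisorial adjunction.

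First, by Stein factorization I may replace $Z$ by $\spec_Z\pi_*\Oo_X$ and assume $\pi:X\to Z$ is a contraction; the $\pi$-ampleness of $-(K_X+B+\bM_X)$ is preserved under the finite modification. Next I push forward the ideal sheaf sequence
$$0\to\Oo_X(-S)\to\Oo_X\to\Oo_S\to0$$
by $\pi$. Writing
$$-S\sim_\Rr K_X+(B-S)+\bM_X-(K_X+B+\bM_X),$$
the pltness of $(X,B+\bM)$ together with $\lfloor B\rfloor=S$ yields $\lfloor B-S\rfloor=0$, so $(X,(B-S)+\bM)$ is a klt g-pair; moreover $-S-(K_X+(B-S)+\bM_X)=-(K_X+B+\bM_X)$ is $\pi$-ample. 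Relative Kawamata--Viehweg vanishing for klt g-pairs on $\Qq$-factorial varieties therefore gives $R^1\pi_*\Oo_X(-S)=0$ in a neighborhood of $\pi(S)$, so using $\pi_*\Oo_X=\Oo_Z$ we obtain a surjection $\Oo_Z\twoheadrightarrow(\pi|_S)_*\Oo_S$ there.

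The canonical injection $\Oo_{\pi(S)}\hookrightarrow(\pi|_S)_*\Oo_S$ coming from the definition of scheme-theoretic image, combined with the surjection above, forces $(\pi|_S)_*\Oo_S=\Oo_{\pi(S)}$. Normality of $\pi(S)$ then follows from normality of $S$, which is the standard consequence of pltness of $(X,B+\bM)$ along $S$, valid also in the g-pair setting. Hence $\pi|_S:S\to\pi(S)$ is a contraction.

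For the ``in particular'' statement, plt divisorial adjunction for g-pairs (Section~\ref{sec3}) will provide a klt g-pair $(S,B_S+\bM^S)$ with $K_S+B_S+\bM^S_S=(K_X+B+\bM_X)|_S$, whose anti-log-canonical class is ample over $\pi(S)$. One more application of the perturbation trick of Lemma~\ref{lem:genericisFT} to this klt g-pair then produces a klt boundary $\Delta_S$ on $S$ with $-(K_S+\Delta_S)$ ample over $\pi(S)$, establishing $S$ of Fano type over $\pi(S)$. The main delicate point will be the vanishing step: since $X$ is only $\Qq$-factorial, the integral divisor $-S$ is $\Qq$-Cartier but not Cartier in general, so I will need the version of relative KV vanishing that applies to reflexive sheaves $\Oo_X(D)$ attached to $\Qq$-Cartier Weil divisors $D$ on klt g-pairs (equivalently, absorb a small perturbation of $S$ into the boundary and invoke the standard Cartier-divisor statement).
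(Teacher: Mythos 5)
Your proof follows essentially the same approach as the paper: push forward the ideal sheaf sequence of $S$, apply relative Kawamata--Viehweg vanishing for klt g-pairs to kill $R^1\pi_*\Oo_X(-S)$, conclude surjectivity of $\Oo_Z\to\pi_*\Oo_S$, and deduce $\pi_*\Oo_S=\Oo_{\pi(S)}$. The only cosmetic difference is in the last deduction: the paper runs through the Stein factorization $S\to V\to Z$ and shows $V\to\pi(S)$ is an isomorphism, while you argue directly that the injection $\Oo_{\pi(S)}\hookrightarrow\pi_*\Oo_S$ from the scheme-theoretic image must be an isomorphism because it is dominated by a surjection; these are equivalent. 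You correctly flag that $-S$ is only $\Qq$-Cartier, which is covered by the cited g-pair KV vanishing. Your elaboration of the ``in particular'' clause via plt adjunction and the perturbation of Lemma~\ref{lem:genericisFT} is sound and fills in details the paper leaves implicit.
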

\begin{proof}
From the following exact sequence
$$0\to\Oo_X(-S)\to\Oo_X\to\Oo_S\to0$$
we get the exact sequence 
$$\pi_*\Oo_X\to\pi_*\Oo_S\to{R}^1\pi_*\Oo_X(-S).$$
Since $-S=K_X+(B-S)+\bM_X-(K_X+B+\bM_X)$, $(X/Z,(B-S)+\bM)$ is klt and $-(K_X+B+\bM_X)$ is ample over $Z$, we have ${R}^1\pi_*\Oo_X(-S)=0$ by the relative Kawamata-Viehweg vanishing theorem for g-pairs \cite[Theorem 2.2.5]{CHLX23} (cf. \cite[Theorem 1.4]{CLX23}). It follows that $\pi_*\Oo_X\to\pi_*\Oo_S$ is surjective. Let $S\to V\to Z$ be the Stein factorization of $S\to Z$, then see that
$$\Oo_Z=\pi_*\Oo_X\to\pi_*\Oo_S=\mu_*\Oo_V,$$
is surjective, where $\mu$ is the morphism $V\to Z$. Since $\Oo_Z\to\mu_*\Oo_V$ factors as $\Oo_Z\to\Oo_{\pi(S)}\to\mu_*\Oo_V$, we know that $\Oo_{\pi(S)}\to\mu_*\Oo_V$ is a surjection. Hence $\Oo_{\pi(S)}\to\mu_*\Oo_V$ is an isomorphism as $\mu$ is finite. Therefore $V\to\pi(S)$ is an isomorphism and $S\to \pi(S)$ is a contraction.
\end{proof}

\subsection{Relative Iitaka dimensions of divisors}

\begin{defn}[Relative Iitaka dimensions, {\cite[Definition 2.1.1]{Cho08}}]
Let $X$ be a normal projective variety and $D$ an $\Rr$-Cartier $\Rr$-divisor on $X$. If $|\lf mD\rf|\neq0$ for some $m\in\Zz_{>0}$, then consider the dominant rational map
$$\phi_{|\lf mD\rf|}:X\dashto W_m,$$
with respect to the complete linear system of $\lf mD\rf$. We define the \emph{Iitaka dimension} of $D$ as
$$\kappa(D):=\max_{m>0}\{\dim W_m\},$$
if $H^0(X,\lf mD\rf)\neq0$ for some positive integer $m$, and $\kappa(D)=-\infty$ otherwise cf. \cite[II, 3.2, Definition]{Nak04}.

\smallskip

Let $f\colon  X\rightarrow Z$ be a projective morphism between varieties, $D$ an $\Rr$-divisor on $X$, and $F$ a very general fiber of the Stein factorialization of $f$. We define the \emph{relative Iitaka dimension} $\kappa(X/Z,D)$ of $D$ over $Z$ in the following way.
\begin{enumerate}
    \item $\kappa(D/Z):=\kappa(D|_F)$, if $|\lfloor mD\rfloor/Z|\not=\emptyset$ for some positive integer $m$ and $\dim F>0$.
    \item $\kappa(D/Z):=-\infty$, if $|\lfloor mD\rfloor/Z|=\emptyset$ for every positive integer $m$.
    \item $\kappa(D/Z):=0$, if $\dim F=0$.
\end{enumerate}
\end{defn}

\begin{lem}\label{lem:smiitdim}
Suppose that $X\to Z$ and $X'\to Z$ are two contractions. Assume that $X\dashto X'$ is a small birational contraction over $Z$, $D\ge0$ is a $\Qq$-Cartier $\Qq$-divisor on $X$ and $D'$ is the strict transform of $D$ on $X'$ which is also $\Qq$-Cartier. Then $\kappa(D/Z)=\kappa(D'/Z).$
\end{lem}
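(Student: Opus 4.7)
The plan is to exploit the defining property of a small birational contraction, namely that it is an isomorphism away from a codimension $\ge 2$ locus, and then to use Hartogs' extension for Weil divisors on normal varieties to identify the relevant spaces of sections on $X$ and $X'$.

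First I would fix notation. Let $\psi\colon X\dashto X'$ be the small birational contraction over $Z$, let $U\subseteq X$ be the largest open subset on which $\psi$ is defined and restricts to an isomorphism onto its image $U'\subseteq X'$, and let $f\colon X\to Z$, $f'\colon X'\to Z$ denote the structure morphisms. Because $\psi$ is small, no divisor on $X$ is contracted by $\psi$ and no divisor on $X'$ is contracted by $\psi^{-1}$, hence $X\setminus U$ and $X'\setminus U'$ have codimension at least $2$. Since $D'$ is the strict transform of $D$, the divisors $D|_U$ and $D'|_{U'}$ correspond under the isomorphism $U\cong U'$, and consequently $\lfloor mD\rfloor|_U$ and $\lfloor mD'\rfloor|_{U'}$ also correspond for every positive integer $m$.

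Next I would compare sections. For every open $W\subseteq Z$, the open set $f^{-1}(W)$ is normal and $f^{-1}(W)\setminus U$ still has codimension $\ge 2$ in $f^{-1}(W)$; by Hartogs' extension applied to the reflexive sheaf $\mathcal{O}_X(\lfloor mD\rfloor)$, the restriction map
\[
H^0(f^{-1}(W),\mathcal{O}_X(\lfloor mD\rfloor))\longrightarrow H^0(f^{-1}(W)\cap U,\mathcal{O}_U(\lfloor mD\rfloor|_U))
\]
is an isomorphism, and symmetrically for $X'$. Combining the two via the isomorphism $U\cong U'$ produces a canonical identification
\[
f_*\mathcal{O}_X(\lfloor mD\rfloor)\;\cong\;f'_*\mathcal{O}_{X'}(\lfloor mD'\rfloor)
\]
of $\mathcal{O}_Z$-modules, for every positive integer $m$.

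Finally I would translate this into a statement about relative Iitaka dimensions. Let $X\to T\to Z$ and $X'\to T'\to Z$ be the Stein factorizations; the birational map $\psi$ over $Z$ descends to a birational and finite map $T\dashto T'$, which must be an isomorphism, so I identify $T=T'$. Choosing a very general fiber $F$ of $X\to T$, the locus $F\setminus U$ still has codimension $\ge 2$ in $F$, so the induced map $F\dashto F'$ is again a small birational map and the same Hartogs comparison gives $H^0(F,\mathcal{O}_F(\lfloor mD|_F\rfloor))\cong H^0(F',\mathcal{O}_{F'}(\lfloor mD'|_{F'}\rfloor))$; moreover the rational maps $\phi_{|\lfloor mD|_F\rfloor}$ and $\phi_{|\lfloor mD'|_{F'}\rfloor}$ are intertwined by $\psi|_F$, so their images have equal dimension. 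Taking the supremum over $m$ gives $\kappa(D|_F)=\kappa(D'|_{F'})$, and the degenerate cases $\dim F=0$ or $|\lfloor mD\rfloor/Z|=\emptyset$ for all $m$ are symmetric in $D$ and $D'$. The main technical point to be careful about is precisely that a \emph{very general} fiber of the Stein factorization meets $U$ in an open subset whose complement still has codimension $\ge 2$; this is a standard consequence of the fact that at most countably many subvarieties of $X\setminus U$ can dominate $T$, together with normality of the general fiber in characteristic zero.
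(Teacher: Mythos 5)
Your argument is correct, but it takes a genuinely different route from the paper's. The paper goes up to a common resolution $f\colon W\to X$, $f'\colon W\to X'$, chooses effective exceptional $\Qq$-divisors $E$ and $E'$ with $f^*D+E=f'^*D'+E'$ (possible because $X\dashto X'$ is small, so $f^*D$ and $f'^*D'$ differ only along exceptional divisors), and concludes $\kappa(D/Z)=\kappa(f^*D+E/Z)=\kappa(f'^*D'+E'/Z)=\kappa(D'/Z)$ from the invariance of $\kappa(\cdot/Z)$ under pullback along a birational morphism and under adding effective exceptional divisors. Your proof instead works downstairs on $X$ and $X'$, using Hartogs' extension for the reflexive sheaves $\mathcal{O}_X(\lfloor mD\rfloor)$ and $\mathcal{O}_{X'}(\lfloor mD'\rfloor)$ across the codimension-$\ge 2$ indeterminacy loci, together with the isomorphism on the common open set, to identify the section spaces and then compare on a very general fiber. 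Both are valid; the paper's is more compact and lives in the standard resolution/exceptional-divisor calculus, while yours avoids the resolution and makes the comparison of global sections completely explicit. One small observation: since $X\to Z$ and $X'\to Z$ are both contractions by hypothesis, their Stein factorizations are trivial, so the step identifying $T$ with $T'$ is automatic and can be omitted.
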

\begin{proof}
Let $f:W\to X$ and $f':W\to X'$ be a common resolution. We can write
$f^* D+E=f'^*D'+E'$ for some $f$-exceptional $\Qq$-divisor $E\ge0$ and $f'$-exceptional $\Qq$-divisor $E'$. Then 
\[\kappa(D/Z)=\kappa(f^* D+E/Z)=\kappa(f'^*D'+E'/Z)=\kappa(D'/Z).\qedhere\]
\end{proof}

\begin{lem}\label{lem:finitemmpiitdim}
Assume that $X\to Z$ is a contraction, $D_2\ge D_1$ are two $\Qq$-Cartier $\Qq$-divisors on $X$. Suppose that $X\dashto X'$ is a sequence of the $D_1$-MMP over $Z$, and $D_2'$ is the strict transform of $D_2$ on $X'$. Then $\kappa(D_2/Z)=\kappa(D_2'/Z)$.
\end{lem}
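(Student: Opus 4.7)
I would argue by induction on the number of steps in the $D_1$-MMP, reducing to the case of a single step $\psi: X \dashrightarrow X_1$ and the comparison between $\kappa(D_2/Z)$ and $\kappa(D_{2,1}/Z)$, where $D_{2,1}$ is the strict transform of $D_2$ on $X_1$.

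If $\psi$ is a flipping step, then it is a small birational contraction over $Z$, and the argument of Lemma~\ref{lem:smiitdim} applies (the proof of that lemma does not really use effectiveness of $D$, since $\kappa$ is defined for arbitrary $\Qq$-Cartier divisors and adding an effective exceptional divisor on a common resolution never changes the Iitaka dimension). This yields $\kappa(D_2/Z) = \kappa(D_{2,1}/Z)$ directly.

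If $\psi = \phi: X \to X_1$ is a divisorial contraction with contracted prime divisor $E$, then for any curve $C$ in the contracted extremal ray we have $D_1 \cdot C < 0$, and the negativity lemma gives $D_1 - \phi^* D_{1,1} = aE$ with $a > 0$. Taking $\phi$ itself as the ``common resolution'' of the birational morphism, the difference $D_2 - \phi^* D_{2,1}$ is supported only on $E$ (all non-exceptional components of $D_2$ are preserved), so it equals $(e - \beta)E$, where $e := \mult_E D_2$ and $\beta := \mult_E \phi^* D_{2,1}$ is determined by the condition $\phi^* D_{2,1} \cdot C = 0$. The standard identity $\phi_* \Oo_X(\phi^* L + F) = \Oo_{X_1}(L)$, for any Weil divisor $L$ on $X_1$ and any $\phi$-exceptional effective $\Qq$-divisor $F$ on $X$ of sufficiently large $E$-multiplicity, then lets one compare $H^0(X, \lfloor m D_2 \rfloor)$ and $H^0(X_1, \lfloor m D_{2,1} \rfloor)$ asymptotically and conclude $\kappa(D_2/Z) = \kappa(D_{2,1}/Z)$.

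The main technical obstacle is handling the sign of $e - \beta$. When $e \ge \beta$ (equivalently $D_2 \cdot C \le 0$), the difference $D_2 - \phi^* D_{2,1}$ is $\phi$-exceptional effective, and the argument reduces to the Iitaka-invariance of adding an exceptional effective divisor to a pullback. In the opposite case $e < \beta$, the hypothesis $D_2 \ge D_1$ is essential: the effective $aE$ contribution coming from the $D_1$-negativity of the contracted ray, together with the effective decomposition $G := D_2 - D_1 \ge 0$, must be used to control the extra $E$-multiplicity of $\phi^* D_{2,1}$ relative to $D_2$ and to guarantee that sections pulled back from $X_1$ produce sections of $\lfloor m D_2 \rfloor$ at the same asymptotic growth rate in $m$.
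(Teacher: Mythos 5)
Your step-by-step induction is a genuinely different strategy from the paper's. The flip case (via Lemma~\ref{lem:smiitdim}, which indeed does not require effectivity of $D$) and the divisorial case with $D_2\cdot C\le 0$ are fine. However, the crucial case $D_2\cdot C>0$ (your $e<\beta$) is only gestured at, not proved. Writing $\phi^*D_{2,1}=D_2+cE$ with $c>0$, the pullback of a section of $\lfloor mD_{2,1}\rfloor$ lands in $H^0(X,\lfloor mD_2+mcE\rfloor)$, not in $H^0(X,\lfloor mD_2\rfloor)$: it may acquire a pole of order up to $\lfloor mc\rfloor$ along $E$. So the inequality $\kappa(D_{2,1}/Z)\le\kappa(D_2/Z)$ does not ``follow from pulling back sections,'' and your text merely asserts that $D_2\ge D_1$ and the positivity of $a$ ``must be used'' without saying how. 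That is the gap.

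To close it along your own lines: since $D_2=D_1+G$ with $G\ge0$, the divisor $ND_1+G$ has the same support as $D_2$ for every $N\ge1$, hence $\kappa(ND_1+G/Z)=\kappa(D_2/Z)$, and likewise on $X_1$. Setting $G-\phi^*G_1=bE$, one computes
\[
(ND_1+G)-\phi^*(ND_{1,1}+G_1)=(Na+b)E,
\]
which is effective for $N\gg0$ because $a>0$. Then the ``good case'' argument applies to $ND_1+G$ versus $ND_{1,1}+G_1$ and yields the missing direction; this is the quantitative content behind your remark that $D_2\ge D_1$ is essential.

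For comparison, the paper's proof sidesteps all of this at once: choose $\epsilon>0$ small enough that the given finite sequence of steps is simultaneously a $(D_1+\epsilon D_2)$-MMP over $Z$, use that running an $A$-MMP preserves $\kappa(A/Z)$, and observe that $D_1+\epsilon D_2$ has the same support as $D_2$ both on $X$ and on $X'$, so $\kappa(D_1+\epsilon D_2/Z)=\kappa(D_2/Z)$ and likewise on $X'$. This single perturbation replaces your induction and sign analysis entirely; your approach, once repaired as above, is correct but considerably longer.
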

\begin{proof}
Pick a positive real number $\epsilon$ such that $X\dashto X'$ is also a sequence of the $(D_1+\epsilon D_2)$-MMP over $Z$. As $\Supp (D_1+\epsilon D_2)=\Supp(D_2)$, one can see that
$$\kappa(D_2'/Z)=\kappa(D_1'+\epsilon D_2'/Z)=\kappa(D_1+\epsilon D_2/Z)=\kappa(D_2/Z),$$
where $D_1'$ is the strict transform of $D_1$ on $X'$.
\end{proof}

\begin{lem}\label{lem:reltrivialmmp}
Assume that $X$ is of Fano type over $Z$. Assume that $\phi:X\to T$ is a contraction over $Z$, $H_T$ is an effective $\Qq$-divisor on $T$ that is ample over $Z$, and $D_2\ge D_1:=\phi^*H_T$ are two $\Qq$-Cartier $\Qq$-divisors such that $\kappa(D_2/Z)=\kappa(D_1/Z)$. Then we can run the $D_2$-MMP over $T$ which terminates with a model $X'$ such that $D_2'$ is semi-ample over $T$, where $D_2'$ is the strict transform of $D_2$ on $T'$. Moreover, if $\phi':X'\to T'$ is the induced morphism of $D_2'$ over $T$. Then $\tau:T'\to T$ is birational.
\end{lem}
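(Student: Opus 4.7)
The plan is as follows. Since $X$ is of Fano type over $Z$ and $\phi:X\to T$ is a contraction over $Z$, standard Mori theory (a klt pair $(X,\Delta)$ with $K_X+\Delta\sim_{\Qq,Z}0$ also satisfies $K_X+\Delta\sim_{\Qq,T}0$, and a small perturbation by the pullback of an ample divisor from $T$ provides a Fano type witness over $T$) allows us to run the $D_2$-MMP over $T$, and it terminates with either a good minimal model or a Mori fiber space. Observe that $D_1=\phi^*H_T\sim_{\Qq,T}0$, so the $D_2$-MMP over $T$ is identical to the $(D_2-D_1)$-MMP over $T$ with the effective divisor $D_2-D_1\geq 0$.

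I would then rule out the Mori fiber space outcome. If the MMP terminated with a Mori fiber space $X'\to Y$ over $T$ (with positive-dimensional fibers and $-D_2'$ relatively ample over $Y$), then the induced morphism $\phi':X'\to T$ (which exists because every MMP step is over $T$) pulls back $H_T$ to $D_1':=(\phi')^*H_T$, the strict transform of $D_1$. Every fiber of $X'\to Y$ sits inside a fiber of $X'\to T$, so $D_1'$ is trivial on such fibers. Hence $D_2'|_{\mathrm{fib}}\sim_\Qq(D_2'-D_1')|_{\mathrm{fib}}$ is effective on a positive-dimensional variety, contradicting that $-D_2'|_{\mathrm{fib}}$ is ample (test against an ample class). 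Thus the MMP terminates with $D_2'$ semi-ample over $T$.

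For the birationality of $\tau$, the key is to establish $\kappa(D_2/T)=0$. Writing $mD_2=m(D_2-D_1)+\phi^*(mH_T)$ for sufficiently divisible $m$ and applying the projection formula,
$$\phi_*\Oo_X(mD_2)\cong\phi_*\Oo_X(m(D_2-D_1))\otimes\Oo_T(mH_T).$$
Pushing further to $Z$ and computing $h^0$ on a general fiber of $T\to Z$ (on which $H_T$ restricts to an ample divisor), the dominant growth in $m$ yields
$$\kappa(D_2/Z)=\kappa(D_2/T)+\dim T/Z,$$
where I use that $D_1$ is trivial on every fiber of $\phi$ so that $(D_2-D_1)|_G=D_2|_G$ for a general fiber $G$ of $\phi$. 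Since $D_1=\phi^*H_T$ with $H_T$ ample over $Z$ gives $\kappa(D_1/Z)=\dim T/Z$, the hypothesis $\kappa(D_2/Z)=\kappa(D_1/Z)$ forces $\kappa(D_2/T)=0$.

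Finally, by Lemma \ref{lem:finitemmpiitdim} applied over the base $T$ (taking both divisors there equal to $D_2$), the Iitaka dimension is preserved under the $D_2$-MMP, so $\kappa(D_2'/T)=0$. Combined with semi-ampleness over $T$, this forces $D_2'\sim_{\Qq,T}0$. Consequently, the induced relative Iitaka fibration $\phi':X'\to T'$ coincides with $X'\to T$ up to identifying $T'$ with $T$, and $\tau:T'\to T$ is an isomorphism, in particular birational. I expect the main technical point to be the Iitaka-dimension identity $\kappa(D_2/Z)=\kappa(D_2/T)+\dim T/Z$, which crucially leverages the ampleness of $H_T$ over $Z$ so that its pullback contributes the full $\dim T/Z$ to the Iitaka dimension of $D_2$ over $Z$ and isolates the remaining effective contribution of $D_2-D_1$ along the fibers of $\phi$.
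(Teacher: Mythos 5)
Your high-level plan tracks the paper's: run the $D_2$-MMP over $T$ (ruling out a Mori fiber space because $D_2\sim_{\Qq,T}D_2-D_1\ge 0$), obtain a good minimal model $X'$ with $D_2'$ semi-ample over $T$ and induced contraction $\phi':X'\to T'$, and then compare Iitaka dimensions to force $\dim T'=\dim T$. Where you diverge is the dimension count. The paper computes on the Iitaka-model side: since $H_{T'}\ge\tau^*H_T$, the divisor $\tau^*H_T+\epsilon H_{T'}$ is ample over $Z$ for small $\epsilon>0$ and has the same support as $H_{T'}$, so $\kappa(D_2'/Z)=\kappa(H_{T'}/Z)=\dim T'-\dim Z$ follows immediately. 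You instead try to first prove $\kappa(D_2/T)=0$ from the identity $\kappa(D_2/Z)=\kappa(D_2/T)+(\dim T-\dim Z)$.

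That identity is true under the stated hypotheses, but the justification you give is not a proof, and it is precisely the step that carries the content. The projection-formula isomorphism $\phi_*\Oo_X(mD_2)\cong W_m\otimes\Oo_T(mH_T)$, with $W_m:=\phi_*\Oo_X(m(D_2-D_1))$, does not by itself control $h^0$ over a general fiber of $T\to Z$: although $\Oo_T\subseteq W_m$, the sheaf $W_m$ can contain summands whose negativity grows much faster than $m$ (already $\phi_*\Oo_X(mE)$ on a Hirzebruch surface with $E$ the negative section is $\bigoplus_{i=0}^m\Oo(-ia)$), so ``dominant growth in $m$'' is not a consequence of the rank of $W_m$ growing like $m^{\kappa(D_2/T)}$. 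The easy-addition inequality gives only $\kappa(D_2/Z)\le\kappa(D_2/T)+\dim T-\dim Z$, which here yields $\kappa(D_2/T)\ge 0$ rather than $=0$; the reverse inequality is the one you need. A correct argument would, for instance, fix $m_0$ realizing the relative Iitaka dimension on the generic $\phi$-fiber, take $c\gg 0$ so that $\phi_*\Oo_X(m_0D_2)\otimes\Oo_T(cH_T)$ is globally generated, note that $|m_0D_2+c\phi^*H_T|$ then both restricts to a $\kappa(D_2/T)$-dimensional system on general $\phi$-fibers and separates such fibers via the subsystem $\phi^*|(m_0+c)H_T|$, and finally use $m_0D_2+c\phi^*H_T\le(m_0+c)D_2$. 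Alternatively, apply your identity to $D_2'$ on $X'$, where relative semi-ampleness makes it immediate and essentially reproduces the paper's computation. As written, the claim you flag as ``the main technical point'' is exactly the step left unproved.
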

\begin{proof}
Since $X$ is of Fano type over $Z$, the $D_2$-MMP over $T$ terminates. Moreover, by Lemma \ref{lem:finitemmpiitdim}, we have
$$\kappa(D'_2/Z)=\kappa(D_2/Z)=\kappa(D_1/Z)=\dim T-\dim Z.$$
By assumption, there exists a $\Qq$-divisor $H_{T'}$ on $T'$ which is ample over $T$ such that $D_2'=\phi'^*H_{T'}$ and $H_{T'}\ge\tau^*H_T$. Note that for any small enough positive real number $\epsilon$, $\tau^*H_T+\epsilon H_{T'}$ is ample over $Z$. Thus
\begin{align*}
\dim T'-\dim Z&=\kappa\left(\tau^*H_T+\epsilon H_{T'}/Z\right)=\kappa\left(H_{T'}/Z\right)=\kappa\left(D_2'/Z\right)=\dim T-\dim Z,
\end{align*}
that is, $\dim T'=\dim T$.
\end{proof}

\begin{lem}\label{lem:twommp}
Assume that $X$ is of Fano type over $Z$, and $D_2\ge D_1\ge0$ are two $\Qq$-Cartier $\Qq$-divisors on $X$ such that $\kappa(D_1/Z)=\kappa(D_2/Z)$. Then we have the following diagram 
 \begin{center}
	\begin{tikzcd}[column sep = 2em, row sep = 2em]
		X \arrow[d, "" swap]\arrow[rr, dashed]  && X' \arrow[d, "\pi'" swap] \arrow[rr, dashed] && X'' \arrow[d, "\pi''" swap] \\
		Z &&  \arrow[ll, ""]  Z'  && \arrow[ll, ""]Z''
	\end{tikzcd}
\end{center}
such that
\begin{enumerate}
  \item $X\dashto X'$ is an MMP over $Z$ on $D_1$, and $Z'$ is the canonical model over $Z$ of $D_1'$ ,
  \item $X'\dashto X''$ is an MMP over $Z'$ on $D_2'$, and $Z''$ is the canonical model over $Z'$ of $D_2''$ , and 
  \item $Z''\to Z'$ is a birational morphism,
\end{enumerate}
where $D_i'$ and $D_i''$ are the strict transforms of $D_i$ on $X'$ and $X''$ respectively for $i=1,2$. 
\end{lem}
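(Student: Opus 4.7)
The plan is to build the diagram in two stages: first run the $D_1$-MMP over $Z$ to produce $X'$ and take $Z'$ to be the canonical model of the resulting $D_1'$, then apply Lemma \ref{lem:reltrivialmmp} with base $Z'$ to obtain the second half. No new ingredients beyond the previous lemmas in this subsection and termination of MMPs for Fano-type varieties are needed.

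For the first stage, since $X$ is of Fano type over $Z$ and $D_1\ge 0$ is $\Qq$-Cartier, a $\Qq$-factorial $D_1$-MMP over $Z$ terminates with a good minimal model $X \dashto X'$ on which the strict transform $D_1'$ is semi-ample over $Z$. Let $\pi'\colon X'\to Z'$ be its canonical model over $Z$; then $D_1' \sim_{\Qq,Z}\pi'^*H_{Z'}$ for some ample-over-$Z$ $\Qq$-divisor $H_{Z'}$ on $Z'$, which gives (1). Since Fano type is preserved under MMP steps, $X'$ is of Fano type over $Z$. The strict transform $D_2'$ of $D_2$ is still $\Qq$-Cartier and satisfies $D_2'\ge D_1'$ (because $D_2-D_1\ge 0$ and its strict transform remains effective through the steps of a $D_1$-MMP), and by Lemma \ref{lem:finitemmpiitdim} we have
\[\kappa(D_2'/Z)=\kappa(D_2/Z)=\kappa(D_1/Z)=\kappa(D_1'/Z).\]

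For the second stage, after rescaling $D_1'$ and $D_2'$ by a common positive rational multiple (which changes neither Iitaka dimensions nor the MMP), we may assume $D_1'=\pi'^*H_{Z'}$ literally as $\Qq$-divisors. This places us exactly in the hypothesis of Lemma \ref{lem:reltrivialmmp} with source variety $X'$ (Fano type over $Z$), base $Z$, contraction $\phi=\pi'$, ample-over-$Z$ divisor $H_{Z'}$, and the pair $D_1'\le D_2'$ with equal relative Iitaka dimensions. That lemma yields a $D_2'$-MMP over $Z'$ terminating with $X'\dashto X''$ such that $D_2''$ is semi-ample over $Z'$; taking $\pi''\colon X''\to Z''$ to be the canonical model of $D_2''$ over $Z'$ gives (2), and the conclusion of Lemma \ref{lem:reltrivialmmp} that $Z''\to Z'$ is birational is precisely (3). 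The only delicate point, and the one I expect to spend most care on, is realising $D_1'$ as a literal pullback $\pi'^*H_{Z'}$ so that Lemma \ref{lem:reltrivialmmp} applies directly; the remaining items, namely termination and preservation of Fano type through the first MMP and the persistence of $D_2'\ge D_1'$, are routine.
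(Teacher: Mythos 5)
Your proof follows exactly the paper's route: run the $D_1$-MMP over $Z$, use Lemma~\ref{lem:finitemmpiitdim} to carry the hypothesis $\kappa(D_1'/Z)=\kappa(D_2'/Z)$ across, realise $D_1'$ as $\pi'^*H_{Z'}$, and invoke Lemma~\ref{lem:reltrivialmmp}. The one step you flag as delicate is indeed where your justification is off. Rescaling $D_1'$ and $D_2'$ by a common positive rational $c$ only replaces $D_1'\sim_{\Qq,Z}\pi'^*H_{Z'}$ by $cD_1'\sim_{\Qq,Z}\pi'^*(cH_{Z'})$; it does not upgrade a $\Qq$-linear equivalence to an equality of divisors, which is what the hypothesis $D_1:=\phi^*H_T$ of Lemma~\ref{lem:reltrivialmmp} requires. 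The correct reason the literal pullback exists is descent: after the MMP, $D_1'$ is effective and $\Qq$-Cartier with $D_1'\sim_{\Qq,Z'}0$ (as $D_1'$ is pulled back from $Z'$ up to $\Qq$-linear equivalence), and $\pi'$ is a contraction; hence $D_1'=\pi'^*(\pi'_*D_1')$ with $\pi'_*D_1'$ effective and ample over $Z$ — this is the same descent fact the paper uses elsewhere (cf.\ the citation of \cite[Lemma~2.5]{CHL23} in the proof of Theorem~\ref{thm:semiexc}). The paper's proof of Lemma~\ref{lem:twommp} simply asserts the existence of $H_1'$ with $D_1'=\pi'^*H_1'$ without comment, so this is a standard point, but the rescaling argument you offer is not a valid substitute for it.
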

\begin{proof}
Since $X$ is of Fano type over $Z$, the $D_1$-MMP over $Z$ and the $D_2'$-MMP over $Z'$ terminate respectively. By Lemma \ref{lem:finitemmpiitdim}, $\kappa(D_1'/Z)=\kappa(D_2'/Z)$. Since $Z'$ is the canonical model over $Z$ of $D_1'$, we may find an ample over $Z$ $\Qq$-divisor $H'_1$ on $Z'$ such that $D'_1=\pi'^*H'_1$. We conclude the statement by Lemma \ref{lem:reltrivialmmp}.
\end{proof}

\subsection{Complements}
%We now introduce the theory of complements for g-pairs. 

\begin{defn}[Complements]\label{defn:comp}
%Let $\epsilon$ be a non-negative real number and 
Let $(X/Z\ni z, B+\bM)$ be a g-pair. We say that a g-pair $(X/Z\ni z,B^++\bM)$ is an \emph{$\Rr$-complement} of $(X/Z\ni z,B+\bM)$ if over a neighborhood of $z$, $(X/Z,B^++\bM)$ is lc, $B^+\ge B$, and $K_X+B^++\bM_X\sim_\Rr0$. %If $B^+\in\Qq$ and $\bM$ is b-$\Qq$-Cartier, then we say $(X/Z\ni z,B^++\bM)$ is a \emph{$\Qq$-complement} of $(X/Z\ni z,B+\bM)$.

Let $n$ be a positive integer. We say that a g-pair $(X/Z\ni z,B^++\bM)$ is an \emph{$n$-complement} of $(X/Z\ni z,B+\bM)$, if over a neighborhood of $z$, we have
\begin{enumerate}
  \item $(X/Z,B^++\bM)$ is lc, 
  \item $nB^+\ge n\lf B\rf+\lf(n+1)\{B\}\rf$, and
  \item $n(K_X+B^++\bM_X)\sim 0$ and $n\bM$ is b-Cartier.
\end{enumerate}
Moreover, $(X/Z\ni z,B^++\bM)$ is \emph{monotonic} if we additionally have $B^+\ge B$.
\end{defn}
We say that $(X/Z\ni z,B+\bM)$ is \emph{$\Rr$-complementary} (resp. \emph{$n$-complementary}) if it has an $\Rr$-complement (resp. $n$-complement). Assume that $\cN\subseteq\Zz_{>0}$ is a finite set, we say $(X/Z\ni z,B+\bM)$ is \emph{$\cN$-complementary} if it is $n$-complementary for some $n\in\cN$. If $\dim Z=\dim z=0$, we will omit $Z$ and $z$.

%For simplicity, when $\epsilon$=0, we may say $(X/Z\ni z,B^++\bM)$ is an $\Rr$-complement (resp. $\Qq$-complement, $n$-complement) of $(X/Z\ni z,B+\bM)$, and $(X/Z\ni z,B+\bM)$ is $\Rr$-complementary (resp. $n$-complementary, $\cN$-complementary).

\begin{lem}\label{lem:Qcompl}
Suppose that $(X/Z\ni z,B+\bM)$ is a (klt) $\Rr$-complementary g-pair. If $B\in\Qq$ and $\bM$ is $\Qq$-Cartier. Then $(X/Z\ni z,B+\bM)$ has a (klt) $\Rr$-complement $(X/Z\ni z,B^++\bM)$ such that $B^+\in\Qq$.
\end{lem}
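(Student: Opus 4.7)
The plan is to use a standard perturbation argument inside Shokurov's polytope of $\Rr$-complements. Let $B^+$ be an $\Rr$-complement provided by the hypothesis, klt over a neighborhood of $z$ in the klt case. I would work inside the finite-dimensional $\Rr$-vector space $V$ of $\Rr$-divisors supported on the finite set $\Sigma := \Supp(B) \cup \Supp(B^+)$, with its natural $\Qq$-structure $V_\Qq := \Sigma \otimes \Qq$.

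First, I would introduce the subset $P \subseteq V$ consisting of divisors $D$ such that (a) $D \ge B$; (b) $K_X + D + \bM_X$ is $\Rr$-Cartier with $K_X + D + \bM_X \sim_{\Rr, Z} 0$ over a neighborhood of $z$; and (c) $(X/Z\ni z, D + \bM)$ is lc. By assumption $B^+ \in P$, so $P$ is nonempty; the goal is to show $P$ is a rational polytope. Conditions (a) and (c) are visibly rational polyhedral: for (c), the log discrepancies along any fixed prime divisor over $X$ are affine-linear in the coefficients of $D$ with rational slopes, using that $\bM$ is $\Qq$-Cartier. The main content is condition (b). Because $B \in \Qq$ and $\bM_X$ is $\Qq$-Cartier, the $\Rr$-Cartier divisor $K_X + B + \bM_X$ is in fact $\Qq$-Cartier (a $\Qq$-divisor that is $\Rr$-Cartier is $\Qq$-Cartier, by flatness of $\Qq \to \Rr$). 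Rewriting (b) as $(D - B) \sim_{\Rr, Z} -(K_X + B + \bM_X)$ over a neighborhood of $z$, and using that the subspace of $\Rr$-Cartier elements of $V$ that are $\sim_{\Rr, Z} 0$ equals the $\Rr$-span of its $\Qq$-rational analogue, the solution set is either empty or a $\Qq$-rational affine subspace of $V$; the rational target $-(K_X + B + \bM_X)$ together with nonemptiness (witnessed by $B^+$) gives the desired rationality.

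Next, since $P$ is a nonempty rational polytope, $P \cap V_\Qq$ is dense in $P$. In the klt case, I would invoke that being klt is an open condition on the coefficients of $D$ (again using the affine dependence of log discrepancies on the coefficients) and pick a rational $B^{++} \in P \cap V_\Qq$ sufficiently close to $B^+$ so that $(X/Z\ni z, B^{++} + \bM)$ remains klt over a neighborhood of $z$. This $B^{++}$ is the sought-after rational $\Rr$-complement: since $B^{++} \in \Qq$ and $\bM$ is $\Qq$-Cartier, $K_X + B^{++} + \bM_X$ is $\Qq$-Cartier, so the relation $\sim_{\Rr, Z} 0$ automatically upgrades to $\sim_{\Qq, Z} 0$ by the same flatness argument; in particular $(X/Z\ni z, B^{++} + \bM)$ is an $\Rr$-complement satisfying the required properties.

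The main obstacle I anticipate is the rationality statement in step (b), that is, showing the set of $\Rr$-Cartier representatives in $V$ of the class of $-(K_X + B + \bM_X)$ in $\Pic(X/Z)_\Rr$ over a neighborhood of $z$ forms a $\Qq$-rational affine subspace. This reduces to the formal fact that the natural map from the $\Qq$-Cartier $\Qq$-divisors in $V$ to $\Pic(X/Z)_\Qq$ has rational kernel and rational image, a property preserved under tensoring with $\Rr$; the crucial input making this applicable is the $\Qq$-Cartierness of $K_X + B + \bM_X$, which is guaranteed precisely by the hypothesis $B \in \Qq$ together with the $\Qq$-Cartierness of $\bM$.
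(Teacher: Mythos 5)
Your argument is correct and is essentially a direct proof of \cite[Lemma 4.1]{HanLiu20}, which the paper invokes in a single line to dispose of this lemma. Both rest on the same rational-polytope perturbation: the key observation that $K_X+B+\bM_X$ is $\Qq$-Cartier (from $B\in\Qq$ and $\bM$ $\Qq$-Cartier) is precisely what makes the $\sim_{\Rr,Z}0$ constraint defining the polytope a $\Qq$-rational affine condition, and the klt case then follows from openness of klt-ness in the coefficients as you say.
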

\begin{proof}
Let $(X/Z\ni z,\tilde{B}+\bM)$ be an $\Rr$-complement of $(X/Z\ni z,B+\bM)$. By \cite[Lemma 4.1]{HanLiu20}, there is a $\Qq$-divisor $B^+$ such that $B^+\ge B$ such that $(X/Z\ni z,B^++\bM)$ is an $\Rr$-complement of $(X/Z\ni z,B+\bM)$. Moreover, if $(X/Z\ni z,\tilde{B}+\bM)$ is klt, then we may pick $(X/Z\ni z,B^++\bM)$ to be klt.
\end{proof}

The following lemma is well-known to experts (cf. \cite[6.1]{Bir19}). We will use the lemma frequently without citing it in this paper. 
\begin{lem}
Let $(X/Z\ni z,B+\bM)$ be a g-pair. Assume that $g:X\dashto X'$ is a birational contraction over $Z$ and $B'$ is the strict transform of $B$ on $X'$.
\begin{enumerate}
 \item If $(X/Z\ni z,B+\bM)$ is $\Rr$-complementary, then $(X'/Z\ni z,B'+\bM)$ is $\Rr$-complementary.
 \item Let $n$ be a positive integer. If $g$ is $-(K_X+B+\bM_X)$-non-positive and $(X'/Z\ni z,B'+\bM)$ is $\Rr$-complementary (resp. monotonic $n$-complementary), then $(X/Z\ni z,B+\bM)$ is $\Rr$-complementary (resp. monotonic $n$-complementary).
\end{enumerate}
\end{lem}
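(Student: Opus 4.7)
The natural approach is to pass through a common log resolution $p \colon W \to X$ and $q \colon W \to X'$ of the birational map $g$, on which $\bM$ descends. The key structural observation is that, since $g$ is a birational contraction, every $p$-exceptional prime divisor on $W$ is also $q$-exceptional: otherwise its $q$-image would be a divisor on $X'$ whose strict transform under $g^{-1}$ is not a divisor on $X$, contradicting the definition of a birational contraction. In particular, $q_* = g_* \circ p_*$ on divisors of $W$.

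For part (1), given an $\Rr$-complement $(X/Z\ni z, B^+ + \bM)$, I set $B^{+\prime} := g_*B^+$, which satisfies $B^{+\prime} \ge g_*B = B'$. Writing $p^*(K_X+B^++\bM_X) = K_W + B^+_W + \bM_W$ with $p_*B^+_W = B^+$, the structural observation forces $q_*B^+_W = g_*B^+ = B^{+\prime}$, hence $K_{X'} + B^{+\prime} + \bM_{X'} = q_*p^*(K_X+B^++\bM_X) \sim_\Rr 0$ over a neighborhood of $z$. The difference $p^*(K_X+B^++\bM_X) - q^*(K_{X'}+B^{+\prime}+\bM_{X'})$ on $W$ vanishes under $q_*$ (so is $q$-exceptional) and is $\Rr$-linearly equivalent to zero over $Z$ near $z$; applying the negativity lemma to both signs forces it to vanish. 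Hence $(X, B^+ + \bM)$ and $(X', B^{+\prime} + \bM)$ are crepant over a neighborhood of $z$, so the latter is lc there.

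For part (2), I define $B^+_W$ on $W$ by $q^*(K_{X'}+B^{+\prime}+\bM_{X'}) = K_W + B^+_W + \bM_W$ and set $B^+ := p_*B^+_W$. Pushforward of linear equivalence gives $K_X+B^++\bM_X \sim_\Rr 0$ (resp.\ $n(K_X+B^++\bM_X) \sim 0$) near $z$; the same negativity-lemma argument yields $p^*(K_X+B^++\bM_X) = q^*(K_{X'}+B^{+\prime}+\bM_{X'})$, so crepancy and lc-ness transfer back to $X$.

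The main obstacle --- and the only place where the non-positivity hypothesis is used --- is proving $B^+ \ge B$. Unwinding ``$g$ is $-(K_X+B+\bM_X)$-non-positive'' yields $q^*(K_{X'}+B'+\bM_{X'}) \ge p^*(K_X+B+\bM_X)$ on $W$. Adding the effective $q^*(B^{+\prime} - B') \ge 0$ gives $q^*(K_{X'}+B^{+\prime}+\bM_{X'}) \ge p^*(K_X+B+\bM_X)$; applying $p_*$ and using $p_*p^* = \id$ on $\Rr$-Cartier divisors produces $K_X+B^++\bM_X \ge K_X+B+\bM_X$, i.e., $B^+ \ge B$. For the monotonic $n$-complement case, $n\bM$ being b-Cartier together with $n(K_X+B^++\bM_X) \sim 0$ forces $nB^+$ to have integer coefficients; this integrality combined with $B^+ \ge B$ yields $nB^+ \ge n\lf B\rf + \lf (n+1)\{B\}\rf$ by a componentwise check.
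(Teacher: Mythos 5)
The paper does not give a proof of this lemma; it states only that the result is ``well-known to experts'' and cites \cite[6.1]{Bir19}. Your argument is a correct, self-contained proof along the standard lines: pass to a common resolution $W$ on which $\bM$ descends, observe that $p$-exceptional implies $q$-exceptional because $g^{-1}$ contracts no divisors, push the complement forward (part (1)) or pull it back and push to $X$ (part (2)), establish crepancy by the negativity lemma applied in both directions to the $\equiv_X 0$ (resp.\ $\equiv_{X'} 0$) exceptional difference, and then in part (2) combine the $D$-non-positivity inequality $q^*(K_{X'}+B'+\bM_{X'}) \ge p^*(K_X+B+\bM_X)$ with $p_*p^*=\id$ to get $B^+ \ge B$. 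The componentwise arithmetic at the end is also fine: since $n\bM$ is b-Cartier, $n\bM_X$ is an integral Weil divisor, and together with $n(K_X+B^++\bM_X)\sim 0$ this forces $nB^+$ integral, after which $nb^+ \ge \lceil nb\rceil \ge \lf(n+1)b\rf$ for $b\in[0,1)$ gives condition (2) of the definition. No gaps; this is the expected argument behind the lemma.
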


\begin{lem}\label{lem:N_Phicompl}
Assume that $\cN\subseteq\Zz_{>0}$ is a finite set, $\Phi$ is a hyperstandard set associated to a finite set $\fR\subseteq\Qq\cap[0,1]$, and $n$ is a positive integer such that $n\fR\subseteq\Zz_{\ge0}$. Assume that $(X/Z\ni z,B+\bM)$ is a g-pair.
\begin{enumerate}
    \item If $(X/Z\ni z,B_{\cN\_\Phi}+\bM)$ is $\cN$-complementary, then $(X/Z\ni z,B+\bM)$ is $\cN$-complementary.
    \item Any $n$-complement of $(X/Z\ni z,B+\bM)$ is a monotonic $n$-complement of $(X/Z\ni z,B_{n\_\Phi}+\bM)$.
\end{enumerate}
\end{lem}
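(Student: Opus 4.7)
The plan is to reduce both parts to elementary component-wise inequalities on the boundary coefficients, isolate one ``approximation identity'' that handles most of the work, and then confront a single nontrivial arithmetic step. Let $b$ denote a coefficient of $B$ on some prime component, and $b^+$ the corresponding coefficient of $B^+$. Note that $n(K_X+B^++\bM_X)\sim 0$ together with $n\bM$ being b-Cartier forces $nb^+\in\Zz_{\ge 0}$. I would first establish the identity
\[
\lf(n+1)\,b_{\cN\_\Phi}\rf \;=\; \lf(n+1)\,b\rf \qquad \text{for every } n\in\cN \text{ and } b\in[0,1].
\]
The proof is one line: the value $\lf(n+1)b\rf/(n+1)$ lies in $\Gamma(\cN,\Phi)$ (take $l=1$, $r=1\in\fR$, $m_n=\lf(n+1)b\rf$, and $m_{n'}=0$ for $n'\ne n$) and is $\le b$, so by definition of the best lower approximation $b_{\cN\_\Phi}\ge\lf(n+1)b\rf/(n+1)$; the reverse inequality for the floors is automatic from $b_{\cN\_\Phi}\le b$.

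Part (1) then follows directly. The lc condition and the linear equivalence transfer automatically with $B^+$. Componentwise, if $b=1$ then $b_{\cN\_\Phi}=1$ and both sides of the required coefficient inequality equal $n$; if $b<1$ the inequality reduces to $nb^+\ge\lf(n+1)b\rf$, which follows from the hypothesis $nb^+\ge\lf(n+1)b_{\cN\_\Phi}\rf$ together with the identity above. For part (2), the identity applied with $\cN=\{n\}$ yields the complement inequality for $B_{n\_\Phi}$ in exactly the same way. The genuinely new point is the \emph{monotonicity} $B^+\ge B_{n\_\Phi}$, which componentwise (for $b<1$) reduces to proving
\[
\lf(n+1)b'\rf\;\ge\;nb' \qquad \text{for every } b'\in\Gamma(\{n\},\Phi)\cap[0,1);
\]
indeed $nb^+\ge\lf(n+1)b\rf=\lf(n+1)b_{n\_\Phi}\rf$, so the displayed inequality applied to $b'=b_{n\_\Phi}$ gives $nb^+\ge nb_{n\_\Phi}$.

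I expect this last displayed inequality to be the main obstacle, and it is the only place where the arithmetic hypothesis $n\fR\subseteq\Zz_{\ge 0}$ is genuinely used. My plan there is: write $b'=1-r/l+m/(l(n+1))$ and set $t:=r(n+1)-m$; then $nr\in\Zz$ forces $nt\in\Zz$, and $r\le 1$ forces $t\le n+1$. A short computation turns the inequality into $l(n+1)\lf t/l\rf\le nt$ in the nontrivial case $t/l\notin\Zz$. Parametrize $c:=nt=(n+1)k-nm$ with $k:=nr\in\{0,1,\ldots,n\}$ and observe that the achievable values of $c\in\Zz_{\ge 0}$ consist of the multiples of $n$ bounded by $n(n+1)$, together with values $c=k+jn$ for $k\in\{1,\ldots,n-1\}$ and $0\le j\le k$. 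Setting $\alpha:=\lf c/(nl)\rf$ and $\beta:=j-\alpha l\in\{0,\ldots,l-1\}$, a short case-check verifies $c\ge l(n+1)\alpha$: in the multiple-of-$n$ case it reduces to $\beta n\ge l\alpha$ (handled using $\beta\ge 1$ together with $l\alpha\le j\le n+1$), and in the other case it becomes $\beta(n+1)+k\ge j$, which is immediate from the constraint $j\le k$. Assembling these pieces completes the monotonicity and hence part (2).
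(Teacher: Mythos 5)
Your argument is mathematically correct and, since the paper simply cites \cite[Lemmas 2.3 and 2.4]{CHX23} for this lemma, yours is a self-contained replacement; the core ideas (the exact identity $\lf(n+1)b_{\cN\_\Phi}\rf=\lf(n+1)b\rf$ obtained by witnessing $\lf(n+1)b\rf/(n+1)\in\Gamma(\cN,\Phi)$ with $l=1,r=1$, and the reduction of monotonicity to $\lf(n+1)b'\rf\ge nb'$ on $\Gamma(\{n\},\Phi)\cap[0,1)$) are the right ones and almost surely mirror what the cited lemmas do. Two small but real slips in the final arithmetic should be fixed. In the multiple-of-$n$ case, with $c=n(j+1)$, $k=n$, $\alpha=\lf c/(nl)\rf$ and $\beta=j-\alpha l$, the target $c\ge l(n+1)\alpha$ reduces to $n(\beta+1)\ge l\alpha$, not to $\beta n\ge l\alpha$. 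Moreover $\beta\ge 1$ is false in general (e.g.\ $j=\alpha l$ gives $\beta=0$, which occurs whenever $l\ge 2$ and $l\nmid(j+1)$), so that is not the right tool; instead, simply observe $l\alpha=j-\beta\le j\le k=n\le n(\beta+1)$. Also note $j\le k=n$ (you wrote $j\le n+1$). Finally, the statement that $\beta\in\{0,\dots,l-1\}$ needs to be tied to the hypothesis $nl\nmid c$: with $k=n$ one has $\alpha=\lf(j+1)/l\rf$, which equals $\lf j/l\rf$ precisely because $l\nmid(j+1)$, so in fact $\beta\in\{0,\dots,l-2\}$. With these repairs the final case-check goes through exactly as you intend; the ``other'' case $k\in\{1,\dots,n-1\}$ is stated correctly and $\beta(n+1)+k\ge j$ indeed follows from $j\le k$.
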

\begin{proof}
The lemma follows from \cite[Lemmas 2.3 and 2.4]{CHX23}.
\end{proof}

\begin{lem}\label{lem:Rcompgpairimplypair}
Let $\epsilon<1$ be a non-negative real number. Let $(X,B+\bM)$ be a g-pair which has an $\epsilon$-lc $\Rr$-complement. Suppose that $X$ is of Fano type and $K_X+B$ is $\Rr$-Cartier. Then $(X,B)$ has an $\Rr$-complement which is $\epsilon$-lc. 
\end{lem}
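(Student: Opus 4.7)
The plan is to convert the given $\epsilon$-lc $\Rr$-complement of the g-pair $(X,B+\bM)$ into an $\epsilon$-lc $\Rr$-complement of the ordinary pair $(X,B)$ by exhibiting an effective $\Rr$-Cartier representative of the moduli part. Let $(X,B^++\bM)$ denote the given $\epsilon$-lc $\Rr$-complement, so $B^+\ge B$, $K_X+B^++\bM_X\sim_\Rr 0$, and $(X,B^++\bM)$ is $\epsilon$-lc. Note that $\bM_X$ is $\Rr$-Cartier (since both $K_X+B+\bM_X$ and $K_X+B$ are), and since $\epsilon<1$ the complement is actually klt. If I can produce an effective $\Rr$-divisor $M\sim_\Rr\bM_X$ on $X$ such that $(X,B^++M)$ remains $\epsilon$-lc, then $B^{++}:=B^++M$ satisfies $B^{++}\ge B^+\ge B$ and $K_X+B^{++}\sim_\Rr 0$, yielding the desired $\epsilon$-lc $\Rr$-complement of $(X,B)$.

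To construct $M$, I would exploit that $X$ is of Fano type, so $-K_X\sim_\Rr B^++\bM_X$ is big; write $-K_X\sim_\Rr A+E$ with $A$ an ample $\Rr$-Cartier divisor and $E\ge 0$. Then, following the perturbation technique in the proof of Lemma \ref{lem:genericisFT}, for small $\eta>0$ the g-pair $(X,(1-\eta)B^++\eta E+(1-\eta)\bM)$ is klt with ample anti-log-canonical class (one computes the class is $\Rr$-linearly equivalent to $-\eta A$), so \cite[Lemma 3.5]{HL18} produces an effective representative of $(1-\eta)\bM_X$ modulo a small ample perturbation. Letting $\eta\to 0$ and using the closedness of the space of effective $\Rr$-representatives recovers the desired $M\sim_\Rr\bM_X$.

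The hardest part will be preserving the sharp $\epsilon$-lc bound rather than merely klt. I would address this by working on a common log resolution $f\colon Y\to X$ on which $\bM$ descends to the nef divisor $\bM_Y$; by the negativity lemma, $f^*\bM_X=\bM_Y+F$ with $F\ge 0$ and $f$-exceptional. For a sufficiently general $M\in|\bM_X|_\Rr$ — available via the semi-ampleness of $\bM_Y$ on an appropriate Fano-type model derived from $X$ — the principal divisor $M-\bM_X$ on $X$ has support disjoint from the codimension $\ge 2$ centers of all $f$-exceptional divisors, so $\mult_E f^*M=\mult_E\bM_Y+\mult_E F$ for every such $E$, giving $a(E,X,B^++M)=a(E,X,B^++\bM)\ge\epsilon$, which establishes that $(X,B^++M)$ is $\epsilon$-lc, as required.
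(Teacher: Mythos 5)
Your strategy hinges on producing an effective $M\sim_\Rr\bM_X$ with $a(E,X,B^++M)=a(E,X,B^++\bM)$ for every divisor $E$ over $X$, and this is where the argument breaks down. The b-divisor $\bM$ only descends to a nef divisor $\bM_Y$ on some high model $Y$, and $Y$ is not of Fano type, so $\bM_Y$ need not be semi-ample, nor need the relevant $\Rr$-linear systems be mobile; consequently there is no supply of ``sufficiently general'' members to invoke. The appeal to ``semi-ampleness of $\bM_Y$ on an appropriate Fano-type model'' does not repair this: on a Fano-type model the trace of $\bM$ is only a pushforward, and a general member of its $\Rr$-linear system, pulled back to $Y$, will not recover $\bM_Y$ up to exceptional divisors with the correct multiplicities. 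If the nef part of an arbitrary g-pair could always be traded for an effective divisor without changing log discrepancies, generalized pairs would carry no more information than pairs; this is precisely the difficulty the notion encodes. There are also concrete errors in the execution: (i) the limiting argument for the existence of an effective $M\sim_\Rr\bM_X$ is invalid, since a limit of effective classes is only pseudo-effective (effectivity does hold here, but because $X$ is a Mori dream space, not by any closedness of effective representatives); (ii) even granting $\mult_E f^*(M-\bM_X)=0$, the bookkeeping gives $a(E,X,B^++M)=a(E,X,B^++\bM)-\mult_E\bM_Y$ rather than equality, because the g-pair log discrepancy discards the multiplicity of $E$ in $\bM_Y$ while the pair log discrepancy of $(X,B^++M)$ does not; (iii) a big divisor $M$ cannot have support disjoint from positive-dimensional centers, and since $Y$ is not a log resolution of $(X,B^++M)$ one must also control divisors extracted over $\Supp M$ on higher models.

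The paper's proof avoids touching $\bM$ altogether: since $-(K_X+B)\sim_\Rr(B^+-B)+\bM_X$ is pseudo-effective, one runs a $-(K_X+B)$-MMP to a Fano-type model $X'$ on which $-(K_{X'}+B')$ is nef, hence semi-ample; there the general-member argument you are reaching for is legitimate because the linear system is genuinely free, the output $(X',B')$ is still $\epsilon$-lc because $(X,B^++\bM)$ is crepant to its transform, and the resulting complement pulls back along the $-(K_X+B)$-non-positive map. If you want to keep the flavor of your approach, apply this MMP step to $-(K_X+B^+)\sim_\Rr\bM_X$ itself; but some MMP (or an ample perturbation that sacrifices the sharp $\epsilon$) is unavoidable.
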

\begin{proof}
By assumption, we may run a $-(K_X+B)$-MMP which terminates with a model $X'$ such that $-(K_{X'}+B')$ is nef and $(X',B')$ is $\epsilon$-lc, where $B'$ is the strict transform of $B$ on $X'$. In particular, $(X',B')$ has an $\Rr$-complement which is $\epsilon$-lc and the lemma follows.
\end{proof}

\begin{defn}[$\Rr$-complementary thresholds]\label{defn:rct} 
Suppose that $(X/Z\ni z, B+\bM)$ is an $\Rr$-complementary g-pair and $D$ is an $\Rr$-Cartier $\Rr$-divisor. The \emph{$\Rr$-complementary threshold} of $D$ with respect to $(X/Z\ni z,B+\bM)$ is defined as
     \begin{align*}
     \Rcomp(X/Z\ni z,B+\bM;D):=\sup\{t\ge0\mid(X/Z\ni z,B+tD+\bM)\text{ is $\Rr$-complementary}\}.
     \end{align*}
     In particular, if $X=Z$, $\pi:X\to Z$ is the identity map, then we obtain the lc threshold.
\end{defn}

We have the following ACC property of $\Rr$-complementary thresholds. %cf. \cite[Proposition 7.10]{CH21}.

\begin{thm}\label{thm:rctacc}
Let $d,p$ be two positive integers, and $\Ii$ a DCC set. Then the set 
\[
\RCT(d,p,\Ii):=\left\{\Rcomp(X/Z\ni z,B+\bM;D) ~\Bigg|~ 
\begin{aligned}
   & \dim X=d,B,D\in\Ii,p\bM\text{ is b-Cartier},\\
  &X\text{ is of Fano type over $Z$}, \text{ and}\\&
   (X/Z\ni z,B+\bM)\text{ is $\Rr$-complementary}
\end{aligned}
\right\}
\]
satisfies the ACC.     
\end{thm}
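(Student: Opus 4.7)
The plan is to argue by contradiction. Suppose $\RCT(d,p,\Ii)$ fails ACC, so there is a strictly increasing sequence of thresholds $t_i = \Rcomp(X_i/Z_i \ni z_i, B_i + \bM_i; D_i)$ realized by data satisfying the listed hypotheses. Since $\Ii$ is DCC, $\Ii_{>0}$ has a positive minimum $m$ (otherwise $\Ii$ would contain an infinite strictly decreasing sequence). Picking a prime component $P$ of $D_i$ with $d_P := \coeff_P(D_i) \ge m$, any $\Rr$-complement $B^+$ of $(X_i/Z_i\ni z_i, B_i + t_i D_i + \bM_i)$ satisfies $t_i d_P \le \coeff_P(B^+) \le 1$, so $t_i \le 1/m$ uniformly, and after passing to a subsequence $t_i \nearrow t_\infty < \infty$.

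My first step is to show that the supremum is attained, i.e., that $(X_i/Z_i \ni z_i, B_i + t_i D_i + \bM_i)$ is itself $\Rr$-complementary. The coefficients of $B_i + t_j D_i$ (ranging over $i, j$) lie in the set $\Ii_0 := (\Ii \cup \{b + t_j d : j \in \Zz_{>0},\, b, d \in \Ii\}) \cap [0,1]$, which I would verify is DCC: any strictly decreasing sequence in $\Ii_0$, after extracting monotone subsequences (using DCC of both $\Ii$ and $\Zz_{>0}$), produces a non-decreasing sequence of the form $b_k + t_{j_k} d_k$, contradicting strict decrease. The DCC boundedness of complements for g-pairs (from \cite{Chen23}, building on \cite{Bir19, HLS19}) then supplies a finite set $\cN_0 = \cN_0(d,p,\Ii_0)$ such that whenever $(X_i, B_i + s D_i + \bM_i)$ is $\Rr$-complementary ($s \in [0, t_i)$), it admits a monotonic $n(s)$-complement with $n(s) \in \cN_0$. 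Taking $s_k \nearrow t_i$ and passing to a subsequence with $n(s_k) = n$ constant, the resulting $n$-complements $B_k^+$ have $n B_k^+$ effective integral in the fixed linear equivalence class $-n(K_{X_i} + \bM_{i,X_i})$, and hence lie in a bounded family. Extracting a further convergent subsequence and invoking closedness of the lc condition gives an $n$-complement $B_i^+$ of $(X_i, B_i + t_i D_i + \bM_i)$ with $B_i^+ \ge B_i + t_i D_i$.

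Second I would exploit the maximality of $t_i$ to locate a tight component. Fix a monotonic $n_i$-complement $B_i^+ \ge B_i + t_i D_i$ of $(X_i, B_i + t_i D_i + \bM_i)$ with $n_i \in \cN_0$, so every coefficient of $B_i^+$ lies in $\tfrac{1}{n_i}\Zz \cap [0,1]$. If every component $P$ of $D_i$ with $d_P := \coeff_P(D_i) > 0$ satisfied the strict inequality $\coeff_P(B_i^+) > \coeff_P(B_i) + t_i d_P$, then for $\delta := \min_P \frac{\coeff_P(B_i^+) - \coeff_P(B_i) - t_i d_P}{d_P} > 0$ the same $B_i^+$ would witness $\Rr$-complementarity of $(X_i, B_i + (t_i + \delta/2) D_i + \bM_i)$, contradicting that $t_i$ is the supremum. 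Hence there exists a component $P_i$ of $D_i$ with $d_{P_i} > 0$ and $\coeff_{P_i}(B_i^+) = \coeff_{P_i}(B_i) + t_i d_{P_i}$.

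Finally I would derive the arithmetic contradiction. Write $b_{P_i} := \coeff_{P_i}(B_i)$ and $\coeff_{P_i}(B_i^+) = k_i/n_i$ with $k_i \in \{0, 1, \dots, n_i\}$, giving $t_i = (k_i/n_i - b_{P_i})/d_{P_i}$ with $b_{P_i}, d_{P_i} \in \Ii$. After passing to a subsequence with $n_i = n$ and $k_i = k$ constant (both drawn from finite sets), then to further subsequences making $b_{P_i}, d_{P_i}$ monotone, DCC of $\Ii$ forces both to be non-decreasing. Then $k/n - b_{P_i}$ is non-increasing and non-negative while $d_{P_i}$ is positive and non-decreasing, so the quotient $t_i$ is non-increasing, contradicting the strict increase of $t_i$. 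The hard step is expected to be the attainment of the supremum: once DCC boundedness of complements is available to fix $n$, the remaining closedness boils down to compactness of effective divisors in a fixed linear equivalence class, but uniform control of the support of the approximating complements needs care.
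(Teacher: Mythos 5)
Your skeleton matches the paper's: a contradiction sequence of strictly increasing $t_i$, the observation that the coefficients of $B_i+t_iD_i$ lie in a DCC set, an application of \cite{Chen23} to force the complement $B_i^+$ into a finite coefficient set, the use of maximality of $t_i$ to locate a tight component $S_i$ of $D_i$ with $\mult_{S_i}(B_i+t_iD_i)=\mult_{S_i}B_i^+$, and the final arithmetic contradiction from DCC. The paper invokes \cite[Theorem 5.2]{Chen23} to get $B_i'\ge B_i+t_iD_i$ with coefficients in a finite set $\Ii'$, which plays exactly the role of your $\frac{1}{n}\Zz\cap[0,1]$ with $n\in\cN_0$; the tight-component and monotone-subsequence arguments are the same. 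So the approach is essentially the paper's, plus an attempt to justify attainment of the supremum, which the paper uses implicitly.

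The attainment step as written has a genuine gap, one you partly flag yourself. You assert the monotonic $n$-complements $B_k^+$ of $(X_i,B_i+s_kD_i+\bM_i)$ ``lie in a bounded family'' because $nB_k^+$ sits ``in the fixed linear equivalence class $-n(K_{X_i}+\bM_{i,X_i})$.'' This does not hold as stated. First, the $n$-complement condition yields $n(K_{X_i}+B_k^++\bM_{i,X_i})\sim0$ only over a neighborhood $U_k$ of $z_i$, and these neighborhoods may shrink with $k$, so the $nB_k^+$ do not a priori share one linear equivalence class. Second, and more seriously, in the relative setting effective Weil divisors in a fixed local class over $\pi^{-1}(U)$ with $U\ni z_i$ quasi-projective do \emph{not} form a bounded ($\Cc$-finite-dimensional) family; the relevant sections form an $\Oo_{Z_i}(U)$-module, and nothing controls the supports of $B_k^+$. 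Consequently ``extracting a convergent subsequence'' is not available. A working route to attainment: pass to a small $\Qq$-factorialization and use that $X_i$ is of Fano type over $Z_i$, so by \cite{BCHM10}-type finiteness of models the ray $t\mapsto -(K_{X_i}+B_i+tD_i+\bM_{i,X_i})$ crosses only finitely many Mori chambers; on the chamber adjacent to $t_i$ one has a single model on which the divisor stays nef over $Z_i$ up to $t=t_i$ by closedness of nefness, lc is likewise a closed condition, and nef on a Fano type variety is semi-ample, producing the $\Rr$-complement at $t=t_i$. The paper does not spell out attainment either — it applies \cite[Theorem 5.2]{Chen23} directly to $(X_i,B_i+t_iD_i+\bM_i)$ — so this is a shared elision; but your proposed ``bounded family'' patch does not close it.
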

\begin{proof}
Suppose on the contrary, there exists a sequence g-pairs $(X_i/Z_i\ni z_i,B_i+\bM_i)$ of dimension $d$ and $\Rr$-Cartier divisors $D_i$, such that $(X_i/Z_i\ni z_i,B_i+\bM_i)$ is $\Rr$-complementary, $B_i,D_i\in \Ii$, $p\bM_i$ are b-Cartier, and 
$$t_i:=\Rcomp(X_i/Z_i\ni z_i,B_i+\bM_i;D_i)$$ 
is strictly increasing. In particular, the coefficients of $B_i+t_iD_i$ belong to a DCC set $\Ii_1$. By \cite[Theorem 5.2]{Chen23}, there exists an $\Rr$-divisor $B'_i\ge B_i+t_iM_i$, such that the coefficients of $B_i'$ belong to a finite set $\Ii'$, and $(X_i/Z_i\ni z_i,B_i'+\bM_i)$ is $\Rr$-complementary. By the construction of $t_i$, there exists a prime divisor $S_i$, such that $\Supp S_i\subseteq\Supp D_i$ and 
$$\mult_{S_i} (B_i+t_iD_i)=\mult_{S_i}B_i'\in \Ii'$$
 for any $i$. Thus $t_i$ belongs to an ACC set, a contradiction.
\end{proof}

\section{Adjunction}\label{sec3}
In this section, we study the divisorial adjunction and canonical bundle formula for g-pairs cf. \cite{Bir19}. Moreover, we will consider lifting complements from lower dimensional varieties via divisorial adjunction and canonical bundle formula respectively.

\subsection{Divisorial adjunction for generalized pairs}
\begin{defn}[Divisorial adjunction]
Let $(X/Z,B+\bM)$ be a g-pair and $f:W\to X$ a log resolution of $(X/Z,B)$ such that $\bM=\overline{\bM_{W}}$. Assume that $S$ is the normalization of a component of $\lf B\rf$, and that $S_W$ is its strict transform on $W$. We may write
$$ K_{W}+B_W+\bM_{W}:=f^*(K_{X}+B+\bM_X),$$
for some $\Rr$-divisor $B_W.$ Let $B_{S_W}:=(B_W-S_W)|_{S_W}$, $\bM^\iota_{S_W}:=\bM_{W}|_{S_W}$, and $\bM^\iota:=\overline{\bM^\iota_{S_W}}$, then we get
$$K_{S_W}+B_{S_W}+\bM^\iota_{S_W}=f^*(K_{X}+B+\bM_X)|_S.$$
Let $f_{S_W}:=f|_{S_W}$ be the induced morphism and $B_{S}:=(f_{S_W})_*B_{S_W}$.
%$(f_{S_W})_*\bM^\iota_{S_W}=\bM^\iota_{S}$. 
Then we have
$$K_{S}+B_{S}+\bM^\iota_{S}=(K_X+B+\bM_X)|_S,$$
and $(S/Z,B_S+\bM^\iota)$ is a g-pair which we refer to as \emph{divisorial adjunction}.
\end{defn}

Note that if $(X/Z,B+\bM)$ is lc and $\bM$ is $\Qq$-Cartier, then $(S/Z,B_S+\bM^\iota)$ is also lc. Conversely, if $(X/Z,S+\bM)$ is plt, $(S/Z,B_S+\bM^\iota)$ is lc, and $\bM$ is $\Qq$-Cartier, then $(X/Z,B+\bM)$ is lc near $S$ (\cite[Theorem 4.4.1]{CHLX23}).

\smallskip

The following result is a relative version of \cite[Lemma 3.3]{Bir19}. %cf. \cite[Proposition 3.9]{PS09}, \cite[Proposition 4.9]{BZ16}.
\begin{prop}\label{prop:adjcoeff}
Let $p$ be a positive integer and $\Phi$ the hyperstandard set associated to a finite set $\fR\subseteq[0,1]\cap\Qq$. Then there exists a hyperstandard set $\Phi'$ depending only on $p$ and $\Phi$ satisfying the following.

Assume that $(X/Z,B+\bM)$ is an lc g-pair of dimension $d$ such that
\begin{enumerate}
   \item $B\in\Phi$ and $p\bM$ is b-Cartier,
   \item $S$ is the normalization of a component of $\lf B\rf$, and 
   \item $(S/Z,B_{S}+\bM^\iota)$ is the g-pair induced by divisorial adjunction.
\end{enumerate} 
Then $B_S\in\Phi'$. Moreover, if we replace $\Phi$ by $\Ii(n,\Phi)$ for some positive integer $n$, then $B_S\in\Ii(n,\Phi')$.
\end{prop}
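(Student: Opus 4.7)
The plan is to reduce the relative assertion to the absolute version established in \cite[Lemma 3.3]{Bir19}. The key observation is that the computation of the adjunction coefficient $\mult_C B_S$, for any prime divisor $C\subseteq S$, is purely local on $X$ near the generic point of $C$: it depends only on a log resolution $f\colon W\to X$ on which $\bM$ descends, restricted to an arbitrarily small neighborhood of $C$, together with the coefficients of $B-S$ along components passing through $C$ and the trace of $\bM_W$ along the relevant divisors meeting $S_W$ above $C$. In particular, the morphism $\pi\colon X\to Z$ plays no essential role in this computation, and after shrinking $Z$ to an affine neighborhood of $\pi(C)$ one is reduced to the absolute setting already treated in \cite[Lemma 3.3]{Bir19}.

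Concretely, the formula derived in loc.\ cit.\ expresses
\[
\mult_C B_S\;=\;1-\frac{1}{m}\Bigl(1-\sum_i n_i b_i-\tfrac{\nu}{p}\Bigr),
\]
for some $m\in\Zz_{>0}$, $n_i\in\Zz_{\ge 0}$, and $\nu\in\Zz$ coming from the combinatorics of the log resolution; here $b_i\in\Phi$ are coefficients of $B-S$ at components through $C$, and the factor $1/p$ encodes the b-Cartier hypothesis on $p\bM$. Writing $b_i=1-r_i/\ell_i$ with $r_i\in\fR$ and $\ell_i\in\Zz_{>0}$ and combining fractions, the right-hand side assumes the hyperstandard form $1-r/\ell$ with $\ell\in\Zz_{>0}$ and $r$ belonging to a finite set $\fR'$ depending only on $\fR$ and $p$; set $\Phi':=\Phi(\fR')$.

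The principal obstacle is exhibiting $\fR'$ as a finite set: one must bound the denominator $m$ and the auxiliary integers $n_i,\nu$ in terms of $\fR$ and $p$ alone. This is precisely the Diophantine step in the proof of \cite[Lemma 3.3]{Bir19}, which uses the lc constraint $\mult_C B_S\le 1$ (equivalently, the quantity in parentheses above is nonnegative) together with $B-S\in\Phi$ to pin down finitely many admissible configurations. Since this step is entirely local at $C$, it carries over to our relative situation verbatim.

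Finally, the moreover claim follows because $\Gamma(\{n\},\Phi)$ is itself a hyperstandard set, as noted after Definition \ref{defn:nphi}. Applying the first part with $\Phi$ replaced by $\Gamma(\{n\},\Phi)$ yields a hyperstandard set $\Phi''$ depending on $p$ and $\Gamma(\{n\},\Phi)$; comparing the constructions of the residue sets on both sides through the adjunction formula shows one can take $\Phi''=\Gamma(\{n\},\Phi')$, the extra summand $(1/\ell)\sum_k m_k/(n+1)$ present in $\Gamma(\{n\},\cdot)$ being compatible with the linear regrouping used to pass from $\fR$ to $\fR'$.
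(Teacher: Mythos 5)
Your proposal is correct and follows essentially the same route as the paper: both reduce to \cite[Lemma 3.3]{Bir19} (the relative setting being harmless since the adjunction coefficients are computed locally along $C$), and both handle the ``moreover'' part by rerunning the argument with $\Phi$ replaced by the hyperstandard set $\Gamma(n,\Phi)$ and identifying the output with $\Gamma(n,\Phi')$. The only difference is that the paper makes the last identification explicit, writing $\Gamma(n,\Phi)=\Phi(\fR_1)$ with $\fR_1=\{r-\tfrac{m}{n+1}\ge 0\}$ and computing $\fR_1'$ directly to see $\Phi(\fR_1')=\Gamma(n,\Phi')$, where you assert this compatibility without the (routine) computation.
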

\begin{proof}
We show that the hyperstandard set $\Phi'$ associated to the following finite set
$$\fR':=\bigg\{1-\sum(1-r_i)\ge0~\bigg|~ r_i\in\fR\cup\left\{\frac{p-1}{p}\right\}\bigg\}$$ 
has the required properties. Indeed, by the same arguments as in the proof of \cite[Lemma 3.3]{Bir19}, we have $B_{S}\in\Phi'$. Hence we only need to prove the ``Moreover'' part. In fact, let
$$\fR_1:=\left\{r-\frac{m}{n+1}\ge0~\bigg|~ r\in\fR,m\in\Zz_{\ge0}\right\},$$
i.e., $\Phi(\fR_1)=\Ii(n,\Phi)$. By \cite[Lemma 3.3]{Bir19} again, the coefficients of $B_S$ belong to the hyperstandard set $\Phi_1'$ associated to the following finite set of rational numbers
\begin{align*}
\fR_1':&=\bigg\{1-\sum(1-r_i')\ge0~\bigg|~ r_i'\in\fR_1\cup\left\{\frac{p-1}{p}\right\}\bigg\}\\
&=\bigg\{1-\sum(1-r_i)-\frac{m}{n+1}~\bigg|~ r_i\in\fR\cup\left\{\frac{p-1}{p}\right\},m\in\Zz_{\ge0}\bigg\}.
\end{align*}
We conclude the statement as $\Phi_1'=\Ii(n,\Phi')$.
\end{proof}

\subsection{Canonical bundle formula for generalized pairs}
By the work of Kawamata \cite{Kaw97,Kaw98} and Ambro \cite{Amb05}, we have the so-called \emph{canonical bundle formula} for usual pairs. In the following, we recall the canonical bundle formula for g-pairs, cf. \cite{Bir19,Fil20,CHLX23}.

Suppose that $(X/Z,B+\bM)$ is an lc g-pair and $\phi:X\to T$ is a contraction over $Z$ such that $\dim T>0$ and $K_X+B+\bM_X\sim_{\Rr,T}0$. 
\begin{comment}
We may write $K_X+B+\bM_X\sim_{\Rr} \phi^*L$ for some $\Rr$-Cartier $\Rr$-divisor $L$ on $T$. For any prime divisor $Q$ on $T$, we may define
$$b_Q:=\sup\left\{ t \geq 0 \mid\big(X/Z, (B+t\phi^*Q)+ \bM \big) \text{ is lc over the generic point of } Q \right\}.$$
Write 
$$B_{T}:=\sum_Q (1-b_Q)Q\text{ and }\bM_{\phi,T}:=L-K_{T}-B_{T}.$$ 
Here the sum runs over all prime divisors $Q$ on $T$. It is known that $B_T$ is a finite sum. Then we have 
$$K_X+B+\bM_X\sim_{\Rr}\phi^{*}(K_{T}+B_{T}+\bM_{\phi,T}).$$
It is clear that $B_{T}$ is uniquely determined by $(X/Z, B+\bM)$ and $\bM_{\phi,T}$ is determined up to $\Rr$-linear equivalence. 

By taking higher birational models, that is, we consider the following commutative diagram
\begin{center}
	\begin{tikzcd}[column sep = 2em, row sep = 2em]
		X \arrow[d, "\phi" swap]&& X' \arrow[ll]\arrow[d, "\phi'" swap] \\
		T&&  \arrow[ll, "\tau"]  T'
	\end{tikzcd}
\end{center} 
one can similarly define $B_{T'}$ and $\bM_{\phi',T'}$ on $T'$ and it holds that $\tau_* \bM_{\phi',T'}=\bM_{\phi,T}$. Thus there exists a b-$\Rr$-divisor $\bM_{\phi}$ whose trace on $T$ is $\bM_{\phi,T}$. Moreover, $\bM_{\phi}$ is nef$/Z$ (see \cite[Theorem 11.4.4]{CHLX23}). 
\end{comment}
Then we can find a uniquely determined $\Rr$-divisor $B_T$ and a nef$/Z$ b-$\Rr$-divisor $\bM_{\phi}$ which is determined only up to $\Rr$-linear equivalence such that $(T/Z,B_T+\bM_{\phi})$ is lc and
$$K_X+B+\bM_X\sim_{\Rr}\phi^*(K_T+B_T+\bM_{\phi,T}).$$
Here $B_{T}$ (resp. $\bM_{\phi}$) is called the \emph{discriminant part} (resp. a \emph{moduli part}) of the canonical bundle formula for $(X/Z,B+\bM)$ over $T$. Moreover, if $(X/Z,B+\bM)$ is klt, then so is $(T/Z,B_T+\bM_{\phi})$.

%Here we emphasize that there are many choices of $\bM_\phi$, some of which could behave badly. But we can always choose one with the required properties in this paper.

\begin{lem}\label{lem:cbfcptoverbase}
Notation as above. Assume that $\bM$ is $\Qq$-Cartier. Then there exists a crepant model $(\tilde{T},B_{\tilde{T}}+\bM_{\phi}) \to (T,B_T+\bM_\phi)$ such that for any prime divisor $P\subseteq\Supp B$ which is vertical over $T$, the image of $P$ on $\tilde{T}$ is a prime divisor.
\end{lem}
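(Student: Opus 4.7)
My plan is, for each vertical prime $P \subseteq \Supp B$, to show the restriction of $\operatorname{ord}_P$ to $k(T)$ is a divisorial valuation on $k(T)$, and then pass to a single birational model $\mu\colon \tilde{T} \to T$ on which all these valuations are simultaneously realized as prime divisors; birational functoriality of the canonical bundle formula then furnishes the crepant model on the new base.

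The technical heart is the first step. I would take a log resolution $f\colon W \to X$ of $(X, \Supp B)$ on which $\bM$ descends, set $\psi := \phi \circ f$, and for each vertical prime $P \subseteq \Supp B$ with strict transform $P_W$ on $W$ consider $v_P := \operatorname{ord}_{P_W}|_{k(T)}$, a nontrivial discrete rank one valuation on $k(T)$ whose residue field embeds into $k(P_W)$. Divisoriality of $v_P$ follows from an Abhyankar-type dimension count: Abhyankar's inequality for $(k(T), v_P)$ gives $\operatorname{tr.deg}(k(v_P)/\Cc) \leq \dim T - 1$, while the analogous inequality for the extension $(k(X), \operatorname{ord}_{P_W})/(k(T), v_P)$ gives $\operatorname{tr.deg}(k(P_W)/k(v_P)) \leq \dim X - \dim T$; combined with $\operatorname{tr.deg}(k(P_W)/\Cc) = \dim X - 1$, this yields the matching lower bound $\operatorname{tr.deg}(k(v_P)/\Cc) \geq \dim T - 1$. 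Since only finitely many $v_P$ arise, the standard correspondence between divisorial valuations and prime divisors on birational models (taking a common smooth model) produces a projective birational morphism $\mu\colon \tilde{T} \to T$ from a smooth $\tilde{T}$ with prime divisors $D_P \subset \tilde{T}$ realizing each $v_P = \operatorname{ord}_{D_P}$.

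Next I would resolve the indeterminacy of the rational map $W \dashrightarrow \tilde{T}$ to obtain $g\colon \tilde{W} \to W$ and $\tilde{\psi}\colon \tilde{W} \to \tilde{T}$ with $\mu \circ \tilde{\psi} = \psi \circ g$, and by further blow-ups arrange that $\tilde{W}$ is a log resolution on which $\bM$ descends. Letting $\tilde{P}$ denote the strict transform of $P_W$ on $\tilde{W}$, the closure $S := \overline{\tilde{\psi}(\tilde{P})}$ lies in $\mu^{-1}(\phi(P)) \subsetneq \tilde{T}$; any rational function on $\tilde{T}$ vanishing on $S$ pulls back to a function of positive $v_P$-value, forcing $D_P \subseteq S$, and irreducibility together with codimension then yield $S = D_P$. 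Applying the canonical bundle formula to $\tilde{\psi}\colon(\tilde{W}, B_{\tilde{W}} + \bM) \to \tilde{T}$ (with $B_{\tilde{W}}$ defined by crepant pullback from $X$) finally produces a g-pair $(\tilde{T}, B_{\tilde{T}} + \bM_\phi)$ whose discriminant and moduli parts push forward via $\mu$ to those on $T$, giving the desired crepant model.

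The principal obstacle I anticipate is the divisoriality of $v_P$; once that is established, the remaining steps are a standard combination of Hironaka resolution, resolution of indeterminacies, and the well-known birational functoriality of the canonical bundle formula for g-pairs, where the hypothesis that $\bM$ is $\Qq$-Cartier is used to ensure the moduli b-divisor is well-behaved under these modifications.
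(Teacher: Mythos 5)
Your reduction of the problem to realizing the divisorial valuations $v_P=\operatorname{ord}_P|_{k(T)}$ on a single birational model of $T$ is sound: the Abhyankar dimension count is correct, and the identification of $\overline{\tilde{\psi}(\tilde{P})}$ with $D_P$ works. The genuine gap is in the last step: what you produce is in general not a \emph{crepant model} in the sense this paper uses. Here a crepant model $(\tilde{T},B_{\tilde{T}}+\bM_\phi)\to(T,B_T+\bM_\phi)$ must itself be a g-pair, i.e.\ $B_{\tilde{T}}\ge 0$ and the pair lc. A smooth common resolution $\tilde{T}$ realizing the finitely many $v_P$ will typically also extract $\mu$-exceptional divisors $E$ with $a(E,T,B_T+\bM_\phi)>1$, and the discriminant of the canonical bundle formula computed on such a model assigns these negative coefficients; $(\tilde{T},B_{\tilde{T}}+\bM_\phi)$ is then only a g-sub-pair. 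This is not a cosmetic issue: in Proposition \ref{prop:cbflift1} the crepant model is fed into the complement machinery, which requires an effective lc boundary.

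Two additions repair this. First, you must check that each $D_P$ can be extracted with nonnegative coefficient, i.e.\ $a(D_P,T,B_T+\bM_\phi)\le 1$; this follows from the canonical bundle formula, since $\mult_P B>0$ forces the lc threshold of $D_P$ over its generic point to be $<1$, hence $\mult_{D_P}B_{\tilde{T}}>0$. Second, you need a crepant extraction statement producing a model that extracts \emph{exactly} this finite set of divisors while remaining an lc g-pair; the paper invokes \cite[Theorem 1.7]{LX23} for this. Note also that the paper bypasses your valuation-theoretic step entirely: by weak semistable reduction \cite[Theorem 0.3]{AK00} one obtains birational modifications $X'\to X$, $T'\to T$ with $X'\to T'$ equidimensional, so images of vertical divisors are automatically prime divisors on $T'$. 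Your Abhyankar argument is a legitimate substitute for that step, but it is the longer route and still leaves the extraction step to be done.
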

\begin{proof}
According to \cite[Theorem 0.3]{AK00}, there exist birational morphisms $X'\to X$ and $T'\to T$ such that $X'\to T'$ is an equidimensional contraction. In particular, for any prime divisor $P\subseteq\Supp B$ which is vertical over $T$, the image $Q$ of $P$ on $T'$ is a prime divisor. Moreover, by the canonical bundle formula, $a(Q,T,B_T+\bM_\phi)<1$ as $a(P,X,B)<1$. Then the lemma holds by \cite[Theorem 1.7]{LX23}.
%(2) It follows from \cite[Lemma 11.4.3]{CHLX23} immediately.
%Suppose on the contrary that $(X,B+G+\bM)$ is not sub-lc. Let $P''$ be a non-sub-lc place of $(X,B+G+\bM)$, i.e., $a(P'',X,B+G+\bM)<0$. It is clear that $\Center_X(P'')\subseteq \Supp G$ which is vertical over $T$. We can find birational morphisms $f:X''\to X$ and $g:T''\to T$ such that $X''\to T''$ is a contraction, $P''$ is a prime divisor on $X''$, and the image $Q''$ of $P''$ on $T''$ is a prime divisor (cf. \cite[VI, Theorem 1.3]{Kol96}). We may write  $$K_{X''}+\Delta''+\bM_{X''}:=f^*(K_X+B+G+\bM_X)$$ and $$K_{T''}+\Delta_{T''}+\bM_{\phi,T''}:=g^*(K_T+B_T+G_T+\bM_{\phi,T})$$  for some $\Rr$-divisors $\Delta''$ and $\Delta_{T''}$. Then $\Delta_{T''}$ is the discriminant part of the canonical bundle formula for $(X''/Z,\Delta''+\bM)$ over $T''$, cf. \cite[Lemma 7.4(ii)]{PS09}. Since $(T,B_T+G_T+\bM_\phi)$ is sub-lc, $\mult_{Q''}\Delta_{T''}\le1$. By the definition of the canonical bundle formula, $(X'',\Delta''+\bM)$ is sub-lc over the generic point of $Q''$. In particular, $\mult_{P''}\Delta''=1-a(P'',X,B+G)\le1$, a contradiction.
\end{proof}

The following proposition is the relative version of \cite[Proposition 6.3]{Bir19} in the generalized pair setting. %For the readers' convenience, we give a proof here which follows the same lines as \cite[Proposition 6.3]{Bir19}.

\begin{prop}[cf. {\cite[Proposition 3.3]{HJ24}}]\label{prop:cbfindex}
Let $d,p$ be two positive integers and $\Phi$ the hyperstandard set associated to a finite set $\fR\subseteq[0,1]\cap\Qq$. Then there exist a positive integer $p'$ and a hyperstandard set $\Phi'\subseteq[0,1]$ depending only on $d,p,$ and $\Phi$ satisfying the following.

Assume that $(X/Z,B+\bM)$ is a g-pair and $\phi:X\to T$ is a contraction over $Z$ such that
\begin{enumerate}
  \item $(X/Z,B+\bM)$ is lc of dimension $d$, and $\dim T>0$,
  \item $X$ is of Fano type over $T$, 
  \item $B\in\Phi$ and $p\bM$ is b-Cartier, and
  \item $K_X+B+\bM_X\sim_{\Qq,T}0$.
\end{enumerate}
Then we can choose a moduli part $\bM_\phi$ of the canonical bundle formula for $(X/Z,B+\bM)$ over $T$ such that $B_T\in\Phi'$, $p'\bM_\phi$ is b-Cartier, and
$$p'(K_X+B+\bM_X)\sim p'\phi^*(K_T+B_T+\bM_{\phi,T}),$$
where $B_T$ is the discriminant of the canonical bundle formula for $(X/Z,B+\bM)$ over $T$.
\end{prop}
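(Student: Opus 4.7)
The plan follows Birkar's proof of \cite[Proposition 6.3]{Bir19}, adapted to the generalized setting as in the cited \cite[Proposition 3.3]{HJ24}. The two conclusions---that $B_T$ lies in a hyperstandard set $\Phi'$ and that a chosen moduli representative $\bM_\phi$ has bounded Cartier index---will be handled separately, with the latter being the main obstacle.

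First, I would set up the canonical bundle formula on a convenient birational model. After replacing $X$ by a log resolution on which $\bM$ descends, Lemma \ref{lem:cbfcptoverbase} together with \cite{AK00} yields a crepant model where $X \to T$ is equidimensional and every vertical prime component of $\Supp B$ dominates a prime divisor of $T$. On such a model, for each prime $Q \subseteq T$, the coefficient $1 - b_Q$ of $B_T$ at $Q$ is determined by the fiber multiplicities $\mu_i$ of the prime divisors $S_i \subseteq \phi^{-1}(Q)$ and the coefficients of $B$ along them, with higher-model contributions controlled by $\bM$. Performing divisorial adjunction to the minimizing $S_i$ (after boosting its coefficient to $1$) and invoking Proposition \ref{prop:adjcoeff}---whose ``moreover'' clause is tailored precisely to absorb the $\tfrac{1}{n+1}$-type contributions coming from the $\mu_i$---produces a hyperstandard set $\Phi' = \Phi'(d, p, \Phi)$ with $B_T \in \Phi'$.

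The main difficulty is bounding the Cartier index of $\bM_\phi$. Restricting to a very general fiber $F$ of $\phi$ gives an lc g-pair $(F, B|_F + \bM|_F)$ where $F$ is of Fano type, $\dim F \le d-1$, $B|_F \in \Phi$, $p\bM|_F$ is $b$-Cartier, and $K_F + B|_F + \bM_X|_F \sim_\Qq 0$. Applying boundedness of complements for g-pairs in the hyperstandard/DCC setting (as in \cite{Bir19}, \cite{Chen23}) produces a uniform $N = N(d, p, \Phi)$ with $N(K_F + B|_F + \bM_X|_F) \sim 0$. Feeding this bound into the effective $b$-semiampleness / effective adjunction theorem for moduli parts of g-pair canonical bundle formulae (see \cite[Ch.\ 11]{CHLX23}, \cite{Fil20}) produces a bounded $p'$ such that $p'\bM_\phi$ is $b$-Cartier.

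Finally, enlarging $p'$ to absorb the denominators of $\Phi'$ makes both sides of the desired equivalence $\Qq$-Cartier integral divisors. Since the difference is $\Qq$-linearly trivial and pulled back from $T$, and $\phi_*\Oo_X = \Oo_T$, one may adjust $\bM_\phi$ within its $\Qq$-linear equivalence class to upgrade $\sim_\Qq$ to the required honest linear equivalence
\[
p'(K_X + B + \bM_X) \sim p'\phi^*(K_T + B_T + \bM_{\phi,T}).
\]
The core obstacle is the Cartier-index bound on $\bM_\phi$; the hyperstandard-ness of $B_T$ and the final $\Qq$-to-$\Zz$ upgrade are essentially bookkeeping once the equidimensional model and the adjunction machinery are in place.
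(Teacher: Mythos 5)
Your treatment of the discriminant part (that $B_T$ lies in a hyperstandard $\Phi'$) is roughly in line with what the paper does via Proposition \ref{prop:adjcoeff}. The gap is in your bound on the Cartier index of $\bM_\phi$. You restrict to a very general fiber $F$, extract a bounded $N$ with $N(K_F+B|_F+\bM_X|_F)\sim 0$, and then feed this into an ``effective $b$-semiampleness / effective adjunction theorem'' as a black box. But effective adjunction in this form---bounding the Cartier index of the moduli $b$-divisor solely in terms of a bound on the index of $K_F+B_F+\bM_F$---is not a theorem one can quote; in full generality it is a deep open conjecture of Shokurov--Prokhorov, and the references you point to (Filipazzi, \cite[Ch.\ 11]{CHLX23}) establish $b$-nefness and structural properties of the generalized canonical bundle formula, not an effective index bound of the required strength. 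In the Fano-type setting the only known proof of such an effective bound \emph{is} \cite[Proposition 6.3]{Bir19}, so invoking ``effective adjunction'' here is essentially circular.

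The paper avoids this by following Birkar's actual argument and replacing its input by \cite[Theorem 1.1]{Chen23}: one applies boundedness of \emph{relative} $n$-complements over $T$ (not over a point of the fiber) to get a uniform $n=n(d,p,\Phi)$ such that for every $t\in T$, $(X/T\ni t,B+\bM)$ admits an $n$-complement $(X/T\ni t,B^++\bM)$. Since $K_X+B+\bM_X\sim_{\Qq,T}0$ and $B\in\Phi$ with $n\fR\subseteq\Zz$, the complement is monotonic, $B^+-B\ge 0$ is $\Qq$-linearly trivial over $T$ and so vanishes on the generic fiber, hence is vertical and locally a pullback from $T$. From $n(K_X+B^++\bM_X)\sim 0$ over a neighborhood of each $t$, one then reads off directly that $n(K_X+B+\bM_X)\sim n\phi^*(K_T+B_T+\bM_{\phi,T})$ locally over $T$ and that $n\bM_{\phi}$ is $b$-Cartier on a suitable model---no appeal to effective $b$-semiampleness is needed. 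Replacing your fiber-restriction-plus-effective-adjunction step with this relative-complement argument over $T$ is what closes the gap; the final $\Qq$-to-$\Zz$ bookkeeping in your last paragraph is then fine.
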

\begin{proof}
The proof follows from the same arguments as in \cite[Proposition 6.3]{Bir19} (see also \cite[Proposition 3.1]{CHL23}) but utilizes the boundedness of relative complements for g-pairs \cite[Theorem 1.1]{Chen23}.
\end{proof}

\subsection{Lifting complements}
In this subsection, we lift complements from lower dimensional varieties via divisorial adjunction and canonical bundle formula respectively.

%The following proposition is similar to \cite[Proposition 6.7]{Bir19}.
\begin{prop}\label{prop:adjlift}
Let $n$ be a positive integer and $(X/Z\ni z,B+\bM)$ a plt g-pair such that 
\begin{enumerate}
  \item $n\bM$ is b-Cartier,
  \item $-(K_X+B+\bM_X)$ is big and nef over $Z$,
  \item $\lf B\rf=S$ is a prime divisor such that $\Center_ZS\ni z$, and
  \item $(S,B_S+\bM^\iota)$ has a monotonic $n$-complement $(S,B_S^++\bM^\iota)$ over a neighborhood of $z$, where $K_S+B_S+\bM^\iota_S:=(K_X+B+\bM_X)|_S$.
\end{enumerate}
Then $(X/Z\ni z,B+\bM)$ has an $n$-complement $(X/Z\ni z,B^++\bM)$ such that $(K_X+B^++\bM_X)|_S=K_S+B_S^++\bM^\iota_S.$
\end{prop}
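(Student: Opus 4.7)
The plan is a cohomological complement-lifting argument in the Shokurov--Prokhorov--Birkar style: produce an effective integral Weil divisor $G$ on $X$, in a prescribed linear class, whose restriction to $S$ encodes the given monotonic $n$-complement $B_S^+$; then $nB^+:=nS+\lfloor(n+1)\{B\}\rfloor+G$ will serve as the desired $n$-complement. Passing to a small birational model that resolves the b-Cartier condition for $n\bM$ (and pushing $G$ back down, which is harmless since the modification is exceptional over $S$), I may assume $n\bM_X$ is Cartier and $\bM_X|_S=\bM^\iota_S$.

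Let $L:=-nK_X-nS-\lfloor(n+1)\{B\}\rfloor-n\bM_X$, an integral Weil divisor, and consider the restriction sequence
\[
0\to\Oo_X(L-S)\to\Oo_X(L)\to\Oo_S(L|_S)\to 0.
\]
The central computation is
\[
L-S\sim_\Rr K_X+\{(n+1)\{B\}\}+\bM_X-(n+1)(K_X+B+\bM_X),
\]
so $L-S-K_X$ is the sum of a boundary with coefficients in $[0,1)$ and a big-and-nef-over-$Z$ divisor. Using the plt hypothesis $\lfloor B\rfloor=S$, together with Kodaira's lemma applied to the bigness of $-(K_X+B+\bM_X)$ to absorb a small ample perturbation, I can replace the right-hand side by $K_X$ plus a genuinely klt g-pair boundary plus a big-and-nef-over-$Z$ $\Rr$-divisor. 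The relative Kawamata--Viehweg vanishing theorem for generalized pairs \cite[Theorem 2.2.5]{CHLX23} then gives $R^1\pi_*\Oo_X(L-S)=0$ over a neighborhood of $z$, yielding the surjection $\pi_*\Oo_X(L)\twoheadrightarrow\pi_*\Oo_S(L|_S)$ there, where $\pi\colon X\to Z$ is the structure morphism.

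Next I would identify $L|_S$ with the divisor parametrizing monotonic $n$-complements on $S$. Divisorial adjunction yields $(K_X+S)|_S=K_S$ and $(K_X+B+\bM_X)|_S=K_S+B_S+\bM^\iota_S$, from which $L|_S\sim -n(K_S+B_S+\bM^\iota_S)$ after accounting for the integer discrepancy between $\lfloor(n+1)\{B\}\rfloor|_S$ and $\lfloor(n+1)\{B_S\}\rfloor$ and the codimension-two different $B_S-(B-S)|_S$. The monotonic $n$-complement satisfies $n(K_S+B_S^++\bM^\iota_S)\sim 0$ and $nB_S^+\ge\lfloor(n+1)\{B_S\}\rfloor$ (note $\lfloor B_S\rfloor=0$ by plt-ness), so the effective divisor $nB_S^+-\lfloor(n+1)\{B_S\}\rfloor\ge 0$ produces a section of $\Oo_S(L|_S)$. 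Lifting it through the surjection above furnishes the desired $G\in H^0$(neighborhood of $z$, $\Oo_X(L)$).

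Setting $nB^+:=nS+\lfloor(n+1)\{B\}\rfloor+G$ gives $n(K_X+B^++\bM_X)\sim 0$ and the required inequality by construction, and the matching of restricted sections gives $(K_X+B^++\bM_X)|_S=K_S+B_S^++\bM^\iota_S$. Lc-ness of $(X/Z\ni z,B^++\bM)$ near $z$ follows from inversion of adjunction for g-pairs \cite[Theorem 4.4.1]{CHLX23} applied to $S$ (which is lc-complemented) combined with klt-ness away from $S$, inherited from $(X,\{B\}+\bM)$ plus the effective contribution of $G$ staying below the log canonical threshold thanks to the big-and-nef room. The main obstacle is the identification of $L|_S$ in the third step: one must carefully handle both the different $B_S-(B-S)|_S$ (controlled by plt-ness) and the discrepancy between rounding before and after restriction, exploiting the integrality of the $n$-complement on $S$ and the monotonicity hypothesis to make these rounding contributions cancel exactly.
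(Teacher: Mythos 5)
Your overall strategy mirrors the paper's — produce an effective divisor $G$ in a prescribed linear class via a KV-vanishing surjection onto sections over $S$, then set $nB^+$ accordingly — but there is a genuine gap in where you run the cohomology. You work directly on $X$ with Weil divisors and the boundary $B$, whereas the paper first passes to a log resolution $f\colon W\to X$ of $(X,B)$ on which $\bM$ descends, writes $K_W+B_W+\bM_W=f^*(K_X+B+\bM_X)$, and takes $L_W:=\lceil-(n+1)(K_W+B_W)\rceil$. This is not a cosmetic difference: on a singular $X$ with non-snc $B$, (a) the rounding operator $\lfloor(n+1)\{\cdot\}\rfloor$ does not commute with restriction to $S$, so your claimed identity $L|_S\sim-n(K_S+B_S+\bM^\iota_S)$ plus an "accountable discrepancy" is precisely the step you cannot control without snc; (b) $L$ is a Weil divisor, so neither the exact sequence $0\to\Oo_X(L-S)\to\Oo_X(L)\to\Oo_S(L|_S)\to0$ nor $\Oo_X(L)|_S=\Oo_S(L|_S)$ is valid in general; and (c) your vanishing step needs $(X,\{(n+1)\{B\}\}+\bM)$ to be klt, which does not follow from plt-ness of $(X,B+\bM)$ — e.g.\ if $b=0.1$, $n=1$, then $\{(n+1)b\}=0.2>b$, and on singular $X$ there is no reason this bigger boundary is still klt. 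A small birational model fixes none of this. On $W$, by contrast, $\{(n+1)(B_W-S_W)\}$ is snc with coefficients in $[0,1)$, $L_W$ is Cartier, and restriction commutes with rounding, which is exactly why the paper moves there. You should also note that your final lc-ness claim ("klt-ness away from $S$ ... thanks to the big-and-nef room") is too vague: the paper uses the Shokurov–Koll\'ar connectedness principle to rule out a second, disjoint non-klt component of $(X,aB^++(1-a)B+\bM)$ near the fiber over $z$, and inversion of adjunction alone only controls the locus near $S$.

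If you insert the log-resolution step — replace $L$ by $L_W=\lceil-(n+1)(K_W+B_W)\rceil$, run the vanishing and restriction argument on $W$ (so that $L_W-n\bM_W-K_W$ is $-(n+1)(K_W+B_W+\bM_W)+\bM_W+\{(n+1)(K_W+B_W)\}$ with snc klt fractional part, as in the paper), push $D_W$ down to $X$, and replace the final hand-wave by the connectedness argument — your proposal becomes essentially the paper's proof.
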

\begin{proof}
Possibly after shrinking $Z$ near $z$, we may assume that $(S,B_S^++\bM^\iota)$ is a monotonic $n$-complement of $(S,B_S+\bM^\iota)$. Let $f:W\to X$ be a log resolution of $(X,B)$, such that $\bM=\overline{\bM_W}$, and $S_W$ the strict transform of $S$ on $W$. Set $f_{S_W}:=f|_{S_W}$.
We may write
$$K_W+B_W+\bM_W:=f^{*}(K_X+B+\bM_X),$$
and
$$n(K_{S_W}+B_{S_W}^{+}+\bM^\iota_{S_W}):=nf_{S_W}^{*}(K_S+B_S^{+}+\bM^\iota_S)\sim 0,$$
for some $\Rr$-divisors $B_W$ and $B_{S_W}^+$. Let
$$L_W:=\lceil -(n+1)(K_W+B_W)\rceil.$$
Note that $\lf B_W^{\ge0}\rf=S_W$ as $(X/Z,B+\bM)$ is plt, and we have
$$K_{S_W}+B_{S_W}+\bM^\iota_{S_W}:=(K_W+B_W+\bM_W)|_{S_W}=f_{S_W}^{*}(K_S+B_S+\bM^\iota_S).$$
Since $-(n+1)(K_W+B_W+\bM_W)+\bM_W$ is big and nef over $Z,$ $\{(n+1)(K_W+B_W)\}$ is snc, and it follows that
\begin{align*}
L_W-n\bM_W&=\lceil-(n+1)(K_W+B_W)\rceil-n\bM_W\\&=-(n+1)(K_W+B_W+\bM_W)+\bM_W
+\{(n+1)(K_W+B_W)\},
\end{align*}
by the relative Kawamata-Viehweg vanishing theorem, we have
$$R^1h_{*}(W,\mathcal{O}_{W}(K_W+L_W-n\bM_W))=0,$$ 
where $h$ is the induced morphism $W\to Z$. From the exact sequence
\begin{align*}
0\to\mathcal{O}_W(K_W+L_W-n\bM_W)&\to \mathcal{O}_W(K_W+S_W+L_W-n\bM_W)\\&\to \mathcal{O}_{S_W}(K_{W}+S_W+L_W-n\bM_W)\to 0,
\end{align*}
we deduce that the induced morphism
\begin{align}\label{surj}
H^0(W,K_W+S_W+L_W-n\bM_W)\to H^0(S_W,(K_{W}+S_W+L_W-n\bM_W)|_{S_W})
\end{align}
is surjective. 

\smallskip

Note that
\begin{align}\label{onW}
K_W+S_W+L_W-n\bM_W=&-nK_W-nS_W-\lf(n+1)(B_W-S_W)\rf-n\bM_W.
\end{align}
Since $f$ is a log resolution of $(X,B)$, and $nB_{S_W}^{+}\ge\lf (n+1)B_{S_W}\rf$, one has
\begin{align*}
&\ \ \ \ (K_{W}+S_W+L_W-n\bM_W)|_{S_W}\\
&=(-nK_W-nS_W-\lf(n+1)(B_W-S_W)\rf-n\bM_W)|_{S_W}\\
&=-nK_{S_W}-\lf(n+1)B_{S_W}\rf-n\bM_{S_W}^\iota\\
&=-n(K_{S_W}+B_{S_W}^{+}+\bM^\iota_{S_W})+nB_{S_W}^{+}-\lf(n+1)B_{S_W}\rf
\\
&\sim D_{S_W}:=nB_{S_W}^{+}-\lf(n+1)B_{S_W}\rf\ge0.
\end{align*}
It follows from \eqref{surj} that we may find an effective divisor $D_W$ on $W$ such that 
$$D_W\sim K_W+S_W+L_W-n\bM_W,$$ 
and $D_W|_{S_W}=D_{S_W}$. Let $D:=f_*D_W$,
$$B_W^+:=S_W+\frac{\lf(n+1)(B_W-S_W)\rf+D_W}{n}$$
and 
$$B^+:=f_*B_W^+=S+\frac{\lf (n+1)\{B\}\rf+D}{n}.$$
By \eqref{onW} and the negativity lemma, we infer that
$$n(K_W+B_W^++\bM_W)=nf^*(K_X+B^++\bM_X)\sim0.$$
Moreover, it is easy to see that
\begin{align*}
&\ \ \ \ (K_W+B_W^++\bM_W)|_{S_W}\\
&=(K_W+S_W+\frac{\lf(n+1)(B_W-S_W)\rf+D_W}{n}+\bM_W)|_{S_W}\\
&=K_{S_W}+\frac{\lf(n+1)B_{S_W}\rf+D_{S_W}}{n}+\bM_{S_W}^\iota\\
&=K_{S_W}+B_{S_W}^++\bM_{S_W}^\iota.
\end{align*}
Therefore we conclude that 
$$(K_X+B^++\bM_X)|_{S}=K_S+B_S^++\bM_S^\iota.$$

It suffices to show that $(X/Z,B^++\bM)$ is lc over a neighborhood of $z$. Suppose on the contrary that $(X/Z,B^{+}+\bM)$ is not lc over a neighborhood of $z$, then there exists a real number $a\in(0,1)$, such that $(X/Z,aB^{+}+(1-a)B+\bM)$ is not lc near the fiber over $z$. On the other hand, by the inversion of adjunction, $(X/Z,aB^{+}+(1-a)B+\bM)$ is lc near $S$. Thus the non-klt locus of $(X/Z,aB^{+}+(1-a)B+\bM)$ near the fiber over $z$ has at least two disjoint components one of which is $S$. This contradicts Shokurov-Koll\'ar's connectedness principle for generalized pairs \cite[Lemma 2.14]{Bir19} as
$$-(K_X+aB^{+}+(1-a)B+\bM_X)=-a(K_X+B^{+}+\bM_X)-(1-a)(K_X+B+\bM_X)$$
is big and nef over $Z$. Therefore $(X/Z,B^{+}+\bM)$ is lc over a neighborhood of $z$.  
\end{proof}
\begin{rem}
In Proposition \ref{prop:adjlift}, $(X/Z\ni z,B^++\bM)$ may not be a monotonic $n$-complement of $(X/Z\ni z,B+\bM)$. If $B\in\Ii(n,\Phi)$ for some hyperstandard set $\Phi$ associated to a finite set $\fR\subseteq[0,1]\cap\frac{1}{n}\Zz$, then $B^+\ge B$, see Lemma \ref{lem:N_Phicompl}.
\end{rem}

Now we turn to the following technical statement on lifting complements by canonical bundle formula. It plays an important role in the rest of the paper.

\begin{prop}\label{prop:cbflift1}
Let $p$ and $n$ be two positive integers such that $p|n$. Assume that $(X/Z,B+\bM)$ is a g-pair and $\phi:X\to T$ is a contraction over $Z$ such that
\begin{enumerate}
  \item $\dim T>0$,
  \item $B$ is a $\Qq$-divisor,
  \item $p\bM$ and $p\bM_{\phi}$ are b-Cartier,
  \item $p(K_{X}+B+\bM_{X})\sim p\phi^*(K_{T}+B_{T}+\bM_{\phi,T})$, and
  \item $(T/Z,B_T+\bM_{\phi})$ has a crepant model $(T'/Z,B_{T'}+\bM_{\phi})$ such that $\Center_{T'}P$ is a prime divisor for any prime divisor $P\subseteq\Supp B$ that is vertical over $T$,
\end{enumerate}
where $B_{T}$ and $\bM_{\phi}$ are the discriminant and moduli parts of canonical bundle formula for $(X/Z,B+\bM)$ over $T$. If $(T'/Z\ni z,B_{T'}+\bM_{\phi})$ is $n$-complementary, then $(X/Z\ni z,B+\bM)$ is $n$-complementary.
\end{prop}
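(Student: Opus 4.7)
The plan is to lift the given $n$-complement $B^+_{T'}$ from $T'$ to $X$ via the canonical bundle formula, using the hypothesis $p\mid n$ crucially. Multiplying hypothesis (4) by $n/p$ yields
\[
n(K_X+B+\bM_X)\sim n\phi^*(K_T+B_T+\bM_{\phi,T}).
\]
First I would pass to a common log resolution $\mu:X'\to X$ with a morphism $\phi':X'\to T'$ satisfying $\phi\circ\mu=\sigma\circ\phi'$, where $\sigma:T'\to T$ is the birational morphism coming from the crepant model, arranged so that $\bM$ descends to $X'$ and $\bM_\phi$ descends to $T'$. Writing $K_{X'}+B_{X'}+\bM_{X'}=\mu^*(K_X+B+\bM_X)$, I set
\[
B^+_{X'}:=B_{X'}+\phi'^*(B^+_{T'}-B_{T'}),\qquad B^+:=\mu_*B^+_{X'}.
\]
Pulling back the equivalence above to $X'$, applying $n(K_{T'}+B^+_{T'}+\bM_{\phi,T'})\sim 0$, and pushing forward by $\mu_*$ gives $n(K_X+B^++\bM_X)\sim 0$.

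The main work is to check (i) $nB^+\ge n\lfloor B\rfloor+\lfloor(n+1)\{B\}\rfloor$, (ii) $B^+\ge 0$, and (iii) lc-ness of $(X/Z,B^++\bM)$ near $z$. For (i)--(ii) I would split primes $P$ on $X$ by horizontal vs vertical over $T$. In the horizontal case, the coefficient of $P$ in $B^+$ equals $b_P$; but (4) forces $p(K_X+B+\bM_X)$ to be an integral Weil divisor, so $pB$ is integral, every $b_P\in\tfrac{1}{p}\Zz\subseteq\tfrac{1}{n}\Zz$, and a direct arithmetic check gives $nb_P=n\lfloor b_P\rfloor+\lfloor(n+1)\{b_P\}\rfloor$. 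In the vertical case, condition (5) guarantees that $Q:=\phi'(P)$ is a prime divisor on $T'$, and $P$ has some multiplicity $\mu_P\ge 1$ in $\phi'^*Q$; the discriminant formula yields $b_P\le 1-(1-b_Q)\mu_P$, with equality at components realizing the discriminant infimum, where $b_Q:=\mult_Q B_{T'}$. The coefficient of $P$ in $B^+$ becomes $b_P+\mu_P(b^+_Q-b_Q)$ with $b^+_Q:=\mult_Q B^+_{T'}$, and the $n$-complement inequality on $T'$ combined with $b^+_Q\in[0,1]$ yields both $nb^+_P\ge n\lfloor b_P\rfloor+\lfloor(n+1)\{b_P\}\rfloor$ and $b^+_P\ge 0$; new vertical primes $P\not\subseteq\Supp B$ mapping onto $\Supp B^+_{T'}$ are handled similarly using $b^+_Q\ge 0$.

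For (iii) I would compare log discrepancies via $\phi'$: horizontal log discrepancies for $(X',B^+_{X'}+\bM_{X'})$ agree with those for $(X,B+\bM)$, which on the generic fiber are lc by (4), while vertical log discrepancies are controlled by lc-ness of $(T'/Z\ni z,B^+_{T'}+\bM_\phi)$ through the canonical bundle formula, giving lc for $(X/Z\ni z,B^++\bM)$ after pushing forward via $\mu_*$. The main obstacle I anticipate is the vertical coefficient and effectiveness analysis in the middle paragraph: condition (5) is essential to prevent vertical components of $B$ from being collapsed by $\sigma$ to higher codimension on $T$, which would decouple $\mu_P$ from the discriminant jump $b^+_Q-b_Q$ and break the pullback strategy.
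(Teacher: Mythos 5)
Your overall strategy coincides with the paper's: pass to a model $X'$ over $T'$ (the paper uses the normalization of the main component of $X\times_T T'$ rather than a log resolution, which is immaterial), set $B'^+:=B'+\phi'^*(B_{T'}^+-B_{T'})$, push forward, and verify the linear equivalence, lc-ness, and the coefficient inequality by a local analysis over each prime divisor $Q$ of $T'$. However, there is a genuine gap in your vertical case, which is the heart of the proof. You assert that for a vertical prime $P$ lying over $Q$, the inequality $nb_P^+\ge n\lf b_P\rf+\lf(n+1)\{b_P\}\rf$ follows from ``the $n$-complement inequality on $T'$ combined with $b_Q^+\in[0,1]$'' and the discriminant bound $b_P\le 1-(1-b_Q)m_Q$. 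That implication is false: take $n=1$, $m_Q=1$, $b_Q=0.6$, $b_P=0.55$. The complement condition on $T'$ forces $b_Q^+=1$, the discriminant bound $0.55+0.4\le 1$ is satisfied, yet $b_P^+=b_P+(b_Q^+-b_Q)m_Q=0.95<1=\lf 2b_P\rf$. What rules such configurations out is the integrality $r_{PQ}:=b_P+(1-b_Q)m_Q=b_P^++(1-b_Q^+)m_Q\in\frac{1}{n}\Zz_{>0}$ (equivalently $nb_P^+\in\Zz$), which is extracted from hypothesis (4), the b-Cartier conditions, $p\mid n$, and the relation $n(K_{T'}+B^+_{T'}+\bM_{\phi,T'})\sim 0$; in my example $r_{PQ}=0.95\notin\Zz$. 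The paper's proof hinges on this integrality together with a careful floor manipulation ($\lf(n+1)b_P\rf=nb_P^++\lf b_P^++((n+1)b_Q-nb_Q^+)m_Q-b_Q^+m_Q\rf\le nb_P^+$, using $nb_Q^+\ge\lf(n+1)b_Q\rf$ and $r_{PQ}\le 1$), and also derives effectiveness $b_P^+\ge 0$ as a consequence. You invoke $p\mid n$ only for horizontal divisors; it is equally essential for the vertical ones, and without it your middle paragraph does not close.

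A secondary imprecision: hypothesis (4) does not force $pB$ to be integral, since $\sim$ only makes the \emph{difference} of the two sides an integral principal divisor; vertical coefficients of $B$ need not lie in $\frac{1}{p}\Zz$. The correct statement, which suffices for your horizontal case, is that $pb_P\in\Zz$ for $P$ \emph{horizontal} over $T$, because $\phi^*(K_T+B_T+\bM_{\phi,T})$ has no horizontal components and $pK_X$, $p\bM_X$ are integral. (Incidentally, the paper's own write-up folds the horizontal case into the vertical computation only implicitly, so spelling it out as you do is a reasonable addition --- but the vertical case still needs the integrality input described above.)
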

\begin{proof}
Let $X'$ be the normalization of the main component of $X\times_{T}T'$, then we have the following commutative diagram
\begin{center}
	\begin{tikzcd}[column sep = 2em, row sep = 2em]
		X \arrow[d, "\phi" swap]  && \arrow[ll,"f" swap] X' \arrow[d, "\phi'" swap] \\
		T &&  \arrow[ll, ""]  T'  .
	\end{tikzcd}
\end{center}
We may write 
$$K_{X'}+B'+\bM_{X'}=f^*(K_X+B+\bM_X)$$
for some $\Rr$-divisor $B'$. Note that by our assumption, we have
$$p(K_{X'}+B'+\bM_{X'})\sim p\phi'^*(K_{T'}+B_{T'}+\bM_{\phi,T'}).$$

\smallskip

By assumption, $(T'/Z\ni z,B_{T'}+\bM_{\phi})$ has an $n$-complement $(T'/Z\ni z,B_{T'}^++\bM_{\phi})$. Possibly after shrinking $Z$ near $z$, we may assume that
$$n(K_{T'}+B_{T'}^++\bM_{\phi,T'})\sim_Z0.$$ 
Let $B'^+:=B'+\phi'^*(B_{T'}^+-B_{T'})$ and $B^+:=f_*B'^+$. We claim that $(X/Z\ni z,B^++\bM)$ is an $n$-complement of $(X/Z\ni z,B+\bM)$. Indeed, we have
\begin{align*}
&\ \ \ \ n(K_{X'}+B'^++\bM_{X'})=n(K_{X'}+B'+\bM_{X'})+n(B'^+-B')\\&
\sim n\phi'^*(K_{T'}+B_{T'}+\bM_{\phi,T'})+n\phi'^*(B_{T'}^+-B_{T'})\\& = n\phi'^*(K_{T'}+B^+_{T'}+\bM_{\phi,T'})\sim_Z0.
\end{align*}
Hence $n(K_X+B^++\bM_X)\sim_Z0$. By \cite[Lemma 11.4.3]{CHLX23}, $(X',B'^{+}+\bM)$ is sub-lc and thus so is $(X,B^++\bM_X)$.
It suffices to prove that 
$$nB^+\ge \lf(n+1)\{B\}\rf+n\lf B\rf.$$

\smallskip

For any prime divisor $P\subseteq\Supp B$, let $Q$ be the image of $P$ on $T'$ which is a prime divisor. Without loss of generality, we may assume that $Q$ is a Cartier divisor. Then let $b_P:=\mult_PB$, $b_P^+:=\mult_PB^+,b_Q:=\mult_QB_{T'}$, $b_Q^+:=\mult_QB_{T'}^+$ and $m_Q:=\mult_P\phi'^*Q$. By construction, 
$$b_P^+=b_P+(b_Q^+-b_Q)m_Q.$$ 
Hence
$$r_{PQ}:=b_P+(1-b_Q)m_Q=b_P^++(1-b_Q^+)m_Q\in\frac{1}{n}\Zz_{>0}.$$
Moreover, as $1-b_Q$ is the lc threshold of $Q$ with respect to $(X/Z,B+\bM)$ over the generic point of $Q$, we know $r_{PQ}\le1$. If $b_Q=1$, then $b_Q^+=1$ which follows that $b_P=b_P^+.$ If $b_P=1$, then $r_{PQ}=b_Q=1$ and thus $b_P^+=1$. Hence we may assume that $b_Q<1$ and $b_P<1$. 
Since 
$$b_P=b_P^+- (b_Q^+-b_Q)m_Q\text{ and }nb_Q^+\ge\lf(n+1)b_Q\rf,$$ 
we can see that
\begin{align*}
\lf(n+1)b_P\rf&=\lf(n+1)b_P^++(n+1)(b_Q-b_Q^+)m_Q\rf
\\&=nb_P^++\lf b_P^++((n+1)b_Q-nb_Q^+)m_Q-b_Q^+m_Q\rf
\\&\le nb_P^++\lf b_P^++\{(n+1)b_Q\}m_Q-b_Q^+m_Q\rf
\\&=nb_P^+.
\end{align*}
The last equality holds because 
$$b_P^++\{(n+1)b_Q\}m_Q-b_Q^+m_Q<b_P^++m_Q-b_Q^+m_Q=r_{PQ}\le1.$$
The proof is finished.
\end{proof}

\section{Monoids of effective Weil divisors}\label{sec4}

\begin{defn}
Let $\pi:X\to Z$ be a projective morphism of varieties. Set
$$\Cl(X/Z):=\big\{D\mid D\in\WDiv(X)\big\}/\sim_Z\text{ and } \Eff(X/Z):=\big\{D\ge0\mid D\in\WDiv(X)\big\}/\sim_Z.$$ 
\end{defn}

\begin{lem}\label{lem:0overpt}
Let $f:X\to S$ be a projective morphism between normal quasi-projective varieties, and $D$ an $\Rr$-divisor on $X$. Then $D\ge0$ (resp. $=0$) over a non-empty open susbet of $S$ if and only if $D\ge0$ (resp. $=0$) over a general point of $S$.
\end{lem}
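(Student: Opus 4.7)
The plan is to unpack both sides in terms of the prime components of $\Supp(D)$ and where they map under $f$. Write $D=\sum_{i=1}^{N} d_i D_i$ as a finite sum with $D_i$ distinct prime divisors and $d_i\in\Rr\setminus\{0\}$ (finiteness holds since $X$ is quasi-projective, so $D$ has finitely many components). Let $I^{-}:=\{i : d_i<0\}$ and, for $i\in I^-$, set $W_i:=\overline{f(D_i)}$, which is an irreducible closed subset of $S$.

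The first key step is the observation that for any open $U\subseteq S$, the restriction $D|_{f^{-1}(U)}$ is effective if and only if every prime component $D_i$ with $d_i<0$ is disjoint from $f^{-1}(U)$, equivalently $W_i\cap U=\emptyset$ for all $i\in I^-$. Thus
$$D\ge0\text{ over some non-empty open }U\subseteq S\ \Longleftrightarrow\ \bigcup_{i\in I^-} W_i\subsetneq S.$$
The interpretation "$D\ge 0$ over a general point of $S$" means that $D\ge 0$ in a Zariski neighborhood of the fiber $X_s$ for $s$ in some non-empty open subset of $S$; equivalently $s\notin\bigcup_{i\in I^-}W_i$ for $s$ in a non-empty open set. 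Since $S$ is irreducible, this again is precisely the condition $\bigcup_{i\in I^-}W_i\subsetneq S$. Thus both formulations coincide.

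For the forward direction (given the general-point version), one explicitly takes $U:=S\setminus\bigcup_{i\in I^-}W_i$, which is a non-empty open subset of $S$ because the union is a proper closed subset (being a finite union of proper closed irreducibles in the irreducible variety $S$). For the reverse direction, any $s\in U$ lies outside each $W_i$ with $i\in I^-$, so $D\ge 0$ over a neighborhood of $s$. The statement for $=0$ is handled identically, replacing $I^-$ by $\{i:d_i\neq 0\}$, or equivalently by applying the $\ge 0$ version to both $D$ and $-D$.

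There is no serious obstacle here: the only non-formal ingredient is the finiteness of $I^-$, which is immediate, and the irreducibility of $S$ used to conclude that a finite union of proper closed subsets is proper. The lemma is purely definitional once one fixes the meaning of "over a general point" as "in a Zariski neighborhood of a general fiber".
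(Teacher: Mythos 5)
Your proof rests on reading ``$D\ge0$ over a general point of $S$'' as ``$D\ge0$ on $f^{-1}(U)$ for some neighborhood $U$ of a general $s$,'' under which the two sides of the lemma are literally the same condition and the statement is a tautology. That is not the intended meaning, as the paper's later uses make clear: in Lemma \ref{lem:trivialovergp} the hypothesis is $D'|_{X_\eta}=0$ as a divisor on the generic fiber, and in Proposition \ref{prop:FTovergp} the input is $E|_{\cX_t}=0$ as a divisor on the fiber $\cX_t$. So ``$D\ge0$ (resp.\ $=0$) over a general point $s$'' means that the restricted divisor $D|_{X_s}=\sum_i d_i\,(D_i|_{X_s})$ on the fiber is effective (resp.\ zero), and the nontrivial direction is to deduce from this that the coefficients $d_i$ themselves are nonnegative (resp.\ zero).

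Under that reading there is a genuine gap: a priori, distinct prime divisors $D_i$ and $D_j$ on $X$ could restrict to divisors on $X_s$ sharing components, so cancellation could make $\sum_i d_i\,(D_i|_{X_s})$ effective even though some $d_i<0$; likewise a component could restrict to the zero cycle on $X_s$. This is exactly what the paper's proof rules out. After shrinking $S$ so that every remaining $D_i$ dominates $S$, it invokes semicontinuity of fiber dimension to arrange, for all $s$ in a nonempty open $U$, that $\dim(X_s\cap D_i)=\dim X_s-1$ for each $i$ and $\dim(X_s\cap D_i\cap D_j)=\dim X_s-2$ for $i\neq j$; hence the $D_i|_{X_s}$ are nonzero divisors with no common components, no cancellation occurs, and $D|_{X_s}\ge0$ (resp.\ $=0$) forces $d_i\ge0$ (resp.\ $=0$) for all $i$. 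Your argument contains none of this, so it does not prove the lemma as it is actually used.
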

\begin{proof}
We may write $D:=\sum_i d_iD_i$, where $d_i$ are real numbers and $D_i$ are distinct prime divisors on $X$. Possibly shrinking S, we may assume that $D_i$ dominates $S$ for each $i$, and choose an open subset $\emptyset\neq U\subseteq S$ such that $D_i|_{f^{-1}U}$ dominates $U$. By the semi-continuity of the dimension of fibers \cite[\S 2 Excersise 3.22]{GTM52}, possibly shrinking $U$, we may assume that for each point $s\in U$,
\begin{itemize}
  \item $\dim X_s=\dim X-\dim S+\dim s$,
  \item $\dim X_s\cap D_i=\dim X-\dim T+\dim s-1$ for any $i$, and
  \item $\dim X_s\cap D_i\cap D_j=\dim X-\dim S+\dim s-2$ for any $i\neq j$.
\end{itemize}
In particular, $D_i|_{X_s}$ are distinct divisors with no common components.

If $D\ge0$ (resp. $=0$) over an open subset $U'\subseteq S$, then $d_i\ge0$ (resp. $=0$) for any $i$, and thus $D|_{X_s}=\sum_i d_iD_i|_{X_s}\ge0$ (resp. $=0$) for any $s\in U\cap U'$. Conversely, if $D|_{X_s}\ge0$ (resp. $=0$) for some $s\in U$, then by our construction, $D\ge0$ (resp. $=0$) over $U$.
\end{proof}

\begin{lem}\label{lem:trivialovergp}
Let $f:X\to S$ be a projective morphism between normal quasi-projective varieties, $D$ an $\Rr$-Cartier $\Rr$-divisor on $X$, $\eta$ the generic point of $S$ and $X_\eta$ the generic fiber. Suppose that $D|_{X_{\eta}}\sim_{\Rr}0$ (resp. $\sim0$). Then there exists an open set $\emptyset\neq U$ of $S$, such that $D|_{X\times_S U}\sim_{\Rr}0$ (resp. $\sim0$). 
\end{lem}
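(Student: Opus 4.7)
The plan is to lift the rational functions that witness $\Rr$-linear triviality on the generic fiber to rational functions on all of $X$, and then show that the discrepancy between $D$ and the resulting principal divisor is supported on finitely many vertical components, which we can then remove by shrinking $S$.

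First, by replacing $S$ with the closure of the image of $f$ (and later intersecting our open $U$ with the complement of $S \setminus \overline{f(X)}$), we may assume $f$ is surjective. Because $X$ is integral and $f$ is dominant, the generic point of $X$ maps to the generic point $\eta$ of $S$, so $K(X) = K(X_\eta)$. Consequently, any rational function on $X_\eta$ is automatically a rational function on $X$.

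Since $D|_{X_\eta} \sim_{\Rr} 0$, we may write
$$D|_{X_\eta} \;=\; \sum_{j=1}^{m} a_j \operatorname{div}_{X_\eta}(h_j)$$
for real numbers $a_j$ and rational functions $h_j \in K(X_\eta) = K(X)$ (in the $\sim 0$ case we take $m=1$ and $a_1 = 1$). Define the $\Rr$-divisor
$$E \;:=\; D \;-\; \sum_{j=1}^{m} a_j \operatorname{div}_X(h_j)$$
on $X$. The restriction of a principal divisor on $X$ to the generic fiber $X_\eta$ agrees with the principal divisor defined by the same rational function on $X_\eta$, so $E|_{X_\eta} = 0$.

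A prime divisor $P$ on $X$ meets $X_\eta$ in a prime divisor if and only if $P$ dominates $S$; vertical prime divisors are disjoint from $X_\eta$. Thus $E|_{X_\eta} = 0$ forces every prime component of $E$ with nonzero coefficient to be vertical over $S$. Since $E$ has only finitely many such components, the union $T$ of their images is a proper closed subset of $S$. Setting $U := S \setminus T$, we have $E|_{f^{-1}(U)} = 0$ and hence
$$D|_{f^{-1}(U)} \;=\; \sum_{j=1}^{m} a_j \operatorname{div}_{f^{-1}(U)}\!\bigl(h_j|_{f^{-1}(U)}\bigr) \;\sim_{\Rr}\; 0,$$
with the same identity (using a single $h$) giving $\sim 0$ in the integral case. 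The only point requiring care is the compatibility of the restriction of principal divisors with restriction to $X_\eta$, but this is just the observation that localization at $\eta \in S$ commutes with taking Weil divisors, so there is no real obstacle.
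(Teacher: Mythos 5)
Your proof is correct and follows essentially the same approach as the paper's: lift the rational functions witnessing $\Rr$-linear triviality from $K(X_\eta)=K(X)$, form $E:=D-\sum a_j \operatorname{div}_X(h_j)$, observe that $E|_{X_\eta}=0$ forces $E$ to be vertical, and shrink $S$ to remove the finitely many images of those vertical components. The only cosmetic difference is that the paper invokes its Lemma~\ref{lem:0overpt} at the final step, whereas you unpack that step directly via the vertical-component argument; both are fine.
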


\begin{proof}
Since $D|_{X_{\eta}}\sim_{\Rr}0$ (resp. $\sim0$), there exist $a_1,\ldots,a_m\in\Rr$ (resp. $\in\Zz$) and $s_1,\ldots,s_m\in \mathcal{K}(X_{\eta})=\mathcal{K}(X)$, such that 
$D|_{X_{\eta}}=(\sum_{i=1}^m a_i(s_i))|_{X_{\eta}}$. Let $D':=D-\sum_{i=1}^m a_i(s_i)$. By Lemma \ref{lem:0overpt}, $D'=0$ over an open subset $U$ of $S$ as $D'|_{X_{\eta}}=0$. Hence $D|_{X\times_S U}\sim_{\Rr}0.$  
\end{proof}

\begin{lem}\label{lem:rcpic}
Assume that $X$ is a smooth rationally connected variety. Then $H^1(X,\Oo_X)=H^2(X,\Oo_X)=0$. Moreover, $\mathrm{Pic}(X)\cong H^2(X,\Zz)$.
\end{lem}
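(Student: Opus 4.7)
The plan is to reduce the two cohomology vanishings to the well-known fact that a smooth projective rationally connected variety carries no nonzero holomorphic $p$-forms for $p\ge 1$, and then read off the Picard identification from the exponential exact sequence.

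First, I would work under the (standard) convention that $X$ is smooth projective rationally connected over $\Cc$, and invoke the theorem of Koll\'ar--Miyaoka--Mori that $H^0(X,(\Omega_X^1)^{\otimes m})=0$ for every $m\ge 1$. Since in characteristic zero $\Omega_X^p$ is a direct summand of $(\Omega_X^1)^{\otimes p}$ (via the antisymmetrization projector), this gives $H^0(X,\Omega_X^p)=0$ for all $p\ge 1$. Applying the Hodge decomposition on the smooth projective complex variety $X$ together with the symmetry $h^{p,q}=h^{q,p}$, one obtains
$$\dim_{\Cc} H^q(X,\Oo_X)=h^{0,q}(X)=h^{q,0}(X)=\dim_{\Cc} H^0(X,\Omega_X^q)=0$$
for every $q\ge 1$; specializing to $q=1$ and $q=2$ yields the first statement.

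For the second statement, I would take the exponential exact sequence of sheaves on the associated complex manifold,
$$0\to\Zz\to\Oo_X^{\mathrm{an}}\to(\Oo_X^{\mathrm{an}})^{*}\to 0,$$
and extract the relevant piece of its long exact cohomology sequence
$$H^1(X,\Oo_X)\to H^1(X,\Oo_X^{*})\to H^2(X,\Zz)\to H^2(X,\Oo_X),$$
where GAGA has been used to identify analytic and algebraic coherent cohomology on the smooth projective $X$. Since both outer terms vanish by the first part and $H^1(X,\Oo_X^{*})=\mathrm{Pic}(X)$, the middle arrow is an isomorphism, which is the sought identification.

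The only nontrivial ingredient is the vanishing of global pluri-differential forms on a rationally connected variety; the rest is routine Hodge theory plus the exponential sequence. Correspondingly, the main ``obstacle'' is essentially choosing the correct reference for the Koll\'ar--Miyaoka--Mori vanishing (e.g.\ \emph{Rationally connected varieties} or Debarre's book); once that is quoted, no further substantive arguments are needed.
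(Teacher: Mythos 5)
Your proof is correct and follows essentially the same route as the paper's: both reduce to the vanishing of global $p$-forms via the Koll\'ar(-Miyaoka-Mori) theorem that a rationally connected variety carries no global pluri-differential forms, apply Hodge symmetry to kill $H^1(X,\Oo_X)$ and $H^2(X,\Oo_X)$, and then read off $\Pic(X)\cong H^2(X,\Zz)$ from the exponential sequence. The only cosmetic difference is that you invoke the antisymmetrization projector to realize $\Omega_X^p$ as a summand of $(\Omega_X^1)^{\otimes p}$ for all $p$, whereas the paper only needs $p\le 2$ and uses the decomposition $\mathcal{F}\otimes\mathcal{F}=S^2\mathcal{F}\oplus\wedge^2\mathcal{F}$.
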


\begin{proof}
By the exponential sequence, we have the following long exact sequence of sheaf cohomology,
$$H^1(X,\Oo_X)\to \mathrm{Pic}(X)\to H^2(X,\Zz)\to H^2(X,\Oo_X).$$
It suffices to show that $H^i(X,\Oo_X)=0$ for $i=1,2$.

By \cite[\S 4 3.3 Propositon]{Kol96}, $X$ is separably rationally connected. Thus by \cite[\S 4 3.8 Corollary]{Kol96},  
$H^0(X,(\Omega_X^1)^{\otimes i})=0$
for every $i>0$. Note that $\mathcal{F}\bigotimes\mathcal{F}=S^2(\mathcal{F})\bigoplus\bigwedge^2(\mathcal{F})$ for any sheaf of $\Oo_X$-modules $\mathcal{F}$, where $S^2(\mathcal{F})$ and $\bigwedge^2(\mathcal{F})$ are the 2nd symmetric product of $\mathcal{F}$ and the 2nd exterior power of $\mathcal{F}$ respectively. It follows that 
$H^0(X,\Omega_X^i)=0$
for $i=1,2$. By Hodge symmetry, $H^i(X,\Oo_X)=0$ for $i=1,2$, and we are done.
\end{proof}

The following theorem might be well-known to experts. For the reader's convenience, we provide a basic proof in details here.

\begin{thm}\label{thm:picisconst}
Let $f:X\to S$ be a smooth projective morphism between smooth quasi-projective varieties such that the fibers $X_s$ is rationally connected for any closed point $s\in S$. Then possibly up to an \'etale base change, $\Pic(X/S)\to \Pic(X_s)$ is an isomorphism for any $s\in S$.
\end{thm}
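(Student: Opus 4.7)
The plan is to invoke the relative Picard scheme $\bPic(X/S)$, show that under our hypotheses it is étale over $S$, and then trivialize it after an étale base change to obtain the desired isomorphism.

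First, since $f$ is smooth and projective with connected fibers (any rationally connected variety is connected), by the representability theorems for the relative Picard functor (Grothendieck--Murre, or Artin for algebraic spaces), $\bPic(X/S)$ exists as a separated group scheme locally of finite type over $S$. I would then argue that $\bPic(X/S)\to S$ is étale: the tangent space at the identity of the fiber is $H^1(X_s,\mathcal{O}_{X_s})$ and the obstruction to infinitesimally deforming a line bundle lies in $H^2(X_s,\mathcal{O}_{X_s})$; both vanish by Lemma \ref{lem:rcpic}, so $\bPic(X/S)$ is simultaneously formally unramified and formally smooth over $S$, hence étale.

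By Lemma \ref{lem:rcpic} again, each fiber $\Pic(X_s)\cong H^2(X_s,\mathbb{Z})$ is a finitely generated abelian group, so the connected components of $\bPic(X/S)$ are indexed by $\Pic(X_s)$ and each component maps isomorphically to $S$ étale-locally. Choosing a finite set of generators of $\Pic(X_s)$ and passing to the finite étale cover $S'\to S$ that trivializes the monodromy on the corresponding components (the union of such components is a finite étale $S$-scheme), we obtain $\bPic(X_{S'}/S')\cong \Pic(X_s)\times S'$ as étale group schemes. Replacing $S$ by $S'$, we may assume $\bPic(X/S)$ is constant.

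Finally, since $f$ is smooth, after a further étale base change we may assume $f$ admits a section $\sigma:S\to X$. This rigidifies the Picard functor: the functor $T\mapsto\{L\in\Pic(X_T):\sigma_T^*L\cong\mathcal{O}_T\}$ is represented by $\bPic(X/S)$ and carries a universal (Poincaré) line bundle $\mathcal{P}$ on $X\times_S\bPic(X/S)$. Each element $\alpha\in\Pic(X_s)$ now corresponds, under our trivialization, to a section $s_\alpha:S\to\bPic(X/S)$, and pulling back $\mathcal{P}$ along $(\id,s_\alpha)$ produces a line bundle on $X$ restricting to $\alpha$ on $X_s$. This gives a splitting $\Pic(X_s)\to\Pic(X)/f^*\Pic(S)=\Pic(X/S)$ inverse to the restriction map, proving surjectivity. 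For injectivity, if $L\in\Pic(X)$ restricts trivially to some fiber, then by étaleness of $\bPic(X/S)$ (and the connectedness of $S$) $L$ restricts trivially to every fiber, and by the see-saw principle $L\cong f^*M$ for some $M\in\Pic(S)$, so $[L]=0$ in $\Pic(X/S)$.

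The main obstacle I anticipate is organizing the various étale base changes (trivializing $\bPic(X/S)$ and producing a section of $f$) into a single one, and addressing the Brauer-type obstruction to lifting $S$-points of $\bPic(X/S)$ to honest line bundles on $X$, which from the Leray spectral sequence lives in $H^2(S,\mathbb{G}_m)$; this obstruction is killed by the existence of a section $\sigma$, which splits the relevant edge map. Everything else is a routine combination of representability, deformation theory, and étale descent.
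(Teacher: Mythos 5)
Your approach via the relative Picard scheme is a genuinely different route from the paper's, and the étaleness observation (from $H^1=H^2=0$) together with the rigidification by a section and the see-saw argument are all correct in spirit. However, there is a real gap at the step where you pass from ``$\bPic(X/S)$ is étale over $S$'' to ``after a finite étale base change $\bPic(X/S)$ becomes constant''. Étale implies only \emph{quasi-finite}: a connected component $C$ of $\bPic(X/S)$ containing a chosen generator of $\Pic(X_s)$ is, by Zariski's main theorem, an open immersion into a finite $S$-scheme, and it may fail to be proper (hence finite) over $S$. If $C\to S$ is not surjective, no finite étale base change trivializes it, and the restriction map $\Pic(X/S)\to\Pic(X_{s'})$ is not surjective at the points $s'$ missed by $C$. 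So the assertion that ``the union of such components is a finite étale $S$-scheme'' --- equivalently, that the monodromy of $\pi_1(S,s)$ on $\Pic(X_s)$ is finite and the relevant classes spread out over all of $S$ --- is precisely the content of the theorem and cannot be taken for granted.

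The paper proves exactly this point, but with a different toolkit. Its Step~3 uses the countably many components of the relative Hilbert scheme $\mathrm{Hilb}(X/S)$ (each of which \emph{is} proper over $S$) to lift any effective divisor on $X_s$ to an $S$-flat family after an étale base change, at least for $s$ in a \emph{very general} subset $\hat U\subseteq S$ --- one first removes the images of those Hilbert components that do not dominate $S$, and restricts to where the dominating ones are fiberwise reduced. Its Step~4 then removes the ``very general'' restriction by a purely topological argument: over a contractible analytic neighbourhood $U_s$ of any $s$, the family is topologically trivial, so $\Pic(X_s)=H^2(X_s,\Zz)$ is canonically identified with $\Pic(X_{s'})$ for $s'\in U_s\cap\hat U$, and the line bundle on $X$ lifting the class over $s'$ also lifts the class over $s$. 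Steps 1--2 of the paper handle the injectivity and the stabilization of the generic Picard group under finite field extensions. If you wish to salvage your Picard-scheme argument, you would need to supply an independent proof that the components of $\bPic(X/S)$ hitting a generating set of $\Pic(X_s)$ are \emph{finite} (not merely quasi-finite) over $S$ after an étale base change; in practice that proof is morally the same as the Hilbert-scheme plus topology argument of the paper.
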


\begin{proof}
\noindent\textbf{Step 1}. 
In this step, we construct an open subset $\emptyset\neq V\subset S$ such that 
$$\phi_s: \Pic(X/S)\to \Pic\left(X_s\right)$$
is an injective between abelian groups for any $s\in V$.

First, we claim that
$$\phi_\eta: \Pic(X/S)\to \Pic\left(X_\eta\right)$$
is an isomorphism between abelian groups, where $\eta$ be the generic point of $S$ and $X_\eta$ is the generic fiber. In fact, the surjectiveness is easily implied by taking the closures of divisors on $X_\eta$. For the injectiveness, if $L|_{X_\eta}\sim 0$, then $L|_{X_U}\sim0$ for some open subset $U\subseteq S$ by Lemma \ref{lem:trivialovergp}, where $X_U:=X\times_SU$. Therefore $L\sim \sum_i n_iD_i$ for some integers $n_i$ and prime divisors $D_i$ that are vertical over $S$. Since $f:X\to S$ is smooth, projective and geometrically connected, we must have $D_i=f^*f(D_i)$ for each $i$ and thus $L\sim \sum_i n_if^*(f(D_i))=0\in\Pic(X/S)$. This finishes the proof of the claim.

In particular, since $\Pic^0(X_\eta)=0$ by the fact that $\mathrm{H}^1(X_\eta,\Oo_{X_\eta})=0$, we know that $$\Pic(X/S)=\Pic\left(X_\eta\right)=\mathrm{NS}\left(X_\eta\right)$$ 
is a finite generated abelian group, hence with finitely many non-trivial torsions $L_1,\dots,L_m$. Note that since $\phi_\eta$ is an isomorphism, $L_i|_{X_\eta}$ is a non-trivial torsion and thus $\mathrm{H}^0(X_\eta,L_i|_{X_\eta})=0$ for each $1\le i\le m$. By the upper-semicontinuity, there exists an open subset $V\subseteq S$ such that $\mathrm{H}^0(X_s,L_i|_{X_s})=0$ for each $1\le i\le m$ and any $s\in V$. We show that
$$\phi_s: \Pic(X/S)\to \Pic(X_s)$$
is injective for any $s\in V$. Indeed, if $L|_{X_s}\sim 0$ for some non-trivial $L\in\Pic(X/S)$, then we can see that $L|_{X_\eta}\equiv0$ in $\mathrm{NS}(X_\eta)_\Qq$. This is equivalent to say that $L|_{X_\eta}$ is a torsion in $\Pic(X_\eta)$ and hence $L$ is a torsion in $\Pic(X/S)$. Then $L=L_i$ for some $1\le i\le m$ which is impossible since $L_i|_{X_s}$ is not trivial for any $s\in V$. We emphasize here that $V$ only depends on the torsions of $\Pic(X_\eta)$.

\medskip

\noindent\textbf{Step 2}. 
In this step, we show that there exists an \'etale morphism $S'\to S$ with $S'$ irreducible such that for any \'etale morphism $S''\to S'$ with $S''$ irreducible, the map 
$$\Pic\left(X_{S''}:=X\times_S S''/S''\right)\to \Pic\left(X_{s''}:=X\times_{S}\Spec~ k(s'')\right)$$ 
is injective for any $s''\in S''$.

For any finite field extensions $K''/K'/k(\eta)$, there exist \'etale morphisms $S''\to S'\to S$ with $S',S''$ irreducible such that the induced $k(\eta'')/k(\eta')/k(\eta)$ is exactly $K''/K'/k(\eta)$, where $\eta'$ and $\eta''$ are the generic points of $S'$ and $S''$ respectively. Moreover, the induced group homomorphisms
$$\Pic\left(X_{\eta'}\right)\to\Pic\left(X_{\eta''}\right)\text{ and }\Pic\left(X_{\eta'}\right)\to\Pic\left(X_{\overline{k(\eta)}}\right)$$
are injective by computing the cohomology under flat base change (consider $\mathrm{H}^0$ of $L$ and $L^{-1}$). 
Since all the groups above can be regarded as submodules of the finitely generated $\Zz$-module $\Pic(X_{\overline{k(\eta)}})=\mathrm{NS}(X_{\overline{k(\eta)}})$, we know there exists $k(\eta')$ such that $\Pic(X_{\eta'})\to\Pic(X_{\eta''})$ is an isomorphism for any field extension $k(\eta'')/k(\eta')$ defined above. In particular, any torsion in $\Pic(X_{\eta''})$ is linearly equivalent to a pullback of some torsion in $\Pic(X_{\eta'})$. Combined with the arguments in {\bf Step 1}, replacing $S'$ with an open subset $V'$ of $S'$, we may finish this step.

\medskip

In what follows, we may replace $X\to S$ with $X_{S'}\to S'$ as constructed in {\bf Step 2} but keep using the same symbols.

\noindent\textbf{Step 3}. In this step, we claim that 
$$\phi_s: \Pic(X/S)\to\Pic(X_{s})$$ 
is isomorphic for any very general closed point $s\in S$. Indeed, we only need to show that $\phi_s$ is surjective.

Consider the relative Hilbert functor of $X/S$, which has countably many irreducible components $H_i$ and $h_i: H_i\to S$ are all projective. For each $h_i$ with $h_i(H_i)=S$, we let $\tilde{h}_i: (H_i)_{\mathrm{red}}\to S$ be the reduction and
$$U_i:=\left\{s\in S~\big|~\text{$\tilde{h}_i^{-1}(s)$ as a scheme is reduced}\right\}.$$
Then $U_i$ is an open subset of $S$. For each $h_i$ such that $h_i(H_i)\subset S$ is a proper closed subvariety of $S$, we define $Z_i:=h_i(H_i)$. We may set
$$\hat{V}:=S~\backslash\bigcup_{h_i(H_i)\neq S}Z_i\text{ and }\hat{U}=\hat{V}\bigcap_{h_i(H_i)=S}U_i.$$
It is clear that $\hat{U}$ is a very general subset of $S$. 

For any closed point $s\in\hat{U}$ and any prime divisor $D_s$ on $X_s$, we can regard $D_s$ as a morphism $\Spec~k(s)=\Spec~\Cc\to H_i$ over $S$ for some $i$, or equivalently, a closed point on the fiber $H_{i,s}:=H_i\times_S\{s\}$ for some $i$. Then any other closed point in the same connected component of $H_{i,s}$ represents the same class $[D_s]\in\Pic(X_s)$ since $\Pic^0(X_s)=0$. Therefore we can assume that $D_s$ corresponds to a smooth point $y$ of $(H_{i,s})_{\mathrm{red}}$. Notice that $s\in U_i$, so $(H_{i,s})_{\mathrm{red}}=(H_i)_{\mathrm{red},s}$ and then we will have an \'etale section of $\tilde{h}_i$ that extends $s\mapsto y$.
%(After taking an irreducible component we can assume $S'$ is irreducible). 
In other words, there exists an \'etale neighborhood $s'\in S'$ of $s\in S$ such that $H_{i,S'}\to S'$ admits a section $S'\to H_{i,S'}$, which corresponds to a flat family $D'\to S'$ with $[D'_{s'}]=[D_s]\in\Pic(X_s)=\Pic(X_{s'})$. By considering the generators of $\Pic(X_s)$ and writing them as difference of two effective divisors we can assume that this $S'$ works for all these effective divisors. It follows that $\Pic(X_{S'}/S')\to\Pic(X_{s})$ is surjective. 

Since $\Pic(X/S)\to\Pic(X_{S'}/S')$ is an isomorphism by the argument in {\bf Step 2}, we know that $\Pic(X/S)\to\Pic(X_s)$ is isomorphic for any closed $s\in \hat{U}$ as well.

\medskip

\noindent\textbf{Step 4}. In this step, we finish the proof.

For any closed point $s\in S$, by Lemma \ref{lem:rcpic}, we know that $\mathrm{H}^2(X_s,\Zz)=\Pic(X_s)$. Thus there exists a contractible analytic neighborhood $s\in U_s$ such that $X_{U_s}:=X\times_SU_s\to U_s$ is a (topologically) trivial fibration and $\mathrm{H}^2(X_{s'},\Zz)$ is canonically identical to $\mathrm{H}^2(X_{U_s},\Zz)$ for any $s'\in U_s$. Take some point $s'\in U_s\cap\hat{U}$, then for any divisor $D_s$ on $X_s$, $[D_s]\in\Pic(X_s)$ corresponds to an element $L_{s'}$ in $\Pic(X_{s'})=\mathrm{H}^2(X_{s'},\Zz)=\mathrm{H}^2(X_{s},\Zz)$. Since $\phi_{s'}:\Pic(X/S)\to\Pic(X_{s'})$ is an isomorphism (by \textbf{Step 3}), we can find a line bundle $L$ on $X$ such that $L|_{X_{s'}}\sim L_{s'}$. Then $L|_{U_s}$ must correspond to the element $[D_s]\in\mathrm{H}^2(X_{U_s},\Zz)$ and hence we have $L|_{X_s}=[D_s]\in\Pic(X_s)$. Therefore $\phi_s:\Pic(X/S)\to\Pic(X_s)$ is surjective, and thus isomorphic.

If $s\in S$ is point with $\dim\bar{s}>0$, then by the statements in {\bf Step 1} we know for general points $s'\in\bar{s}$, we have the injection
$$
\Pic(X_s)\to\Pic(X_{s'})
$$
and by {\bf Step 2} we know that $\Pic(X/S)\to\Pic(X_{s})$ is also injective, then the statement follows since the composition $\Pic(X/S)\to\Pic(X_{s'})$ is bijective.
\end{proof}

\begin{thm}[cf.\ {\cite[Theorem 5]{Sho20}}]\label{thm:effgpisfg}
Let $X$ be a normal projective variety that is of Fano type. Then the commutative monoid $\Eff(X)$ is finitely generated.
\end{thm}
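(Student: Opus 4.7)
My plan is to derive the theorem from the finite generation of the Cox ring of $X$. First I would reduce to the case where $X$ is $\Qq$-factorial: by \cite{BCHM10}, $X$ admits a small $\Qq$-factorial modification $\pi\colon X'\to X$ with $X'$ again of Fano type, and since $\pi$ is a small birational morphism, taking strict transforms and pushforwards gives mutually inverse bijections between the free abelian groups of Weil divisors on $X$ and on $X'$. This bijection preserves effectiveness and, via the identification $\mathcal{K}(X')=\mathcal{K}(X)$, linear equivalence of Weil divisors as well. Hence $\Eff(X')\cong\Eff(X)$ as commutative monoids, and I may assume that $X$ itself is $\Qq$-factorial.

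Next, by \cite{BCHM10} and its standard consequences, a $\Qq$-factorial Fano type variety is a Mori dream space in the sense of Hu--Keel. In particular $\Cl(X)$ is a finitely generated abelian group, and the Cox ring
$$R(X):=\bigoplus_{[D]\in\Cl(X)} H^0\!\left(X,\mathcal{O}_X(D)\right),$$
endowed with its natural $\Cl(X)$-graded $\Cc$-algebra structure (after fixing consistent isomorphisms $\mathcal{O}_X(D)\otimes\mathcal{O}_X(D')\cong\mathcal{O}_X(D+D')$ on representatives of a set of generators of $\Cl(X)$), is finitely generated as a $\Cc$-algebra.

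Once the Cox ring is known to be finitely generated, I would finish as follows. Pick homogeneous generators $s_1,\dots,s_N$ of $R(X)$, with $s_i\in H^0(X,\mathcal{O}_X(D_i))\setminus\{0\}$; each class $[D_i]$ then lies in $\Eff(X)$ by construction. For an arbitrary $[E]\in\Eff(X)$, choose any nonzero section $s\in H^0(X,\mathcal{O}_X(E))$; as a homogeneous element of $R(X)$ of degree $[E]$, the section $s$ is a finite $\Cc$-linear combination of monomials $\prod_i s_i^{a_i}$, and every such monomial appearing with nonzero coefficient must have degree $[E]=\sum_i a_i[D_i]$ in $\Cl(X)$. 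Picking one such monomial exhibits $[E]$ as a non-negative integer combination of $[D_1],\dots,[D_N]$, so these classes generate $\Eff(X)$ as a commutative monoid.

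The main obstacle is supplying the input from \cite{BCHM10}: the existence of a small $\Qq$-factorialization of a Fano type variety, and more substantively the finite generation of the Cox ring. Once these are granted, the passage from finitely many algebra generators of $R(X)$ to finitely many monoid generators of $\Eff(X)$ is essentially formal, and no delicate issue arises from torsion in $\Cl(X)$ because every identity used is taken inside $\Cl(X)$ itself rather than in $N^1(X)$.
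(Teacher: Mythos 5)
Your outline follows the same route as the paper: reduce to the $\Qq$-factorial case via a small $\Qq$-factorialization (your argument that this identifies $\Eff(X)$ with $\Eff(X')$ is fine), obtain finite generation of a Cox ring from \cite{BCHM10}, and then extract monoid generators of $\Eff(X)$ from homogeneous algebra generators — that last formal step is correct and is exactly how the paper concludes. The one place where you are leaning on a citation that does not quite cover what you need is the assertion that ``a $\Qq$-factorial Fano type variety is a Mori dream space, hence the $\Cl(X)$-graded Cox ring $\bigoplus_{[D]\in\Cl(X)}H^0(X,\Oo_X(D))$ is finitely generated.'' The Hu--Keel Mori dream space property, and the finite generation statements that come straight out of \cite{BCHM10}, concern Cox rings graded by $\Pic(X)$ (or by a finite collection of Cartier divisors), whereas for a $\Qq$-factorial variety one generally has $\Pic(X)\subsetneq\Cl(X)$, and it is the Weil-divisor Cox ring that your final step requires. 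This gap is genuinely fillable — indeed it is the actual content of the theorem — and the paper fills it as follows: by Totaro's theorem, $\Cl(X)$ is generated by finitely many prime divisors $D_1,\dots,D_r$; choosing $m$ and an ample $A\sim_\Qq-(K_X+B)$ suitably, each $\pm D_i$ is written as $m(K_X+\Delta_j)$ for a klt boundary $\Delta_j=B\pm\frac{1}{m}D_i+A$; \cite[Corollary 1.1.9]{BCHM10} then gives finite generation of the multigraded adjoint ring $\bigoplus H^0(X,\lfloor\sum m_j(K_X+\Delta_j)\rfloor)$, and a Veronese-type subring argument transfers this to $\bigoplus H^0(X,\sum m_jD_j)$, which surjects onto all effective classes since the $D_j$ and $-D_j$ generate $\Cl(X)$. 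If you replace your appeal to ``standard consequences'' by this construction (or by an explicit reference that treats the $\Cl$-graded Cox ring of Fano type varieties), your proof is complete.
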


\begin{proof}
Possibly replacing $X$ with a small $\Qq$-factorization, we may assume that $X$ is $\Qq$-factorial. 

Since $X$ is of Fano type, there exists a klt pair $(X,B)$ with a $\Qq$-boundary $B$, such that $-(K_X+B)$ is ample. By \cite[Theorem 3.1]{Totaro12}, there exist prime divisors $D_1,\ldots,D_r\ge0$ which generate $\Cl(X)$. We may choose a positive integer $m$ and an ample $\Qq$-divisor $A\sim_{\Qq}-(K_X+B)$ such that 
\begin{itemize}
  \item $mA$ is an integral divisor,
  \item $mA\sim-m(K_X+B)$,
  \item $A-\frac{1}{m}\sum_{i=1}^r D_i\ge0$, and
  \item $(X,B+\frac{1}{m}\sum_{i=1}^r D_i+A)$ is klt.
\end{itemize}
Let $\Delta_j$ ($j=1,2,\ldots,2r$) be all the possible divisors of the form $B\pm \frac{1}{m}D_i+A$, where $1\le i\le r$. Since $(X,\Delta_j)$ is klt for each $j$, by \cite[Corollary 1.1.9]{BCHM10}, the Cox ring
$$\bigoplus_{(m_1,\ldots,m_{2r})\in\Zz_{\ge0}^{2r}}H^0\left(X,\Oo_X\left(\bigg\lfloor \sum_{j=1}^{2r} m_j\left(K_X+\Delta_j\right)\bigg\rfloor\right)\right)$$
is finitely generated. Notice that $m(K_X+\Delta_j)\sim D_j$ for any $1\le j\le 2r$, where $D_{j}:=-D_{j-r}$ for $r+1\le j\le 2r$. Therefore by \cite[Proposition 7.8]{AM}, we know the subring

%Therefore by Lemma \ref{lem: fg muti-graded ring} (taking $d=m$), 
$$\bigoplus_{(m_1,\ldots,m_{2r})\in\Zz_{\ge0}^{2r}}H^0\left(X,\sum_{j=1}^{2r} m_jD_j\right)$$
is also finitely generated. %, where $D_{r+i}=-D_i$ for $1\le i\le r$. 
This immediately implies that $\Eff(X)$ is finitely generated.
%Therefore $$\bigoplus_{(m_1,\ldots,m_{2r})\in\Zz_{\ge0}^{2r}}H^0(X, \sum_{j=1}^{2r} \frac{m_j}{M!}D_j)$$ is finitely generated as well. By construction of $M$, for any $D\in \Eff(X/Z)$,  $D\sim\sum_{j=1}^{2r} \frac{m_j}{M!}D_j$ for some non-negative integers $m_1,\ldots,m_r$. It follows that the commutative monoid $\Eff(X/Z)$ is finitely generated.
\end{proof}
%The following lemma is well-known. We give a proof in details as $R$ is assumed to be a muti-graded ring rather than a graded ring.
%\begin{lem}\label{lem: fg muti-graded ring}
%Let $R=\bigoplus_{(m_1,\ldots,m_{r})\in\Zz_{\ge0}^{r}}R_{m_1,\ldots,m_r}$ be a muti-graded ring and $d>0$ an integer. If $R^{(d)}=\bigoplus_{(m_1,\ldots,m_{r})\in\Zz_{\ge0}^{r}}R_{dm_{1},\ldots,dm_{r}}$ is an integral domain which is finitely generated, then $R$ is finitely generated as well.
%\end{lem}
%\begin{proof}
%Let $T$ be the integral closure of $R^{(d)}$ in $K(R^{(d)})$. By \cite[Corollary 13.13]{GTM150}, $T$ is a finitely generated $R^{(d)}$-module. Since $R^{(d)}$ is a Noetherian ring, by \cite[Proposition 6.5]{AM}, $T$ is a Noetherian $R^{(d)}$-module. For any $f\in R$, then $f$ is integral over $R^{(d)}$ as it is the zero of the monic polynomial $x^d-f^d\in R^{(d)}[x]$. Thus $R\subseteq T$ is a submodule of $T$. By \cite[Proposition 6.2]{AM}, $R$ is a finitely generated $R^{(d)}$-module. Hence the ring $R$ is finitely generated.   
%\end{proof}

\begin{proof}[Proof of Theorem \ref{thm:effisconst}]
By assumption, there is an effective $\Qq$-divisir $\Delta$ such that $(X,\Delta)$ is klt and $-(K_X+\Delta)$ is ample over $S$. Let $g:Y\to X$ be a log resolution of $(X,\Delta)$, possibly passing to an \'etale base change of $S$, we can assume that
\begin{itemize}
    \item $f:(X,\Delta)\to S$ is flat and $X_s$ is normal for any $s\in S$,
    \item $(Y,E)\to S$ is a log smooth family, where $E$ is the reduced $g$-exceptional divisors, and
    \item $E|_{Y_s}$ is the exceptional locus of $g_s: Y_s\to X_s$, and $E_{i,s}:=E_i|_{Y_s}$ is irreducible for any irreducible component $E_i$ of $E$.
\end{itemize}
Since $H^i(Y_s,\Oo_{Y_s})=H^i(X_s,\Oo_{X_s})=0$ for $i=1,2$, by Theorem \ref{thm:picisconst}, up to another \'etale base change of $S$, we may further assume that 
$$\phi^Y_s:\Pic(Y_\eta)\cong\Pic(Y/S)\to\Pic(Y_s)$$
is an isomorphism for any $s\in S$. Notice that 
$$\mathrm{Cl}(X/S)=\Pic(Y/S)/\oplus_i\Zz[E_i]\text{ and }\mathrm{Cl}(X_s)=\Pic(Y_s)/\oplus_i\Zz[E_{i,s}],$$
therefore $\phi^Y_s$ actually induces an isomorphism $\phi_s:\mathrm{Cl}(X/S)\to\mathrm{Cl}(X_s)$ for any $s\in S$. %We emphasize that the isomorphism $\phi^Y_s$, hence $\phi_s$ is stable under any irreducible \'etale base change.

%By Proposition \ref{prop: Eff_R is raitonal}, w
We may take $D_1,...,D_r$ to be generators of $\mathrm{Cl}(X/S)$. Let $A\sim_S -d(K_X+\Delta)$ be a relatively very ample Cartier divisor over $S$ such that there exists effective Weil divisors $B_1,...,B_{2r}$ satisfying $B_i\sim_S A+D_i$ and $B_{r+i}\sim_S A-D_i$. We may take $h:W\to X$ to be a log resolution of $(X,\sum B_i+\Delta)$ such that up to
shrinking $S$ the following holds.
%an \'etale base change of $S$ the following holds.
\begin{enumerate}
    \item $h$ factors through $g$.
    \item $(W,\Delta_W+F+\sum B_{i,W}+A_W)\to S$ is a log smooth family, where $\Delta_W=h_*^{-1}\Delta,~B_{i,W}=h_*^{-1}B_i,~0\le A_W\sim_S h^*A$ and $F$ is the reduced exceptional divisors.
    \item $A_W$ have no common component with $\Delta_W+F+\sum B_{i,W}$.
    \item $F|_{W_s}$ is the exceptional locus of $W_s\to X_s$.
\end{enumerate}

Now let $L$ be any Weil divisor on $X$, then $L=\sum_{i=1}^ra_iD_i\in \mathrm{Cl}(X/S)$ for some $a_i\in\Zz$ and
\begin{align*}
L+\sum_i|a_i|A&\sim_S\sum_{a_i>0}a_iB_i-\sum_{a_i<0}a_iB_{i+r}=\sum_{a_i>0}a_iB_i+\sum_{a_i<0}(-a_i)B_{i+r}\ge0.
\end{align*}
Let
$$L_W:=\sum_{a_i>0}a_iB_{i,W}+\sum_{a_i<0}(-a_i)B_{i+r,W}-\sum_i|a_i|f^*A,$$
then we have $h_*L_W\sim_S L$ and $h_*\Oo_W(L_W+nF)=\Oo_X(h_*L_W)$ for any $n\gg 0$.

Notice that
$$L\sim_S md(K_X+\Delta)+\sum_{a_i>0}a_iB_i+\sum_{a_i<0}(-a_i)B_{i+r}+\left(m-\sum_i|a_i|\right)A$$
for any $m$. Let $K_W+\Delta'=h^*(K_X+\Delta)$ and write $\Delta'=\Delta'_{1}-\Delta'_{2}$, where $\Delta'_{1}\ge0,\Delta'_{2}\ge0$ and $\Delta'_{1}\cap\Delta'_{2}=0.$ For any $m\gg n$, we have

\begin{align*}
L_W+nF+md\Delta'_{2}\sim_S& ~md\left(K_W+\Delta'_{1}\right)+nF+\sum_{a_i>0}a_iB_{i,W}\\&+\sum_{a_i<0}(-a_i)B_{i+r,W}+\left(m-\sum_i|a_i|\right)A_W.
\end{align*}
Notice that $md\Delta'_{2}$ is $h$-exceptional, therefore we also have 
$$h_*\Oo_W\left(L_W+nF+md\Delta'_{2}\right)=\Oo_X(h_*L_W)$$
for any $m, n\gg0$ such that $m-n\gg0$. Since $D_W$ is general, $D_W+\sum_iB_{i,W}+F+\Delta_W$ is simple normal crossing. Let 
$$\Delta'':=\Delta'_{1}+\frac{1}{md}\left(nF+\sum_{a_i>0}a_iB_{i,W}+\sum_{a_i<0}(-a_i)B_{i+r,W}+\left(m-\sum_i|a_i|\right)A_W\right),$$
then $(W,\Delta'')$ is an snc klt pair over $S$ such that 
$$md\left(K_W+\Delta''\right)\sim_S L_W+nF+md\Delta'_{2}.$$
Hence by \cite[Theorem 1.8]{HMX13} we have $r:=h^0(W_s,md(K_{W_s}+\Delta''_s))$ is independent of $s\in S$. By Grauert's theorem \cite[Corollary 12.9]{GTM52} we have 
$$f_*h_*\Oo_W\left(L_W+nF+md\Delta'_{2}\right)\otimes k(s)\to H^0\left(W_s,\left(L_W+nF+md\Delta'_{2}\right)|_{W_s}\right)$$
is an isomorphism for any $s\in S$ and $m\gg n\gg0$. Since $F|_{W_s}$ is the exceptional locus of $h_s: W_s\to X_s$ and $md\Delta'_{2}|_{W_s}$ is exceptional, we have 
$$H^0\left(W_s,\left(L_W+nF+md\Delta'_{2}\right)|_{W_s}\right)=H^0\left(X_s,(h_s)_{*}\left(L_W|_{W_s}\right)\right)$$
and it follows that
$$f_*\Oo_X(L)\otimes k(s)=f^*\Oo_X(h_*L_W)\otimes k(s)\to H^0\left(X_s,(h_s)_{*}\left(L_W|_{W_s}\right)\right)$$
is isomorphic and therefore $f_*\Oo_X(L)$ is locally free of rank $r$. 

If we let $h=g\circ\alpha$, then since $g_*(\alpha_*L_W-g^{-1}_*L)=h_*L_W-L\sim_S 0$ we have 
\[\alpha_*L_W-g^{-1}_*L\sim_S(\alpha_*L_W-g^{-1}_*L)-g^*(h_*L_W-L)\]
and the latter is a Weil divisor supported on $\Exc(g)=\Supp E$. Therefore $\alpha_*L_W$ and $g^{-1}_*L$ have the same image under $g_*:\Pic(Y/S)\to\Cl(X/U)$. Notice that $\phi_s^Y(\alpha_*L_W)=(\alpha_{s})_*(L_{W}|_{W_s})$ and $(g_s)_*((\alpha_{s})_*(L_{W}|_{W_s}))=(h_s)_*(L_W|_{W_s})$, then we have 
$$
(h_s)_*(L_W|_{W_s})=(g_s)_*\phi_s^Y(\alpha_*L_W)=(g_s)_*\phi^Y_s(g^{-1}_*L)=\phi_s(L)\in\Cl(X_s).
$$ Hence we can rewrite the above isomorphism as
$$f_*\Oo_X(L)\otimes k(s)\to H^0(X_s,\phi_s(L)),$$
and this implies that $\phi_s$ actually induces an isomorphism
$$\phi_s|_{\Eff(X/S)}:\Eff(X/S)\to\Eff(X_s)$$
for any $s\in S$. Indeed this is because 
$$
L\in\Eff(X/S)\Leftrightarrow f_*\Oo_X(L)\neq0\Leftrightarrow r>0\Leftrightarrow \phi_s(L)\in\Eff(X_s).
$$ 
The proof is finished.
\end{proof}

\section{Boundedness of complements for exceptional generalized pairs}\label{sec5}
In this section, we show the boundedness of complements for exceptional generalized pairs which is the first step towards our main result, cf. {\cite[Theorem 7]{Sho20}}.
\begin{thm}\label{thm:exc}
Let $d,p,I$ be positive integers, $\fR\subseteq[0,1]\cap\Qq$ a finite set, and $\Phi:=\Phi(\fR)$. Then there exists a finite set $\cN$ of positive integers depending only on $d, p,I,$ and $\Phi$, satisfying $I\mid\cN$ and the following.

Assume that $(X,B+\bM)$  is a projective g-pair of dimension $d$ such that $X$ is of Fano type, $p\bM$ is b-Cartier, $(X,B_\Phi+\bM)$ is exceptional, and $(X,B+\bM)$ is $\Rr$-complementary. Then $(X,B+\bM)$ is $\cN$-complementary.
\end{thm}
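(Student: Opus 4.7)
The plan follows the sketch from the introduction: reduce the problem to the case where the boundary $B$ has a uniformly bounded number of components, and then invoke existing boundedness of complements for g-pairs of bounded complexity. By Lemma \ref{lem:Qcompl}, I may first assume that $B$ is a $\Qq$-divisor (and by rescaling, that $\bM$ is $\Qq$-Cartier). Since $(X,B_{\Phi}+\bM)$ is exceptional and $\Rr$-complementary, the BAB-type theorem \cite[Theorem 1.11]{Bir19} for exceptional generalized pairs implies that $X$ lives in a bounded family; equivalently, there is a projective morphism $\cX\to S$ of finite type such that every admissible $X$ appears as a closed fiber.

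By Theorem \ref{thm:effgpisfg}, $\Eff(X)$ is finitely generated for each such $X$. Applying Theorem \ref{thm:effisconst} to $\cX\to S$, after a noetherian stratification of $S$ and \'etale base changes on each stratum, the canonical isomorphisms $\Eff(\cX/S_\alpha)\to \Eff(\cX_s)$ produce a uniform upper bound $N=N(d,p,\Phi)$ on the number of generators of $\Eff(X)$. I fix effective prime generators $D_1,\ldots,D_N$ of $\Eff(X)$ on each piece; then every effective Weil divisor on $X$ is linearly equivalent to a $\Zz_{\ge 0}$-combination of the $D_i$.

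The crucial reduction is then to replace the boundary $B$ by a $\Qq$-divisor $B'\ge B_{\Phi}$ with $\Supp B'\subseteq \bigcup_i D_i$, such that $(X,B'+\bM)$ is still $\Rr$-complementary and a bounded monotonic $n$-complement of $(X,B'+\bM)$ yields an $n$-complement of $(X,B+\bM)$. Once this is achieved, $B'$ has at most $N$ components and its coefficients lie (up to the hyperstandard approximation $\Phi$) in a set with controlled behavior, so the existing theorems on boundedness of complements for generalized pairs of bounded complexity \cite[Theorem 1.1]{Chen23} and \cite[Theorem 1.10]{Bir19} furnish a finite set $\cN_0$ of positive integers such that $(X,B+\bM)$ is $\cN_0$-complementary. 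To enforce $I\mid \cN$, one finally passes from $\cN_0$ to a finite set $\cN$ of common multiples of $I$ and the elements of $\cN_0$, using that rescaling preserves the defining numerical and lc properties of a monotonic $n$-complement.

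The main obstacle is the reduction step producing $B'$. One must arrange simultaneously that $B'\ge B_{\Phi}$, that $(X,B'+\bM)$ remains $\Rr$-complementary (hence lc), and that a bounded complement of $(X,B'+\bM)$ descends to a bounded complement of the original $(X,B+\bM)$. The exceptional hypothesis is precisely what provides the necessary room: since $(X,B_{\Phi}+G+\bM)$ is klt for every effective $G\sim_{\Rr}-(K_X+B_\Phi+\bM_X)$, there is uniform slack in the lc condition, so one can shift the boundary across linear equivalence into the fixed support $\bigcup_i D_i$ without destroying $\Rr$-complementarity. Translating the ``number of generators of $\Eff(X)$ is bounded by $N$'' statement into the actual bound on the number of components of $B'$ is the delicate technical heart of the argument, and is what is encapsulated in the (forthcoming) Proposition \ref{prop:excgpairbdd}.
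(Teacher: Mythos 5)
Your high-level strategy matches the paper's, and you correctly identify the key inputs: exceptionality of $(X,B_\Phi+\bM)$ plus \cite[Theorem 1.11]{Bir19} give a bounded family, and Theorems \ref{thm:effisconst} and \ref{thm:effgpisfg} give a uniform bound $m$ on the number of generators of $\Eff(X)$ (that is Proposition \ref{prop:excgpairbdd}). But the proposal leaves unfilled the two steps that carry the real content, and these are genuine gaps rather than deferred details.

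First, even after the boundary is supported on $m$ fixed prime divisors $D_1,\dots,D_m$, the coefficients still range over all of $[0,1]$, so you cannot directly quote \cite[Theorem 1.1]{Chen23} or \cite[Theorem 1.10]{Bir19}, which require DCC coefficients. The paper fills this with Proposition \ref{prop:compbdd}: argue by contradiction, pass to a subsequence with each coefficient monotone with limit $b_j$, replace $b_{ij}$ by $\min\{b_{ij},b_j\}$ to enter a DCC set, apply the DCC complement result, and check via the floor inequalities in Definition \ref{defn:comp} that for $i\gg0$ the resulting $n'$-complement also works for the original coefficients. Second, you flag the reduction from $B$ to a boundary supported on $\bigcup_i D_i$ as the delicate heart but misattribute it to Proposition \ref{prop:excgpairbdd}, which only bounds $m$ and says nothing about transporting complements. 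The paper's actual device is a monotone, linear-equivalence-preserving homomorphism $\mu:\WDiv(X)_\Rr\to\fD_\Rr$, where $\fD:=\Zz(\{D_1,\dots,D_m\})$, sending each prime $P\not\in\{D_1,\dots,D_m\}$ to an effective $\tilde P\sim P$ in $\fD^+$. Because the $D_i$ include $\Supp B_\Phi$, one has $\mu(B)\ge\mu(B_\Phi)=B_\Phi$, and exceptionality forces $(X,\mu(B)+\bM)$ to be a klt $\Rr$-complement of itself, hence $n$-complementary for some $n\in\cN$. Crucially, one must then go back to produce an $n$-complement of the original $(X,B+\bM)$: the paper uses the coefficientwise inequality $\lf(n+1)\mu(B)\rf\ge\mu\lf(n+1)B\rf$ to set $E':=\lf(n+1)\mu(B)\rf-\mu\lf(n+1)B\rf\ge0$ and $B^+:=\frac{1}{n}\left(\lf(n+1)B\rf+E+E'\right)$, then verifies $n(K_X+B^++\bM_X)\sim0$ and klt-ness (via exceptionality again, since $B^+\ge B_\Phi$). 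Without this floor-function bookkeeping your reduction does not close. As a minor point, the opening appeal to Lemma \ref{lem:Qcompl} to ``assume $B$ is a $\Qq$-divisor'' is not valid: that lemma only rationalizes the complement when $B$ is already rational, and the paper's argument must and does work with arbitrary $[0,1]$ coefficients throughout.
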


\subsection{Monoids of effective divisors for exceptional generalized pairs}

\begin{defn}\label{defn:exc}
We say that a projective g-pair $(X,B+\bM)$ is \emph{exceptional} if it is $\Rr$-complementary and any $\Rr$-complement of $(X,B+\bM)$ is klt.
\end{defn}

\begin{rem}
We remark that $(X,B+\bM)$ is exceptional if and only if it is $\Rr$-complementary and $(X,B+G+\bM)$ is klt for any effective $\Rr$-divisor $G\sim_{\Rr}-(K_X+B+\bM_X)$. Indeed, otherwise, there exists a non-lc g-pair $(X,B'+\bM)$ such that $K_X+B'+\bM_X\sim_\Rr0$ and $B'\ge B$. Let $(X,B''+\bM)$ be an $\Rr$-complement of $(X,B+\bM)$. Take a positive real number $\delta$ such that $(X,(\delta B'+(1-\delta)B'')+\bM)$ is lc but not klt. In particular, $(X,(\delta B'+(1-\delta)B'')+\bM)$ is a non-klt $\Rr$-complement of $(X,B+\bM)$, a contradiction.
\end{rem}

\begin{lem}\label{lem:excbir}
Let $(X,B+\bM)$ be a projective g-pair that is exceptional and $f:Y\to X$ a birational morphism. Assume that $(Y,B_Y+\bM)$ is an $\Rr$-complementary g-pair such that $f_*B_Y\ge B$. Then $(Y,B_Y+\bM)$ is exceptional.
\end{lem}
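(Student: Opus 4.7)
The plan is to prove that $(Y,B_Y+\bM)$ is exceptional by taking any $\Rr$-complement $(Y,B_Y^++\bM)$ of $(Y,B_Y+\bM)$, pushing it forward along $f$ to an $\Rr$-complement of $(X,B+\bM)$, invoking exceptionality of $(X,B+\bM)$ to conclude that pushforward is klt, and finally transferring kltness back to $Y$ via crepancy.

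Concretely, first I would set $B_X^+:=f_*B_Y^+$. Since $B_Y^+\ge B_Y$, the hypothesis $f_*B_Y\ge B$ gives $B_X^+\ge f_*B_Y\ge B$. Pushing forward the relation $K_Y+B_Y^++\bM_Y\sim_{\Rr}0$ (and using that $\bM_X=f_*\bM_Y$ since $\bM$ is a b-divisor) yields $K_X+B_X^++\bM_X\sim_{\Rr}0$; in particular $K_X+B_X^++\bM_X$ is $\Rr$-Cartier, so $(X,B_X^++\bM)$ is an honest g-pair.

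Next, the key point is to promote this to a crepant relation. Consider the divisor
\[
D:=(K_Y+B_Y^++\bM_Y)-f^*(K_X+B_X^++\bM_X).
\]
Its pushforward $f_*D$ vanishes (with compatible choices of $K_X,K_Y$, together with $f_*B_Y^+=B_X^+$ and $f_*\bM_Y=\bM_X$), so $D$ is $f$-exceptional; moreover $D\sim_{\Rr,X}0$ since both summands are. By the negativity lemma, $D=0$, i.e., $f$ is crepant between $(Y,B_Y^++\bM)$ and $(X,B_X^++\bM)$. Consequently, $(X,B_X^++\bM)$ inherits lcness from $(Y,B_Y^++\bM)$ and, combined with the previous paragraph, is an $\Rr$-complement of $(X,B+\bM)$. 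Exceptionality of $(X,B+\bM)$ forces it to be klt, and crepancy transports kltness back to $(Y,B_Y^++\bM)$.

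Since $(Y,B_Y^++\bM)$ was an arbitrary $\Rr$-complement of $(Y,B_Y+\bM)$, every such complement is klt, so $(Y,B_Y+\bM)$ is exceptional. The only delicate step is the application of the negativity lemma to force $D=0$; this is where $\Rr$-Cartierness of $K_X+B_X^++\bM_X$ is needed, but that is automatic once one observes that $\sim_{\Rr}0$ implies $\Rr$-Cartier.
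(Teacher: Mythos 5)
Your argument is correct and follows essentially the same route as the paper's proof: push $B_Y^+$ forward to $B^+:=f_*B_Y^+\ge B$, observe $(Y,B_Y^++\bM)$ and $(X,B^++\bM)$ are crepant, invoke exceptionality of $(X,B+\bM)$ to get that $(X,B^++\bM)$ is klt, and transfer kltness back to $Y$. The only difference is that the paper simply asserts the crepancy equation $K_Y+B_Y^++\bM_Y=f^*(K_X+B^++\bM_X)$, while you correctly spell out the negativity-lemma argument that justifies it.
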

\begin{proof}
Let $(Y,B_Y^++\bM)$ be an $\Rr$-complement of $(Y,B_Y+\bM)$ and let $B^+:=f_*B_Y^+$. Then $B^+\ge B$ and
$$K_Y+B_Y^++\bM_{Y}=f^*\left(K_X+B^++\bM_X\right).$$
Since $(X,B+\bM)$ is exceptional, $(X,B^++\bM)$ is klt which implies that $(Y,B_Y^++\bM)$ is klt. Therefore $(Y,B_Y+\bM)$ is exceptional.
\end{proof}

\begin{prop}\label{prop:excgpairbdd}
Let $d,p$ be positive integers and $\fR\subseteq[0,1]\cap\Qq$ a finite set. Then there exists a positive integer $m$ depending only on $d,p$, and $\fR$ satisfying the following.

Assume that $(X,B+\bM)$ is a $\Qq$-factorial projective g-pair such that
\begin{enumerate}
  \item $\dim X=d$,
  \item $B\in\fR$, $p\bM$ is b-Cartier,
  \item $X$ is of Fano type, and
  \item $(X,B+\bM)$ is exceptional.
\end{enumerate}
Then $\Eff(X)$ has at most $m$ prime generators.
\end{prop}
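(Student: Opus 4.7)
The plan is to combine boundedness of exceptional Fano type g-pairs with the constancy of the monoid of effective Weil divisors in families established in Theorem~\ref{thm:effisconst}. First, I would invoke \cite[Theorem 1.11]{Bir19} (for g-pairs; Lemma~\ref{lem:excbir} can be used to replace the boundary if necessary while preserving exceptionality) to conclude that the collection of $X$ satisfying the hypotheses of the proposition forms a bounded family. Concretely, there exist finitely many projective morphisms $\pi_i:\mathcal{X}_i\to S_i$ with $S_i$ of finite type over $\Cc$ such that every such $X$ is isomorphic to some closed fiber $(\mathcal{X}_i)_s$ for an $i$ and some $s\in S_i$.

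Next, by Noetherian stratification of each $S_i$ I may assume each $S_i$ is smooth and irreducible, and each fiber $(\mathcal{X}_i)_s$ is normal, $\Qq$-factorial, and of Fano type; all of these conditions are preserved on constructible subsets, so subdividing further we may assume they hold on all of $S_i$. I then apply Theorem~\ref{thm:effisconst} to each family: up to an \'etale base change $S_i'\to S_i$, there is a canonical isomorphism
$$
\Eff(\mathcal{X}_i'/S_i')\to\Eff((\mathcal{X}_i')_{s'})
$$
for every $s'\in S_i'$, where $\mathcal{X}_i':=\mathcal{X}_i\times_{S_i}S_i'$. By Theorem~\ref{thm:effgpisfg} applied to any closed fiber, $\Eff(\mathcal{X}_i'/S_i')$ is a finitely generated commutative monoid, and the number $m_i$ of its prime generators is independent of $s'$ via the canonical isomorphism above. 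Setting $m:=\max_i m_i$ produces the desired uniform bound, because every $X$ in the statement is isomorphic to some fiber $(\mathcal{X}_i')_{s'}$ after pulling back along the \'etale cover.

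The main obstacle will be verifying the g-pair boundedness input from \cite[Theorem 1.11]{Bir19}: one must confirm that the exceptional hypothesis on $(X,B+\bM)$, together with $B\in\fR$ for a fixed finite set $\fR$ and $p\bM$ b-Cartier, indeed places $X$ in a bounded family. Once boundedness is in hand, Theorem~\ref{thm:effisconst} and Theorem~\ref{thm:effgpisfg} do the remaining work essentially for free: a finite stratification reduces the question to bounding the number of prime generators of $\Eff$ on each (\'etale cover of a) stratum, where it is literally constant in the family.
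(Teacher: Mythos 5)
Your overall architecture (boundedness of the underlying varieties, then Theorem \ref{thm:effisconst} plus Theorem \ref{thm:effgpisfg} on each family) is the same as the paper's, but two of the steps you treat as routine are exactly where the paper does real work, and as written they are gaps. The more serious one is the claim that after Noetherian stratification you may assume each fiber is ``normal, $\Qq$-factorial, and of Fano type'' and that this suffices. Theorem \ref{thm:effisconst} requires the \emph{total space} $\mathcal{X}_i$ to be of Fano type \emph{over} $S_i$ (i.e.\ a relatively klt pair $(\mathcal{X}_i,\cB_i)$ with $-(K_{\mathcal{X}_i}+\cB_i)$ big and nef over $S_i$), not merely that every fiber is of Fano type; the latter is not known to be a constructible condition, and even granting it fiberwise it does not formally globalize. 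This is precisely the content of Proposition \ref{prop:FTovergp}, whose proof is a nontrivial application of deformation invariance of plurigenera (\cite[Theorem 1.8]{HMX13}) and needs as input a dense set of fibers that are $\epsilon$-lc, $\Qq$-factorial, log Calabi--Yau with big boundary. Your ``preserved on constructible subsets'' assertion hand-waves over this.

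The second gap is the boundedness input itself, which you flag as ``the main obstacle'' but do not resolve. The paper does not apply \cite[Theorem 1.11]{Bir19} to the g-pair directly: it first uses \cite[Theorem 1.10]{Bir19} to get a monotonic $n'$-complement $(X,B'+\bM)$, observes that exceptionality plus Cartier index forces it to be $\tfrac{1}{n'}$-lc, transfers this to the honest pair $(X,B)$ via Lemma \ref{lem:Rcompgpairimplypair}, produces a $\tfrac{1}{n'}$-lc $n$-complement $(X,B^+)$ of the pair via \cite[Theorem 1.4(3)]{CX22}, and only then invokes \cite[Theorem 1.11]{Bir19} for $(X,B^+)$. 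This detour is not cosmetic: it is also what supplies the dense family of $\epsilon$-lc log Calabi--Yau fibers with big boundary that Proposition \ref{prop:FTovergp} needs in order to conclude the family is of Fano type over (an \'etale cover of) the base. Without both of these steps your argument does not close.
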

\begin{proof}
According to \cite[Theorem 1.10]{Bir19} (see also \cite[Theorem 1.1]{Chen23}), there exists a positive integer $n'$ which only depends on $d,p$ and $\fR$ such that $(X,B+\bM)$ has a monotonic $n'$-complement $(X,B'+\bM)$. As $(X,B+\bM)$ is exceptional, $(X,B'+\bM)$ is klt. Moreover, since $n'(K_X+B'+\bM_X)$ is Cartier, we know that $(X,B'+\bM)$ is $\frac{1}{n'}$-lc. In particular, $(X,B+\bM)$ has an $\Rr$-complement that is $\frac{1}{n'}$-lc and thus so does $(X,B)$ by Lemma \ref{lem:Rcompgpairimplypair}. By \cite[Theorem 1.4(3)]{CX22}, there exists a positive integer $n$ which only depends on $d,p,\fR$ and $n'$ such that $(X,B)$ has a $n$-complement $(X,B^+)$ that is $\frac{1}{n'}$-lc. By \cite[Theorem 1.11]{Bir19}, $(X,B^+)$ belongs to a bounded family $(\cX,\cB^+)$ over $T$. 

Note that by our construction there exists a dense subset of points $\{t_i\}_{i\in I}\subseteq T$ such that $(\cX_{t_i},\cB_{t_i}^+:=\cB^+|_{\cX_{t_i}})$ is a $\Qq$-factorial $\frac{1}{n'}$-lc pair,  $K_{\cX_{t_i}}+\cB^+_{t_i}\sim_\Qq0$ and $\cB^+_{t_i}$ is big for any $i\in I$. Then by Proposition \ref{prop:FTovergp} we may assume that possibly after an \'etale base change, for each component $T_i$ of $T$, $\cX_i:=\cX\times_TT_i$ is of Fano type over the generic point of $T_i$. Then the proposition follows from Theorems \ref{thm:effisconst} and \ref{thm:effgpisfg}.
\end{proof}

The following proposition is essentially contained in the proof of \cite[Proposition 2.9]{HX15}. For the reader's convenience, we give a proof here.
\begin{prop}\label{prop:FTovergp}
Let $(\cX,\cB)$ be a pair, $\cX\to T$ a contraction between normal quasi-projective varieties. Assume that $\cB$ is a $\Qq$-divisor. Let $\epsilon$ be a positive real number and $\{t_i\}_{i\in I}\subseteq T$ a dense subset of points such that $(\cX_{t_i},\cB_{t_i}:=\cB|_{\cX_{t_i}})$ is $\Qq$-factorial $\epsilon$-lc,  $K_{\cX_{t_i}}+\cB_{t_i}\sim_\Qq0$ and $\cB_{t_i}$ is big for any $i\in I$. Then there exists an \'etale morphism $S\to T$, such that $\cX\times_TS$ is of Fano type over $S$.
\end{prop}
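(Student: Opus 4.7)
\emph{Proof sketch.} The strategy is to verify, after shrinking $T$ to an open subset and possibly further passing to an \'etale cover to obtain an \'etale morphism $S \to T$, that $(\cX_S/S, \cB|_{\cX_S})$ satisfies the three hypotheses of Lemma \ref{lem:genericisFT} with trivial moduli; Lemma \ref{lem:genericisFT} then immediately implies $\cX_S$ is of Fano type over $S$. Concretely, I need to establish: (i) $K_\cX + \cB \sim_{\Qq, S} 0$; (ii) $(\cX_S, \cB|_{\cX_S})$ is klt; and (iii) $\cB|_{\cX_S}$ is big over $S$. The density of $\{t_i\}$ is the mechanism that upgrades each fibrewise condition at a dense set of $t_i$ to an open condition on $T$.

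After passing to an irreducible component of $T$, taking its normalization, applying generic flatness, removing images of the vertical components of $\cB$, and shrinking, I may assume $T$ is normal and irreducible, $\cX \to T$ is flat with geometrically normal fibers, and the Weil-divisor restriction of $\cB$ to each fibre over a fixed dense open agrees with the assumed $\cB_t$. Item (iii) follows from lower semi-continuity of the volume function, using that $\vol(\cB_{t_i}) > 0$ on a dense set. For (i), note that each $(\cX_{t_i}, \cB_{t_i})$ is $\Qq$-factorial klt with $\cB_{t_i}$ big and $K_{\cX_{t_i}} + \cB_{t_i} \sim_\Qq 0$, so by Lemma \ref{lem:genericisFT} each $\cX_{t_i}$ is klt Fano type and in particular rationally connected. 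Taking a simultaneous log resolution of $(\cX, \cB)$ over an open of $T$ and applying Theorem \ref{thm:picisconst} (after a possible \'etale base change) yields an isomorphism between the relative Picard group and each fibrewise Picard group; the fibrewise $\Qq$-linear triviality at dense $t_i$ then forces $K_{\cX_\eta} + \cB_\eta \sim_\Qq 0$ on the generic fibre, which by Lemma \ref{lem:trivialovergp} extends to $\Qq$-linear triviality over an open $U \subseteq T$. Given (i), the $\Qq$-Cartierness of $K_\cX + \cB$ over $U$ lets us verify (ii) by openness of klt-ness on the log resolution combined with the $\epsilon$-lc hypothesis at any $t_i \in U$.

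Intersecting the three opens and applying Lemma \ref{lem:genericisFT} gives the desired conclusion. The main obstacle is item (i), the spreading of $\Qq$-linear triviality from a dense set of fibres to an open of $T$; this requires Theorem \ref{thm:picisconst} applied to a family of log resolutions of klt Fano-type varieties, and is the essential reason for permitting an \'etale base change (rather than just an open immersion).
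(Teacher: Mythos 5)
The core issue is in item (iii). You claim that bigness of $\cB$ over $S$ follows from ``lower semi-continuity of the volume function, using that $\vol(\cB_{t_i}) > 0$ on a dense set,'' but this is not a valid inference. In a flat projective family with a relatively $\Qq$-Cartier divisor $D$, the function $t\mapsto\vol(D_t)$ satisfies $\vol(D_t)\ge\vol(D_\eta)$ for all $t$, with equality only on a \emph{very general} subset (the complement of countably many proper closed subsets). A Zariski-dense set $\{t_i\}$ can lie entirely inside that countable union of jumping loci (e.g.\ the $\Qq$-points of $\Aa^1$). So $\vol(\cB_{t_i})>0$ for a dense family of $t_i$ does not force $\vol(\cB_\eta)>0$, and the step as written does not go through. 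This is exactly why the paper does not argue with $\cB$ directly: it first produces a uniform $a>0$ such that $(\cX_{t_i},(1+a)\cB_{t_i})$ is $\tfrac{\epsilon}{2}$-lc for all $i$ (via a fiberwise log resolution from the stratification of \cite{Xu20} and an a priori multiplicity bound), then shrinks $T$ so that $K_\cX+(1+a)\cB$ is $\Qq$-Cartier and klt (via \cite[Prop.\ 2.4]{HX15}), and only then applies the invariance of log plurigenera \cite[Thm.\ 1.8]{HMX13} to $K+(1+a)\cB$ on a log resolution to conclude $\vol\bigl((K_\cX+(1+a)\cB)|_{\cX_\eta}\bigr)=\vol\bigl((K_\cX+(1+a)\cB)|_{\cX_{t_i}}\bigr)>0$. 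Since $K_{\cX_{t_i}}+(1+a)\cB_{t_i}\sim_\Qq a\cB_{t_i}$, this is what yields bigness over the generic point; the dense set of $t_i$ is only needed to trigger the invariance theorem, not a direct semicontinuity of $\vol(\cB_t)$.

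Your approach to item (i) is genuinely different from the paper's and looks plausible: rather than showing $\kappa_\sigma(K_{\cX_\eta}+\cB_\eta)=0$ via \cite[Thm.\ 1.8]{HMX13} and then $\kappa=0$ via \cite[Thm.\ 1.3]{Gongyo11}, you propose to apply Theorem \ref{thm:picisconst} to a simultaneous log resolution and use injectivity of $\Pic(\cX'/S)\to\Pic(\cX'_{t_i})$ to spread the $\Qq$-linear triviality at a single $t_i$. This works once one checks that the restriction of $m\mu^*(K_\cX+\cB)$ to a fibre of the resolution agrees with $m\mu_{t_i}^*(K_{\cX_{t_i}}+\cB_{t_i})$ for suitably divisible $m$, and that all fibres of the resolution are rationally connected (which follows from rational connectedness at the dense $t_i$ plus deformation invariance of RC in smooth families). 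It is a heavier tool than the paper uses, but it does bypass the $\kappa_\sigma$-argument. However, this alternative to (i) does not help repair (iii): Picard-group constancy alone says nothing about which classes are big on the generic fibre versus the special fibres, so you would still need the $(1+a)\cB$ bootstrap (and with it the construction of $a$) to get bigness.
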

\begin{proof}
We claim that there exists a positive rational number $a$ which only depends on $(\cX/T,\cB)$ such that $(\cX_{t_i},(1+a)\cB_{t_i})$ is $\frac{\epsilon}{2}$-lc for any $i\in I$. By \cite[Definition-Lemma 2.8]{Xu20}, possibly stratifying $T$ into a disjoint union of finitely many constructible subsets and taking \'etale coverings, we may assume that there exists a decomposition $T=\bigsqcup T_\alpha$ into irreducible smooth strata $T_\alpha$ such that for each $\alpha$, $(\cX\times_T T_\alpha,\cB\times T_\alpha)$ admits a fiberwise log resolution.
We may assume that $(\cX,\cB)$ admits a fiberwise log resolution $\mu:\cX'\to\cX$ and write
$$K_{\cX'} +\cB':=\mu^*\left(K_\cX+\cB\right),$$
for some $\Qq$-divisor $\cB'$ on $\cX'$. By assumption, the coefficients of $\cB'|_{\cX_{t_i}'}$ are $\le1-\epsilon$. We finish the claim by showing that there exists a positive integer $M_0$ depending only on $\mu:\cX'\to\cX$ and $\cB$, such that $\mult_{F_i}\mu_{i}^*\cB_{t_i}\le M_0$ for any $i\in I$ and any prime divisor $F_i$ on $\cX_{t_i}'$, where $\mu_{i}$ denotes the morphism $\cX_{t_i}'\to\cX_{t_i}$. Take a Cartier divisor $\mathcal{H}\ge\cB$ on $\cX$ and let $\mathcal{H}':=\mu^*\mathcal{H}$. As 
$$\mult_{F_i}\mu_{i}^*(\mathcal{H}|_{\cX_{t_i}})=\mult_{F_i}\mathcal{H}'|_{\cX_{t_i}'}$$
for any $i\in I$ and any prime divisor $F_i$ on $\cX_{t_i}'$,
it is clear that $\mult_{F_i}\mu_{i}^*(\mathcal{H}|_{\cX_{t_i}})\le M_0$ for some positive integer $M_0$ which only depends on the divisor $\mathcal{H}'$. In particular,
$$\mult_{F_i}\mu_{i}^*\cB_{t_i}\le \mult_{F_i}\mu_{i}^*(\mathcal{H}|_{\cX_{t_i}})\le M_0.$$

\smallskip

Now by \cite[Proposition 2.4]{HX15} possibly shrinking $T$, we may assume that $K_{\cX}+(1+a)\cB$ is $\Qq$-Cartier and klt. We may write
$$K_{\cX'} +\cD_a':=\mu^*\left(K_\cX+\left(1+a\right)\cB\right)+\cE_a,$$
where $\cD_a'$ and $\cE_a$ are effective $\Qq$-divisors with no common components. Since $\cE_a|_{\cX_t'}$ is effective and exceptional over $\cX_t$, when $m$ is sufficiently divisible, we see that
\begin{align*}
&\ \ \ \ h^0\left(\cX_t,m\left(K_\cX+\left(1+a\right)\cB\right)|_{\cX_t}\right)\\&
=h^0\left(\cX_t',\mu_t^*m\left(K_\cX+\left(1+a\right)\cB\right)|_{\cX_t}+\cE_a|_{\cX_t'}\right)
\\&= h^0\left({\cX_t'},m\left(K_{\cX'} + \cD_a'\right)|_{\cX_t'}\right),
\end{align*}
where $\mu_t$ denotes the morphism $\cX_t'\to\cX_t$. Hence $$\vol\left(\cX_t,\left(K_\cX+\left(1+a\right)\cB\right)|_{\cX_t}\right)=\vol\left(\cX'_t,\left(K_{\cX'}+\cD_a'\right)|_{\cX'_t}\right).$$
By \cite[Theorem 1.8]{HMX13}, $\vol(\cX_t,(K_\cX+(1+a)\cB)|_{\cX_t})$ is locally constant for $t\in T.$ Therefore $K_{\cX_\eta}+(1+a)\cB_\eta$ is big, where $\eta$ is the generic point of $T$. 

\smallskip

According to \cite[Theorem 1.8]{HMX13} again, we have $\kappa_{\sigma}(K_{\cX_\eta}+\cB_\eta)=0$. Hence by \cite[Theorem 1.3]{Gongyo11}, we see that $\kappa(K_{\cX_\eta}+\cB_\eta)=0$. In particular, we can find an effective $\Qq$-divisor $E|_{\cX_\eta}$ on $\cX_\eta$ such that $E\sim_{\Qq}K_{\cX_\eta}+\cB_\eta$. By \cite[Lemma 3.2.1]{BCHM10}, there exists an effective $\Qq$-divisor $E$ on $\cX$ such that $E\sim_{\Qq,U}K_{\cX}+\cB$ and $E|_{\cX_\eta}=E_\eta$. Since $E|_{\cX_{t_i}}\sim_\Qq K_{\cX_{t_i}}+\cB_{t_i} \sim_\Qq0$ and $\{t_i\}_{i\in I}$ is a dense subset of $T$, $E|_{\cX_t}\sim_\Qq0$ over a general fiber $t$. As $E|_{\cX_t}\ge0$, one can see that $E|_{\cX_t}=0$ and thus $E=0$ over an open subset of $T$ by Lemma \ref{lem:0overpt}. Therefore possibly shrinking $T$, we may assume that $K_{\cX}+\cB\sim_{\Qq,T}0$, and $\cB$ is big over $T$. The proof is finished.
\end{proof}

\subsection{Proof of Theorem \ref{thm:exc}}

\begin{comment}
\begin{prop}\label{prop:excgpairbdd}
Let $d,p$ be positive integers and $\fR\subseteq[0,1]\cap\Qq$ a finite set. Then there exists a bounded family $\cX$ over a normal base $T$ satisfying the following.

Assume that $(X,B+\bM)$ is a $\Qq$-factorial g-pair such that
\begin{enumerate}
  \item $\dim X=d$,
  \item $B\in\fR$, $p\bM$ is b-Cartier,
  \item $X$ is of Fano type, and
  \item $(X,B+\bM)$ is exceptional.
\end{enumerate}
Then $X$ belongs to the bounded family $\cX$ over $T$. Moreover, possibly after an \'etale base change, $\cX_i$ is of Fano type over the generic point of each component $T_i$ of $T$, where $\cX_i:=\cX\times_TT_i$. 

Moreover, there exists a positive integer $m$ depending only on $d,p$, and $\fR$ such that $\Eff X$ has at most $m$ generators.
\end{prop}
\end{comment}

\begin{prop}\label{prop:compbdd}
Let $d,p,m,$ and $I$ be positive integers. Then there exists a finite set $\cN$ of positive integers depending only on $d, p,m,$ and $I,$ satisfying $I\mid\cN$ and the following.

Assume that $(X/Z\ni z,B+\bM)$ is a projective g-pair such that
    \begin{enumerate}
      \item $\dim X=d$,
      \item $X$ is of Fano type over $Z$,
      \item $(X/Z\ni z,B+\bM)$ is $\Rr$-complementary, and
      \item the number of components of $B$ is at most $m$, and $p\bM$ is b-Cartier.
    \end{enumerate}
    Then $(X/Z\ni z,B+\bM)$ is $\cN$-complementary.
\end{prop}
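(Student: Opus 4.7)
The plan is to reduce the proposition to the already-established boundedness of complements for DCC coefficient sets \cite[Theorem 1.1]{Chen23}, by approximating $B$ from below by a boundary with coefficients in a finite (hence DCC) subset of $[0,1]\cap\Qq$ depending only on $d,p,m,I$. Write $B=\sum_{i=1}^m b_iB_i$ with $b_i\in[0,1]$, using the hypothesis that $B$ has at most $m$ components. Fix a hyperstandard set $\Phi=\Phi(\fR)$ (initially with $\fR=\{0,1\}$), and for a finite set $\cN_0\subseteq\Zz_{>0}$ with $I\mid\cN_0$ consider the approximation $\tilde B:=B_{\cN_0\_\Phi}\le B$, whose coefficients lie in the DCC set $\Gamma(\cN_0,\Phi)$. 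Since $\tilde B\le B$ and $(X/Z\ni z,B+\bM)$ is $\Rr$-complementary, so is $(X/Z\ni z,\tilde B+\bM)$.

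Applying \cite[Theorem 1.1]{Chen23} to $(X/Z\ni z,\tilde B+\bM)$ produces a finite set $\cN_1=\cN_1(d,p,I,\Gamma(\cN_0,\Phi))$, with $I\mid\cN_1$, such that $(X/Z\ni z,\tilde B+\bM)$ is $\cN_1$-complementary. Iterate: replace $\cN_0$ by $\cN_0\cup\cN_1$ and repeat until $\cN_1\subseteq\cN_0$. Then $\cN:=\cN_0$ satisfies $I\mid\cN$ and, by Lemma \ref{lem:N_Phicompl}(1), transports an $n$-complement of $(X/Z\ni z,\tilde B+\bM)$ to an $n$-complement of $(X/Z\ni z,B+\bM)$ for any $n\in\cN$.

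The main obstacle is establishing termination of the iteration uniformly in $(X/Z\ni z,B,\bM)$, depending only on $d,p,m,I$. The bound $m$ on the number of components of $B$ is essential: it forces the coefficient set $\{b_1,\ldots,b_m\}\subseteq[0,1]$ to have cardinality at most $m$. Combining this with the ACC for $\Rr$-complementary thresholds (Theorem \ref{thm:rctacc}) applied coordinatewise to the $m$-dimensional polytope
\[P:=\{\mathbf{b}'\in[0,1]^m:(X/Z\ni z,\textstyle\sum_i b'_iB_i+\bM)\text{ is }\Rr\text{-complementary}\},\]
one expects $P$ to be a rational polytope whose vertices have denominators bounded by a constant depending only on $d,p,m,I$. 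In the spirit of the uniform rational polytope results of \cite{HLS19}, this implies that the successive approximations $B_{\cN_0\_\Phi}$ take only finitely many distinct forms as $\cN_0$ grows within a bounded set, so the iteration stabilizes after a bounded number of steps, giving the required uniformity of $\cN$.
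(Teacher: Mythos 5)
Your overall strategy (approximate $B$ from below by a boundary with DCC coefficients, invoke \cite[Theorem 1.1]{Chen23}, then transport the complement back via Lemma \ref{lem:N_Phicompl}) is reasonable in outline, but the proof has a genuine gap exactly where you flag it: the termination of the iteration. Each pass replaces $\cN_0$ by $\cN_0\cup\cN_1$, which enlarges the DCC set $\Gamma(\cN_0,\Phi)$, and \cite[Theorem 1.1]{Chen23} applied to this larger set can return a strictly larger $\cN_1$; nothing you write rules out an infinite strictly increasing chain. The heuristic you offer does not close this. Theorem \ref{thm:rctacc} gives an ACC statement for thresholds in a single direction, not the claim that the polytope $P$ of $\Rr$-complementary coefficient vectors is rational with denominators bounded uniformly in $(X/Z\ni z,\bM)$; that uniform-denominator statement is nowhere established (and is itself a nontrivial assertion, closer in difficulty to the proposition you are trying to prove). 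Moreover, even granting it, the sets $\cN_i$ output by \cite[Theorem 1.1]{Chen23} depend on the growing DCC sets $\Gamma(\cN_0,\Phi)$, not on the finitely many ``forms'' of $B_{\cN_0\_\Phi}$, so stabilization of the approximations would not by itself force stabilization of the $\cN_i$.

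The paper sidesteps iteration entirely with a compactness argument that uses the bound $m$ in a more elementary way: assuming the conclusion fails, one takes a sequence of counterexamples $(X_i/Z_i\ni z_i,\sum_{j=1}^m b_{ij}B_{ij}+\bM_i)$ with minimal admissible complement indices tending to infinity, passes to a subsequence so that each coefficient sequence $\{b_{ij}\}_i$ is monotone with limit $b_j$, and sets $b'_{ij}:=\min\{b_{ij},b_j\}$. The resulting coefficient set is DCC, so \cite[Theorem 1.1]{Chen23} gives one uniform $n'$ with $I\mid n'$ for all the lowered boundaries $B_i'$; for $i$ large the inequality $b_{ij}-b_j<\frac{1-\{(n'+1)b_j\}}{n'+1}$ forces $\lfloor(n'+1)\{b_{ij}\}\rfloor=\lfloor(n'+1)\{b'_{ij}\}\rfloor$ and $n'\lfloor b_{ij}\rfloor=n'\lfloor b'_{ij}\rfloor$, so an $n'$-complement of $(X_i,B_i'+\bM_i)$ is already an $n'$-complement of $(X_i,B_i+\bM_i)$, a contradiction. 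If you want to salvage your write-up, replacing the iteration by this single contradiction/subsequence step is the fix; as written, the argument is incomplete.
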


\begin{proof}
Suppose on the contrary that there exist a sequence of g-pairs $(X_i/Z_i\ni z_i,B_{i}+\bM_{i})$ satisfying the conditions, such that
\[
n_i:=\min\big\{n \mid (X_i/Z_i\ni z_i,B_{i}+\bM_i)\text{ is }n\text{-complementary},\text{ and }I\mid n
\big\}
\]
is a strictly increasing sequence of positive integers. For each $i$, we may write $B_i:=\sum_{j=1}^mb_{ij}B_{ij}$, where $B_{i1},\dots,B_{im}$ are prime divisors. Since $b_{ij}\in[0,1],$ possibly passing to a subsequence, we may assume that either $\{b_{ij}\}_{i=1}^{\infty}$ is a decreasing sequence or an increasing sequence, and $\lim_{i\to\infty}b_{ij}=b_j$ for some real number $b_j\in[0,1]$ for any $1\le j\le m$. 

Let $b_{ij}':=\min\{b_{ij},b_j\}$ for any $i,j$, and 
$$B_{i}':=\sum_{j=1}^mb_{ij}'B_{ij}\text{ for any }i.$$ 
Note that the set $\{b_{ij}'\}_{i\ge1,m\ge j\ge1}$ satisfies the DCC and $(X_i/Z_i\ni z_i, B_{i}'+\bM_i)$ is $\Rr$-complementary for each $i\ge1$. By \cite[Theorem 1.1]{Chen23}, there exists a positive integer $n'$ which only depends on $d,p,I,$ and $\{b_{ij}'\}_{i,j}$ satisfying $I\mid n'$, such that $(X_i/Z_i\ni z_i,B_{i}'+\bM_i)$ is $n'$-complementary for any $i$. 

There exists a positive integer $N_0$ such that 
$$b_{ij}-b_j<\frac{1-\{(n'+1)b_j\}}{n'+1}$$
for any $i\ge N_0$ and any $1\le j\le m,$ as $\lim_{i\to \infty}b_{ij}=b_j$ for any $1\le j\le m$. 
Since either $b_{ij}'=b_{ij}$ or $1\ge b_{ij}>b_j=b_{ij}'$ and 
$$(n'+1)b_{ij}'<(n'+1)b_{ij}<1+\lfloor (n'+1)b_{ij}'\rfloor,$$ 
we have
$$\lfloor(n'+1)\{b_{ij}\}\rfloor=\lfloor(n'+1)\{b_{ij}'\}\rfloor\text{ and }  n'\lfloor b_{ij}\rfloor=n'\lfloor b_{ij}'\rfloor$$ 
for any $1\le j\le s$ and any $i\ge N_0$. It follows that $(X_i/Z_i\ni z_i,B_{i}+\bM_i)$ is $n'$-complementary for any $i\ge N_0$, a contradiction.
\end{proof}

\begin{proof}[Proof of Theorem \ref{thm:exc}]
By Lemma \ref{lem:excbir}, we can replace $X$ with a small $\Qq$-factorialization and thus assume that $X$ is $\Qq$-factorial. According to \cite[Lemma 7.2]{Bir19}, the g-pair $(X,B_\Phi+\bM)$ is $\delta$-lc for some $\delta>0$ depending only on $d,p$, and $\Phi$. It follows that $B_{\Phi}\in\Phi_0:=\Phi\cap[0,1-\delta]$ which is a finite set of rational numbers. Moreover, by \cite[Theorem 1.1]{Bir19}, $(X,B_\Phi)$ belongs to a log bounded family. Possibly replacing $I$ by a multiple, we may assume that $p|I$ and $I(\fR\cup\Phi_0)\subseteq\Zz$. 

By Proposition \ref{prop:excgpairbdd}, there exists a positive integer $m$ depending only on $d,p$, and $\Phi$ such that we can find finitely many prime divisors $D_1,\dots,D_m$ (containing all the components of $B_{\Phi}$) satisfying that if we let 
$$\fD:=\Zz(\{D_1,\dots,D_m\})\text{ and }\fD^+:=\Zz_{\ge0}(\{D_1,\dots,D_m\}),$$
then $B_\Phi\in\fD_\Qq:=\fD\otimes\Qq$ and $D\in\fD^+/\sim$ for any effective Weil divisor $D$. By Proposition \ref{prop:compbdd}, there exists a finite set $\cN\subseteq\Zz_{>0}$ depending only on $d,p, m,$ and $I$ satisfies $I\mid \cN$, such that if we additionally require $B\in\fD_\Rr:=\fD\otimes\Rr$, then $(X,B+\bM)$ is $\cN$-complementary. In what follows, we will show that $\cN$ has the required property.

We now first define a homomorphism $\mu:\WDiv(X)_\Rr\to\fD_\Rr$. Let 
$$\mu(D_i):=D_i\text{ for any }1\le i\le m,$$
and for any prime divisor $P\notin\{D_1,\dots,D_m\}$, we let
$\mu(P):=\tilde{P},$
where $\tilde{P}\in\fD^+$ such that $P\sim\tilde{P}$. For any $\Rr$-divisor $\sum a_iP_i$, we define
$$\mu\left(\sum a_iP_i\right)=\sum a_i\mu(P_i).$$ 
It is easy to see that $\mu$ is a homomorphism. It is clear that $\mu|_{\fD_\Rr}={\rm id}$, $\mu(P)\sim P$ for any divisor $P$, and $\mu(Q')\ge \mu(Q)$ for any $\Rr$-divisors $Q'\ge Q$. 

Suppose that $(X,B+G+\bM)$ is an $\Rr$-complement of $(X,B+\bM)$ for some $\Rr$-divisor $G\ge0$. Since $B+G\ge B\ge B_\Phi$ and $B_\Phi\in\fD_\Qq$, we have 
$$\mu(B+G)\ge\mu(B)\ge\mu(B_\Phi)=B_\Phi$$ 
and 
$$K_X+\mu(B+G)+\bM_X\sim_\Rr K_X+B+G+\bM_X\sim_\Rr0.$$
By our assumption that $(X,B_\Phi+\bM)$ is exceptional, we infer that $(X,\mu(B+G)+\bM)$ is a klt $\Rr$-complement of itself. In particular, $(X,\mu(B)+\bM)$ is $\Rr$-complementary. It implies that $(X,\mu(B)+\bM)$ is $n$-complementary for some $n\in\cN$. More precisely, there exists an effective Weil divisor $E$ such that
$$E\sim -nK_X-\lf(n+1)\mu(B)\rf-n\bM_X.$$
We claim that $(X,B+\bM)$ is $n$-complementary. 

\smallskip

We may write $B:=\sum_i b_iB_i$, and $\mu(B_i)=\sum_{j=1}^mm_{ij}D_j$, where $B_i$ are distinct prime divisors, $b_i\in[0,1]$, and $m_{ij}\in\Zz_{\ge0}$ for any $i,j$. Then 
\begin{align*}
\lf(n+1)\mu(B)\rf&=\sum_{j=1}^m\lf(n+1)\sum_im_{ij}b_i\rf D_j\\&
\ge\sum_{j=1}^m\sum_im_{ij}\lf(n+1)b_i\rf D_j=\mu\lf(n+1)B\rf.
\end{align*}
In particular, 
$$E':=\lf(n+1)\mu(B)\rf-\mu\lf(n+1)B\rf$$ 
is an effective Weil divisor. Since $B_\Phi=B_{\Phi_0}$ and $n\Phi_0\subseteq\Zz_{\ge0}$ by our choice, one has
$$\frac{\lf(n+1)B\rf}{n}\ge\frac{\lf(n+1)B_\Phi\rf}{n}\ge B_\Phi$$ 
and 
\begin{align*}
&\ \ \ -nK_X-\lf(n+1)B\rf-n\bM_X\sim -nK_X-\mu\lf(n+1)B\rf-n\bM_X\\&
=-nK_X-\lf(n+1)\mu(B)\rf-n\bM_X+\left(\lf(n+1)\mu(B)\rf-\mu\lf(n+1)B\rf\right).
\end{align*}
It follows that
$$-nK_X-\lf(n+1)B\rf-n\bM_X\sim E+E'.$$
Set 
$$B^+:=\frac{\lf(n+1)B\rf+E+E'}{n}\ge B_{\Phi},$$ 
then we have 
$$n\left(K_X+B^++\bM_X\right)\sim0.$$ 
Moreover, $(X,B^++\bM)$ is klt since $(X,B_\Phi+\bM)$ is exceptional. Therefore $(X,B^++\bM)$ is an $n$-complement of $(X,B+\bM)$. The proof is finished.
\end{proof}

\section{Boundedness of complements for semi-exceptional generalized pairs}\label{sec6}
In this section, we show the boundedness of complements for g-pairs which are semi-exceptional, cf. {\cite[Theorem 13]{Sho20}}.

\begin{thm}\label{thm:semiexc}
Let $d,p,I$ be positive integers, $\fR\subseteq[0,1]\cap\Qq$ a finite set, and $\Phi:=\Phi(\fR)$. Then there exists a finite set $\cN\subseteq\Zz_{>0}$ depending only on $d,p,I$, and $\Phi$ satisfying $I\mid \cN,$ and the following.

Assume that $(X,B+\bM)$ is a projective g-pair of dimension $d$ such that
\begin{enumerate}
  \item $X$ is of Fano type, 
  \item $p\bM$ is b-Cartier,
  \item $(X,B_{\Phi}+\bM)$ is semi-exceptional, and
  \item $(X,B+\bM)$ is $\Rr$-complementary.
\end{enumerate}
Then $(X,B+\bM)$ is $\cN$-complementary.
\end{thm}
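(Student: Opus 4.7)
The plan is to argue by induction on $d$, using Theorem \ref{thm:exc}$_d$ together with Theorem \ref{thm:semiexc}$_{d-1}$ as the induction hypotheses. After replacing $(X,B+\bM)$ with a $\Qq$-factorial dlt modification and using Lemma \ref{lem:Qcompl} to pass to a rational $\Rr$-complement, we may assume $X$ is $\Qq$-factorial and $B$ is a $\Qq$-divisor. The goal is to identify a fibration $\phi\colon X\to T$ with $0<\dim T<d$ to which one can apply Proposition \ref{prop:cbflift1}, reducing the problem to dimension $\dim T$; the main task is to locate such a $\phi$ in a uniform way, using an ``approximation by hyperstandard coefficients'' procedure.

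Next, I would construct inductively a chain of finite sets of positive integers
$$
\cN^{0}\subseteq\cN^{1}\subseteq\cdots\subseteq\cN^{i}\subseteq\cdots,
$$
each satisfying $I\mid \cN^{i}$, as follows. Let $\cN^{0}$ be the finite set produced by Theorem \ref{thm:semiexc}$_{d-1}$ applied to the data $(d-1,p,I,\Phi)$. Given $\cN^{i}$, apply Proposition \ref{prop:cbfindex} with the hyperstandard set $\Gamma(\cN^{i},\Phi)$ to obtain an ``adjunction index'' $p_{i}$ and a hyperstandard set $\Phi_{i}'$. Then define $\cN^{i+1}$ to be the finite set given by Theorem \ref{thm:semiexc}$_{d-1}$ applied to $(d-1,p_{i},I,\Phi_{i}')$. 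In particular, one can enlarge $\cN^{i+1}$ so that $\cN^{i}\subseteq\cN^{i+1}$.

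The key dichotomy comes from the sequence of Iitaka dimensions
$$
\kappa\!\left(B-B_{\cN^{0}\_\Phi}\right)\ge\kappa\!\left(B-B_{\cN^{1}\_\Phi}\right)\ge\cdots,
$$
which takes values in $\{-\infty,0,1,\dots,d\}$, so it must stabilize at some step $i\le d$ with common value $k:=\kappa(B-B_{\cN^{i}\_\Phi})=\kappa(B-B_{\cN^{i+1}\_\Phi})$. If $k=d$, i.e., the divisor $B-B_{\cN^{i}\_\Phi}$ is big, the semi-exceptional hypothesis combined with Lemma \ref{lem:semiexcnotbig} (and a small perturbation argument) forces $(X,B_{\cN^{i}\_\Phi}+\bM)$ to be exceptional in the sense of Definition \ref{defn:exc}, and we conclude by Theorem \ref{thm:exc}$_d$ together with Lemma \ref{lem:N_Phicompl}. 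If $k<d$, we invoke Lemma \ref{lem:twommp} and Lemma \ref{lem:reltrivialmmp} applied to $D_{1}=B-B_{\cN^{i}\_\Phi}$ and $D_{2}=B-B_{\cN^{i+1}\_\Phi}$ to produce, after an MMP and a small birational contraction (harmless by Lemma \ref{lem:smiitdim}), a contraction $\phi\colon X\to T$ with $\dim T=k$ such that both $K_{X}+B_{\cN^{i}\_\Phi}+\bM_{X}$ and $K_{X}+B_{\cN^{i+1}\_\Phi}+\bM_{X}$ are $\Qq$-trivial over $T$.

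Finally, apply Proposition \ref{prop:cbfindex} to the fibration $(X,B_{\cN^{i}\_\Phi}+\bM)\to T$ to produce discriminant/moduli data $(T,B_{T,i+1}+\bM_{\phi})$, where the same moduli part $\bM_{\phi}$ works for both $\cN^{i}\_\Phi$ and $\cN^{i+1}\_\Phi$ since their difference is $\Qq$-vertical $\Qq$-trivial over $T$. By Lemma \ref{lem:cbfcptoverbase}, after replacing $T$ by a crepant model, the hypothesis of Proposition \ref{prop:cbflift1} on the preservation of divisorial supports is satisfied. By construction of $\cN^{i+1}$ and the induction hypothesis Theorem \ref{thm:semiexc}$_{d-1}$, $(T,B_{T,i+1}+\bM_{\phi})$ has an $n$-complement for some $n\in\cN^{i+1}$, and Proposition \ref{prop:cbflift1} lifts this to an $n$-complement of $(X,B_{\cN^{i+1}\_\Phi}+\bM)$; Lemma \ref{lem:N_Phicompl} then gives the desired $n$-complement of $(X,B+\bM)$ with $n\in\cN:=\cN^{d}$. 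The main technical obstacles I anticipate are (a) verifying the semi-exceptional property is preserved under the replacements and MMPs needed to build $\phi$, and (b) checking carefully that Proposition \ref{prop:cbfindex} can be applied uniformly so that the sequence $\{p_{i}\}$ (and hence the sequence $\cN^{i}$) is genuinely controlled by the discrete data $(d,p,I,\Phi)$ alone.
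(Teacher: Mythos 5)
Your plan follows the paper's argument quite closely: the inductive construction of $\cN^{0}\subseteq\cN^{1}\subseteq\cdots$ via Proposition \ref{prop:cbfindex} and Theorem \ref{thm:semiexc}$_{d-1}$, the stabilization of $\kappa(B-B_{\cN^{i}\_\Phi})$, and the lifting via Proposition \ref{prop:cbflift1} after matching moduli parts are exactly the paper's mechanism. However, there are two genuine gaps.

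First, you do not treat the case $k=0$. If $k=0$ the Iitaka fibration collapses $T$ to a point, so there is no base to which Propositions \ref{prop:cbfindex} and \ref{prop:cbflift1} apply (both require $\dim T>0$). The paper handles this separately: if $\kappa(B-B_{\Phi_d})=0$, an MMP on $B-B_{\Phi_d}$ terminates on a model where $B'=B'_{\Phi_d}\in\Gamma(\cN^{d},\Phi)$, and then Proposition \ref{prop:CYindex} produces a uniform Calabi--Yau index $n_{CY}$. That integer $n_{CY}$ must be adjoined to the final set $\cN$.

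Second, the way you dispose of the case $k=d$ has a hole. You assert that bigness of $B-B_{\cN^{i}\_\Phi}$, together with Lemma \ref{lem:semiexcnotbig} and ``a small perturbation argument'', forces $(X,B_{\cN^{i}\_\Phi}+\bM)$ to be exceptional. But the contrapositive of Lemma \ref{lem:semiexcnotbig} only gives that $(X,B_{\cN^{i}\_\Phi}+\bM)$ is \emph{itself} klt when $-(K_X+B_{\cN^{i}\_\Phi}+\bM_X)$ is big; it does not say that every $\Rr$-complement of it is klt. Concretely, a semi-exceptional pair can be klt and still admit a strictly lc $\Rr$-complement $(X,B^{++}+\bM)$, and for that complement $-(K_X+B^{++}+\bM_X)\sim_\Rr 0$ is never big, so the lemma gives no contradiction. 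The paper avoids this entirely by reversing the order: it first checks whether $(X,B_{\Phi_d}+\bM)$ is exceptional (if so, Theorem \ref{thm:exc} plus Lemma \ref{lem:N_Phicompl} finish the proof, and $\cN^{exc}_d$ must also be added to $\cN$); if not, it takes a dlt modification of a \emph{non-klt} $\Rr$-complement, after which $(X,B+\bM)$ is strictly lc dlt CY and each $(X,B_{\Phi_i}+\bM)$ is strictly lc, so Lemma \ref{lem:semiexcnotbig} directly yields $\kappa(B-B_{\Phi_i})\le d-1$ for all $i$, and the case $k=d$ never arises. You should adopt this order of operations.

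Two smaller points: in your invocation of Lemma \ref{lem:twommp} you have $D_1$ and $D_2$ swapped, since $B_{\cN^{i+1}\_\Phi}\ge B_{\cN^{i}\_\Phi}$ implies $B-B_{\cN^{i+1}\_\Phi}\le B-B_{\cN^{i}\_\Phi}$ and the lemma requires $D_2\ge D_1$; and your final set should be $\cN^{d}\cup\cN^{exc}_d\cup\{n_{CY}\}$, not just $\cN^{d}$. Also note the paper's fibration construction is slightly more careful than a bare application of Lemma \ref{lem:twommp}: it introduces an auxiliary crepant model $Y'$ that is isomorphic to $X$ in codimension one (so complements transfer back), and runs the second MMP there with $(B_{\Phi_{i+1},Y'})_{\Phi_i}$ rather than $B_{\Phi_i}$, because one needs the base $Z''$ and the moduli part computed with hyperstandard coefficients; this subtlety is worth retaining.
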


We recall the definition of semi-exceptional, which is referred to as \emph{not strongly non-exceptional} in \cite[2.15]{Bir19}.

\begin{defn}\label{defn:semiexc}
We say that a projective g-pair $(X,B+\bM)$ is \emph{semi-exceptional}, if it is $\Rr$-complementary and $(X,B+G+\bM)$ is lc for any effective $\Rr$-divisor $G\sim_{\Rr}-(K_X+B+\bM_X)$.
\end{defn}

\begin{lem}\label{lem:pushdownissemiexc}
Let $(X,B+\bM)$ be a projective g-pair which is semi-exceptional. Suppose that $f:Y\to X$ is a birational morphism and $(Y,B_Y+\bM)$ is an $\Rr$-complementary g-pair such that $f_*B_Y\ge B$. Then $(Y,B_Y+\bM)$ is semi-exceptional. 
\end{lem}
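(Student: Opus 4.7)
The plan is to reduce the semi-exceptionality of $(Y,B_Y+\bM)$ to that of $(X,B+\bM)$ by pushing a candidate witness down to $X$ and then transferring the lc conclusion back up crepantly. Fix any effective $\Rr$-divisor $G_Y$ with $G_Y\sim_\Rr-(K_Y+B_Y+\bM_Y)$; the goal is to show that $(Y,B_Y+G_Y+\bM)$ is lc. Setting $B':=f_*(B_Y+G_Y)$, the hypothesis $f_*B_Y\ge B$ together with $G_Y\ge 0$ gives $B'\ge B$, while pushing the $\Rr$-linear equivalence on $Y$ forward by the birational map $f$ yields $K_X+B'+\bM_X\sim_\Rr 0$. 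Hence $B'-B\ge 0$ and $B'-B\sim_\Rr -(K_X+B+\bM_X)$, so semi-exceptionality of $(X,B+\bM)$ implies that $(X,B'+\bM)$ is lc.

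The second step is to transfer this lc property back to $Y$ by a crepant comparison. Since $K_X+B'+\bM_X=(K_X+B+\bM_X)+(B'-B)$ is $\Rr$-Cartier, the $\Rr$-divisor
$$E:=K_Y+B_Y+G_Y+\bM_Y-f^*(K_X+B'+\bM_X)$$
is well-defined. Using $f_*K_Y=K_X$ and the b-divisor compatibility $f_*\bM_Y=\bM_X$, one checks $f_*E=0$, so $E$ is $f$-exceptional; and both $K_Y+B_Y+G_Y+\bM_Y\sim_\Rr 0$ and $f^*(K_X+B'+\bM_X)\sim_\Rr 0$ on $Y$, hence $E\sim_\Rr 0$ and in particular $E\equiv_f 0$. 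The negativity lemma then forces $E=0$, so $f$ realizes $(Y,B_Y+G_Y+\bM)$ as a crepant birational model of $(X,B'+\bM)$, and $(Y,B_Y+G_Y+\bM)$ inherits the lc property.

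The main obstacle is the crepant comparison step. One must confirm that $K_X+B'+\bM_X$ is $\Rr$-Cartier (so that $f^*$ makes sense) and that the negativity lemma applies to $E$; both are immediate from the setup, the former from $B'-B$ being $\Rr$-linearly equivalent to the $\Rr$-Cartier divisor $-(K_X+B+\bM_X)$, the latter because $f_*E=0$ forces the support of $E$ to lie in the exceptional locus and $E\equiv_f 0$ follows from $E\sim_\Rr 0$. No further input beyond semi-exceptionality of $(X,B+\bM)$ and the negativity lemma should be required.
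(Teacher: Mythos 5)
Your proof is correct and follows essentially the same strategy as the paper: push the witness down to $X$, use semi-exceptionality of $(X,B+\bM)$, and transfer the lc property back up crepantly via the negativity lemma. You are in fact a bit more careful than the printed proof: the paper pushes down $G_Y$ to $G:=f_*G_Y$ and then asserts that $(X,B+G+\bM)$ is an $\Rr$-complement and that $K_Y+B_Y+G_Y+\bM_Y=f^*(K_X+B+G+\bM_X)$, which is only literally accurate when $f_*B_Y=B$; if $f_*B_Y>B$, then $K_X+B+G+\bM_X\sim_\Rr B-f_*B_Y\not\sim_\Rr 0$ and the crepant identity fails. Your substitution $B':=f_*(B_Y+G_Y)$ handles the general case $f_*B_Y\ge B$ cleanly: $B'\ge B$, $K_X+B'+\bM_X\sim_\Rr 0$, semi-exceptionality gives $(X,B'+\bM)$ lc, and the negativity-lemma argument (applied to $E$ and $-E$, both $f$-exceptional with $f_*E=0$ and $E\equiv_f 0$ since $E\sim_\Rr 0$) forces $E=0$, so $(Y,B_Y+G_Y+\bM)$ is a crepant model of $(X,B'+\bM)$ and hence lc. No gap; if anything your write-up repairs a minor imprecision in the paper's.
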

\begin{proof}
It suffices to show that for any effective $\Rr$-divisor $G_Y\sim_\Rr-(K_Y+B_Y+\bM_Y)$, $(Y,B_Y+G_Y+\bM)$ is lc. To this end, let $G:=f_*G_Y$, then by assumption, $(X,B+G+\bM)$ is an $\Rr$-complement of $(X,B+\bM)$. Moreover, by the negativity lemma, we have
$$K_Y+B_Y+G_Y+\bM_Y=f^*(K_X+B+G+\bM_X),$$ 
and thus $(Y,B_Y^++\bM)$ is lc. This completes the proof.
\end{proof}

\begin{lem}\label{lem:semiexcnotbig}
Let $(X,B+\bM)$ be a projective g-pair. Suppose that $(X,B+\bM)$ is strictly lc (i.e. lc but not klt) and semi-exceptional. Then $-(K_X+B+\bM_X)$ is not big.
\end{lem}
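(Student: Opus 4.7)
\medskip

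\noindent\emph{Proof proposal.} The plan is to argue by contradiction: assuming that $-(K_X+B+\bM_X)$ is big, I will construct an effective $\Rr$-divisor $G\sim_{\Rr}-(K_X+B+\bM_X)$ for which $(X,B+G+\bM)$ is not lc, directly violating the definition of semi-exceptional.

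First, since $(X,B+\bM)$ is lc but not klt, there is a prime divisor $E_0$ over $X$ with $a(E_0,X,B+\bM)=0$; let $Z:=\Center_X E_0$. Next, since $-(K_X+B+\bM_X)$ is assumed big and $\Rr$-Cartier, I will write
$$-(K_X+B+\bM_X)\sim_{\Rr} A+E,$$
where $A$ is an ample $\Qq$-Cartier $\Qq$-divisor and $E\ge 0$ is an $\Rr$-divisor (by a standard Kodaira-type perturbation on a big $\Rr$-divisor). For $m$ sufficiently divisible, $mA$ is very ample, so I can choose $D\in|mA|$ passing through $Z$ and set $A':=\frac{1}{m}D$; then $A'\sim_{\Qq}A$ is effective, $\Qq$-Cartier, and $Z\subseteq\Supp A'$.

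Now let $G:=A'+E\ge 0$, so $G\sim_{\Rr}-(K_X+B+\bM_X)$. Pick a log resolution $f:Y\to X$ of $(X,B+G)$ on which $E_0$ appears as a divisor and $\bM$ descends. Since $G$ is $\Rr$-Cartier and $E_0\subseteq f^{-1}(Z)\subseteq f^{-1}(\Supp A')=\Supp(f^*A')$, we obtain $\mult_{E_0}(f^*G)\ge\mult_{E_0}(f^*A')>0$. Therefore
$$a(E_0,X,B+G+\bM)=a(E_0,X,B+\bM)-\mult_{E_0}(f^*G)=-\mult_{E_0}(f^*G)<0,$$
so $(X,B+G+\bM)$ is not lc. This contradicts the semi-exceptionality of $(X,B+\bM)$ and completes the proof.

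The only delicate point is ensuring that the effective representative $A'$ is $\Qq$-Cartier (so that $f^*A'$ makes sense and has positive multiplicity along $E_0$) while simultaneously passing through $Z$; both are arranged by passing to $|mA|$ for $m$ sufficiently divisible and choosing a member through $Z$, so the argument is essentially immediate once bigness is exploited to produce the ample summand $A$.
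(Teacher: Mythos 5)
Your proof is correct, and it takes a route that differs from the paper's in two respects. The paper first passes to a dlt modification (invoking Lemma~\ref{lem:pushdownissemiexc}) so that $X$ becomes $\Qq$-factorial, starts from an actual $\Rr$-complement $(X,B+G+\bM)$, picks a prime divisor $S$ containing a non-klt center, and uses bigness of $G$ to rewrite $G\sim_{\Rr}\epsilon S+F$ with $F\ge 0$ and $\epsilon>0$; the $\Qq$-factoriality is what makes $S$ (and hence $G-\epsilon S$) $\Rr$-Cartier. You instead work directly on $X$, never use the $\Rr$-complementary hypothesis, and manufacture the offending divisor $G=A'+E$ from a Kodaira decomposition of the big anticanonical, arranging the ample summand $A'$ to contain the lc center $Z$. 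This buys you a slightly more self-contained argument (no dlt modification, no appeal to Lemma~\ref{lem:pushdownissemiexc}), at the mild cost of having to be careful that ``$D$ passes through $Z$'' really means $Z\subseteq\Supp D$ even when $Z$ is a divisor --- which is fine, since $\mathcal{I}_Z\otimes\Oo_X(mA)$ is globally generated for $m\gg 0$ and any such section yields the required $D\in|mA|$ --- and that $A'=\frac{1}{m}D$ is then automatically $\Qq$-Cartier because $D\sim mA$ is Cartier. The underlying mechanism, namely using bigness to push an $\Rr$-linearly trivializing effective divisor through an lc center, is the same in both proofs.

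One small suggestion: in the displayed discrepancy computation you should say explicitly that $E$ is taken $\Rr$-Cartier (it is, being $D-A$ plus a principal $\Rr$-divisor), so that $f^*G$ is well defined and decomposes as $f^*A'+f^*E$ with both summands effective; you use this implicitly when you bound $\mult_{E_0}(f^*G)$ from below by $\mult_{E_0}(f^*A')$.
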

\begin{proof}
Suppose on the contrary that $-(K_X+B+\bM_X)$ is big. Let $f:(Y,B_Y+\bM)\to (X,B+\bM)$ be a dlt modification of $(X,B+\bM)$. By Lemma \ref{lem:pushdownissemiexc}, $(Y,B_Y+\bM)$ is semi-exceptional. Possibly replacing $(X,B+\bM)$ by $(Y,B_Y+\bM)$, we may assume that $X$ is $\Qq$-factorial.

Let $(X,B+G+\bM)$ be an $\Rr$-complement of $(X,B+\bM)$ for some $\Rr$-divisor $G\ge0$, and $S$ a prime divisor that contains some non-klt center of $(X,B+\bM)$. Since $G\sim_\Rr-(K_X+B+\bM_X)$ is big, we may pick a small enough positive real number $\epsilon$ such that $G-\epsilon S\sim_{\Rr}F$ for some effective $\Rr$-divisor $F$. We derive a contradiction since then
$$K_X+B+\epsilon S+F+\bM_X\sim_\Rr K_X+B^++\bM_X\sim_\Rr0,$$
and $(X,B+\epsilon S+F+\bM)$ is not lc.
\end{proof}

The following lemma says that the semi-exceptionality is preserved under the canonical bundle formula.
\begin{lem}\label{lem:semiexccbf}
Suppose that $(X,B+\bM)$ is semi-exceptional, $B$ is a $\Qq$-divisor, and $\bM$ is b-$\Qq$-Cartier. Suppose that $\phi:X\to T$ is a contraction such that $K_X+B+\bM_X\sim_{\Qq,T}0$ and $\dim T>0$. Let $B_T$ and $\bM_\phi$ be the discriminant part and a moduli part of a canonical bundle formula for $(X,B+\bM)$ over $T$ respectively. Then $(T,B_T+\bM_\phi)$ is semi-exceptional.
\end{lem}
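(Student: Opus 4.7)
The plan is to verify the two defining conditions of semi-exceptionality for $(T,B_T+\bM_\phi)$: that it is $\Rr$-complementary, and that every candidate $\Rr$-complement is automatically lc. I would handle the second condition first, since it reduces to a direct pullback argument.

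Given any effective $\Rr$-divisor $G_T$ on $T$ with $G_T\sim_\Rr -(K_T+B_T+\bM_{\phi,T})$, set $G:=\phi^*G_T\ge 0$. The canonical bundle formula $K_X+B+\bM_X\sim_\Rr \phi^*(K_T+B_T+\bM_{\phi,T})$ then yields $G\sim_\Rr -(K_X+B+\bM_X)$, so $(X,B+G+\bM)$ is an $\Rr$-complement candidate for $(X,B+\bM)$. By semi-exceptionality of $(X,B+\bM)$, it is lc. Applying the canonical bundle formula to $(X,B+G+\bM)\to T$, the discriminant at each prime $Q\subseteq T$ shifts from $1-b_Q$ to $1-b_Q+\mult_Q G_T$ (because adding $\phi^*G_T$ near the generic point of $Q$ shifts the lc threshold by $-\mult_Q G_T$), producing $B_T+G_T$; the moduli part can be kept as $\bM_\phi$ since $G$ is pulled back from $T$. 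Hence $(T,B_T+G_T+\bM_\phi)$ is lc.

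For the $\Rr$-complementarity of $(T,B_T+\bM_\phi)$, fix an $\Rr$-complement $(X,B^++\bM)$ of $(X,B+\bM)$. Restricted to the generic fiber $X_\eta$ of $\phi$, the divisor $(B^+-B)|_{X_\eta}$ is effective and $\Rr$-linearly trivial, because both $K_{X_\eta}+B|_{X_\eta}+\bM_X|_{X_\eta}$ and $K_{X_\eta}+B^+|_{X_\eta}+\bM_X|_{X_\eta}$ are $\Rr$-linearly trivial. Since any effective $\Rr$-linearly trivial divisor on a projective variety vanishes (intersect with an ample class), $(B^+-B)|_{X_\eta}=0$, and hence $B^+-B$ is vertical over $T$. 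Running the canonical bundle formula for $(X,B^++\bM)\to T$ yields a discriminant $B_T^+\ge B_T$ and a moduli part $\bM_\phi^+$. Because $B^+-B$ is vertical, the generic fiber log pair is unaltered, so $\bM_\phi^+$ can be identified $\Rr$-linearly with $\bM_\phi$. Consequently $(T,B_T^++\bM_\phi)$ is lc with $K_T+B_T^++\bM_{\phi,T}\sim_\Rr 0$, giving the required $\Rr$-complement.

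The main delicate point is the identification $\bM_\phi^+\sim_\Rr\bM_\phi$ when passing from $B$ to $B^+=B+(\text{vertical})$. This rests on the principle that the moduli part of the canonical bundle formula depends only on the generic fiber data of the g-pair over $T$; concretely, subtracting the two defining relations gives $B^+-B\sim_\Rr \phi^*((B_T^+-B_T)+(\bM_{\phi,T}^+-\bM_{\phi,T}))$, and combining this with the discriminant calculation together with the injectivity of $\phi^*$ on $\Rr$-linear equivalence classes of divisors on $T$ (a consequence of $\phi_*\mathcal{O}_X=\mathcal{O}_T$) pins down the moduli up to $\Rr$-linear equivalence.
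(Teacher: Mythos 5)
Your overall approach mirrors the paper's: verify that pullbacks of candidate complements on $T$ are lc (which follows directly from semi-exceptionality of $(X,B+\bM)$) and separately produce one $\Rr$-complement of $(T,B_T+\bM_\phi)$ by pushing a complement down through the canonical bundle formula. The argument for the second defining condition (every candidate complement on $T$ is lc) is essentially identical to the paper's: pull back $G_T$, invoke semi-exceptionality, and note that adding $\phi^*G_T$ shifts the discriminant by $G_T$ and leaves the moduli part unchanged.

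The gap is in the $\Rr$-complementarity step. Having shown $(B^+-B)|_{X_\eta}=0$ you conclude that $B^+-B$ is vertical, and then assert that the moduli parts can be $\Rr$-linearly identified because ``the moduli part depends only on the generic fiber data.'' That heuristic, as stated, does not hold: a vertical effective divisor need not be a pullback, and adding a vertical-but-not-pulled-back divisor to $B$ can in principle change the relative degeneracy structure and hence the discriminant in a way that is not compensated simply. What you actually need, and what the paper supplies cleanly, is that the correction term is genuinely pulled back from $T$. The paper first replaces the $\Rr$-complement by a $\Qq$-complement $(X,B+G+\bM)$ using Lemma~\ref{lem:Qcompl}; then $G\sim_{\Qq,T}0$ is automatic from the hypotheses, and $G\ge 0$ together with $G\sim_{\Qq,T}0$ forces $G=\phi^*G_T$ for some $\Qq$-divisor $G_T\ge0$ on $T$ by the cited result \cite[Lemma 2.5]{CHL23}. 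Once $G$ is a pullback, the identification of moduli parts and the formula $B_T^+=B_T+G_T$ become routine. In your version you have all the ingredients for this (in particular $B^+-B\sim_{\Rr,T}0$, not merely $(B^+-B)|_{X_\eta}\sim_\Rr 0$), but you never invoke it; the remark at the end about ``injectivity of $\phi^*$ on $\Rr$-linear equivalence classes'' gestures in the right direction but does not discharge the obligation to show the difference divisor is actually a pullback, which is the crux. I would recommend either quoting the pullback lemma explicitly, or, as the paper does, first reducing to a $\Qq$-complement to make the argument cleaner.
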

\begin{proof}
By Lemma \ref{lem:Qcompl}, $(X,B+\bM)$ has an $\Rr$-complement $(X,B+G+\bM)$ for some $\Qq$-Cartier $\Qq$-divisor $G\ge0$. Then $G\sim_{\Qq,T}0$ by assumption and thus $G=\phi^*G_T$ for some $\Qq$-Cartier $\Qq$-divisor $G_T\ge0$ on $T$ by \cite[Lemma 2.5]{CHL23}. Then we have
\begin{align*}
 K_X+B+G+\bM_X&\sim_\Qq\phi^*(K_T+B_T+G_T+\bM_{\phi,T})\sim_\Qq0
\end{align*}
which implies that $K_T+B_T+G_T+\bM_{\phi,T}\sim_\Qq0$ and $(T,B_T+G_T+\bM_\phi)$ is lc. In particular, $(T,B_T+G_T+\bM_\phi)$ is an $\Rr$-complement of $(T,B_T+\bM_\phi)$.

For any effective $\Rr$-Cartier $\Rr$-divisor $G_T'\sim_{\Rr}-(K_T+B_T+\bM_{\phi,T})$, let 
$$G':=\phi^*G_T'\sim_\Rr -(K_X+B+\bM_X).$$
By assumption $(X,B+G'+\bM)$ is lc and thus so is $(T,B_T+G_T'+\bM_\phi)$ which completes the proof.
\end{proof}

\begin{prop}\label{prop:CYindex}
Let $d,p,l$ be positive integers and $\Phi$ the hyperstandard set associated to a finite set $\fR\subseteq[0,1]\cap\Qq$. Then there exists a positive integer $n$ which is divisible by $l$ depending only on $d,p,l$ and $\Phi$ satisfying the following.

Assume that $(X,B+\bM)$ is a projective g-pair such that
\begin{enumerate}
  \item $\dim X=d$,
  \item $p\bM$ is b-Cartier,
  \item $X$ is of Fano type,
  \item $\kappa(B-B_0)=0$ for some $B_0\in\Phi,$ and 
  \item $(X,B+\bM)$ is an $\Rr$-complement of itself.
\end{enumerate}
Then $n(K_X+B+\bM_X)\sim0$.
\end{prop}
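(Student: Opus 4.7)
The plan is to reduce the statement to the boundedness of complements for generalized pairs with hyperstandard boundaries, i.e., \cite[Theorem 1.1]{Chen23}.

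The first step, and the crux of the argument, is to show that $B$ itself is a $\Qq$-divisor. From $K_X+B+\bM_X\sim_\Rr 0$, $B_0\in\Phi\subseteq\Qq$, and $p\bM$ b-Cartier, we obtain $B-B_0\sim_\Rr -(K_X+B_0+\bM_X)$, where the right-hand side is a $\Qq$-Cartier $\Qq$-divisor; hence $[B-B_0]\in\Cl(X)_\Rr$ lies in the image of $\Cl(X)_\Qq$. Since $X$ is of Fano type, Theorem \ref{thm:effgpisfg} gives that $\Eff(X)$ is finitely generated, so $\Eff(X)_\Rr$ is a rational polyhedral cone. The hypothesis $\kappa(B-B_0)=0$ forbids any positive-dimensional family of effective representatives of the class $[B-B_0]$; combined with the rational polyhedrality of $\Eff(X)_\Rr$, this forces $B-B_0$ to be the unique effective representative of its class and to have rational coefficients. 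After possibly replacing $B_0$ by the hyperstandard approximation $B_{\Phi}\le B$ (still in $\Phi$) to ensure $B\ge B_0$, we conclude $B\in\Qq$.

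With $B$ rational and $B\ge B_0$, the pair $(X,B_0+\bM)$ is $\Rr$-complementary via $(X,B+\bM)$ and has hyperstandard boundary $B_0\in\Phi$. Applying \cite[Theorem 1.1]{Chen23} yields a positive integer $n_0$, divisible by $l$ and depending only on $d,p,l,$ and $\Phi$, together with a monotonic $n_0$-complement $(X,B_0^++\bM)$ satisfying $n_0(K_X+B_0^++\bM_X)\sim 0$.

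For the conclusion, both $B-B_0$ and $B_0^+-B_0$ are effective $\Qq$-divisors representing the class $[-(K_X+B_0+\bM_X)]$. The condition $\kappa(B-B_0)=0$ is equivalent to $h^0(X,\lfloor m(B-B_0)\rfloor)\le 1$ for every positive integer $m$; therefore, after choosing $m$ that clears the denominators of both $B-B_0$ and $B_0^+-B_0$, this class admits a unique effective representative. Hence $B-B_0=B_0^+-B_0$, so $B=B_0^+$, and $n:=n_0$ satisfies $n(K_X+B+\bM_X)\sim 0$.

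The main obstacle will be the rationality step: leveraging $\kappa(B-B_0)=0$ to pass from a rational equivalence \emph{class} to a genuinely rational representative $B$. The decisive tool is the rational polyhedrality of $\Eff(X)_\Rr$ (Theorem \ref{thm:effgpisfg}), which rules out continuous families of effective representatives with irrational coefficients; the remaining two steps are essentially a citation and a uniqueness consequence of $\kappa=0$.
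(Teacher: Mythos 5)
Your strategy genuinely diverges from the paper's. The paper runs a $(B-B_0)$-MMP $X\dashto X'$; since the MMP is $(K_X+B+\bM_X)$-trivial and $X$ is of Fano type, $B'-B'_0$ becomes semiample with $\kappa=0$, hence $B'=B'_0$, so on $X'$ the boundary lies in $\Phi$ and one simply quotes the known index result for hyperstandard-boundary Calabi--Yau g-pairs and pulls it back via the negativity lemma. You instead try to show $B$ is \emph{already} rational on $X$ itself, apply Chen's theorem there, and conclude by uniqueness of the effective representative. The endgame (applying \cite[Theorem 1.1]{Chen23} to $(X,B_0+\bM)$ with $B_0\in\Phi$, and then $\kappa(B-B_0)=0\Rightarrow h^0(m(B-B_0))\le 1\Rightarrow B_0^+=B$) is fine once you have $B\in\Qq$ and $B\ge B_0$.

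The gap is the rationality step, which is asserted but not proved, and the tool you cite for it is not the one doing the work. You claim that $\kappa(B-B_0)=0$ ``forbids any positive-dimensional family of effective representatives'' and that ``rational polyhedrality of $\Eff(X)_\Rr$'' then forces rationality. Neither assertion is immediate. First, $\kappa$ of an $\Rr$-divisor is defined via $\lfloor m(B-B_0)\rfloor$, and a one-parameter family of effective $\Rr$-divisors $\Rr$-linearly equivalent to $B-B_0$ does not directly produce a pencil inside any $|\lfloor m(B-B_0)\rfloor|$. To make this work one must argue via the polytope $P:=\{D\ge 0 : D\sim_\Rr B-B_0, \Supp D\subseteq\Supp(B-B_0)\}$: it is a rational polytope (because $\Rr$-linear equivalence is a rational constraint and $[B-B_0]$ is a rational class — this has nothing to do with $\Eff(X)$ being finitely generated), $B-B_0$ lies in its relative interior, and if $\dim P\ge 1$ one picks two distinct rational points $G_1,G_2\in P$ near $B-B_0$; then $G_1\sim_\Qq G_2$ gives $h^0(mG_1)\ge 2$, and $G_1\le C(B-B_0)$ for a rational $C>0$ (by relative interiority) yields $\kappa(B-B_0)=\kappa(C(B-B_0))\ge\kappa(G_1)\ge 1$, a contradiction. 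Second, once $P$ is a single point, rationality of $P$ — again independent of Theorem~\ref{thm:effgpisfg} — gives $B-B_0\in\Qq$. So the claim is true, but the justification you give is not a proof and misattributes the mechanism to finite generation of $\Eff(X)$. The paper's MMP route avoids this entirely: it never needs $B$ to be rational, because after the MMP the irrational part $B'-B'_0$ has been contracted to zero.
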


\begin{proof}
By \cite[Theorem 1.10]{Bir19} (see also \cite[Theorem 1.1]{Chen23}), there exists a positive integer $n$ which is divisible by $l$ depending only on $d,p,l$ and $\Phi$ such that if we further assume that $K_X+B_0+\bM_X\sim_\Qq0$, then $n(K_X+B_0+\bM_X)\sim0$. We claim that $n$ has the required properties.

We may run the $(B-B_0)$-MMP which terminates with a model $X'$ such that $B'=B'_0$ as $\kappa(B-B_0)=0$, where $B'$ and $B'_0$ are the strict transforms of $B$ and $B_0$ on $X'$ respectively. In particular we have
$$K_{X'}+B'_0+\bM_{X'}=K_{X'}+B'+\bM_{X'}\sim_\Qq0.$$
By our choice of $n$, we infer that
$$n(K_{X'}+B'+\bM_{X'})=n(K_{X'}+B'_0+\bM_{X'})\sim0.$$
Since $K_X+B+\bM_X\sim_\Qq0$, by the negativity lemma, one can see that
$$f^*(K_X+B+\bM_X)=f'^*(K_{X'}+B'+\bM_{X'})$$ for any common resolution $f:W\to X$ and $f':W\to X'$. Hence we conclude that $n(K_X+B+\bM_X)\sim0.$ This finish the proof.
\end{proof}

For convenience, by ``$\cN(\le d,p,I,\Phi)$ is given by Theorem \ref{thm:semiexc}'', we mean that the set $\cN(\le d,p,I,\Phi)$ satisfies the property of Theorem \ref{thm:semiexc}. Similarly, by ``$\Phi'(d,p,\Phi)$ is given by Proposition \ref{prop:cbfindex}'' we mean that the set $\Phi'(d,p,\Phi)$ satisfies the property of Proposition \ref{prop:cbfindex}.

\begin{proof}[Proof of Theorem \ref{thm:semiexc}]
We will perform a series of birational operations, and readers can refer to Figure \ref{fig1} to recall their connections.

We prove the theorem by induction on the dimension. Assume Theorem \ref{thm:semiexc} in dimension $\le d-1$. We will construct a finite sequence $p_i,$ $\Phi_i$ and $\Phi_i',$ $\cN_{i+1}$ and $\cN^{i+1}$ ($0\le i\le d-1$) of positive integers, hyperstandard sets, finite sets of positive integers respectively.

Set $\Phi_{0}:=\Phi$. Let $p_0:=p_0(d,p,\Phi_{0})$ be an integer and $\Phi'_{0}:=\Phi_0'(d,p,\Phi_{0})$ a hyperstandard set given by Proposition \ref{prop:cbfindex}. Let $\cN^1=\cN_1:=\cN_1(\le d-1,p_0,I,\Phi_{0}')\subseteq\Zz_{>0}$ be a finite set given by Theorem \ref{thm:semiexc}. Now suppose that $p_i,\Phi_{i},\Phi_{i}',\cN_{i+1}$ and $\cN^{i+1}$ are constructed for some $0\le i\le d-2$. Then we let 
$\Phi_{i+1}:=\Ii(\cN^{i+1},\Phi).$
Let $p_{i+1}:=p_{i+1}(d,p,\Phi_{i+1})$ be an integer and $\Phi'_{i+1}:=\Phi_{i+1}'(d,p,\Phi_{i+1})$ a hyperstandard set given by Proposition \ref{prop:cbfindex}. Let $\cN_{i+2}:=\cN_{i+2}(\le d-1,p_{i+1},I,\Phi_{i+1}')\subseteq\Zz_{>0}$ be a finite set given by Theorem \ref{thm:semiexc} and 
$$\cN^{i+2}:=\cN_{i+2}\cup\cN^{i+1}.$$ 
Now we finish the construction inductively. 

Set $\Phi_d:=\Gamma(\cN^d,\Phi)$ and let $\cN_d^{exc}:=\cN_d^{exc}(d,p,\Phi_d)\subseteq\Zz_{>0}$ be a finite set given by Theorem \ref{thm:exc}. Let $n_{CY}:=n_{CY}(d,p,\Phi_d)$ be a positive integer given by Proposition \ref{prop:CYindex}. We will show that $\cN^d\cup\cN_d^{exc}\cup\{n_{CY}\}$ has the required property.

By assumption and construction, $(X,B_{\Phi_d}+\bM)$ is semi-exceptional. If $(X,B_{\Phi_d}+\bM)$ is exceptional, then $(X,B+\bM)$ is $\cN_d^{exc}$-complementary by the choice of $\cN_d^{exc}$ and Lemma \ref{lem:N_Phicompl}. Hence we may assume that $(X,B_{\Phi_d}+\bM)$ is semi-exceptional and not exceptional. In particular, $(X,B_{\Phi_i}+\bM)$ is semi-exceptional and not exceptional for any $0\le i\le d$. In the following, we will prove that $(X,B+\bM)$ is $\cN^d\cup\{n_{CY}\}$-complementary.

Let $(X,B^++\bM)$ be an $\Rr$-complement of $(X,B_{\Phi_d}+\bM)$ which is not klt. By Lemma \ref{lem:Qcompl}, we may assume that $B^+\in\Qq$. Let $f:Y\to X$ be a dlt modification of $(X,B^++\bM)$, and we may write
$$K_Y+B_Y^++\bM_Y=f^*(K_X+B^++\bM_X)$$
for some $\Rr$-divisor $B_Y^+\ge0$. Note that $f_*((B_Y^+)_{\Phi_d})\ge B_{\Phi_d}$. Then by Lemma \ref{lem:pushdownissemiexc}, $(Y,(B_Y^+)_{\Phi_d}+\bM)$ is semi-exceptional. Possibly replacing $(X,B+\bM)$ by $(Y,B_Y^++\bM)$, we may assume that $(X,B+\bM)$ is a $\Qq$-factorial dlt CY g-pair. Note that $(X,B_{\Phi_i}+\bM)$ is dlt for any $i$.

According to Lemma \ref{lem:semiexcnotbig} we have 
$$0\le\kappa(B-B_{\Phi_d})\le\cdots\le\kappa(B-B_{\Phi_0})\le d-1.$$ 
It follows that there exists an integer $0\le i\le d-1$ such that 
$$\kappa(B-B_{\Phi_i})=\kappa(B-B_{\Phi_{i+1}})=:k.$$ 
If $k=0$, then $\kappa(B-B_{\Phi_d})=0$ and thus $(X,B+\bM)$ is $n_{CY}$-complementary by our choice of $n_{CY}$. Hence we may assume that $k\ge1$. We only need to show that $(X,B_{\Phi_{i+1}}+\bM)$ is $\cN_{i+1}$-complementary and thus so is $(X,B+\bM)$.

We may run an MMP on $B-B_{\Phi_{i+1}}$ which terminates with a model $\psi:X\dashto X'$ on which $B'-B_{\Phi_{i+1}}'$ is semi-ample, where $D'$ denotes the strict transform of $D$ on $X'$ for any $\Rr$-divisor $D$ on $X$. Let  $X'\to Z'$ be the induced morphism by $B'-B_{\Phi_{i+1}}'$. Since this MMP is also an MMP on $-(K_X+B_{\Phi_{i+1}}+\bM_X)$,
$$a\left(P,X',B_{\Phi_{i+1}}'+\bM\right)<a(P,X,B_{\Phi_{i+1}}+\bM)\le1$$ 
for any $\psi$-exceptional prime divisor $P$. Then by \cite[Theorem 1.7]{LX23}, there exists a birational morphism $f':Y'\to X'$ extracts all the $\psi$-exceptional prime divisors on $X$, i.e., the birational map $\varphi:Y'\dashto X$ is isomorphism in codimenison one. We may write
$$K_{Y'}+B_{\Phi_{i+1},Y'}+\bM_{Y'}=f'^*\left(K_{X'}+B_{\Phi_{i+1}}'+\bM_{X'}\right)$$
and
$$K_{Y'}+B_{Y'}+\bM_{Y'}=f'^*\left(K_{X'}+B'+\bM_{X'}\right)$$
for some effective $\Qq$-divisors $B_{Y'}$ and $B_{\Phi_{i+1},Y'}$. It is clear that $B_{Y'}\ge B_{\Phi_{i+1},Y'},$
$$\varphi_*B_{Y'}=B,\text{ and } \varphi_* B_{\Phi_{i+1},Y'}\ge B_{\Phi_{i+1}}.$$ 
By Lemma \ref{lem:pushdownissemiexc}, $(Y',(B_{\Phi_{i+1},Y'})_{\Phi_{i}}+\bM)$ is semi-exceptional as $\varphi_* (B_{\Phi_{i+1},Y'})_{\Phi_i}\ge (B_{\Phi_{i+1}})_{\Phi_i}=B_{\Phi_i}$ and $(X,B_{\Phi_i}+\bM)$ is also semi-exceptional. Now 
$$\kappa\left(B_{Y'}-(B_{\Phi_{i+1},Y'})_{\Phi_{i}}\right)\ge\kappa\left(B_{Y'}-B_{\Phi_{i+1},Y'}\right)=\kappa\left(B'-B'_{\Phi_{i+1}}\right).$$ 
On the other hand, by Lemma \ref{lem:smiitdim}, we see that
$$\kappa\left(B_{Y'}-(B_{\Phi_{i+1},Y'})_{\Phi_{i}}\right)=\kappa\left(B-\varphi_* (B_{\Phi_{i+1},Y'})_{\Phi_i}\right)\le \kappa(B-B_{\Phi_{i}})$$ 
which implies that 
$$\kappa\left(B_{Y'}-(B_{\Phi_{i+1},Y'})_{\Phi_{i}}\right)=\kappa\left(B_{Y'}-B_{\Phi_{i+1},Y'}\right).$$
\begin{figure}[htbp]
    \centering
	\begin{tikzcd}[column sep = 2em, row sep = 2em]
	{}&& Y'\arrow[dd, "\phi'", bend left]\arrow[d,"f'" swap]\arrow[rr,dashed] &&Y''\arrow[dd,"\phi''"]\\
		X \arrow[rr,"\psi", dashed]  && X' \arrow[d, "\pi'" swap] &&{} \\
		{} && Z'  && \arrow[ll, "\tau''" swap]Z'' 
	\end{tikzcd}
    \caption{} 
    \label{fig1} 
\end{figure}

Note that $Y'$ is also of Fano type. Then we may run a $(B_{Y'}-(B_{\Phi_{i+1},Y'})_{\Phi_i})$-MMP$/Z'$ and it terminates with a model $Y''$ on which $B_{Y''}-(B_{\Phi_{i+1},Y'})_{\Phi_i,Y''}$ is semi-ample$/Z'$, where $D_{Y''}$ is the strict transform of $D$ on $Y''$ for any $\Rr$-divisor $D$ on $Y'$. Suppose that $\phi'':Y''\to Z''$ is the induced morphism by $B_{Y''}-(B_{\Phi_{i+1},Y'})_{\Phi_i,Y''}$ over $Z'$. Then 
$$K_{Y''}+(B_{\Phi_{i+1},Y'})_{\Phi_i,Y''}+\bM_{Y''}\sim_{\Qq,Z''}0.$$ 
Moreover, $\tau'':Z''\to Z'$ is a birational morphism by Lemma \ref{lem:twommp}. 

By Proposition \ref{prop:cbfindex} and our construction of $\Phi_i'$ and $p_i$, there exists a g-pair $(Z'',B_{Z''}^{(i)}+\bM_{\phi''})$ such that
\begin{itemize}
  \item $B_{Z''}^{(i)}\in\Phi_i'$, 
  \item $p_i\bM_{\phi''}$ is b-Cartier, and
  \item $p_i\left(K_{Y''}+(B_{\Phi_{i+1},Y'})_{\Phi_i,Y''}+\bM_{Y''}\right)\sim p_i\phi''^*\left(K_{Z''}+B_{Z''}^{(i)}+\bM_{\phi'',Z''}\right)$.
\end{itemize}
Since $K_{X'}+B_{\Phi_{i+1}}'+\bM_{X'}\sim_{\Qq,Z'}0$, $K_{Y'}+B_{\Phi_{i+1},Y'}+\bM_{Y'}\sim_{\Qq,Z'}0$ and hence 
$$K_{Y''}+B_{\Phi_{i+1},Y''}+\bM_{Y''}\sim_{\Qq,Z'}0.$$ 
In particular, $0\le B_{\Phi_{i+1},Y''}-(B_{\Phi_{i+1},Y'})_{\Phi_i,Y''}\sim_{\Qq,Z''}0$ which implies that $B_{\Phi_{i+1},Y''}-(B_{\Phi_{i+1},Y'})_{\Phi_i,Y''}=\phi''^*G_{Z''}$ for some $\Qq$-Cartier $\Qq$-divisor on $Z''$ (see \cite[Lemma 2.5]{CHL23}). Thus we can choose $\bM_{\phi''}$ to be a moduli part of a canonical bundle for $(Y'',B_{\Phi_{i+1},Y''}+\bM)$ over $Z''$ (cf. \cite[Lemma 3.5]{Bir19}). By the canonical bundle formula, there exists a g-pair $(Z'',B_{Z''}^{(i+1)}+\bM_{\phi''})$ such that
$$K_{Y''}+B_{\Phi_{i+1},Y''}+\bM_{Y''}\sim_{\Qq}\phi''^*\left(K_{Z''}+B_{Z''}^{(i+1)}+\bM_{\phi'',Z''}\right).$$ 
Moreover, 
\begin{align*}
&\ \ \ \ p_i\left(K_{Y''}+B_{\Phi_{i+1},Y''}+\bM_{Y''}\right)
\\&=p_i\left(K_{Y''}+(B_{\Phi_{i+1},Y'})_{\Phi_i,Y''}+\bM_{Y''}\right)+p_i\left(B_{\Phi_{i+1},Y''}-(B_{\Phi_{i+1},Y'})_{\Phi_i,Y''}\right)
\\&\sim p_i\phi''^*\left(K_{Z''}+B_{Z''}^{(i)}+\bM_{\phi'',Z''}\right)+p_i\phi''^*(G_{Z''})
\\&\sim p_i\phi''^*\left(K_{Z''}+B_{Z''}^{(i+1)}+\bM_{\phi'',Z''}\right).
\end{align*}
Since $(Y',B_{\Phi_{i+1},Y'}+\bM)$ is crepant to $(Y'',B_{\Phi_{i+1},Y''}+\bM)$, we can choose $\bM_{\phi''}=\bM_{\phi'}$ to be a moduli part of a canonical bundle for $(Y',B_{\Phi_{i+1},Y'}+\bM)$ over $Z'$. Then by the canonical bundle formula, there exists a g-pair $(Z',B_{Z'}^{(i+1)}+\bM_{\phi'})$ such that
$$K_{Y'}+B_{\Phi_{i+1},Y'}+\bM_{Y'}\sim_\Qq \phi'^*\left(K_{Z'}+B_{Z'}^{(i+1)}+\bM_{\phi',Z'}\right).$$ 
Moreover, we have that
$$K_{Z''}+B_{Z''}^{(i+1)}+\bM_{\phi'',Z''}=\tau''^*\left(K_{Z'}+B_{Z'}^{(i+1)}+\bM_{\phi',Z'}\right)$$
and
$$p_i(K_{Y'}+B_{\Phi_{i+1},Y'}+\bM_{Y'})\sim p_i\phi'^*\left(K_{Z'}+B_{Z'}^{(i+1)}+\bM_{\phi',Z'}\right).$$ 

By Lemma \ref{lem:semiexccbf}, $(Z'',B^{(i+1)}_{Z''}+\bM_{\phi'})$ is semi-exceptional. By Lemma \ref{lem:cbfcptoverbase}, one can find a crepant model $(Z''',B^{(i+1)}_{Z'''}+\bM_{\phi'})$ of $(Z'',B^{(i+1)}_{Z''}+\bM_{\phi'})$ with induced morphism $\tau''':Z'''\to Z''$ such that for any prime divisor $P\subseteq\Supp B_{\Phi_{i+1},Y'}$ which is vertical over $Z'$, $\Center_{Z'''}P$ is a prime divisor. Since $B_{Z''}^{(i)}\in\Phi_i'$,
$$\tau'''_*(\left(B^{(i+1)}_{Z'''}\right)_{\Phi_i'}= \left(B^{(i+1)}_{Z''}\right)_{\Phi_i'}\ge\left(B_{Z''}^{(i)}\right)_{\Phi_i'}=B_{Z''}^{(i)},$$ 
and therefore $(Z''',(B^{(i+1)}_{Z'''})_{\Phi_i'}+\bM_{\phi'})$ is semi-exceptional by Lemma \ref{lem:pushdownissemiexc}. Therefore by induction, $(Z''',B^{(i+1)}_{Z'''}+\bM_{\phi'})$ is $n$-complementary for some $n\in\cN_{i+1}$. It follows by Proposition \ref{prop:cbflift1} that $({Y'},B_{\Phi_{i+1},Y'}+\bM)$ is also $n$-complementary. We conclude that $(X,B_{\Phi_{i+1}}+\bM)$ is $n$-complementary as $X$ and $Y'$ are isomorphic in codimension one. 
\end{proof}

\section{Boundedness of complements for generalized pairs of generic klt type}\label{sec7}
In this section, we show the boundedness of complements for g-pairs of generic klt type, cf. {\cite[Theorem 14]{Sho20}}.

\begin{thm}\label{thm:generictype}
Let $d,p,I$ be positive integers, $\fR\subseteq[0,1]\cap\Qq$ a finite set, and $\Phi:=\Phi(\fR)$. Then there exists a finite set $\mathcal{N}$ of positive integers depending only on $d,p, I,$ and $\Phi$ satisfying $I\mid\cN$, and the following.

Assume that $(X/Z\ni z,B+\bM)$ is a g-pair of dimension $d$ such that $p\bM$ is b-Cartier, and $(X/Z\ni z,B_{\cN\_\Phi}+\bM)$ is of generic klt type. Then $(X/Z\ni z,B+\bM)$ is $\mathcal{N}$-complementary.
\end{thm}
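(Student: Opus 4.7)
The plan is to induct on the dimension $d$, taking Theorem \ref{thm:semiexc} in dimension $d$ and Theorem \ref{thm:generictype} in dimension $d-1$ as inputs; the base case is trivial. First I would fix the auxiliary data. By Proposition \ref{prop:adjcoeff} applied to $p$ and $\Phi$, obtain a hyperstandard set $\Phi'=\Phi'(p,\Phi)$ suited to divisorial adjunction; then set $\cN_{\mathrm{se}}:=\cN_{\mathrm{se}}(d,p,I,\Phi)$ from Theorem \ref{thm:semiexc} and $\cN_{\mathrm{gt}}:=\cN_{\mathrm{gt}}(d-1,p,I,\Phi')$ from the inductive hypothesis, and let $\cN:=\cN_{\mathrm{se}}\cup\cN_{\mathrm{gt}}$, which is finite and satisfies $I\mid\cN$. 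Consistent with the introduction, I read ``generic klt type'' as: $(X/Z\ni z,B_{\cN\_\Phi}+\bM)$ has an $\Rr$-complement that is klt over a neighborhood of $z$, so in particular, whenever the pair fails to be semi-exceptional, the divisor $-(K_X+B_{\cN\_\Phi}+\bM_X)$ is big over $Z$.

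Given the g-pair, I would split into two cases. If $(X/Z\ni z,B_{\cN\_\Phi}+\bM)$ is semi-exceptional, Theorem \ref{thm:semiexc} yields an $n$-complement with $n\in\cN_{\mathrm{se}}\subseteq\cN$, which Lemma \ref{lem:N_Phicompl} promotes to an $n$-complement of $(X/Z\ni z,B+\bM)$. Otherwise there exists an effective $G\sim_{\Rr}-(K_X+B_{\cN\_\Phi}+\bM_X)$ with $(X,B_{\cN\_\Phi}+G+\bM)$ not lc over $z$; using the bigness of $-(K_X+B_{\cN\_\Phi}+\bM_X)$, I would invoke the plt reduction packaged in Lemma \ref{lem:pltcomp2} to pass to a birational model on which $(X,B+\bM)$ is plt, $S:=\lfloor B\rfloor$ is a single prime divisor whose image on $Z$ contains $z$, and $-(K_X+B+\bM_X)$ is big and nef over $Z$. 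All the birational operations involved are $-(K_X+B+\bM_X)$-non-positive, so complements pull back and it suffices to construct one on the plt model.

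On the plt model, divisorial adjunction produces $(S/\pi(S)\ni\pi(z),B_S+\bM^\iota)$, where $\pi\colon X\to Z$. By Lemma \ref{lem:S'isFT}, $S\to\pi(S)$ is a contraction and $S$ is of Fano type over $\pi(S)$, and by Proposition \ref{prop:adjcoeff} the coefficients of $(B_S)_{\cN_{\mathrm{gt}}\_\Phi'}$ lie in $\Gamma(\cN_{\mathrm{gt}},\Phi')$. I would then check that $(S/\pi(S)\ni\pi(z),(B_S)_{\cN_{\mathrm{gt}}\_\Phi'}+\bM^\iota)$ is of generic klt type: restricting the klt $\Rr$-complement on $X$ to $S$ yields, by inversion of adjunction for plt g-pairs, a klt $\Rr$-complement of $(S,B_S+\bM^\iota)$ over a neighborhood of $\pi(z)$. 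The inductive hypothesis now furnishes a monotonic $n$-complement of $(S,B_S+\bM^\iota)$ with $n\in\cN_{\mathrm{gt}}\subseteq\cN$, and Proposition \ref{prop:adjlift} lifts it to an $n$-complement of $(X/Z\ni z,B+\bM)$ on the plt model, hence of the original pair.

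The main obstacle I foresee is the plt reduction in the non-semi-exceptional case: one must tie-break the non-lc locus to a single divisor $S$ while preserving control of the $B_{\cN\_\Phi}$ approximation and the Fano-type property, and while ensuring $S$ meets the fibre over $z$. A secondary technical point is checking that generic klt type genuinely descends along divisorial adjunction, which I expect to follow from the Shokurov--Koll\'ar connectedness principle for generalized pairs (cf.\ \cite[Lemma 2.14]{Bir19}) together with the bigness of $-(K_X+B+\bM_X)$ secured by the plt reduction.
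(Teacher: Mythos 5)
Your overall architecture matches the paper's: dichotomize on semi-exceptionality, invoke Theorem \ref{thm:semiexc} in one branch, and in the other reduce via Lemma \ref{lem:pltcomp2} to a plt pair with $\lfloor B\rfloor=S$ prime, apply induction on $S$, and lift with Proposition \ref{prop:adjlift}. However, there are two genuine gaps. First, in the semi-exceptional branch your set $\cN_{\mathrm{se}}=\cN_{\mathrm{se}}(d,p,I,\Phi)$ is built from Theorem \ref{thm:semiexc} with input $\Phi$, whose hypothesis is that $(X,B_\Phi+\bM)$ is semi-exceptional; but you only know that $(X,B_{\cN\_\Phi}+\bM)$ is semi-exceptional, and since $B_{\cN\_\Phi}\ge B_\Phi$ semi-exceptionality does not descend to the smaller boundary. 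The paper fixes the order: it first builds $\cN^{d-1}$ from the inductive hypothesis, then takes $\cN_d$ from Theorem \ref{thm:semiexc} with input $\Gamma(\cN^{d-1},\Phi)$, and dichotomizes on $(X,B_{\cN^{d-1}\_\Phi}+\bM)$, so the hypotheses match. Your version is fixable but not correct as written.

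The more serious gap is in the lifting step. The inductive hypothesis furnishes only an $n$-complement of $(S,B_S+\bM^\iota)$, $n\in\cN_{\mathrm{gt}}$ --- not a monotonic one --- whereas Proposition \ref{prop:adjlift}(4) requires a \emph{monotonic} $n$-complement of the \emph{exact} adjoint boundary of the boundary you lift from. By Lemma \ref{lem:N_Phicompl}(2) what you actually get is a monotonic $n$-complement of $(S,(B_S)_{n\_\Phi'}+\bM^\iota)$, so you must replace the boundary upstairs by $B_{n\_\Phi}$ (whose adjoint lies in $\Gamma(n,\Phi')$ by the ``moreover'' part of Proposition \ref{prop:adjcoeff} and is $\le(B_S)_{n\_\Phi'}$). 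But then $-(K_X+B_{n\_\Phi}+\bM_X)$ is big yet no longer nef over $Z$, so one must run a $-(K_X+B_{n\_\Phi}+\bM_X)$-MMP to a model $X'$, check $S$ survives and $S'$ is Fano type over its image (Lemma \ref{lem:S'isFT}), and then transport the complement from a terminal model $\tilde S$ of $S$ to $S'$ by showing $\tilde S\dashrightarrow S'$ is a birational contraction with $\phi_*(B_{\tilde S})_{n\_\Phi'}\ge(B'_{n\_\Phi})_{S'}$ via a discrepancy comparison on a common resolution. This is roughly half of the paper's proof and is precisely the obstacle your proposal does not identify; asserting directly that induction gives a monotonic $n$-complement of $(S,B_S+\bM^\iota)$ is unjustified (its coefficients need not lie in $\frac{1}{n}\Zz$, so no $n$-complement of it need be monotonic).
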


\begin{defn}\label{defn:gklt}
We say that a g-pair $(X/Z\ni z,B+\bM)$ is of $\emph{generic klt type}$, if it has an $\Rr$-complement $(X/Z\ni z,B^++\bM)$ such that over a neighborhood of $z$, $B^+-B$ is big and $(X,B^++\bM)$ is klt.
\end{defn}

\begin{rem}
Assume that $(X/Z\ni z,B+\bM)$ is of generic klt type, then $X$ is of Fano type over a neighborhood of $z$. 
\end{rem}

\begin{lem}\label{lem:pltcomp1}
Assume that $(X/Z\ni z,B+\bM)$ is of generic klt type and not semi-exceptional. Then $(X/Z\ni z,B+\bM)$ has an $\Rr$-complement $(X/Z\ni z,B^++\bM)$ with only one lc place $S$ such that $z\in\Center_Z{S}$, and $\Supp(B^+-B)$ supports an effective divisor $A$ which is ample over a neighborhood of $z$.
\end{lem}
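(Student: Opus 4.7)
The plan is to execute three steps: first, upgrade the klt $\Rr$-complement provided by generic-klt-type so that its boundary supports an ample divisor; second, interpolate with a non-lc witness furnished by the failure of semi-exceptionality to obtain a strictly lc $\Rr$-complement reaching over $z$; and third, apply a tie-breaking argument with the ample piece to concentrate the non-klt locus onto a single lc place whose center passes through $z$.

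\textbf{Step 1 (ample support for the klt complement).} By Definition \ref{defn:gklt} there is an $\Rr$-complement $(X/Z\ni z, B^{*}+\bM)$ that is klt over a neighborhood of $z$ with $B^{*}-B$ big over $Z$ near $z$. Since $(X/Z\ni z, B+\bM)$ is of generic klt type, $X$ is of Fano type over a neighborhood of $z$; hence Kodaira's lemma applied to $B^{*}-B\sim_{\Rr,Z}-(K_X+B+\bM_X)$ near $z$ yields a decomposition $B^{*}-B\sim_{\Rr,Z}\epsilon H+E$ for small $\epsilon>0$, where $H$ is a general effective $\Rr$-divisor ample over $Z$ near $z$ and $E\geq 0$. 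Replacing $B^{*}$ by $(1-\delta)B^{*}+\delta(B+\epsilon H+E)$ for a sufficiently small $\delta>0$ preserves the $\Rr$-linear equivalence class over $Z$ near $z$, keeps the inequality $B^{*}\geq B$, and keeps the pair klt near $z$, while ensuring $\Supp(B^{*}-B)\supseteq\Supp(\epsilon H)$. So we may assume $\Supp(B^{*}-B)$ supports an effective $\Rr$-divisor $A$ which is ample over a neighborhood of $z$.

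\textbf{Step 2 (strictly lc $\Rr$-complement).} Not being semi-exceptional (in the relative sense over $z$) furnishes an effective $G$ with $G\sim_{\Rr,Z}-(K_X+B+\bM_X)$ near $z$ such that $(X,B+G+\bM)$ fails to be lc over any neighborhood of $z$. For $t\in[0,1]$ set $B_{t}:=tB^{*}+(1-t)(B+G)$, so $B_{t}\geq B$ and $K_X+B_{t}+\bM_X\sim_{\Rr,Z}0$ near $z$; at $t=0$ the pair is non-lc over $z$, at $t=1$ it is klt near $z$. Take the minimal $t_0\in(0,1)$ with $(X,B_{t_0}+\bM)$ lc over a neighborhood of $z$, and set $B_0^{+}:=B_{t_0}$. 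Then $(X/Z\ni z,B_0^{+}+\bM)$ is a strictly lc $\Rr$-complement whose non-klt locus meets the fiber over $z$, and $\Supp(B_0^{+}-B)\supseteq t_0\Supp(B^{*}-B)\supseteq\Supp A$.

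\textbf{Step 3 (tie-breaking to a unique lc place through $z$).} Since $(X,B_0^{+}+\bM)$ is non-klt over $z$, there exists an lc center whose image on $Z$ contains $z$; pick such a center and a prime lc place $S$ over it. For sufficiently small $\eta>0$, $\tilde{B}:=B_0^{+}-\eta A\geq B$ (since $\Supp A\subseteq\Supp(B_0^{+}-B)$, choose $\eta$ smaller than the ratio of coefficients), and $-(K_X+\tilde{B}+\bM_X)\sim_{\Rr,Z}\eta A$ is ample over $Z$ near $z$. Choose a general effective $\Rr$-divisor $D\sim_{\Rr,Z}\eta A$ near $z$, and take the lc threshold $\lambda$ of $D$ with respect to $(X,\tilde{B}+\bM)$ over $z$; by genericity of $D$ in its ample linear system and an appropriate choice placing the only new lc place over $S$ (for instance by first passing to a dlt modification of $(X,B_0^{+}+\bM)$ to realize $S$ as a divisor and then tie-breaking on that model before pushing down), we obtain $(X,\tilde{B}+\lambda D+\bM)$ lc with unique lc place $S$. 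Rescaling $D$ and $\eta$ so that $\lambda D\sim_{\Rr,Z}\eta A$, we set $B^{+}:=\tilde{B}+\lambda D$, which satisfies $B^{+}\geq B$, $K_X+B^{+}+\bM_X\sim_{\Rr,Z}0$ near $z$, has unique lc place $S$ with $z\in\Center_Z S$, and $\Supp(B^{+}-B)\supseteq\Supp D\cup\Supp(\tilde{B}-B)\supseteq\Supp A$.

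\textbf{Main obstacle.} The delicate ingredient is Step 3: the ample perturbation must be orchestrated so that, among possibly several non-klt centers of $(X,B_0^{+}+\bM)$ over $z$, the unique surviving lc place is one whose center contains $z$, while simultaneously maintaining $B^{+}\geq B$ and $K_X+B^{+}+\bM_X\sim_{\Rr,Z}0$. This typically requires passing to a dlt modification to realize lc places as divisors, running an MMP to separate/contract unwanted lc places, and exploiting ampleness of $A$ over the neighborhood of $z$ to move the boundary of $D$ away from those lc places that one wishes to eliminate.
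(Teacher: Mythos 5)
Your Steps 1 and 2 are sound in outline (modulo careful ordering of choices in Step 1 to preserve the klt property after the replacement), but Step 3 contains a genuine gap that the ``Main obstacle'' paragraph acknowledges without resolving.

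The tie-breaking argument in Step 3 requires $(X,\tilde{B}+\bM)=(X,B_0^+-\eta A+\bM)$ to be klt, so that the lc threshold of a general $D\sim_{\Rr,Z}\eta A$ produces exactly one new lc place. But $(X,B_0^++\bM)$ may well have an lc place $P$ whose center is not contained in $\Supp A$; for such a $P$, the pullback of $A$ to a log resolution has multiplicity $0$ along $P$, so $a(P,X,\tilde{B}+\bM)=a(P,X,B_0^++\bM)=0$ persists. Since $A$ is ample, $\Supp A$ meets, but does not contain, positive-dimensional subvarieties; nothing forces it to contain all lc centers of $B_0^+$. Consequently $(X,\tilde{B}+\bM)$ is in general strictly lc, any further general ample $D$ has $\Center_X P\not\subseteq\Supp D$, and adding $\lambda D$ leaves $a(P)=0$ unchanged. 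You therefore end up with at least two lc places (the old $P$ and the new one coming from $D$), not one. The proposed fixes---dlt modification and running an MMP to contract unwanted lc places---would change the model, would need to preserve both $B^+\ge B$ and $K_X+B^++\bM_X\sim_{\Rr,Z}0$ simultaneously, and are not spelled out in a way that plainly closes the gap. There is also a circularity in ``rescaling $D$ and $\eta$ so that $\lambda D\sim_{\Rr,Z}\eta A$'': the lc threshold $\lambda$ itself depends on the scaling, so this normalization needs a more careful setup even when the klt hypothesis does hold.

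The paper sidesteps all of this by never tie-breaking on $X$. It passes to a log resolution $f:W\to X$ of $(X,\Supp(B+C+E))$, where $B$, $C$ (the non-lc witness), and $B'$ (the klt CY complement) pull back to sub-boundaries $B_W$, $C_W$, $B_W'$ with coefficients that are plain real numbers on an snc divisor. It then interpolates $tC_\epsilon+(1-t)B_W'$ (with $C_\epsilon$ a perturbation of $C_W$ by $\epsilon B_W+\epsilon f^*E-\epsilon H$ for an $f$-exceptional $H\le 0$ chosen so $A_W:=H+f^*A$ is ample), and reads off directly from the coefficient functions of $t$ the unique $t$ at which a single prime divisor $S$ reaches coefficient exactly $1$; genericity of $H$ and $E$ breaks any ties. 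Adding the general ample $t\epsilon A_W$ restores $\Rr$-linear triviality over $Z$ (since $C_\epsilon+\epsilon A_W\sim_{\Rr,Z}C_W$) without touching the coefficient of $S$, and pushing forward gives $B^+$. This is exactly the cleaner, explicit version of what your Step 3 is reaching for; without moving to the log resolution one cannot control in this way which component has coefficient $1$.
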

\begin{proof}
By assumption and possibly shrinking $Z$ around $z$, we may find a klt g-pair $(X/Z,B'+\bM)$ such that $K_X+B'+\bM_X\sim_{\Rr,Z}0$ and  $B'-B\ge0$ is big over $Z$. We may assume that that $B'-B\sim_{\Rr,Z}A+E$ for some $\Rr$-divisor $E\ge0$ and ample over $Z$ $\Rr$-divisor $A\ge0$. Since $(X/Z\ni z,B+\bM)$ is not semi-exceptional, possibly shrinking $Z$ near $z$, there exists an $\Rr$-divisor $C\ge B$ such that $(X/Z,C+\bM)$ is not lc and $K_X+C+\bM\sim_{\Rr,Z}0$.

Let $f:W\to X$ be a log resolution of $(X,\Supp(B+C+E))$ such that $\bM$ descents to $W$. We may write
$$K_W+B_W+\bM_W:=f^*(K_X+B+\bM_X),$$
$$K_W+C_W+\bM_W:=f^*(K_X+C+\bM_X),$$
and
$$K_W+B_W'+\bM_W:=f^*(K_X+B'+\bM_X)$$
for some $\Rr$-divisors $C_W,B_W$, and $B_W'$. Let $H$ be an ample over $Z$ $\Rr$-divisor on $W$ such that $-H\ge0$, $H$ is exceptional over $X$, and $A_W:=H+f^*{A}$ is ample over $Z$.

Let $\epsilon$ be a positive real number such that $||C_{\epsilon}^{>0}||_{\infty}>1$, where
$$C_{\epsilon}:=(1-\epsilon)C_W+\epsilon B_W+\epsilon f^*E-\epsilon H\ge B_W.$$
One can find a positive real number $t$ such that 
$$\norm{(tC_\epsilon+(1-t)B_W')^{>0}}_\infty=1\text{ and }\lf(tC_\epsilon+(1-t)B_W')^{>0}\rf=S$$
is a prime divisor. In particular, $(W,tC_\epsilon+(1-t)B_W'+\bM)$ is sub-lc with only one lc place $S$. We may assume that $\Supp B_W^+$ is snc, and $\lf(B_W^+)^{>0}\rf=S$, where
$$B_W^+:=tC_\epsilon+(1-t)B_W'+t\epsilon A_W.$$
Notice that
\begin{align*}
C_{\epsilon}+\epsilon A_W&=(1-\epsilon)C_W+\epsilon B_W+\epsilon(f^*E-H)+\epsilon(f^*A+H)
\\&\sim_{\Rr,Z} (1-\epsilon)C_W+\epsilon B_W+\epsilon(B_W'-B_W)=(1-\epsilon)C_W+\epsilon B'_W
\\&\sim_{\Rr,Z}C_W.
\end{align*}
Therefore 
$$K_W+B_W^++\bM_W\sim_{\Rr,Z}K_W+(1-t)B'_W+tC_W+\bM_W\sim_{\Rr,Z}0.$$ 
Let $B^+:=f_*B_W^+$. Then we have $K_X+B^++\bM_X\sim_{\Rr,Z}0$ and
$$K_W+B_W^++\bM_W=f^*(K_X+B^++\bM_X)$$ 
by the negativity lemma. Hence $(X/Z\ni z,B^++\bM)$ has the required properties.
\end{proof}

\begin{lem}\label{lem:pltcomp2}
Assume that $(X/Z\ni z,B+\bM)$ is of generic klt type and not semi-exceptional. Then there exists a birational morphism $f:Y\to X$, and a g-pair $(Y/Z\ni z,B_Y+\bM)$ such that over a neighborhood of $z$, we have
\begin{enumerate}
    \item $(Y,B_Y+\bM)$ is plt,
    \item $-(K_Y+B_Y+\bM_Y)$ is ample,
    \item $K_Y+B_Y+\bM_Y\ge f^*(K_X+B+\bM_X)$, and
    \item $\lf B_Y\rf=S$ is the only possible $f$-exceptional prime divisor.
\end{enumerate}
\end{lem}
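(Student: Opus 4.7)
The plan is to invoke Lemma~\ref{lem:pltcomp1} to produce an $\Rr$-complement whose only lc place is a divisor $S$ with $\Center_Z S\ni z$, realize $S$ as a boundary component on a suitable birational model $Y$, and finally perturb the resulting plt complement by a small ample multiple to gain the anti-ampleness in condition (2).

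First, I apply Lemma~\ref{lem:pltcomp1} to obtain an $\Rr$-complement $(X/Z\ni z, B^++\bM)$ of $(X/Z\ni z, B+\bM)$ with a unique lc place $S$ (whose center on $Z$ contains $z$), together with an effective $\Rr$-divisor $A\ge 0$ supported on $\Supp(B^+-B)$ and ample over some neighborhood of $z$. After shrinking, there is $\epsilon_0>0$ with $\epsilon_0 A\le B^+-B$.

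Next, I construct $f:Y\to X$. If $S$ is already a prime divisor on $X$, I take $f$ to be a small $\Qq$-factorialization (or the identity if $X$ is already $\Qq$-factorial). Otherwise, I apply an extraction of the unique divisorial non-klt valuation $S$ (e.g.\ via \cite[Theorem 1.7]{LX23}) to obtain a $\Qq$-factorial birational morphism $f:Y\to X$ with $\Exc(f)=\{S\}$ and $-S$ relatively ample over $X$. Setting $B_Y^+:=f_*^{-1}B^++S$, the pair $(Y, B_Y^++\bM)$ is plt with $\lfloor B_Y^+\rfloor=S$ and $K_Y+B_Y^++\bM_Y=f^*(K_X+B^++\bM_X)\sim_{\Rr,Z}0$; moreover, $Y$ is of Fano type over $Z$.

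I then define $B_Y:=B_Y^+-\epsilon A_Y$ for a sufficiently small $\epsilon\in(0,\epsilon_0]$, where $A_Y$ is an effective $\Rr$-Cartier divisor on $Y$ which is ample over a neighborhood of $z$, has $\mult_S A_Y=0$, and satisfies $A_Y\le f^*A$. Granted such an $A_Y$, all four conclusions follow immediately: conditions (1) and (4) hold since $B_Y\le B_Y^+$ and $\mult_S B_Y=\mult_S B_Y^+=1$ (so plt-ness is preserved and $\lfloor B_Y\rfloor=S$); condition (2) follows from $K_Y+B_Y+\bM_Y\sim_{\Rr,Z}-\epsilon A_Y$; and condition (3) from
\[
K_Y+B_Y+\bM_Y = f^*(K_X+B^++\bM_X) - \epsilon A_Y \ge f^*(K_X+B^++\bM_X) - \epsilon f^*A \ge f^*(K_X+B+\bM_X),
\]
using $\epsilon A\le B^+-B$.

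The main obstacle is the construction of $A_Y$: when $f$ is a nontrivial extraction, the pullback $f^*A$ is only nef and big on $Y$ (not ample), because of $f$-contracted curves inside $S$. The remedy exploits the relative ampleness of $-S$ over $X$: classes of the form $f^*A-cS$ are ample on $Y$ over a neighborhood of $z$ for $c$ in an open range $(0,c_{\max})$. By adjusting the representative of $A$ within $\Supp(B^+-B)$ (which meets $\Center_X S$, since $S$ is an lc place of $(X,B^++\bM)$ but not of the klt pair $(X,B+\bM)$), the value $c=\mult_S f^*A$ can be brought into this range, and one sets $A_Y=f^*A-cS$; in the $\Qq$-factorialization case one simply chooses $A_Y$ to be a representative of the ample class of $A$ with support avoiding $S$, using the flexibility in $\Supp(B^+-B)$.
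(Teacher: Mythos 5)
Your overall plan — apply Lemma~\ref{lem:pltcomp1}, extract the unique lc place $S$ via $f:Y\to X$, and then perturb the resulting plt boundary to gain anti-ampleness — matches the paper's strategy. But the execution diverges in an important way, and there is a genuine gap.

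The paper uses the klt $\Rr$-complement $(X,B^++\bM)$ supplied by the generic klt type hypothesis in an essential way: after renaming the $\Rr$-complement from Lemma~\ref{lem:pltcomp1} as $(X,B'+\bM)$ with $B'-B\ge A\ge 0$ for some $A$ ample over $Z$, it sets $B'':=B'-A$ and builds $B_Y^\epsilon$ as a convex combination of the crepant pullbacks $B_Y''$ and $B_Y^+$ shifted by a small multiple of $S$. With this choice, $-(K_Y+B_Y^\epsilon+\bM_Y)\sim_{\Rr,Z}(1-t)(f^*A-\epsilon S)$, which is ample over $Z$ for arbitrarily small $\epsilon>0$ because $f^*A$ is big and nef over $Z$ and $-S$ is $f$-ample; pltness is then checked by a continuity argument in $\epsilon$. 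Your proposal ignores the klt complement entirely and instead subtracts a small ample $A_Y$ directly from the plt complement $B_Y^+$.

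The genuine gap is in your construction of $A_Y$. You need an effective $\Rr$-divisor $A_Y$ on $Y$ that is ample over $Z$, satisfies $\mult_S A_Y=0$, and is bounded by $f^*(B^+-B)$. You propose $A_Y:=f^*A-cS$ with $c:=\mult_S f^*A$ and assert that ``adjusting the representative of $A$ within $\Supp(B^+-B)$'' brings $c$ into the range where $f^*A-cS$ is ample. This is not justified. Rescaling $A$ by $\lambda$ scales both $\mult_S f^*A$ and the ampleness threshold for $c$ by $\lambda$, so the relevant ratio is unchanged. What you are actually asserting is that some effective divisor $A$ supported on $\Supp(B^+-B)$ and ample over $Z$ has ample strict transform $f_*^{-1}A=f^*A-(\mult_S f^*A)S$ on $Y$; this is a nontrivial geometric condition that your argument does not establish and that can fail. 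The paper's convex-combination construction circumvents precisely this obstruction: the coefficient of $S$ subtracted from $f^*A$ there is a free small parameter $\epsilon$, not pinned to the value $\mult_S f^*A$, so ampleness of $f^*A-\epsilon S$ comes for free from the $f$-ampleness of $-S$.
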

\begin{proof}
By Lemma \ref{lem:pltcomp1} and shrinking $Z$ around $z$, $(X/Z\ni z,B+\bM)$ has an $\Rr$-complement $(X/Z\ni z,B'+\bM)$ with only one lc place $S$ such that $B'-B\ge A\ge0$ for some ample over $Z$ $\Rr$-divisor $A$. Suppose that $(X/Z\ni z,B^++\bM)$ is an $\Rr$-complement of $(X/Z\ni z,B+\bM)$ such that $(X/Z,B^++\bM)$ is klt.

Let $f:Y\to X$ be a birational morphism that only extracts $S$. Note that if $S$ is a prime divisor on $X$, then $f=id$. Let $B'':=B'-A\ge B\ge0$. We may write 
$$K_Y+B_Y''+\bM_Y:=f^*\left(K_X+B''+\bM_X\right),$$
and
$$K_Y+B_Y^++\bM_Y:=f^*\left(K_X+B^++\bM_X\right)$$
for some $\Rr$-divisors $B''_Y$ and $B_Y^+$.

Let $b_S^+:=\mult_SB_Y^+$. For any $0\le\epsilon\ll1$.
Set
$$t:=\frac{\epsilon}{\epsilon+1-b_S^+} \text{ and }B_Y^\epsilon:=(1-t)B_Y''+tB_Y^+.$$
We claim that one can pick a positive real number $\epsilon$ such that $(Y/Z,B^\epsilon_Y+\bM)$ is plt and
$$-(K_Y+B_Y^\epsilon+\bM_Y)\sim_{\Rr,Z}(1-t)(f^*A-\epsilon S)$$
is ample over $Z$. It suffices to prove the claim as the statement follows immediately.

\smallskip

It is obvious that $f^*A-\epsilon S$ is ample over $Z$ for any $0<\epsilon\ll1$. For any prime divisor $S'\neq S$ on $Y$, we have
$$\mult_{S'}B_Y^\epsilon=(1-t)\mult_{S'}B_Y''+t\mult_{S'}B_Y^+<1,$$
as $(X/Z,B^++\bM)$ is klt and $(X/Z,B''+\bM)$ is lc with at most one lc place $S$. Let $g:W\to Y$ be a log resolution of $(Y,B_Y^\epsilon)$. We only need to show that for any $g$-exceptional prime divisor $P$ on $W$, we have that
$$(1-t)\left(b_P''+\epsilon s_P\right)+tb_P^+<1,$$
where $s_P:=\mult_Pg^*S$, 
$$b_P'':=\mult_P\left(g^*\left(K_Y+B_Y''+\bM_Y\right)-K_W-\bM_W\right),$$
and
$$b_P^+:=\mult_P\left(g^*\left(K_Y+B_Y^++\bM_Y\right)-K_W-\bM_W\right).$$ 
Since $t=\frac{\epsilon}{\epsilon+1-b_S^+},$ we only need to show
$$h(\epsilon):=\left(1-\frac{\epsilon}{\epsilon+1-b_S^+}\right)\left(b_P''+\epsilon s_P\right)+\frac{\epsilon}{\epsilon+1-b_S^+}b_P^+<1$$
for $0<\epsilon\ll1$. Note that $(Y/Z\ni z,B_Y''+\bM)$ is plt and hence $h(0)=b_P''<1$. Since $h(\epsilon)$ is a continuous function near $0$, $h(\epsilon)<1$ for $0<\epsilon\ll1$. This completes the proof.
\end{proof}

\begin{proof}[Proof of Theorem \ref{thm:generictype}]
We will perform a series of birational operations, and readers can refer to Figure \ref{fig2} to recall their connections.

We prove the theorem by induction on the dimension. Assume Theorem \ref{thm:generictype} holds in dimension $=d-1$. Let $\Phi':=\Phi'(p,\Phi)$ be a hyperstandard set given by Proposition \ref{prop:adjcoeff}, and $\cN^{d-1}:=\cN^{d-1}(d-1,p,I,\Phi')\subseteq\Zz_{>0}$ a finite set given by Theorem \ref{thm:generictype}. Let $\cN_d:=\cN_d(d,p,I,\Ii(\cN^{d-1},\Phi))\subseteq\Zz_{>0}$ be a finite set given by Theorem \ref{thm:semiexc}. We will show that $\mathcal{N}^d:=\mathcal{N}_d\cup\mathcal{N}^{d-1}$ has the required properties.

Possibly replacing $X$ with a $\Qq$-factorization we may assume that $X$ is $\Qq$-factorial. If $(X/Z\ni z,B_{\mathcal{N}^{d-1}{\_}\Phi}+\bM)$ is semi-exceptional, then $\dim Z=\dim z=0$ and $(X,B+\bM)$ is $\mathcal{N}_d$-complementary by our choice of $\mathcal{N}_d$. Hence we may assume that $(X/Z\ni z,B_{\mathcal{N}^{d-1}{\_}\Phi}+\bM)$ is not semi-exceptional. We claim that $(X/Z\ni z,B_{\mathcal{N}^{d-1}{\_}\Phi}+\bM)$ is $\mathcal{N}^{d-1}$-complementary and thus $(X/Z\ni z,B+\bM)$ is also $\mathcal{N}^{d-1}$-complementary by Lemma \ref{lem:N_Phicompl}. 

Possibly replacing $B$ with $B_{\mathcal{N}^{d-1}{\_}\Phi}$, we may assume that $B\in\Ii(\cN^{d-1},\Phi)$, $(X/Z\ni z,B+\bM)$ is of generic klt type and not semi-exceptional. By Lemma \ref{lem:pltcomp2} and possibly shrinking $Z$ near $z$, we may assume that 
\begin{itemize}
  \item $-(K_X+B+\bM_X)$ is ample over $Z$,
  \item $(X/Z,B+\bM)$ is plt, and
  \item $\lf B\rf=S$ is a prime divisor. 
\end{itemize}
Note that $S\to \pi(S)$ is a contraction and is of Fano type (cf. Step 1 of the proof of \cite[Proposition 8.1]{Bir19}), where $\pi:X\to Z$. Let $(S/\pi(S),B_S+\bM^\iota)$ be the divisorial adjunction of $(X/Z,B+\bM)$ on $S$. It is easy to see that $(S/\pi(S)\ni z,B_S+\bM^\iota)$ is of generic klt type. Let $(\tilde{S}/\pi(S),B_{\tilde{S}}+\bM^\iota)$ be a terminal model of $(S/\pi(S),B_S+\bM^\iota)$ (cf. \cite[Lemma 2.10(1)]{CT23}). Also note that $(\tilde{S}/\pi(S),B_{\tilde{S}}+\bM^\iota)$ is of generic klt type. By induction, $(\tilde{S},B_{\tilde{S}}+\bM^\iota)$ has an $n$-complement over a neighborhood of $z$ for some $n\in\cN^{d-1}$. In particular, $(\tilde{S},(B_{\tilde{S}})_{n\_\Phi'}+\bM^\iota)$ has a monotonic $n$-complement over a neighborhood of $z$ by Lemma \ref{lem:N_Phicompl}. We will show that $(X/Z\ni z,B+\bM)$ is $n$-complementary.

We may run a $-(K_X+B_{n\_\Phi}+\bM_X)$-MMP over $Z$ and it terminates with a model $\pi':X'\to Z$ such that $-(K_{X'}+B'_{n\_\Phi}+\bM_{X'})$ is big and nef over $Z$ as $-(K_X+B+\bM_X)$ is ample$/Z$ and $B\ge B_{n\_\Phi}$, where $B'_{n\_\Phi}$ is the strict transform of $B_{n\_\Phi}$ on $X'$. Since $a(S,X,B_{n\_\Phi}+\bM)=0$ and $(X',B'_{n\_\Phi}+\bM)$ is lc, $S$ is not contracted in this MMP. Furthermore, by Lemma \ref{lem:S'isFT}, $S'$ the strict transform of $S$ on $X'$ is of Fano type over $\pi'(S')$.

Let $f:W\to X$ and $f':W\to X'$ be a common resolution. We now claim that 
\begin{equation}\label{1}
    f^*(K_X+B+\bM_X)\ge f'^*\left(K_{X'}+B'_{n\_\Phi}+\bM_{X'}\right).
\end{equation}
Indeed, as 
$$-\left(f^*(K_X+B+\bM_X)-f'^*\left(K_{X'}+B'_{n\_\Phi}+\bM_{X'}\right)\right)$$ 
is nef over $X'$, and
$$f'_*f^*(K_X+B+\bM_X)-\left(K_{X'}+B'_{n\_\Phi}+\bM_{X'}\right)\ge0,$$ 
the inequality follows from the negativity lemma. 
\begin{figure}
\centering
\begin{tikzcd}[column sep = 4em, row sep = 1em]
	& W \arrow[dl, "f" swap] \arrow[dr, "f'"]\\
	X \arrow[rr, dashed] && X' \\
	& S_W\arrow[uu,hook] \arrow[dl,  "f|_{S_W}" swap] \arrow[dr,"f'|_{S_W}"]\\
	S \arrow[uu,hook] \arrow[rr, dashed, ""] && S' \arrow[uu,hook] \\
		& \tilde{S}\arrow[ul,  swap] \arrow[ur,dashed, "\phi" swap]
\end{tikzcd}
\caption{} 
    \label{fig2} 
\end{figure}
Now, let $S_W$ be the strict transform of $S$ on $W$. By \eqref{1}, we have
$$f|_{S_W}^*\left(K_S+B_S+\bM_S^\iota\right)\ge f'|_{S_W}^*\left(K_{S'}+(B'_{n\_\Phi})_{S'}+\bM_{S'}^\iota\right),$$
where $({S'},(B'_{n\_\Phi})_{S'}+\bM^\iota)$ is the divisorial adjunction of $({X'},B'_{n\_\Phi}+\bM)$ on $S'$. Thus for any prime divisor $D'$ over $S'$ with $a(D',S',(B'_{n\_\Phi})_{S'}+\bM^\iota)\le1$, it holds that
$$a\left(D',S,B_S+\bM^\iota\right)\le a\left(D',S',(B'_{n\_\Phi})_{S'}+\bM^\iota\right)\le1.$$ 
Hence the birational map $\phi:\tilde{S}\dashto S'$ is a birational contraction. It is clear that $\phi_*B_{\tilde{S}}\ge (B'_{n\_\Phi})_{S'}$. Moreover, as $(B'_{n\_\Phi})_{S'}\in\Ii(n,\Phi')$ by Proposition \ref{prop:adjcoeff}, we know that $\phi_*(B_{\tilde{S}})_{n\_\Phi'}\ge(B'_{n\_\Phi})_{S'}$. Therefore, over a neighborhood of $z$,  $(S',(B'_{n\_\Phi})_{S'}+\bM^\iota)$ has a monotonic $n$-complement since $(\tilde{S},(B_{\tilde{S}})_{n\_\Phi'}+\bM^\iota)$ has a monotonic $n$-complement. According to Proposition \ref{prop:adjlift}, $(X'/Z\ni z,B'_{n\_\Phi}+\bM)$ has a monotonic $n$-complement. It immediately implies that $(X/Z\ni z,B_{n\_\Phi}+\bM)$ has a monotonic $n$-complement. The proof is finished.
\end{proof}

\begin{rem}
In Theorem \ref{thm:generictype}, we do not assume that $(X/Z\ni z,B+\bM)$ is $\Rr$-complementary. Indeed, if $(X/Z\ni z,B_{\cN\_\Phi}+\bM)$ is $\cN$-complementary then so is $(X/Z\ni z,B+\bM)$ by Lemma \ref{lem:N_Phicompl}.
\end{rem}

\section{Proof of Theorem \ref{thm:main}}\label{sec8}

In this section, we prove our main result. Indeed, Theorem \ref{thm:main} follows from the following theorem immediately.

\begin{thm}\label{thm:klttype}
Let $d,p,I$ be positive integers, $\fR\subseteq[0,1]\cap\Qq$ a finite set, and $\Phi:=\Phi(\fR)$. Then there exists a finite set $\cN\subseteq\Zz_{>0}$ depending only on $d,p,I,$ and $\Phi$ satisfying $I\mid\cN,$ and  the following.

Assume that $(X/Z\ni z,B+\bM)$ is a g-pair such that
\begin{enumerate}
 \item $\dim X=d$,
 \item $p\bM$ is b-Cartier,
 \item $X$ is of Fano type over $Z$, and
 \item $(X/Z\ni z,B_{\cN\_\Phi}+\bM)$ has an $\Rr$-complement which is klt over a neighborhood of $z$.
\end{enumerate}
Then $(X/Z\ni z,B+\bM)$ is $\cN$-complementary.
\end{thm}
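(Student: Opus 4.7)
The plan is to argue by induction on $d$, combining the inductive hypothesis Theorem \ref{thm:klttype} in dimensions $\le d-1$ with Theorem \ref{thm:generictype}$_d$, following the blueprint of the proof of Theorem \ref{thm:semiexc}. The finite set $\cN$ will be built so that Theorem \ref{thm:generictype}$_d$ plays the role that Theorem \ref{thm:exc}$_d$ plays in the proof of Theorem \ref{thm:semiexc}, while Theorem \ref{thm:klttype}$_{d-1}$ handles the descent to a lower-dimensional base via the canonical bundle formula. Concretely, I would inductively build a chain $\Phi_0 := \Phi \subseteq \Phi_1 \subseteq \cdots \subseteq \Phi_{d+1}$ and finite sets $\cN^{\mathrm{klt},j}, \cN^{\mathrm{gen},j}$ for $0 \le j \le d$: at step $j$, apply Proposition \ref{prop:cbfindex} with parameters $(d, p, \Phi_j)$ to obtain $p_j$ and $\Phi_j'$; apply Theorem \ref{thm:klttype}$_{d-1}$ with $(d-1, p_j, I, \Phi_j')$ to obtain $\cN^{\mathrm{klt}, j}$; apply Theorem \ref{thm:generictype}$_d$ with $(d, p, I, \Phi_j)$ to obtain $\cN^{\mathrm{gen}, j}$; and set $\Phi_{j+1} := \Gamma(\cN^{\mathrm{klt}, j} \cup \cN^{\mathrm{gen}, j}, \Phi_j)$. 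Define $\cN := \bigcup_{j=0}^{d} (\cN^{\mathrm{klt}, j} \cup \cN^{\mathrm{gen}, j})$, so that $\Gamma(\cN, \Phi) = \Phi_{d+1}$.

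To verify that $\cN$ works, first replace $X$ by a small $\Qq$-factorialization and let $B^+$ be a klt $\Rr$-complement of $(X/Z\ni z, B_{\Phi_{d+1}}+\bM)$, which exists by hypothesis. Split into two cases. \emph{Case A.} If $(X/Z\ni z, B_{\Phi_{j+1}}+\bM)$ is of generic klt type for some $0 \le j \le d$, then since $\Phi_{j+1} \supseteq \Gamma(\cN^{\mathrm{gen}, j}, \Phi_j)$, the pair $(X/Z\ni z, B_{\Gamma(\cN^{\mathrm{gen}, j}, \Phi_j)}+\bM)$ is also of generic klt type, because any witnessing klt complement for the larger boundary remains a witness for the smaller one (and bigness is monotone in the divisor). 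Theorem \ref{thm:generictype}$_d$ applied with $\Phi_j$ then yields an $n$-complement of $(X/Z\ni z, B+\bM)$ with $n \in \cN^{\mathrm{gen}, j} \subseteq \cN$. \emph{Case B.} If none of the pairs $(X/Z\ni z, B_{\Phi_{j+1}}+\bM)$, $0 \le j \le d$, is of generic klt type, then since $B^+$ remains a klt $\Rr$-complement of each, this forces $\kappa(B^+ - B_{\Phi_{j+1}}) < d - \dim z$ for every $0 \le j \le d$. Pass to a dlt modification making $(X, B^+ + \bM)$ a $\Qq$-factorial dlt Calabi--Yau g-pair (preserving the klt-complement hypothesis through an analogue of Lemma \ref{lem:pushdownissemiexc}), and apply pigeonhole to the $d+1$ values $\kappa(B^+ - B_{\Phi_k})$, $1 \le k \le d+1$, all lying in $\{0, 1, \ldots, d - \dim z - 1\}$, to find $1 \le j \le d$ with $\kappa(B^+ - B_{\Phi_j}) = \kappa(B^+ - B_{\Phi_{j+1}})$. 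Then execute the fibration argument of Theorem \ref{thm:semiexc} verbatim: run an MMP on $B^+ - B_{\Phi_{j+1}}$, extract a fibration $\phi': Y' \to Z''$ via Lemma \ref{lem:twommp}, descend the canonical bundle formula with index $p_j$ via Proposition \ref{prop:cbfindex}, pass to a crepant dlt model via Lemma \ref{lem:cbfcptoverbase}, apply Theorem \ref{thm:klttype}$_{d-1}$ on the base to obtain an $n$-complement with $n \in \cN^{\mathrm{klt}, j} \subseteq \cN$, and lift to $X$ via Proposition \ref{prop:cbflift1}.

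The main obstacle is establishing the klt-type analogue of Lemma \ref{lem:semiexccbf}: if $(X, B+\bM)$ admits a klt $\Rr$-complement and $K_X + B + \bM_X \sim_{\Qq, T} 0$ for a contraction $\phi: X \to T$, then the pair $(T, B_T + \bM_\phi)$ produced by the canonical bundle formula also admits a klt $\Rr$-complement, so that Theorem \ref{thm:klttype}$_{d-1}$ can be invoked on the base. The proof should parallel Lemma \ref{lem:semiexccbf}: push the given klt complement $B^+$ through the canonical bundle formula, using that $B^+ - B$ is $\phi$-numerically trivial and therefore the pullback of an effective $\Qq$-Cartier divisor from $T$ by \cite[Lemma 2.5]{CHL23}, and verify klt-ness of the induced complement on $T$ by restriction to the generic fiber. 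The remaining subtleties -- tracking the interaction between the approximations $B_{\Phi_j}$ and the discriminant part of the canonical bundle formula under MMP and crepant operations, and the relative bookkeeping over $Z$ near $z$ -- run parallel to the proofs already appearing in the excerpt.
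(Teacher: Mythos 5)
Your overall architecture matches the paper's: induction on dimension, a chain of nested hyperstandard sets built by alternating Proposition \ref{prop:cbfindex} with the lower-dimensional inductive hypothesis, a pigeonhole on Iitaka dimensions, and the fibration/canonical-bundle-formula lifting from Theorem \ref{thm:semiexc}. Your observation that one must verify preservation of klt $\Rr$-complements under the canonical bundle formula (the klt analogue of Lemma \ref{lem:semiexccbf}) is also the step the paper flags with the remark ``since $(X,B_{\Phi_{i+1}}+\bM)$ has an $\Rr$-complement which is klt \ldots by the canonical bundle formula, so are \ldots''.

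However, there is a genuine gap: your $\cN$ omits the Calabi--Yau index $n_{CY}$ supplied by Proposition \ref{prop:CYindex}, and your Case~B does not treat the situation where the pigeonhole produces $\kappa(B^+ - B_{\Phi_j}/Z) = \kappa(B^+ - B_{\Phi_{j+1}}/Z) = 0$ with $\dim Z = 0$. In that case the MMP on $B^+ - B_{\Phi_{j+1}}$ contracts it to zero, the target of Lemma \ref{lem:twommp} is a point, and Proposition \ref{prop:cbfindex} (which requires $\dim T > 0$) and the lifting of Proposition \ref{prop:cbflift1} are inapplicable; ``execute the fibration argument of Theorem \ref{thm:semiexc} verbatim'' silently skips the branch of that proof which invokes $n_{CY}$ when $k=0$. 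The paper handles this by noting that $\kappa(B - B_{\Phi_d}) = 0$ and applying Proposition \ref{prop:CYindex} to get a uniform $n_{CY}$ with $n_{CY}(K_X + B + \bM_X) \sim 0$, so one must add $n_{CY}(d, p, I, \Gamma(\cN^d,\Phi))$ to $\cN$ and invoke it for this sub-case.

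A smaller structural difference: you reapply Theorem \ref{thm:generictype}$_d$ at every level $j$ to produce the sets $\cN^{\mathrm{gen},j}$ and check generic klt type at every $\Phi_{j+1}$, whereas the paper applies it only once (with $\Phi$) and checks only whether $(X/Z\ni z, B_{\Phi_0}+\bM)$ is of generic klt type. After replacing $B$ by the klt $\Qq$-complement $B^+$, this is equivalent to $B^+ - B_{\Phi_0}$ being big over $Z$; if it fails, all the differences $B^+ - B_{\Phi_i}$ are a fortiori not big, so the single check already places every $\kappa(B^+ - B_{\Phi_i}/Z)$ in $\{0,\dots,d-\dim Z-1\}$ and pigeonhole applies. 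Your extra checks are harmless but not needed, at the cost of a larger $\cN$. (Also, in the bound you should write $d - \dim Z$ rather than $d - \dim z$; the relative Iitaka dimension is bounded by the general fiber dimension of $X \to Z$.)
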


\begin{proof}
We prove the theorem by induction on the dimension. Assume Theorem \ref{thm:klttype} in dimensions $\le d-1$. We will construct a finite sequence $p_i,\Phi_i\text{ and }\Phi_i',\cN_{i+1}\text{ and }\cN^{i+1}$ ($i=0,\dots,d-1$) of positive integers, hyperstandard sets, finite sets of positive integers respectively.

Let $\cN^0=\cN_0:=\cN_0(d,p,I,\Phi)\subseteq\Zz_{>0}$ be a finite set given by Theorem \ref{thm:generictype}, and $\Phi_0:=\Ii(\cN^0,\Phi)$. Let $p_0:=p_0(d,p,\Phi_0)$ be a positive integer and $\Phi_0':=\Phi_0'(d,p_0,\Phi_0)$ a hyperstandard set given by Proposition \ref{prop:cbfindex}. Let $\cN_1:=\cN(\le d-1,p_0,I,\Phi_0')\subseteq\Zz_{>0}$ be a finite set given by Theorem \ref{thm:klttype}, and $\cN^1:=\cN_1\cup\cN^0$. Now suppose that $p_i,\Phi_i$ and $\Phi_i',\ \cN_{i+1}$ and $\cN^{i+1}$ are constructed for some $0\le i\le d-2$. Let $\Phi_{i+1}:=\Gamma(\cN^{i+1},\Phi)$. Let $p_{i+1}:=p_{i+1}(d,p,\Phi_{i+1})$ be a positive integer and $\Phi_{i+1}':=\Phi_{i+1}'(d,p,\Phi_{i+1})$ a hyperstandard set given by Proposition \ref{prop:cbfindex}. Let $\cN_{i+2}:=\cN(\le d-1,p_{i+1},\Phi_{i+1}')\subseteq\Zz_{>0}$ be a finite set given by Theorem \ref{thm:klttype}, and set 
$$\cN^{i+2}:=\cN_{i+2}\cup\cN^{i+1}.$$ 
Now we finish the construction inductively. 

Set $\Phi_d:=\Gamma(\cN^d,\Phi)$ and let $n_{CY}:=n_{CY}(d,p,\Phi_d)$ be a positive integer given by Proposition \ref{prop:CYindex}. We will show that $\cN:=\cN^d\cup\{n_{CY}\}$ has the required properties.

Possibly replacing $X$ with a small $\Qq$-factorialization, we may assume that $X$ is $\Qq$-factorial. By Lemma \ref{lem:Qcompl}, $(X/Z\ni z,B_{\cN\_\Phi}+\bM)$ has an $\Rr$-complement $(X/Z\ni z,B^++\bM)$ that is klt over a neighborhood of $z$ and $B^+\in\Qq$. Notice that $(X/Z\ni z,B+\bM)$ is $\cN$-complementary provided that $(X/Z\ni z,B^++\bM)$ is $\cN$-complementary. Therefore possibly shrinking $Z$ near $z$ and replacing $B$ by $B^+$, we may assume that $(X/Z,B+\bM)$ is klt and $K_X+B+\bM_X\sim_{\Qq,Z}0$.

If $(X/Z\ni z,B_{\Phi_0}+\bM)$ is of generic klt type, that is $B-B_{\Phi_0}$ is big over $Z$, then it is $\cN_0$-complementary by our choice of $\cN_0$ and we are done. Therefore we may assume that 
$$0\le\kappa(B-B_{\Phi_d}/Z)\le\cdots\le\kappa(B-B_{\Phi_0}/Z)\le d-1-\dim Z.$$
In particular, there exists an integer $0\le i\le d-1$ such that 
$$\kappa(B-B_{\Phi_i}/Z)=\kappa(B- B_{\Phi_{i+1}}/Z)=:k\ge0.$$ 
If $\dim Z=0$ and $k=0$, then $\kappa(B-B_{\Phi_d})=0$ which follows that 
$$n_{CY}(K_X+B+\bM_X)\sim0$$ 
by Proposition \ref{prop:CYindex}, and the statement follows. Thus we may assume that either $\dim Z>0$ or $k>0$. In the following, we will show that $(X/Z\ni z,B_{\Phi_{i+1}}+\bM)$ is $\cN_{i+1}$-complementary and thus finish the proof by Lemma \ref{lem:N_Phicompl}.

The rest of the proof is very similar as the proof of Theorem \ref{thm:semiexc}.
By Lemma \ref{lem:twommp}, we have the following commutative diagram
\begin{center}
	\begin{tikzcd}[column sep = 2em, row sep = 2em]
		X \arrow[d,"\pi",swap]\arrow[rr, "", dashed]  && X' \arrow[d, "\pi'" swap] \arrow[rr, dashed] && X'' \arrow[d, "{\pi''}" swap] \\
		Z && \arrow[ll, ""] Z'  && \arrow[ll, "\tau''", swap]Z'' 
	\end{tikzcd}
\end{center}
such that
\begin{itemize}
  \item $X\dashto X'$ is the $(B-B_{\Phi_{i+1}})$-MMP over $Z$, and $B'-B_{\Phi_{i+1}}'$ is semi-ample$/Z$,
  \item $X'\dashto X''$ is the $(B_{\Phi_{i+1}}'-B_{\Phi_i}')$-MMP over $Z'$ and $B_{\Phi_{i+1}}''-B_{\Phi_i}''$ is semi-ample$/Z'$,
  \item $\pi:X'\to Z'$ and $\pi'':X''\to Z''$ are the morphisms induced by $B'-B_{\Phi_{i+1}}'$ over $Z$ and $B_{\Phi_{i+1}}''-B_{\Phi_i}''$ over $Z'$ respectively, and 
  \item $\tau'':Z''\to Z'$ is a birational contraction between varieties of positive dimensions.
\end{itemize}
where $D'$ (resp. $D''$) denotes its strict transform on $X'$ (resp. $X''$) for any $\Rr$-divisor $D$ on $X$. By Proposition \ref{prop:cbfindex}, there exist g-pairs $(Z'/Z,B_{Z'}^{(i+1)}+\bM_{\pi'})$, $(Z''/Z,B_{Z''}^{(i)}+\bM_{\pi''})$ and $(Z''/Z,B_{Z''}^{(i+1)}+\bM_{\pi''})$ such that
\begin{itemize}
  \item $B_{Z''}^{(i)}\in\Phi_i'$, $p_{i}\bM_{\pi''}$ is b-Cartier,
  \item $K_{X'}+B_{\Phi_{i+1}}'+\bM_{X'} \sim_{\Qq}(\pi')^*\left(K_{Z'}+B_{Z'}^{(i+1)}+\bM_{\pi',Z'}\right)$,
  \item $p_{i}\left(K_{X''}+B_{\Phi_{i}}''+\bM_{X''}\right)\sim p_{i}(\pi'')^*\left(K_{Z''}+B_{Z''}^{(i)}+\bM_{\pi'',Z''}\right)$,
  \item $p_{i}\left(K_{X''}+B_{\Phi_{i+1}}''+\bM_{X''}\right)\sim p_{i}(\pi'')^*\left(K_{Z''}+B_{Z''}^{(i+1)}+\bM_{\pi'',Z''}\right)$, and
  \item $K_{Z''}+B_{Z''}^{(i+1)}+\bM_{\pi'',Z''}=(\tau'')^*\left(K_{Z'}+B_{Z'}^{(i+1)}+\bM_{\pi',Z'}\right)$.
\end{itemize}
Moreover, we infer that $p_{i}\bM_{\pi'}=p_{i}\bM_{\pi''}$ is b-Cartier and
$$p_{i}\left(K_{X'}+B_{\Phi_{i+1}}+\bM_{X'}\right)\sim p_{i}(\pi')^*\left(K_{Z'}+B_{Z'}^{(i+1)}+\bM_{\pi',Z'}\right).$$ 
We remark here that since $(X,B_{\Phi_{i+1}}+\bM)$ has an $\Rr$-complement which is klt over a neighborhood of $z$, by the canonical bundle formula, so are $(X',B'_{\Phi_{i+1}}+\bM)$ and $(Z'/Z\ni z,B_{Z'}^{(i+1)}+\bM_{\pi'})$.

Since $\psi:X\dashto X'$ is an MMP over $Z$ on $B-B_{\Phi_{i+1}}$ which is also an $-(K_X+B_{\Phi_{i+1}}+\bM_X)$-MMP over $Z$, we can see that
$$a(P,X',B_{\Phi_{i+1}}'+\bM)<a(P,X,B_{\Phi_{i+1}}+\bM)\le1$$
for any $\psi$-exceptional prime divisor $P$. Hence by \cite[Lemma 4.5]{BZ16} there exists a $\Qq$-factorial crepant model $(Y'/Z,B_{\Phi_{i+1},Y'}+\bM)$ of $(X'/Z,B_{\Phi_{i+1}}'+\bM)$ such that $Y'$ and $X$ are isomorphic in codimension one. It is clear that if $(Y'/Z\ni z,B_{\Phi_{i+1},Y'}+\bM)$ is $\cN_{i+1}$-complementary then so is $(X/Z\ni z,B_{\Phi_{i+1}}+\bM)$. Let $\tau''':(Z'''/Z,B^{(i+1)}_{Z'''}+\bM_{\pi'})\to (Z'/Z,B^{(i+1)}_{Z'}+\bM_{\pi'})$ be a crepant model of $(Z'/Z,B^{(i+1)}_{Z'}+\bM_{\pi'})$ such that any prime divisor $P_Y\subseteq\Supp B_{\Phi_{i+1},Y'}$ which is vertical over $Z'$, the image of $P_Y$ on $Z'''$ is a prime divisor. Since $(Z'''/Z\ni z,B^{(i+1)}_{Z'''}+\bM_{\pi'})$ is crepant to $(Z'/Z,B^{(i+1)}_{Z'}+\bM_{\pi'})$, one can see that $(Z'''/Z\ni z,B^{(i+1)}_{Z'''}+\bM_{\pi'})$ has an $\Rr$-complement which is klt over a neighborhood of $z$. By induction $(Z'''/Z\ni z,B^{(i+1)}_{Z'''}+\bM_{\pi'})$ is $n$-complementary for some $n\in\cN_{i+1}$. By Proposition \ref{prop:cbflift1}, $(Y'/Z\ni z,B_{\Phi_{i+1},Y'}+\bM)$ is also $n$-complementary. This completes the proof.
\end{proof}

\begin{proof}[Proof of Theorem \ref{thm:main}]
It follows from Theorem \ref{thm:klttype} by taking $I=1$ and $\Phi$ the standard set.
\end{proof}

\section{Boundedness of complements for $\Rr$-complementary generalized pairs}\label{sec9}
In this subsection, for the readers' convenience, we also give a short proof of \cite[Theorem 17]{Sho20}.

\begin{thm}[{\cite[Theorem 17]{Sho20}}]\label{thm:lctype}
Let $d, I$ be two positive integers, and $\I\subseteq[0,1]$ a subset such that $\I\cap\Qq$ is DCC. Then there exists a finite set $\cN$ of positive integers depending only on $d,I,$ and $\I$ satisfying the following.

Assume that $(X/Z\ni z,B)$ is a pair of dimension $d$ such that $B\in\I,$ $X$ is of Fano type over $Z$, and $(X/Z\ni z,B)$ is $\Rr$-complementary. Then $(X/Z\ni z,B)$ has an $n$-complement for some $n\in\cN$ such that $I\mid n$.
\end{thm}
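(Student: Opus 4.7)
The plan is to deduce Theorem \ref{thm:lctype} from Theorem \ref{thm:klttype} by induction on $d = \dim X$, reducing to a case split based on whether a klt $\Rr$-complement exists and using divisorial adjunction to descend dimension when it does not. The arithmetic assumption that $\mathfrak{I}\cap\Qq$ is DCC serves as the substitute for the klt hypothesis of Theorem \ref{thm:klttype} when handling the non-klt case.

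After a small $\Qq$-factorialization, we may assume $X$ is $\Qq$-factorial. Let $\Phi := \Phi(\{0,1\})$ denote the standard hyperstandard set and let $\cN_0 = \cN_0(d, 1, I, \Phi)$ be the finite set supplied by Theorem \ref{thm:klttype} (taking $p = 1$ and $\bM = 0$). If $(X/Z\ni z, B_{\cN_0\_\Phi})$ admits an $\Rr$-complement that is klt over a neighborhood of $z$, then Theorem \ref{thm:klttype} gives an $\cN_0$-complement of $(X/Z\ni z, B_{\cN_0\_\Phi})$, which transfers to one of $(X/Z\ni z, B)$ via Lemma \ref{lem:N_Phicompl}(1). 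Thus we may reduce to the case where no such klt $\Rr$-complement exists, so every $\Rr$-complement $(X, B^+)$ of $(X/Z\ni z, B)$ is strictly lc (i.e., not klt) over $z$.

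In this non-klt case, choose an $\Rr$-complement $(X, B^+)$ and a non-klt place $S$ whose center on $X$ contains $z$. Take a plt blow-up $f\colon (Y, B_Y^+)\to (X, B^+)$ extracting $S$ (or $f = \id$ if $S$ is already a divisor on $X$), producing a $\Qq$-factorial plt pair with $\lfloor B_Y^+\rfloor = S$. Setting $B_Y := S + f_*^{-1} B \le B_Y^+$, the pair $(Y/Z\ni z, B_Y)$ is $\Rr$-complementary via $B_Y^+$. Running an appropriate MMP over $Z$ (which terminates since $Y$ is of Fano type over $Z$) and using Lemma \ref{lem:S'isFT}, we arrange $-(K_Y + B_Y)$ to be big and nef over $Z$ while $S$ remains the unique component of $\lfloor B_Y \rfloor$; the bigness comes from $f_*^{-1}(B^+ - B)$ upon a judicious choice of $B^+$. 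Applying divisorial adjunction on $S$ produces a pair $(S, B_S)$ of dimension $d-1$. By the Shokurov adjunction formula, the coefficients of $B_S$ lie in a set $\mathfrak{I}' = \mathfrak{I}'(d, \mathfrak{I})$ with $\mathfrak{I}'\cap\Qq$ still DCC, and $(S, B_S)$ is $\Rr$-complementary. The inductive hypothesis then yields a finite set $\cN_1 = \cN_1(d-1, I, \mathfrak{I}')$ such that $(S, B_S)$ has an $n$-complement for some $n \in \cN_1$ with $I \mid n$. Lift this complement to $Y$ via Proposition \ref{prop:adjlift} and push forward to $X$; the desired finite set is $\cN := \cN_0 \cup \cN_1$.

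The main obstacle is setting up the plt blow-up and subsequent MMP so that $-(K_Y + B_Y)$ becomes big and nef over $Z$ (as required to invoke Proposition \ref{prop:adjlift}) while $S$ is preserved as the unique component of $\lfloor B_Y\rfloor$. Since $(X/Z\ni z, B)$ is \emph{not} assumed to be of generic klt type, the bigness of $B^+ - B$ is not automatic; the construction must therefore be more delicate than the one in Lemma \ref{lem:pltcomp2}, typically combining a suitable perturbation of $B^+$ with an MMP analogous to that in the proof of Theorem \ref{thm:generictype}. A secondary technical point is verifying that $\mathfrak{I}'\cap\Qq$ remains DCC under divisorial adjunction; this follows from the explicit form $(m-1 + \sum m_i b_i)/m$ of the adjunction coefficients together with the DCC hypothesis on $\mathfrak{I}\cap\Qq$, once one checks that rational collapses of irrational combinations do not disrupt the DCC property.
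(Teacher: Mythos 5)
There is a genuine gap, and it sits exactly where you flag it as "the main obstacle": your strategy of extracting a non-klt place $S$, arranging $-(K_Y+B_Y)$ to be big and nef over $Z$ with $\lfloor B_Y\rfloor=S$, and lifting a complement from $S$ via Proposition \ref{prop:adjlift} cannot be carried out when $-(K_X+B)$ is not big over $Z$. This is not a matter of a "more delicate" version of Lemma \ref{lem:pltcomp2}: Lemma \ref{lem:semiexcnotbig} shows that a strictly lc semi-exceptional pair never has $-(K_X+B)$ big, so in the semi-exceptional (and more generally non-big) regime no choice of $B^+$ or MMP will produce the plt pair with relatively ample $-(K_Y+B_Y)$ that Proposition \ref{prop:adjlift} needs. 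This non-big, non-klt case is precisely the hard case, and the paper's proof takes a completely different route there: it decomposes $B^+-B_{\cN\_\Phi}=F+M$ via the relative Nakayama--Zariski decomposition, runs an MMP on $M$ to obtain a fibration $\pi'\colon X'\to Z'$ with $-(K_{X'}+B'_{\cN\_\Phi}+F')\sim_\Rr(\pi')^*H'$ for $H'$ relatively ample, shows (Lemma \ref{lem:mlcislccenter} together with \cite[Theorem 1.1]{AM06}) that the strictly lc Calabi--Yau points of the base form a finite set, bounds the index $I(K_{X'}+B'_{\cN\_\Phi}+F')\sim0$ near those points via Proposition \ref{prop:maxlcindex}, and finally perturbs to a klt boundary $B'\ge\frac{N}{N+1}B$ (Proposition \ref{prop:splctype}) so that Theorem \ref{thm:main} applies. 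Note that in the paper the DCC hypothesis on $\I\cap\Qq$ enters essentially through Proposition \ref{prop:maxlcindex} (via the ACC for $\Rr$-complementary thresholds, Theorem \ref{thm:rctacc}); in your argument it is only invoked for a bookkeeping point about adjunction coefficients, which is a sign the hypothesis is not being used where it is actually needed.

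Your "secondary technical point" is also not merely technical: it is false in general that the rational part of the adjunction coefficient set remains DCC. The adjunction coefficients have the form $\frac{m-1+\sum m_ib_i}{m}$, and since the irrational elements of $\I$ are unconstrained, one can choose $\alpha_k=\tfrac{\sqrt2}{2}+\tfrac1k\in\I$ and $\beta=\tfrac12-\tfrac{\sqrt2}{2}\in\I$ so that $\alpha_k+\beta=\tfrac12+\tfrac1k$ is a strictly decreasing sequence of rationals; the resulting rational adjunction coefficients then violate DCC. So even in the big case your induction does not close under divisorial adjunction with the stated arithmetic hypothesis, and the reduction to dimension $d-1$ needs a different justification.
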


\subsection{Strictly log canonical Calabi-Yau varieties}
\begin{defn}[cf.\ {\cite[\S11]{Sho20}}]
We say that a pair $(X/Z\ni z,B)$ is \emph{strictly lc Calabi-Yau} (\emph{strictly lc CY} for short) if
\begin{enumerate}
  \item $(X/Z\ni z,B)$ is an $\Rr$-complement of itself, and
  \item for any $\Rr$-complement $(X/Z\ni z,B^+)$ of $(X/Z\ni z,B)$, $B^+=B$ over a neighborhood of $z$.
\end{enumerate}
\end{defn}

\begin{rem}
When $\dim Z=\dim z=0$, $(X,B)$ is strictly lc CY if and only if $(1)$ holds.
\end{rem}

\begin{ex}
Let $\pi:X:=\Pp^1\times\Pp^1\to Z:=\Pp^1$, $z\in Z$ a closed point and $L_1,L_2$ are two sections. Then over a neighborhood of $z$, we have $(X,L_1+L_2)$ is lc and $K_X+L_1+L_2\sim_{\Rr,Z}0$. Since $K_X+L_1+L_2+\pi^*z\sim_{\Rr,Z}0$, $(X/Z\ni z,L_1+L_2)$ is not strictly lc CY.
\end{ex}

\begin{lem}\label{lem:crepantofmlccy}
Suppose that $(X/Z\ni z,B)$ is strictly lc CY and $\psi:X\dashto X'$ is a birational contraction over $Z$. Then $(X'/Z\ni z,B':=\psi_*B)$ is strictly lc CY.
\end{lem}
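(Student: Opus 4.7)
\emph{Plan.} The goal is to verify the two defining properties of strictly lc Calabi--Yau for $(X'/Z\ni z, B')$.

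First, I will establish $K_{X'}+B'\sim_\Rr 0$ and the lc-ness of $(X',B')$ over a neighborhood of $z$. Writing $K_X+B=\sum_i r_i(h_i)_X$ over a neighborhood of $z$ via the hypothesis $K_X+B\sim_\Rr 0$ and applying $\psi_*$ yields $K_{X'}+B'=\sum_i r_i(h_i)_{X'}\sim_\Rr 0$, using $\psi_*K_X=K_{X'}$ and $\psi_*(h)_X=(h)_{X'}$ for any rational function $h$ (valid for any birational map). The lc property of $(X',B')$ follows from the standard ``pushforward-of-$\Rr$-complementary'' lemma recalled in the excerpt: $(X',B')$ is $\Rr$-complementary, so admits some lc $\Rr$-complement $(X',B'^+_0)$ with $B'^+_0\ge B'$, and decreasing the boundary preserves lc-ness. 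This establishes property~(1).

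For property~(2), take any $\Rr$-complement $(X'/Z\ni z,B'^+)$ of $(X'/Z\ni z,B')$ and set $C:=B'^+-B'$. Then $C\ge 0$ and $C\sim_\Rr 0$ over $z$ (since both $K_{X'}+B'$ and $K_{X'}+B'^+$ are $\sim_\Rr 0$), so $C=\sum_i r_i(h_i)_{X'}$ for some rationals $r_i$ and rational functions $h_i$. Define $\tilde C:=\sum_i r_i(h_i)_X$ on $X$ via the birational identification of function fields; it is $\Rr$-Cartier, $\Rr$-linearly trivial, and satisfies $\psi_*\tilde C=C$. Taking a common resolution $f\colon W\to X$, $g\colon W\to X'$, the equality $f^*(h_i)_X=(h_i)_W=g^*(h_i)_{X'}$ yields $f^*\tilde C=g^*C\ge 0$, and by $\Rr$-Cartieness this forces $\tilde C\ge 0$; the same identity yields $f^*(K_X+B)=g^*(K_{X'}+B')$, and hence $f^*(K_X+B^+)=g^*(K_{X'}+B'^+)$ where $B^+:=B+\tilde C$. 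Since $(X',B'^+)$ is lc, $(X,B^+)$ is lc as well, and combined with $B^+\ge B$ and $K_X+B^+\sim_\Rr 0$ over $z$ this makes $(X/Z\ni z,B^+)$ an $\Rr$-complement of $(X/Z\ni z,B)$. The strictly lc CY hypothesis on $(X,B)$ then forces $B^+=B$, hence $\tilde C=0$, over $z$; applying $\psi_*$ yields $C=0$ over $z$, i.e.\ $B'^+=B'$, as required.

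The step I expect to be the main obstacle is the careful bookkeeping of principal divisors under pullback and pushforward, specifically the identities $\psi_*(h)_X=(h)_{X'}$ and $f^*(h)_X=g^*(h)_{X'}$ and the effectivity conclusion $\tilde C\ge 0$ deduced from $f^*\tilde C=g^*C\ge 0$ via $\Rr$-Cartieness; these crucially rely on $\psi$ being a genuine birational contraction, so that every prime divisor on $X'$ lifts to a prime divisor on $X$ and $\psi_*$ behaves consistently on prime divisors.
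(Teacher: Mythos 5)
Your proof is correct and is essentially a detailed expansion of the paper's one-line argument, which cites the fact that $(X/Z\ni z,B)$ and $(X'/Z\ni z,B')$ are crepant over a neighborhood of $z$. Your key identity $f^*(K_X+B)=g^*(K_{X'}+B')$ on a common resolution, derived by writing $K_X+B$ as an $\Rr$-combination of principal divisors and using that principal divisors transform compatibly under the birational identification, is precisely what ``crepant'' means here, and your lifting of the complement $B'^+$ to $B^+=B+\tilde C$ is the standard transfer of $\Rr$-complements across a crepant birational contraction. One small remark: once you have the crepant identity, the lc-ness of $(X',B')$ follows directly from equality of log discrepancies $a(E,X,B)=a(E,X',B')$, so the detour through the pushforward-of-$\Rr$-complementary lemma is unnecessary (though not wrong); also the $r_i$ in the decomposition $C=\sum r_i(h_i)_{X'}$ are real, not rational, but nothing in your argument uses rationality.
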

\begin{proof}
It follows from the definition and the fact that $(X/Z\ni z,B)$ and $(X'/Z\ni z,B')$ are crepant over a neighborhood of $z$.
\end{proof}

\begin{lem}\label{lem:mlcislccenter}
Suppose that $(X/Z\ni z,B)$ is a strictly lc CY pair with $\dim z<\dim Z$. Assume that $K_X+B\sim_{\Rr,Z}0$ and there is a g-pair $(Z,B_Z+\bM_{\pi})$ induced by a canonical bundle formula for $(X/Z,B)$ over $Z$. Then $\{\bar z\}$ is an lc center of $(Z,B_Z+\bM_{\pi})$.
\end{lem}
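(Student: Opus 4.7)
The plan is to argue by contradiction: suppose $\overline{\{z\}}$ is \emph{not} an lc center of $(Z, B_Z + \bM_\pi)$, and produce an $\Rr$-complement $(X/Z \ni z, B + D)$ of $(X/Z \ni z, B)$ with $D \ne 0$ over a neighborhood of $z$, violating the strict lc CY hypothesis. The divisor $D$ will be obtained as $\epsilon \pi^* D_Z$, where $D_Z$ is a small effective principal divisor on $Z$ vanishing at $z$ but transverse to the lc centers of $(Z, B_Z + \bM_\pi)$ passing through $z$.

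First I would construct $D_Z$ by prime avoidance. The canonical bundle formula applied to the lc pair $(X, B)$ forces $(Z, B_Z + \bM_\pi)$ to be lc, and its finitely many lc centers passing through $z$, say $W_1, \dots, W_k$, all strictly contain $\overline{\{z\}}$ by the contradiction hypothesis (using $\dim z < \dim Z$ to make this a meaningful strict containment). In the local ring $\Oo_{Z,z}$, this translates to $\mathfrak{p}_{W_i} \subsetneq \mathfrak{m}_z$ for each $i$, so prime avoidance produces $f \in \mathfrak{m}_z \setminus \bigcup_i \mathfrak{p}_{W_i}$. Shrinking $Z$ to an affine neighborhood of $z$, the principal divisor $D_Z := (f)$ is effective, contains $\overline{\{z\}}$ in its support, yet does not contain any $W_i$. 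For any divisorial valuation $E$ over $Z$ with $\Center_Z E = W_i$, one has $\mult_E D_Z = 0$, so the log discrepancies at the existing lc places (all over some $W_i$) are preserved after adding $\epsilon D_Z$; every other relevant log discrepancy appearing on a log resolution is strictly positive and changes only by $O(\epsilon)$. Hence for small $\epsilon \in \Qq_{>0}$, the g-pair $(Z, B_Z + \epsilon D_Z + \bM_\pi)$ remains lc over a neighborhood of $z$.

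Next I would pull $D_Z$ back: set $D := \epsilon \pi^* D_Z$. Then $D$ is effective over a neighborhood of $z$, principal (so $D \sim_\Rr 0$), and satisfies $\pi^{-1}(z) \subseteq \Supp D$, so in particular $D > 0$ over a neighborhood of $z$. Because $D$ is the pullback of a base divisor, the canonical bundle formula for $(X, B + D) \to Z$ produces discriminant $B_Z + \epsilon D_Z$ and the same moduli part $\bM_\pi$. Invoking the reverse direction of the canonical bundle formula for lc-trivial fibrations of g-pairs (see, e.g., \cite{Fil20, CHLX23}), the lc-ness of $(Z, B_Z + \epsilon D_Z + \bM_\pi)$ near $z$ transfers to the lc-ness of $(X, B + D)$ near $z$. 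Combined with $K_X + B + D \sim_\Rr K_X + B \sim_\Rr 0$ over a neighborhood of $z$ (where, after shrinking $Z$, any $\Rr$-Cartier divisor on $Z$ becomes $\Rr$-principal and hence $\sim_\Rr 0$), this makes $(X/Z \ni z, B + D)$ an $\Rr$-complement of $(X/Z \ni z, B)$ with $B + D > B$ near $z$, contradicting strict lc CY.

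The main obstacle will be the reverse direction of the canonical bundle formula for g-pairs, namely that lc-ness on the base forces lc-ness on the total space. This is classical for lc-trivial fibrations of ordinary pairs (Kawamata, Ambro) and established in the g-pair setting, but its use here requires carefully tracking log discrepancies of both horizontal and vertical divisorial valuations over a neighborhood of $z$. A more minor technical issue is reducing $K_X + B \sim_{\Rr, Z} 0$ to $\sim_\Rr 0$ over a neighborhood of $z$, which amounts to the fact that any $\Rr$-Cartier divisor on a normal variety becomes $\Rr$-linearly trivial after shrinking to a small enough affine neighborhood of the point in question.
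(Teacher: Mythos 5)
Your proof is correct and follows essentially the same strategy as the paper's: argue by contradiction and violate the strictly lc CY property by adding $\epsilon$ times the pullback of an effective divisor on $Z$ that passes through $z$ but contains no lc center of $(Z,B_Z+\bM_\pi)$ near $z$. The only difference is in how that divisor is produced — the paper takes $\overline{\{z\}}$ itself when $\dim z=\dim Z-1$ and a general ample divisor through $z$ otherwise, whereas you construct a principal divisor by prime avoidance, which avoids the case distinction and makes the $\Rr$-linear triviality of the perturbation automatic.
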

\begin{proof}
Suppose that the statement is not true. If $\dim z=\dim Z-1$, let $H:=\{\bar z\}$, otherwise let $H\ni z$ be any general ample divisor. Possibly shrinking $Z$ around $z$, we may assume that $H$ contains no lc center of $(Z,B_Z+\bM_\pi)$. Pick a small enough positive real number $\epsilon$, such that $(Z,B_Z+\epsilon H+\bM_\pi)$ is lc. Then $(X/Z\ni z,B+\epsilon\pi^*H)$ is an $\Rr$-complement of $(X/Z\ni z,B)$, where $\pi:X\to Z$. However, $B+\epsilon\pi^*H\neq B$ over any neighborhood of $z$. The lemma holds.
\end{proof}

\begin{prop}\label{prop:maxlcindex}
Let $d$ be a positive integer and $\I\subseteq[0,1]\cap\Qq$ a DCC set. Then there exists a positive integer $I$ depending only on $d$ and $\I$ satisfying the following.

Assume that $(X/Z\ni z,B)$ is a pair of dimension $d$ such that $B\in\I,$ $X$ is of Fano type over $Z$ , and $(X/Z\ni z,B)$ is strictly lc CY. Then $I(K_X+B)\sim0$ over some neighborhood of $z$.
\end{prop}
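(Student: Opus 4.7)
The plan is to deduce the proposition directly from the known boundedness of complements for pairs with DCC boundary coefficients. Specifically, I would invoke \cite[Theorem 1.10]{Bir19} (equivalently \cite[Theorem 1.1]{Chen23} specialized to $p=1$ and $\bM=0$) to obtain a finite set $\cN$ of positive integers, depending only on $d$ and $\I$, such that every $\Rr$-complementary pair $(X/Z\ni z,B)$ of dimension $d$ with $B\in\I$ and $X$ of Fano type over $Z$ admits a \emph{monotonic} $n$-complement $(X/Z\ni z,B^+)$ for some $n\in\cN$; here monotonic means $B^+\ge B$ over a neighborhood of $z$.

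Next, I would invoke the strictly lc CY hypothesis. By definition, any $\Rr$-complement $(X/Z\ni z,B^+)$ of $(X/Z\ni z,B)$ with $B^+\ge B$ must satisfy $B^+=B$ over some neighborhood of $z$. Applied to the monotonic $n$-complement produced above, this forces $B^+=B$ near $z$. The $n$-complement condition then gives $n(K_X+B)=n(K_X+B^+)\sim 0$ over a neighborhood of $z$, so setting $I:=\lcm(\cN)$ yields the desired bounded index, with $I$ depending only on $d$ and $\I$.

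The one subtle point worth flagging is that the boundedness result used must supply a monotonic $n$-complement, not merely an $n$-complement in the sense of Definition \ref{defn:comp}. Without the monotonicity $B^+\ge B$, the coefficient condition $nB^+\ge n\lfloor B\rfloor+\lfloor(n+1)\{B\}\rfloor$ alone can force $B^+$ to exceed $B$ at certain components while dropping below $B$ at others (depending on the fractional parts of $(n+1)\{b\}$), and the strictly lc CY property would no longer apply to conclude $B^+=B$. Fortunately monotonicity is built into the cited DCC statements; alternatively, one can approximate $B$ from below by $B_{n\_\Phi}$ for a suitable hyperstandard set $\Phi$ chosen so that $B\in\Ii(n,\Phi)$ (hence $B_{n\_\Phi}=B$) and apply Lemma \ref{lem:N_Phicompl}(2) to extract monotonicity automatically. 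This is the only mild technical obstacle; once it is addressed, the proof reduces to the two-line argument described above.
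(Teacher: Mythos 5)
Your overall strategy (get a monotonic $n$-complement for bounded $n$, then use strictly lc CY to force $B^+=B$ near $z$) matches the second half of the paper's proof, but you are missing the crucial first step, and the citations you lean on do not actually deliver the monotonicity you need.

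The gap is in the claim that \cite[Theorem 1.10]{Bir19} or \cite[Theorem 1.1]{Chen23} yields a \emph{monotonic} $n$-complement for an arbitrary DCC set $\I$. \cite[Theorem 1.10]{Bir19} assumes hyperstandard coefficients $\Phi(\fR)$ for a finite $\fR$ (not an arbitrary DCC set), and \cite[Theorem 1.1]{Chen23}, which does allow DCC coefficients, only produces $n$-complements in the sense of Definition \ref{defn:comp}, i.e.\ with $nB^+\ge n\lf B\rf+\lf(n+1)\{B\}\rf$. As you yourself note, this coefficient inequality can push $b^+$ below $b$ (e.g.\ $b=2/5$, $n=3$ gives $\lf(n+1)b\rf/n=1/3<2/5$), and it only coincides with $B^+\ge B$ when $n$ clears the denominators of $B$. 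Your fallback -- choosing $\Phi$ and $n$ with $\I\subseteq\Ii(n,\Phi)$ so that $B_{n\_\Phi}=B$ and Lemma \ref{lem:N_Phicompl}(2) applies -- does not work either: an arbitrary DCC subset of $[0,1]\cap\Qq$ need not be contained in any single set $\Ii(n,\Phi)$ with $n,\Phi$ depending only on $d$ and $\I$, so you cannot make that choice uniformly.

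The missing ingredient, and the actual first step of the paper's proof, is the ACC for $\Rr$-complementary thresholds (Theorem \ref{thm:rctacc}). The strictly lc CY hypothesis forces each nonzero coefficient $b=\mult_PB$ (over a neighborhood of $z$) to equal the threshold $\Rcomp(X/Z\ni z,B-bP;P)$: if the threshold were larger, one could raise the coefficient of $P$ and still have an $\Rr$-complement, producing a complement $B^+\gneq B$ near $z$, contradiction. Hence every coefficient lies in $\I':=\I\cap\RCT(d,1,\I\cup\Zz_{\ge0})$, which is \emph{finite} as the intersection of a DCC set with an ACC set. Only after this reduction to a finite rational set $\I'$ can one choose $I$ divisible by the denominators of $\I'$ (so that $IB\in\Zz$) and invoke \cite[Theorems 1.7 and 1.8]{Bir19} to obtain a genuinely monotonic $I$-complement, from which your final two lines then go through. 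Without this reduction the argument does not close.
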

\begin{proof}
We first show that there exists a finite set $\I'\subseteq\I$ depending only on $d,\I$ such that $B\in\I'$ over some neighborhood of $z$. In fact, by Theorem \ref{thm:rctacc}, $\RCT(d,1,\I\cup\Zz_{\ge0})$ is an ACC set. It follows that 
$$\I':=\I\cap\RCT(d,1,\I\cup\Zz_{\ge0})$$ 
is a finite set as $\I$ is DCC. Since $(X/Z\ni z,B)$ is strictly lc CY, we know that $B\in\I'$ over some neighborhood of $z$.

According to \cite[Theorems 1.7 and 1.8]{Bir19} (see also \cite[Theorem 1.8]{HLS19}), we may find a positive integer $I$ which only depends on $d$ and $\I'$ such that $(X/Z\ni z,B)$ has a monotonic $I$-complement. We prove that $I$ has the required property. To see this, let $(X/Z\ni z,B+G)$ be an $I$-complement of $(X/Z\ni z,B)$ for some $\Rr$-divisor $G\ge0$. By assumption, $G=0$ over some neighborhood of $z$ which immediately implies that
$$I(K_X+B)=I(K_X+B+G)\sim0$$
over some neighborhood of $z$. The proof is finished.
\end{proof}

\subsection{Proof of Theorem \ref{thm:lctype}}
We first show Theorem \ref{thm:lctype} holds in the following situation.

\begin{prop}\label{prop:splctype}
Let $d$ and $I$ be two positive integers. Assume that $\cN$ is a finite set of positive integers given by Theorem \ref{thm:main} which only depends on $d$ and $I$. Then the following holds.

Assume that $(X/Z\ni z,B)$ is an $\Rr$-complementary pair of dimension $d$ and $X$ is of Fano type over $Z$. Assume that there is a contraction $\pi':X\to Z'$ over $Z$ and an open subset $U\subseteq Z'$, such that 
\begin{enumerate}
  \item $IB\in\Zz_{\ge0}$ over $U$, 
  \item $(X,B)$ is klt over $Z'\setminus U$, and
  \item there exists an $\Rr$-divisor $H'$ on $Z'$ which is ample over $Z$ such that
  $$-(K_X+B)\sim_{\Rr}(\pi')^*H'.$$ 
\end{enumerate} 
Then $(X/Z\ni z,B)$ is $\cN$-complementary.
\end{prop}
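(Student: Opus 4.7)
The plan is to reduce to an application of Theorem \ref{thm:main}. Set $\fR:=\{k/I\mid 0\le k\le I\}$ and $\Phi:=\Phi(\fR)$, so that $\tfrac{1}{I}\Zz\cap[0,1]\subseteq\Phi$, and take $\cN$ to be the finite set produced by Theorem \ref{thm:klttype} with parameters $(d,p=1,I,\Phi)$; then $\cN$ depends only on $d$ and $I$ and satisfies $I\mid n$ for every $n\in\cN$. Since $X$ is of Fano type over $Z$ by assumption, Theorem \ref{thm:klttype} will conclude $\cN$-complementarity for $(X/Z\ni z,B)$ as soon as we exhibit an $\Rr$-complement of $(X/Z\ni z,B_{\cN\_\Phi})$ which is klt over a neighborhood of $z$.

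I first record the shape of the truncation: by hypothesis~(1), $B\in\Phi\subseteq\Gamma(\cN,\Phi)$ on $(\pi')^{-1}(U)$, so $B_{\cN\_\Phi}=B$ there; by~(2), $(X,B_{\cN\_\Phi})$ remains klt on $(\pi')^{-1}(Z'\setminus U)$ since $B_{\cN\_\Phi}\le B$. Moreover, every component of $\lfloor B\rfloor$ has image contained in $U$ under $\pi'$, for otherwise hypothesis~(2) would be violated along that component. The construction of the required klt $\Rr$-complement now uses the ampleness of $H'$ over $Z$. Writing $\bar\pi:Z'\to Z$, I pick a general effective $\Rr$-divisor $G'\sim_{\Rr,Z}H'$ on $Z'$ with $\Supp G'\cap\bar\pi^{-1}(z)=\emptyset$; such a $G'$ exists because $H'$ is ample over $Z$ and $\bar\pi^{-1}(z)$ is a proper closed subset of $Z'$. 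Setting $D:=(\pi')^*G'+(B-B_{\cN\_\Phi})\ge0$ yields $K_X+B_{\cN\_\Phi}+D\sim_{\Rr,Z}0$. After shrinking $Z$ around $z$, $\Supp G'$ stays disjoint from $\bar\pi^{-1}(z)$, so $(\pi')^*G'=0$ on a neighborhood of $\pi^{-1}(z)$. Thus the complementary boundary agrees with $B$ on $(\pi')^{-1}(U)$ and with $B_{\cN\_\Phi}$ on $(\pi')^{-1}(Z'\setminus U)$ in a neighborhood of $z$, and is klt there whenever $\lfloor B\rfloor\cap\pi^{-1}(z)=\emptyset$; in that subcase Theorem \ref{thm:klttype} immediately gives the conclusion.

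The main obstacle is the remaining subcase $\lfloor B\rfloor\cap\pi^{-1}(z)\ne\emptyset$, in which no klt $\Rr$-complement of $(X/Z\ni z,B)$ exists and Theorem \ref{thm:klttype} cannot be invoked directly. I expect to handle this by a separate induction on $\dim X$: pick a component $S\subseteq\lfloor B\rfloor$ meeting $\pi^{-1}(z)$, perform divisorial adjunction to produce $(S/\pi(S)\ni z, B_S)$, use Proposition \ref{prop:adjcoeff} to track the hyperstandard set in which $B_S$ sits, verify that $\pi'|_S:S\to\pi'(S)\subseteq U$ together with the restriction of $H'$ places $(S/\pi(S)\ni z, B_S)$ into the hypotheses of Proposition \ref{prop:splctype} in dimension $d-1$ (with a controlled enlargement of $I$), apply the induction hypothesis, and lift the resulting $n$-complement on $S$ back to $X$ via Proposition \ref{prop:adjlift}. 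The most delicate technical point is to coordinate the initial choice of $\cN$ with the enlargement of $I$ forced by divisorial adjunction, so that the lifted complement still has index in the prescribed~$\cN$.
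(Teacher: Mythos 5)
The first half of your argument (when $\lfloor B\rfloor\cap\pi^{-1}(z)=\emptyset$) is sound, but you explicitly leave the hard case open, and the route you sketch for it does not close the gap. Your plan is to take a component $S$ of $\lfloor B\rfloor$ meeting $\pi^{-1}(z)$, do divisorial adjunction, apply the proposition inductively to $S$, and lift via Proposition~\ref{prop:adjlift}. That lifting step requires $-(K_X+B+\bM_X)$ to be big and nef over $Z$. Here, however, $-(K_X+B)\sim_{\Rr}(\pi')^*H'$ is pulled back from $Z'$, so it fails to be big whenever $\dim Z'<\dim X$ (precisely the situation this proposition is introduced to handle inside the proof of Theorem~\ref{thm:lctype}). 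There is also an arithmetic mismatch: after adjunction the coefficients of $B_S$ land in a hyperstandard set $\Phi'$ from Proposition~\ref{prop:adjcoeff}, not in $\tfrac1I\Zz$, so the hypothesis $IB_S\in\Zz_{\ge0}$ over the relevant open set is not preserved. You acknowledge the ``coordination'' problem, but it is not a technicality to be sorted out later — it is where the argument breaks.

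What you are missing is the arithmetic room created by hypothesis (1) together with $I\mid\cN$. The paper's proof sets $N:=\max\cN$ and shows it suffices to produce a boundary $B'$ with $(X,B')$ klt, $K_X+B'\sim_{\Rr,Z}0$, $B'\ge\tfrac{N}{N+1}B$ over $U$, and $B'\ge B$ over $Z'\setminus U$. Over $U$ the integrality $IB\in\Zz$ with $I\mid n$ guarantees that any $n$-complement of $(X/Z\ni z,B')$ (which exists by Theorem~\ref{thm:main}, since $B'$ is itself a klt $\Rr$-complement) is automatically an $n$-complement of $(X/Z\ni z,B)$: over $U$ one has $(n+1)B'\ge\tfrac{N}{N+1}(n+1)B\ge nB$ and $nB\in\Zz$, hence $\lfloor(n+1)B'\rfloor\ge nB\ge n\lfloor B\rfloor+\lfloor(n+1)\{B\}\rfloor$, while over $Z'\setminus U$ the inequality is trivial. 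The divisor $B'$ is constructed directly: choose $H_1'\sim_{\Rr,Z}H'$ effective with $Z'\setminus U\subseteq\Supp H_1'$ and $(X,B+(\pi')^*H_1')$ lc, choose a klt $C$ with $K_X+C\sim_{\Rr,Z}0$ using Fano type, and take $B':=(1-\delta)(B+(\pi')^*H_1')+\delta C$ for small $\delta>0$. The convex combination with $C$ kills all lc centers (giving klt), the added $(\pi')^*H_1'$ compensates for the shrinkage over $Z'\setminus U$, and the margin $\tfrac{N}{N+1}$ absorbs the shrinkage over $U$. No induction on dimension and no adjunction are needed.
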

\begin{proof}
Possibly shrinking $Z$ near $z$, we may assume that $(X,B)$ is lc. Set $N:=\max_{n\in\cN}{n}$. We first claim that there is a boundary $B'$ on $X$ such that 
\begin{itemize}
 \item $(X,B')$ is klt, $K_X+B'\sim_{\Rr,Z}0$, and
 \item $B'\ge \frac{N}{N+1} B$ over $U$, and $B'\ge B$ over $Z'\setminus U$.
\end{itemize}
Suppose the claim is true now. Then
\begin{equation}\label{eqn: prop 7.9}
    \lf(n+1)B'\rf\ge n\lf B\rf+\lf(n+1)\{B\}\rf
\end{equation}
for any $n\in\cN$. By Theorem \ref{thm:main} and the construction of $\cN$, $(X/Z\ni z,B')$ is $n$-complementary for some $n\in\cN$. Thus $(X/Z\ni z,B)$ is $n$-complementary by \eqref{eqn: prop 7.9}.

Thus we only need to prove the claim. By assumption, we may find an effective $\Rr$-Cartier $\Rr$-divisor $H_1'\sim_{\Rr,Z}H'$ such that $Z'\setminus U\subseteq\Supp H_1'$ and $(X,B+H_1)$ is lc, where $H_1:=(\pi')^*H_1'$. In particular, we have 
$$K_X+B+H_1\sim_{\Rr,Z}0\text{ and }B+H_1>B\text{ over }Z'\setminus U.$$ 
Since $X$ is of Fano type over $Z$, one can find an $\Rr$-divisor $C$ such that $(X,C)$ is klt and $K_X+C\sim_{\Rr,Z}0$. We may pick a positive real number $\delta\in(0,1)$ satisfying that
\begin{itemize}
  \item $(1-\delta)(B+H_1)\ge B\text{ over }Z'\setminus U$, and
  \item $B':=(1-\delta)(B+H_1)+\delta C\ge \frac{N}{N+1}B\text{ over }U$.
\end{itemize}
It is obvious that $(X,B')$ is klt and $K_X+B'\sim_{\Rr,Z}0$. The proof is finished.
\end{proof}

\begin{proof}[Proof of Theorem \ref{thm:lctype}]
Let $I'$ be a positive integer given by Proposition \ref{prop:maxlcindex} which only depends on $d$ and $\Gamma\cap\Qq$. Possibly replacing $I$ with $II'$, we may assume that $I$ also satisfies the property of Proposition \ref{prop:maxlcindex}. Let $\Phi:=\Phi(\frac{1}{I}\Zz\cap[0,1])$ be the hyperstandard set associated to $\frac{1}{I}\Zz\cap[0,1]$. Let $\cN=\cN(d,I)$ be a finite set of positive integers given by Theorem \ref{thm:main}. We will show that $\cN$ has the required properties.

Possibly shrinking $Z$ near $z$ replacing $(X,B)$ by a dlt modification, we may assume that $(X,B)$ is $\Qq$-factorial dlt. Suppose that $(X/Z\ni z,B^+)$ is an $\Rr$-complement of $(X/Z\ni z,B)$. Possibly replacing $z$ by a closed point of $\bar{z}$ and shrinking $Z$ near $z$, we may assume that $z$ is a closed point, $(X,B^+)$ is lc, and $K_X+B^+\sim_{\Rr,Z}0$. Write
$$-(K_X+B_{\cN\_\Phi})\sim_{\Rr,Z} B^+-B_{\cN\_\Phi}=F+M,$$ 
where $F:=N_\sigma(B^+-B_{\cN\_\Phi}/Z)\ge0$ and $M:=B^+-B_{\cN\_\Phi}-F\ge0$
(cf. \cite[III, \S4]{Nak04}, \cite[\S3]{LX22}). Note that $F$ is well-defined as $B^+-B_{\cN\_\Phi}\ge0$. 

As $X$ is of Fano type over $Z$, we may run an MMP on $M$ over $Z$ which terminates with a good minimal model $X'$, such that $M'$ the strict transform of $M$ on $X'$ is semi-ample over $Z$. Since
$$M\sim_{\Rr,Z}-(K_X+B_{\cN\_\Phi}+F),$$
$X$ and $X'$ are isomorphic in codimension one by \cite[Lemma 2.4]{HX13}. We also see that $-(K_{X'}+B'_{\cN\_\Phi}+F')$ is semi-ample over $Z$ and thus induces a contraction $\pi':X'\to Z'$ over $Z$, where $B'_{\cN\_\Phi}$ and $F'$ are the strict transforms of $B_{\cN\_\Phi}$ and $F$ on $X'$ respectively. In particular, there is an effective $\Rr$-divisor $H'$ on $Z'$ which is ample over $Z$ such that 
$$-(K_{X'}+B'_{\cN\_\Phi}+F')\sim_{\Rr}(\pi')^*H'.$$ 
Note that $(X',B^{+\prime})$ is lc and thus $(X',B_{\cN\_\Phi}'+F')$ is also lc, where $B^{+\prime}$ is the strict transform of $B^+$ on $X'$.

We denote by $\eta'$ the generic point of $Z'$, and
$$\cZ_{scy}:=\{\eta'\} \cup \left\{z'\in Z'\mid \left(X'/Z'\ni z',B'_{\cN\_\Phi}+F'\right)\text{ is strictly lc Calabi-Yau}\right\}.$$
By Lemma \ref{lem:mlcislccenter} and \cite[Theorem 1.1]{AM06}, $\cZ_{scy}$ is a non-empty finite set.

\begin{claim}\label{claim:twofacts}
Over a neighborhood of $z'$ for any $z'\in\cZ_{scy}$, we have that
$$I\left(K_{X'}+B'_{\cN\_\Phi}+F'\right)\sim0.$$
\end{claim}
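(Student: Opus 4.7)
My plan is to derive the claim from Proposition \ref{prop:maxlcindex} applied to the pair $(X'/Z' \ni z', B'_{\cN\_\Phi} + F')$. The hypotheses of that proposition require that (i) this pair is strictly lc Calabi--Yau of dimension $d$, (ii) $X'$ is of Fano type over $Z'$, and (iii) the coefficients of $B'_{\cN\_\Phi} + F'$ lie in a DCC subset of $\Qq$ for which the fixed $I$ works. Conditions (i) and (ii) should be handled routinely: $X'$ is of Fano type over $Z$ by the MMP construction, hence of Fano type over the intermediate contraction $Z'$; strict lc CY at non-generic $z' \in \cZ_{scy}$ is by definition, and at $z' = \eta'$ follows from the observation that the effective divisor $H'$ on $Z'$ cannot contain the generic point, so $-(K_{X'} + B'_{\cN\_\Phi} + F') \sim_{\Rr} (\pi')^* H'$ is $\Rr$-trivial over an open neighborhood of $\eta'$.

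The heart of the argument is condition (iii). I would first replace $B^+$ by a $\Qq$-complement: since $B_{\cN\_\Phi} \in \Qq$ and $(X/Z \ni z, B_{\cN\_\Phi})$ inherits $\Rr$-complementarity from $(X/Z \ni z, B)$, Lemma \ref{lem:Qcompl} supplies a $\Qq$-complement of $(X/Z \ni z, B_{\cN\_\Phi})$, which I use in place of $B^+$. This forces $F, M \in \Qq$ and in particular $F' \in \Qq$. The coefficients of $B'_{\cN\_\Phi}$ then lie in the hyperstandard set $\Phi = \Phi(\frac{1}{I}\Zz \cap [0,1])$, which is a DCC subset of $\Qq$. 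For the coefficients of $F'$, I would argue using the rigidity of strict lc CY combined with Theorem \ref{thm:rctacc}: once $B'_{\cN\_\Phi}$ is fixed (with coefficients in $\Phi$), the strict lc CY property at $z'$ pins the coefficients of $F'$ to a finite set depending only on $d, I, \I$, and $\Phi$, via the ACC of $\Rr$-complementary thresholds applied to a DCC set containing $\Phi \cup (\I \cap \Qq)$. I would then enlarge $I$ at the very outset of the proof of Theorem \ref{thm:lctype} so that it satisfies the conclusion of Proposition \ref{prop:maxlcindex} for this enlarged finite set, yielding $I(K_{X'} + B'_{\cN\_\Phi} + F') \sim 0$ over a neighborhood of $z'$.

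The main obstacle is making the coefficient control uniform and independent of the specific choice of $B^+$. The coefficients of $F'$ arise from a Zariski decomposition and could a priori be arbitrary rationals; taming them requires the combination of strict lc CY rigidity at $z'$, the ACC of $\Rr$-complementary thresholds, and the $\Qq$-rationalization of $B^+$ to confine them to a fixed finite set derivable from $d, I, \I, \Phi$. Executing this bookkeeping—so that the initial enlargement of $I$ at the start of the proof encompasses all the resulting constraints simultaneously over all points $z' \in \cZ_{scy}$—is the delicate step, after which the claim reduces cleanly to Proposition \ref{prop:maxlcindex}.
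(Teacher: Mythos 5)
The paper never actually supplies a proof of Claim~\ref{claim:twofacts}: the text states the claim, then says ``Suppose that Claim~\ref{claim:twofacts} holds true now'' and completes the proof of Theorem~\ref{thm:lctype} without returning to it. So there is no written argument of the authors' to compare yours against; I can only assess whether your proposal would close the gap on its own.

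Your plan --- invoke Proposition~\ref{prop:maxlcindex} for the pair $(X'/Z'\ni z',B'_{\cN\_\Phi}+F')$ at each $z'\in\cZ_{scy}$ --- is the natural route, and you correctly identify the DCC--coefficient hypothesis as the real work. Your remarks on $\eta'$, on the Fano-type-over-$Z'$ condition, and your use of Lemma~\ref{lem:Qcompl} to rationalize $B^+$ (hence $F'$) are all reasonable.

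The gap is in the coefficient control, and you should not paper over it. You propose to pin the coefficients of $F'$ to a finite set ``via the ACC of $\Rr$-complementary thresholds,'' but Theorem~\ref{thm:rctacc} presupposes that \emph{both} the boundary and the direction divisor already lie in a fixed DCC set. The strict lc CY property does identify $\mult_P(B'_{\cN\_\Phi}+F')$ near $z'$ with the threshold $\Rcomp(X'/Z'\ni z',B_0;P)$ where $B_0:=(B'_{\cN\_\Phi}+F')-\mult_P(B'_{\cN\_\Phi}+F')\cdot P$; but for this threshold to belong to a fixed ACC set $\RCT(d,1,\cdot)$ you need $B_0$ to have DCC coefficients, and $B_0$ carries all the \emph{other} components of $F'$, whose coefficients are exactly what you are trying to control. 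The circularity is not broken by rationalizing $B^+$: that puts $F'$ in $\Qq$ but bounds no denominators. Likewise, the rigidity of strict lc CY tells you $B'_{\cN\_\Phi}+F'$ is the unique $\Rr$-complement of itself near $z'$, but the only boundaries you can produce with a priori controlled coefficients --- $B'_{\cN\_\Phi}$, or a finite-coefficient enlargement $B_1\ge B'_{\cN\_\Phi}$ from \cite[Theorem 5.2]{Chen23} --- are not shown to dominate $B'_{\cN\_\Phi}+F'$ near $z'$, so rigidity cannot be triggered with a finite-coefficient competitor. Until some such comparison is established (e.g.\ an argument that an $\Rr$-complement of a finite-coefficient boundary lying above $B'_{\cN\_\Phi}$ must, near $z'$, dominate $F'$ --- which would require a local Nakayama--Zariski argument over $Z'$ rather than the global one over $Z$ used to define $F$), the ``enlarged $I$ at the outset'' cannot be specified, and the claim remains unproved by your proposal.
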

Suppose that Claim \ref{claim:twofacts} holds true now. Then by \cite[Lemma 2.5]{CHL23}, Lemma \ref{lem:mlcislccenter} and \cite[Theorem 1.1]{AM06}, we can find an open subset $U\subseteq Z'$ satisfies that 
\begin{itemize}
  \item $I\left(K_{X'}+B'_{\cN\_\Phi}+F'\right)\sim0\text{ over $U$, and }$
  \item $\left(X',B'_{\cN\_\Phi}+F'\right)\text{ is klt over }Z'\setminus U.$
\end{itemize}
In particular, $I(B'_{\cN\_\Phi}+F')\in\Zz_{\ge0}$ over $U$. Recall that 
$$-(K_{X'}+B'_{\cN\_\Phi}+F')\sim_{\Rr}(\pi')^*H'$$ 
where $H'$ is ample over $Z$. By Proposition \ref{prop:splctype}, $(X'/Z\ni z,B'_{\cN\_\Phi}+F')$ is $\cN$-complementary, and thus $(X'/Z\ni z,B')$ is also $\cN$-complementary by Lemma \ref{lem:N_Phicompl}. Furthermore, since $X$ and $X'$ are isomorphic in codimension one, $(X/Z\ni z,B)$ is $\cN$-complementary. This completes the proof.
\end{proof}
\begin{rem}
We note here that Shokurov wrote that Theorem \ref{thm:lctype} is also true in the generalized pairs setting \cite[Addendum 61]{Sho20}, however, it seems that the current proof is not applicable in this setting. The main reason is that in the proof of Theorem \ref{thm:lctype}, one needs to perturb the boundary of the pair which may change the nef part (in the generalized setting) and that may lead to failure. 
\end{rem}

\bibliographystyle{amsalpha}
\providecommand{\bysame}{\leavevmode\hbox to3em{\hrulefill}\thinspace}
\providecommand{\MR}{\relax\ifhmode\unskip\space\fi MR }
\providecommand{\MRhref}[2]{%
  \href{http://www.ams.org/mathscinet-getitem?mr=#1}{#2}
}

\end{document}